\documentclass[a4paper]{amsart}

\makeatletter
\renewcommand{\tocsection}[3]{%
  \indentlabel{\@ifnotempty{#2}{\bfseries\ignorespaces#1 #2\quad}}\bfseries#3}
\renewcommand{\tocsubsection}[3]{%
  \indentlabel{\@ifnotempty{#2}{\ignorespaces#1 #2\quad}}#3}

\newcommand\@dotsep{4.5}
\def\@tocline#1#2#3#4#5#6#7{\relax
  \ifnum #1>\c@tocdepth 
  \else
    \par \addpenalty\@secpenalty\addvspace{#2}%
    \begingroup \hyphenpenalty\@M
    \@ifempty{#4}{%
      \@tempdima\csname r@tocindent\number#1\endcsname\relax
    }{%
      \@tempdima#4\relax
    }%
    \parindent\z@ \leftskip#3\relax \advance\leftskip\@tempdima\relax
    \rightskip\@pnumwidth plus1em \parfillskip-\@pnumwidth
    #5\leavevmode\hskip-\@tempdima{#6}\nobreak
    \leaders\hbox{$\m@th\mkern \@dotsep mu\hbox{.}\mkern \@dotsep mu$}\hfill
    \nobreak
    \hbox to\@pnumwidth{\@tocpagenum{\ifnum#1=1\bfseries\fi#7}}\par
    \nobreak
    \endgroup
  \fi}
\AtBeginDocument{%
\expandafter\renewcommand\csname r@tocindent0\endcsname{0pt}
}
\def\l@subsection{\@tocline{2}{0pt}{2.5pc}{5pc}{}}
\makeatother

\usepackage{xcolor}
\definecolor{lgray}{RGB}{240,240,240}
\definecolor{webgreen}{rgb}{0,.5,0}
\definecolor{webbrown}{rgb}{.6,0,0}
\definecolor{RoyalBlue}{cmyk}{1, 0.50, 0, 0}
\usepackage[colorlinks=true, breaklinks=true, urlcolor=webbrown, linkcolor=RoyalBlue, citecolor=webgreen, backref=page]{hyperref}

\usepackage{amssymb,mathtools,esint}
\mathtoolsset{showonlyrefs}
\usepackage{epsfig, graphicx} 
\usepackage{verbatim, setspace}

\usepackage{newtxtext}

\usepackage{mathabx}

\newtheorem{theorem}{Theorem}[section]

\newtheorem{proposition}[theorem]{Proposition}
\newtheorem{lemma}[theorem]{Lemma}

\newtheorem*{totik}{Theorem (Totik, \cite[Theorem~14.4]{Totik})}

\newcommand{\T}		{\mathbb{T}}
\newcommand{\D}		{\mathbb{D}}

\newcommand{\C}		{\mathbb{C}}
\newcommand{\N}		{\mathbb{N}}
\newcommand{\Z}		{\mathbb{Z}}

\newcommand{\dist}{\mathrm{dist}}

\newcommand{\supp}{\mathrm{supp}}

\newcommand{\re}{\mathrm{Re}}
\newcommand{\im}{\mathrm{Im}}

\newcommand{\qasq}{\quad \text{as} \quad}
\newcommand{\qandq}{\quad \text{and} \quad}

\newcommand{\ic}{\mathrm{i}}
\renewcommand{\O}{\mathcal{O}}

\numberwithin{equation}{section}

\begin{document}

\title[]{Asymptotics of Polynomials Orthogonal on an Interval with Varying Weights}

\author{Sergey A. Denisov}
\address{Department of Mathematics, University of Wisconsin-Madison, 480n Lincoln Dr., Madison, WI 53706, USA}
\email{\href{mailto:denissov@math.wisc.edu}{denissov@math.wisc.edu}}

\author{Maxim L. Yattselev}

\address{Department of Mathematical Sciences, Indiana University Indianapolis, 402~North Blackford Street, Indianapolis, IN 46202}

\email{\href{mailto:maxyatts@iu.edu}{maxyatts@iu.edu}}

\thanks{The research of the first author was supported by NSF grants DMS-2054465 and DMS-2450716, the Simons Fellowship in Mathematics, the Simons Travel Support for Mathematicians Award, and the Van Vleck Professorship Research Award. The second author completed this work during his tenure as the Royal Society Wolfson Visiting Fellow (RSWVF\textbackslash R3\textbackslash 253003) at the School of Mathematics in Bristol University.
}

\subjclass[2010]{42C05}

\begin{abstract}
We study strong asymptotics of orthogonal polynomials on an interval with varying weights, extending the framework of Totik's theorem on weighted orthogonal polynomials. Our results generalize this theorem in several directions. In particular, we allow the supports of the associated equilibrium measures to converge to a proper subinterval of the original interval of orthogonality or even collapse to a point. These extensions are motivated by applications to multiple orthogonality where such varying measures arise naturally.

\smallskip

\noindent
\textit{Keywords}:  orthogonal polynomials, varying weights, strong asymptotics
\end{abstract}


\maketitle

\tableofcontents

\section{Introduction}
In this paper we study strong (Szeg\H{o}) asymptotics of orthogonal polynomials on an interval with respect to varying weights.  Such problems arise naturally in several settings, in particular, in the theory of multiple orthogonal polynomials. Our primary motivation comes from this connection.\smallskip

 Recall that measures \( \mu_1,\ldots,\mu_d \) on the real line form an Angelesco system if \( \Delta(\mu_i)\cap\Delta(\mu_k) = \emptyset \) for each \( i\neq k \), where \( \Delta(\mu) \) denotes the convex hull of the support of a measure \( \mu \). Given a multi-index  \( \vec n=(n_1,\ldots,n_d) \), \( n_i\geq0 \), the corresponding type~II multiple orthogonal polynomial   is a monic polynomial \( P_{\vec n} \) of degree \( n_1+\cdots+n_d \) characterized by orthogonality conditions
\[
\int x^k P_{\vec n}(x)d\mu_i(x) =0, \quad k\in\{0,\ldots,n_i-1\},  \quad i\in\{1,\ldots,d\}.
\]
A classical theorem of Angelesco (see \cite{Ang19}) states that  \( P_{\vec n} \) has exactly \( n_i \) zeros on \( \Delta_i \). Hence, it can be written in the form \( P_{\vec n}=P_{\vec n,1} \cdots P_{\vec n,d} \), where \( P_{\vec n,i} \) is a monic polynomial of degree \( n_i \) with all of its zeros contained in the interval \( \Delta(\mu_i) \). This factorization transforms multiple orthogonality into an ordinary orthogonality problem with a varying weight. Indeed, it holds that
\begin{align}
\label{weighted ortho}
\int x^k P_n(x)w_n^{2n}(x)d\mu(x) =0, \quad k\in\{0,\ldots,n-1\},
\end{align}
with \( n=n_i \), \( \mu=\mu_i \), \( P_n = P_{\vec n,i} \), and \( w_n^{2n} = \prod_{k\neq i}P_{\vec n,k} \) for any \( i\in\{1,\ldots,d\} \). Denote by $V^\omega$ the logarithmic potential of a measure \( \omega \):
 \[ V^\omega(z):= \int\log|z-x|^{-1}d\omega(x)\,. \]
If \( \nu_n \) is the normalized zero counting measure of  \( \prod_{k\neq i}P_{\vec n,k} \), then
\[
w_n(x) = \exp\left( -\frac Nn V^{\nu_n}(x)\right), \quad N=\frac12\sum_{k\neq i} n_k.
\] Hence, polynomials \( P_{\vec n,i} \) can be viewed as ordinary orthogonal polynomials with non-trivial and varying weights. In this paper, we investigate strong asymptotics of such polynomials. Then, in a follow-up paper 
we intend to use these results to derive strong asymptotics of the polynomials \( P_{\vec n,i} \) themselves. Throughout the text, we denote by  \( T_n(w_n^{2n}\mu) \) the monic polynomials satisfying \eqref{weighted ortho}, where, with a slight abuse of notation, we write \( f\mu \) for the measure \( fd\mu \).

The asymptotic theory of orthogonal polynomials with fixed weights is classical; see the monographs
\cite{Szego,Nevai,StahlTotik,Ismail,Simon,Simon2}. Orthogonal polynomials with varying weights on a finite interval were studied in \cite{Totik}, see also \cite{CYLL98} for the special case of reciprocal polynomial weights. The starting point for our work is \cite[Theorem~14.4]{Totik}. Before we state it, let us describe in detail all the necessary ingredients. 

Given a closed interval \( \Delta=[\alpha,\beta] \), we denote by \( L^p(\omega_\Delta) \) the space of real-valued functions whose moduli are \( p \)-summable with respect to
\begin{align}
\label{arcsine}
d\omega_\Delta(x) := \frac{dx}{\pi\sqrt{(x-\alpha)(\beta-x)}},
\end{align}
the arcsine distribution on \( \Delta \).  We further denote by \( H^2(D_\Delta) \) the Hardy space of functions holomorphic in \( D_\Delta := \overline\C\setminus\Delta \) whose squared moduli admit harmonic majorants in \( D_\Delta \), see \cite[Chapter~10, p.168]{Duren}. This definition is conformally invariant, meaning that \( f \in H^2(\D) \), the standard Hardy space on the unit disk, if and only if \( f\circ\phi_\Delta\in H^2(D_\Delta) \), where
\begin{equation}
\label{phi-w}
\phi_\Delta(z) := \frac2{\beta-\alpha}\left(z - \frac{\beta+\alpha}{2} - w_\Delta(z) \right) \qandq w_\Delta(z) := \sqrt{(z-\alpha)(z-\beta)}\,.
\end{equation}
The branch in the definition of \(w_\Delta\) is taken to be holomorphic off \( \Delta \) and normalized so that \( w_\Delta(z) = z+\mathcal O(1) \) as \( z\to\infty \) (\( \phi_\Delta \) is simply the conformal map of \( D_\Delta \) onto $\D$ such that \( \phi_\Delta(\infty) =0 \) and \( \phi_\Delta(\beta)=1 \)). Every function \( g\in H^2(D_\Delta) \) possesses non-tangential limits \( g_\pm \) from above and below \( \Delta \) that satisfy
\[
g_\pm \in L^2(\omega_\Delta) \qandq \log |g_\pm| \in L^1(\omega_\Delta).
\]
We shall say that \( G \) is an \emph{outer function} in \( H^2(D_\Delta) \) if \( G\circ \phi_\Delta^{-1} \) is an outer function in \( H^2(\D) \), see \cite[Section~2.4]{Duren}. For instance, given a nonnegative function \( f \in L^2(\omega_\Delta) \) such that \( \log f \in L^1(\omega_\Delta) \), the function
\begin{equation}
\label{outer}
\Omega_\Delta(f;z) := \exp\left( w_\Delta(z) \int_\Delta \log f(x) \frac{d\omega_\Delta(x)}{z-x} \right)
\end{equation}
is an outer function in \( H^2(D_\Delta) \). It also satisfies $\Omega_\Delta(f;\infty)>0$ and is conjugate-symmetric. For such functions it holds that
\begin{equation}
\label{outer-b}
|\Omega_{\Delta\pm}(f;x)| = f(x) \quad \text{for almost every} \quad x\in\Delta.
\end{equation}
In particular, any conjugate-symmetric outer function in \( H^2(D_\Delta) \) that is positive at infinity can be recovered through the modulus of its boundary values via \eqref{outer}--\eqref{outer-b}. Notice also that
\begin{equation}
\label{geom-mean}
\log \Omega_\Delta(f;\infty) = \int_\Delta \log fd\omega_\Delta.
\end{equation}

Given a  measure \( \mu \) on the real line, we write
\begin{equation}
\label{Lebesgue}
d\mu(x) = \mu^\prime(x)dx + d\mu^s(x),
\end{equation}
where \( \mu^\prime(x) \) is the Radon-Nikodym derivative of \( \mu \) with respect to the Lebesgue measure on the real line and \( \mu^s\) is singular to the Lebesgue measure. We denote by \( |\mu| \) the total mass of \( \mu \).

We say that \( \mu \) is a \emph{Szeg\H{o}  measure} on \( \Delta\subseteq\Delta(\mu) \), written \( \mu\in\mathrm{Sz}(\Delta) \), if \( \log\mu^\prime \in L^1(\omega_\Delta) \). In this case, \( \log v_\Delta \in L^1(\omega_\Delta) \), where
\begin{equation}
\label{szego}
d\mu_{|\Delta}(x) = v_\Delta(x)d\omega_\Delta(x) + d\mu_{|\Delta}^s(x).
\end{equation}
Equivalently, \( v_\Delta(x) = \pi\mu^\prime(x)\sqrt{(x-\alpha)(\beta-x)}, \, x\in\Delta \).  In the classical theory of strong asymptotics, one usually assumes that \( \Delta=\Delta(\mu) \) in \eqref{szego}. 
Our primary motivation, however, comes from multiple orthogonality, where it is essential to work with restrictions of $\mu$ to proper subintervals of its support. This is the reason for writing \( \mu_{|\Delta} \) in \eqref{szego}. When \( \mu\in\mathrm{Sz}(\Delta) \), one can define the associated \emph{Szeg\H{o} function} by
\begin{equation}
\label{SzegoFun}
G(\mu_{|\Delta};z) := \Omega_\Delta\big(\sqrt{v_\Delta};z\big), \quad z\in D_\Delta.
\end{equation}
By construction, $G(\mu_{|\Delta};\cdot)$ is an outer function in \( H^2(D_\Delta) \) and its boundary values on \( \Delta \) satisfy \( |G_\pm(\mu_{|\Delta};x)|^2 = v_\Delta(x) \) for almost every \( x\in \Delta \). In what follows, we remove the subscript \( |\Delta \) from \( \mu \) in \eqref{SzegoFun} if \( \Delta=\Delta(\mu) \). 

Let \( w \) be a continuous nonnegative function on \( \Delta \) that is non-zero quasi-everywhere on \( \Delta \) (that is, outside a set of zero logarithmic capacity). Among all probability  measures \( \omega \) on \( \Delta \), there exists a unique one, denoted $\omega_w$, that minimizes the weighted energy  functional 
\[
I_w[\omega] := \iint \log\big[ |x-y| w(x)w(y)\big]^{-1}d\omega(x)d\omega(x).
\]
This measure is known as \emph{extremal measure} (or \emph{weighted equilibrium measure/distribution}), see \cite[Theorem~A]{Totik}. Let \[ F_w:= I_w[\omega_w] + \int\log wd\omega_w.\] It is known that
\begin{align}
\label{equilibrium condition}
V^{\omega}(x) 
\begin{cases}
\geq \log w(x) + F_w, & x\in\Delta, \\
\leq \log w(x) + F_w, & x\in\supp(\omega_w).
\end{cases}
\end{align} 
We are now ready to state the theorem that serves as the starting point for our work (we provide its reduced version for the asymptotics of monic polynomials).
\begin{totik}
\label{thm:totik}
Let \( \{w_n\} \) be a sequence of continuous, nonnegative functions on \([-1,1]\) that are non-vanishing quasi-everywhere. Assume that the corresponding extremal measures \( \omega_{w_n} \) are supported on the entire interval \( [-1,1] \), are absolutely continuous, and satisfy\footnote{By \( f(x)\lesssim g(x)\), \( x\in S \), we mean that \( f(x)\leq cg(x)\), \( x\in S \), for some constant \( c>0 \).}
\[
(1-x^2)^{\varkappa_L} \lesssim \omega_{w_n}^\prime(x) \lesssim (1-x^2)^{\varkappa_U}, \quad x\in[-1,1],
\]
for some constants \( -1<\varkappa_U\leq\varkappa_L \). Furthermore, let \( \mu\in\mathrm{Sz}([-1,1]) \) be an absolutely continuous measure. Then, the asymptotics 
\[
T_n(w_n^{2n}\mu)(z) = (1+o(1))\exp\left( n \int\log(z-x)d\omega_{w_n}(x)\right) \frac{G(\mu;\infty)}{G(\mu;z)}
\]
holds locally uniformly in \( D_{[-1,1]} \).
\end{totik}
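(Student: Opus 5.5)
The plan is the classical extremal-problem approach to Szeg\H{o} asymptotics, adapted to varying weights. Write \(\omega_n:=\omega_{w_n}\) and \(F_n:=F_{w_n}\), and use the equilibrium relation \eqref{equilibrium condition}: since \(\supp(\omega_n)=[-1,1]\), equality \(V^{\omega_n}=\log w_n+F_n\) holds on all of \([-1,1]\). Hence \(w_n^{2n}(x)=e^{-2nF_n}e^{-2nV^{\omega_n}(x)}=e^{-2nF_n}|\Phi_n(x)|^2\), where
\[
\Phi_n(z):=\exp\Big(n\int\log(z-x)\,d\omega_n(x)\Big)
\]
is, up to the multivalued argument, an analytic function of modulus \(e^{-nV^{\omega_n}}\). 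The target statement then reads \(T_n/(\Phi_n\,G(\mu;\infty)/G(\mu;\cdot))\to1\) locally uniformly in \(D_{[-1,1]}\).

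\textbf{Extremal problem and lower bound.} Write \(T_n(w_n^{2n}\mu)\) as the minimizer of \(\int|P|^2w_n^{2n}d\mu\) over monic \(P\) of degree \(n\). Dividing by \(\Phi_n\) and by the Szeg\H{o} function \(G(\mu;\cdot)\) on \(D_{[-1,1]}\), the quantity \(f_n:=P\,G(\mu;\cdot)/\Phi_n\) is single-valued and holomorphic in \(D_{[-1,1]}\) (the argument of \(\Phi_n\) increases by \(2\pi n\) around the interval, which \(P\) compensates). It has value \(G(\mu;\infty)\) at infinity, and its boundary values satisfy \(|f_{n\pm}|^2=|P|^2w_n^{2n}e^{2nF_n}v\) with \(d\mu=v\,d\omega_{[-1,1]}\). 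Subharmonicity and the \(H^2\) mean-value property then give
\[
\int|P|^2w_n^{2n}d\mu\ \ge\ e^{-2nF_n}G(\mu;\infty)^2 ,
\]
with near-equality forcing \(f_n\) to be close to the constant \(G(\mu;\infty)\) in \(H^2(D_{[-1,1]})\), since the excess is exactly the \(H^2\)-distance to the constant. This is a Pythagoras identity in \(H^2\), and it turns an upper bound on the minimum into locally uniform asymptotics via Cauchy estimates in \(D_{[-1,1]}\).

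\textbf{Upper bound and the main obstacle.} It remains to construct monic \(Q_n\) of degree \(n\) with \(\int|Q_n|^2w_n^{2n}d\mu\le(1+o(1))e^{-2nF_n}G(\mu;\infty)^2\). This is the hard step. Two approximations are needed.

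\emph{(a) Approximating \(\Phi_n\) by a polynomial.} We need polynomials \(R_n\) of degree \(n-m\) with \(|R_n|\approx|\Phi_n|e^{-\cdots}\) on \([-1,1]\), uniformly up to a factor \(1+o(1)\). I would discretize \(\omega_n\): split \([-1,1]\) into \(n\) intervals of \(\omega_n\)-mass \(1/n\) and place the zeros at suitably chosen points, for instance weighted centers, as in Totik's discretization. The two-sided bound \((1-x^2)^{\varkappa_L}\lesssim\omega_n'\lesssim(1-x^2)^{\varkappa_U}\) controls the interval lengths uniformly in \(n\), including near the endpoints. Using it, I would show \(\log|R_n|+nV^{\omega_n}\) is uniformly small on \([-1,1]\) away from the zeros, with controlled logarithmic dips near them. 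Those dips only help the \(L^2\) upper bound, provided they are integrable against \(\mu\), and this is where absolute continuity of \(\mu\) is used.

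\emph{(b) Approximating \(1/G(\mu;\cdot)\).} Multiply by a fixed-degree polynomial \(S_m\) approximating \(G(\mu;\infty)/G(\mu;\cdot)\) in the weighted \(L^2\) sense. This is the standard Szeg\H{o} argument: first reduce to \(v\) continuous and positive by truncation, using monotonicity of the extremal value in \(\mu\), and then let \(m\to\infty\) slowly. To absorb the degree shift, and to correct the normalization at infinity, I would use the analytic factor \(\Phi_n/\Phi_n^{(n-m)}\) near \([-1,1]\), where \(\Phi_n^{(n-m)}\) denotes the corresponding function for degree \(n-m\).

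Combining the two bounds yields the asymptotics. I expect step (a) to be the main difficulty: obtaining \(1+o(1)\) uniformity across the whole family \(\{\omega_n\}\) with only the endpoint density bounds available.
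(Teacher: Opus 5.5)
There is a genuine gap, and it lies in the normalization of your extremal problem, before step (a) even starts. Your lower bound $\int|P|^2w_n^{2n}d\mu\ge e^{-2nF_n}G(\mu;\infty)^2$ is correct. (Fix the sign slip: $w_n^{2n}=e^{-2nF_n}e^{2nV^{\omega_n}}=e^{-2nF_n}|\Phi_n|^{-2}$, which is what your formula for $|f_{n\pm}|^2$ actually uses.) But on an interval this bound is off by a factor $2$. The true minimum is $(2+o(1))e^{-2nF_n}G(\mu;\infty)^2$; see the second claim of Theorem~\ref{thm:3} with $\mu_n=\mu$, $h_n=0$.

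So the bound $\int|Q_n|^2w_n^{2n}d\mu\le(1+o(1))e^{-2nF_n}G(\mu;\infty)^2$ that you plan to construct fails for every monic $Q_n$, and the Pythagoras step is never triggered. Take $w_n\equiv1$, $\mu=\omega_{[-1,1]}$. Then $\omega_n=\omega_{[-1,1]}$, $e^{F_n}=2$, $G\equiv1$, $\Phi_n=2^{-n}(z+w_{[-1,1]}(z))^n$, the minimizer is the monic Chebyshev polynomial, and
\[
T_n/\Phi_n=1+\phi^{2n},\qquad \int T_n^2\,d\omega_{[-1,1]}=2\cdot4^{-n},\qquad \|f_n-f_n(\infty)\|_{H^2}=\|\phi^{2n}\|_{H^2}=1,
\]
although $f_n\to1$ locally uniformly.

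The reason is structural. $P$ is single-valued across $[-1,1]$, while $f_{n+}/f_{n-}=G_+\Phi_{n-}/(G_-\Phi_{n+})$ is unimodular and oscillates like $e^{-2\pi\ic n\omega_n([x,1])}$. Hence $f_n$ cannot be $H^2$-close to a constant: the one-point extremal problem in $H^2(D_{[-1,1]})$ is not the right one for orthogonality on an interval. The same phenomenon undermines step (a). Placing the zeros on $[-1,1]$ produces oscillation of order one at scale $1/n$ everywhere, not just logarithmic dips near the zeros; in the example, $|T_n|e^{nV^{\omega_n}}=2|\cos n\theta|$. Totik's discretization \cite[Lemma~9.1]{Totik} likewise only gives $1\le|Q|e^{nV}\le n^B$.

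What is missing is a mechanism that sees both sides of the cut, i.e.\ a second constraint. The paper, following Totik, gets it by passing to the circle. By \cite[Theorem~10.2]{Totik} one writes $e^{2nV^{\omega_n}}=\iota_n/|H_n|^2$, where:
\begin{itemize}
\item $0<\iota_n\le1$ and $\int\log\iota_n\,d\omega_{[-1,1]}\to0$;
\item $H_n$ has degree $n-i_n$ and zeros off $[-1,1]$, at least half of them at distance at least $L_n/n$ from it, with $L_n\to\infty$.
\end{itemize}
The problem then becomes orthogonality with respect to $\iota_n\mu/\tau_n$, a Szeg\H{o}-type measure times a reciprocal polynomial weight. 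Theorem~\ref{thm:2}, obtained from Theorem~\ref{thm:1} through the Joukowski map, turns the degree-$n$ interval problem into a degree-$2n$ problem on $\T$. There both the factor $2$ and the oscillating term come out of Szeg\H{o}'s correspondence between $p_n$ and $(\phi_{2n},\phi_{2n}^*)$. The oscillating term is $\phi_{2n}/\phi_{2n}^*$, and it tends to $0$ locally uniformly (second relation in \eqref{A1-1}) only because the Blaschke product $W_{2n}/W_{2n}^*$ does, i.e.\ because of condition $(D_{[-1,1]})$ on the zeros of $\tau_n$.

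If you want to keep a variational proof, you need two things. First, a two-constraint problem on $\T$: a sharp lower bound $(2-o(1))$, obtained by projecting onto $1$ and onto an oscillating unimodular second vector whose inner product with $1$ tends to $0$. Second, a trial polynomial attaining $(2+o(1))$. Constructing the latter is essentially the content of Totik's theorem combined with the reciprocal-weight asymptotics.
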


Observe that Totik's theorem requires \( \supp(\omega_n) = [-1,1] \).
In this setting, relation \eqref{equilibrium condition}  shows that the weight of orthogonality may equivalently be written as
 \( e^{2nV^{\omega_{w_n}}}\mu \)
since multiplying the weight by a positive constant does not change the corresponding monic orthogonal polynomial.  For the applications that motivate our work, however, this result is not sufficiently general. One objective of the present paper is therefore to extend Totik's theorem in several ways.

  In Theorem~\ref{thm:3}, we replace a single absolutely continuous measure \( \mu \) with a sequence of not necessarily absolutely continuous measures \( e^{h_n}\mu_n \), where functions \( h_n \) belong to a compact subset of continuous functions on \( [-1,1] \) (in fact, the theorem is formulated for a general interval \( \Delta \)).
  Totik's proof of the above theorem relies on \cite[Theorem~10.2]{Totik} together with results from \cite{CYLL98} concerning orthogonal polynomials with reciprocal polynomial weights. We follow the same general strategy. The main difference is that we replace the key results from \cite{CYLL98} by our Theorem~\ref{thm:2}, which is itself derived from its analogue on the unit circle, namely Theorem~\ref{thm:1}.
  
  As discussed earlier, our applications also require treating situations in which the supports of the extremal measures \( \omega_{w_n} \)  are proper subsets of the original interval of orthogonality. This leads to two additional extensions.  In Theorem~\ref{thm:4}, we address the situation where \( \supp(\omega_{w_n}) \) converge to a non-degenerate interval and in Theorem~\ref{thm:5} we deal with the case where \( \supp(\omega_{w_n}) \) collapse into a point. Theorems~\ref{thm:4} and~\ref{thm:5} require us to strengthen the notion of a Szeg\H{o} measure. Accordingly, in Section~\ref{sec:usm} we introduce the notion of a {\it uniformly Szeg\H{o} measure} and in Section~\ref{sec:ssm} we  define the class of {\it strongly Szeg\H{o} measures with index \( \gamma \).}

\section{OPs with Reciprocal Polynomial Weights}

Let \(  \sigma \) be a measure on the unit circle \( \T \).  A sequence of polynomials \( \phi_n \), \( n\geq0 \), is said to be orthonormal  with respect to \( \sigma \) if \( \deg(\phi_n) =n \) and
\begin{align}
\int \phi_n(\xi)\overline{\phi_m(\xi)}d\sigma(\xi) = \delta_{nm},
\end{align}
where \(  \delta_{nm} \) denotes the usual Kronecker symbol. As standard, we assume that the leading coefficient of \( \phi_n \) is positive.  Given a polynomial \( p \) of degree at most \( n \), denote by \( p^* \) its  reciprocal polynomial (w.r.t. index \( n \)). That is,
\begin{align}
p^*(z) = z^n \overline{p_n(1/\overline z)}.
\end{align}
We are interested in the asymptotic behavior of the polynomials \( \phi_n^* \). It is known \cite[Theorem~11.4.1]{Szego} that the zeros of \( \phi_n \) belong to the unit disk \( \D \)  and consequently the zeros of \( \phi_n^* \) belong to the exterior of the closed unit disk. A  celebrated result of Szeg\H{o}, see \cite[Theorem~12.1.1]{Szego}, states that if \( d\sigma = \upsilon dm \), where \( dm = (2\pi)^{-1}|d\xi| \) is the normalized arclength measure on $\T$, and \( \log\upsilon\in L^1(\T) \), then
\begin{align}
\label{szego asymptotics}
\phi_n^*(z) D(\sigma;z) = 1+ o(1) \qasq n\to\infty
\end{align}
locally uniformly in the unit disk \( \D \), where \( D(\sigma;z) \) is the Szeg\H{o} function of the measure \( \sigma \) defined by
\begin{align}
\label{szego function}
D(\sigma;z):=\exp\left(\frac 12\int\frac{\xi+z}{\xi-z}\log\upsilon(\xi)dm(\xi)\right), \quad z\not\in\T.
\end{align}
The function \( D(\sigma;z) \) is analytic in \( \overline\C\setminus\T \). In fact, it is an outer function in both \( \D \) and \( \overline\C\setminus\overline \D \), its values inside and outside of the unit disk are related via the identity
\begin{align}
D^{-1}(\sigma;z) = \overline{D(\sigma;1/\bar z)}, \quad z\not\in \T\,.
\end{align}
It has a non-tangential limit on \( \T \) (taken within $\D$) that satisfies  $|D(\sigma,\xi)|^2=\upsilon(\xi)$ for almost every $\xi$ on $\T$.  

Verblunsky and Kolmogorov observed that \eqref{szego asymptotics} does not change if \( \sigma \) is of the more general form \( d\sigma = \upsilon dm + d\sigma^s \), where \( d\sigma^s \) is singular with respect to \( dm \). In this case, the Szeg\H{o} function of such measures is still defined by \eqref{szego function}. A further extension of \eqref{szego asymptotics} was obtained in \cite{CYLL98} (see also \cite{St00}), where \( \sigma \) was modified by an \( n \)-dependent reciprocal polynomial weight. Such weights arise naturally in the theory of multi-point Pad\'e approximants, while we need them as a technical tool in the proof of Theorem~\ref{thm:3}. Our applications, however, require a more general formulation which we discuss below.

Accordingly, let
 \( (\sigma_n,g_n,W_n) \)
be a sequence of triples consisting of a finite measure on \(\T\), a real-valued continuous function on \(\T\), and a monic polynomial \(W_n\) of degree $n$ whose zeros all lie in \(\D\).
 For each \( n \), define the  inner product:
\[
\langle f,k\rangle_{\T,n} := \int f(\xi)\overline{k(\xi)} \, \frac{e^{g_n(\xi)}d\sigma_n(\xi)}{|W_n(\xi)|^2}.
\]
We are interested in the orthonormal polynomials \( \phi_n \), \( \deg\phi_n=n \), satisfying
\begin{equation}
\label{ortho-circle}
\begin{cases}
\langle \phi_n,\xi^k\rangle_{\T,n}=0, &  k\in\{0,1,\ldots,n-1\}, \smallskip \\
\langle \phi_n,\phi_n\rangle_{\T,n} =1,
\end{cases}
\end{equation}
and normalized to have a positive leading coefficient, i.e., \( \phi_n(z) = \alpha_n z^n + \ldots \), \( \alpha_n>0 \) (the special case \( g_n=0 \) and \( \sigma_n=\sigma \)  is exactly the setting considered in \cite{St00}).  If we denote the zeros of \( W_n \) by \( b_{n,i} \), \( i\in\{1,2,\ldots,n\} \), then
\[
W_n(z)=\prod_{i=1}^n (z-b_{n,i}) \qandq W_n^*(z)=\prod_{i=1}^n (1-\overline{b_{n,i}}z).
\]
Recall that all \(b_{n,i}\) belong to the unit disk. 
Write $d\sigma_n=\upsilon_ndm+d\sigma_n^s$, where the measures \( \sigma_n^s \) are singular to $m$. Assume that
\emph{
\begin{itemize}
\item[$(A_\T)$] there exists a finite measure $d\sigma=\upsilon dm+d\sigma^s$, where \( \sigma^s \) is singular to $m$, such that
\[
\limsup_{n\to\infty} \int fd\sigma_n \leq \int fd\sigma;
\]
for every nonnegative continuous function $f$ on $\T$;
\item[$(B_\T)$] the functions \( \log \upsilon_n \) and \(\log  \upsilon \) belong to \( L^1(\T)\) and  \( \|\log \upsilon_n- \log \upsilon\|_{L^1(\T)}\to 0 \) as \( n\to\infty \); \smallskip
\item[$(C_\T)$] the real-valued functions \( g_n \) belong to \( \mathcal E \), a fixed compact subset of \( C(\T) \), the space of continuous functions on the unit circle; \smallskip
\item[$(D_\T)$] the zeros of $W_n$ satisfy \( \sum_{i=1}^n (1-|b_{n,i}|)\to \infty \) as \( n\to\infty \).
\end{itemize}
}
It is known, see \cite[Section~II.2]{Ga}, that condition $(D_\T)$ is equivalent to
\begin{equation}
\label{Blaschke}
\frac{W_n(z)}{W_n^*(z)} = o(1) \qasq n\to\infty
\end{equation}
locally uniformly in \( \D \). 

\begin{theorem}
\label{thm:1}
Let \( \phi_n \) be the orthonormal polynomials defined by \eqref{ortho-circle}, where the triples \( (\sigma_n,g_n,W_n) \) satisfy conditions \( (A_\T) 
\)--\((D_\T) \). Then,
\begin{equation}
\label{A1-1}
 \frac{\phi_n^*(z)}{W_n^*(z)}D_n(z)  = 1 + o_\mathcal{E}(1) \qandq \frac{\phi_n(z)}{\phi_n^*(z)} = o_\mathcal{E}(1) \qasq n\to\infty
 \end{equation}
locally uniformly in \( \D \), where \( D_n(z) := D(e^{g_n}\sigma_n;z) \). In particular,
\begin{equation}
\label{A1-2}
\alpha_n D_n(0) = 1 + o_\mathcal{E}(1) \qasq n\to\infty.
\end{equation}
\end{theorem}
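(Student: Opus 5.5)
The plan is to use the standard extremal characterization of $\phi_n^*$ and compare it with a Szeg\H{o}-type upper bound, in the spirit of the Kolmogorov--Verblunsky argument adapted to reciprocal polynomial weights (as in \cite{CYLL98,St00}). Set $d\tau_n := e^{g_n}|W_n|^{-2}d\sigma_n$. The value $1/\alpha_n^2$ is the minimum of $\int|q|^2d\tau_n$ over monic polynomials $q$ of degree $n$; equivalently, $\phi_n^*$ solves the problem of minimizing $\int|p|^2d\tau_n$ over polynomials $p$ of degree at most $n$ with $p(0)=1$ (scaled so that $\phi_n^*(0)=\alpha_n$). For $z\in\D$ we also have the Christoffel-type formula $\sum_{k\le n}|\phi_{k}^{(n)}(z)|^2$. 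The key observation is that on $\T$ one has $|W_n|=|W_n^*|$. Hence $|p/W_n^*|^2 e^{g_n}\upsilon_n$ is the relevant density, and $p/W_n^*$ is analytic in $\D$. The natural comparison function is therefore $F_n := W_n^*/(W_n^*(0)D_n)$ divided appropriately, and the target is $\phi_n^* D_n/W_n^* \approx 1$.

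\textbf{Step 1 (upper bound).} I would show $\limsup \alpha_n^{-2}|D_n(0)|^{-2}\le 1$. To do this, take the test function $p = W_n^*\cdot \pi_{n}$, where $\pi_n$ approximates $1/D_n$ in the sense that the rational function $W_n^*/D_n$ is approximated by polynomials of degree $n$ with constant term fixed. Concretely, I would approximate $1/D(e^{g}\sigma)$ by a polynomial $Q$ of fixed degree $m$ with $\int|Q|^2 e^{g}d\sigma \le 1/|D(0)|^{2}\cdot... +\epsilon$. This is possible by the Szeg\H{o} theorem for the limit measure, using compactness of $\mathcal E$ and passing to subsequences with $g_n\to g$ uniformly. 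Then I take $p=W_n^*Q$ minus a correction of degree $>n$. Since $\deg(W_n^*Q)=n+m$, I would instead use $p=$ the projection of $W_n^*Q/W_n^*$... The simpler route is to use the Blaschke structure: the space of $p/W_n^*$ with $\deg p\le n$ equals the span of the Malmquist--Takenaka rational functions, which contains $1/W_n^*$ times polynomials. Under $(D_\T)$ the finite Blaschke products $B_n=W_n/W_n^*\to0$ locally uniformly, so these spaces exhaust $H^2$ in the limit. Thus any fixed polynomial $Q$ can be approximated in $L^2(\T)$ by elements $p/W_n^*$ with $p(0)/W_n^*(0)=Q(0)$, with error tending to $0$ (the projection error of $Q$ onto $B_nH^2$ tends to $0$ because $B_n\to0$ weakly). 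Combining this with $(A_\T)$ for the upper semicontinuity of $\int|Q|^2e^{g_n}d\sigma_n$ gives the upper bound. I treat the singular part exactly as Verblunsky did, since $(A_\T)$ controls $\limsup$ of the full measure.

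\textbf{Step 2 (lower bound and convergence).} For $f_n:=\phi_n^*D_n/W_n^*\in H^2$, the function $|f_n|^2$ on $\T$ is at most the density of $|\phi_n^*|^2d\tau_n$ restricted to the absolutely continuous part. Hence $\|f_n\|_{H^2}^2\le 1$, while $f_n(0)=\alpha_nD_n(0)/W_n^*(0)\cdot W_n^*(0)$ (with $W_n^*(0)=1$). The lower bound $\alpha_n|D_n(0)|\ge1$ comes from the subharmonicity (Jensen) argument: $\log|f_n(0)|\le\int\log|f_n|dm$. Conversely, Step 1 gives $|f_n(0)|\le 1+o(1)$, and indeed the identity $\alpha_n^{-2}=\min$ yields $\|f_n\|_2\le1$ with $f_n(0)\ge1-o(1)$... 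Combining the two, $\|f_n-1\|_{H^2}^2=\|f_n\|^2-2\re f_n(0)+1\to0$, which gives the first relation in \eqref{A1-1} locally uniformly, and also \eqref{A1-2}. Here $(B_\T)$ together with compactness in $(C_\T)$ ensures that $D_n(0)$ and $D_n$ converge locally uniformly along subsequences, which is what makes the estimates uniform; this uniformity is what the notation $o_{\mathcal E}$ records.

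\textbf{Step 3 (the ratio $\phi_n/\phi_n^*$).} On $\T$ we have $|\phi_n|=|\phi_n^*|$. Therefore $\phi_nD_n/W_n^*$ has $H^2$ norm at most $1$, and it is orthogonal to the constant part in the appropriate sense. Writing $\phi_n D_n/W_n^* = B_n\cdot(\text{something bounded in } H^2)$ and using $B_n\to0$ locally uniformly gives $\phi_n/\phi_n^*=o(1)$. I expect the main obstacle to be Step 1, namely approximating $1/D$ within the $n$-dependent rational spaces uniformly in $\mathcal E$ while controlling the singular part and the interplay between the varying $\upsilon_n$ and $D_n$. I would handle this by applying $(B_\T)$ through the $L^1$ convergence of $\log\upsilon_n$, which gives $D_n\to D$ locally uniformly in $\D$.
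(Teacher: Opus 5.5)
\textbf{The gap is in Step~3.} The factorization $\phi_nD_n/W_n^*=B_n h_n$, with $B_n=W_n/W_n^*$ and $h_n$ bounded in $H^2(\D)$, is false. Here $h_n=\phi_nD_n/W_n$ has poles at the zeros of $W_n$, because $\phi_n$ does not vanish there. For example, with $W_n(z)=z^n$ it would force $\phi_n=\alpha_nz^n$.

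What is true is only the boundary identity $\phi_nD_n/W_n^*=B_n\,\overline{f_n}\,U_n$ on $\T$, where $U_n=D_n/\overline{D_n}$ is unimodular. So after using $f_n\to1$ in $L^2(\T)$ you still have to prove $\int B_nU_n(1-z\overline\xi)^{-1}dm(\xi)\to0$. Weak convergence $B_n\to0$, tested against a merely bounded $n$-dependent factor, proves nothing: replace $U_n$ by $\overline{B_n}$ and the integral equals $1$. One needs precompactness of $\{(1-\overline z\xi)^{-1}\overline{U_n(\xi)}\}$ in $L^2(\T)$, and this is the real work of Lemma~\ref{lem:pw7}:
\begin{itemize}
\item $\overline{U_n}$ is an exponential of $\ic$ times the conjugate function of $g_n+\log\upsilon_n$;
\item the conjugates of $\mathcal E$ form a compact set in $L^2$;
\item Kolmogorov's theorem turns $(B_\T)$ into convergence of the conjugates of $\log\upsilon_n$ in $L^p$, $p<1$;
\item this suffices because $|e^{\ic a}-e^{\ic b}|^2\le4|a-b|^p$.
\end{itemize}
The interior convergence $D_n\to D$ in $\D$ that you invoke says nothing about these boundary phases.

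If you prefer to stay in your variational framework, a possible repair is Christoffel--Darboux for $\tau_n$. Writing $\phi_k$ for the orthonormal polynomials of $\tau_n$, one has $|\phi_n^*(z)|^2-|\phi_n(z)|^2=(1-|z|^2)\sum_{k<n}|\phi_k(z)|^2$. The right-hand side can be bounded below by a Step~1-type test function in $H^2\ominus B_nH^2=\{p/W_n^*:\deg p\le n-1\}$ that approximates the Szeg\H{o} extremal problem at the point $z$ for $e^{g}\sigma$. This should give $\phi_n(z)D_n(z)/W_n^*(z)\to0$ pointwise, and local uniformity then follows because $|\phi_n/\phi_n^*|\le1$ in $\D$.

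\textbf{Steps~1--2 are a sound and more elementary alternative} to Lemmas~\ref{lem:pw2}--\ref{lem:pw4}. Instead of Stahl's companion polynomials, Carath\'eodory functions and the entropy bound, you get $\liminf\alpha_nD_n(0)\ge1$ from the extremal property of $\phi_n^*/\alpha_n$, using test functions from $H^2\ominus zB_nH^2=\{p/W_n^*:\deg p\le n\}$. Two corrections are needed.
\begin{itemize}
\item The approximation of the fixed polynomial $Q$ must be uniform on $\T$, not merely in $L^2(m)$. When $\sigma_n$ has singular parts or unbounded densities, $(A_\T)$ controls $\int|r|^2e^{g_n}d\sigma_n$ only through sup-norms. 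Uniform approximation does hold: the projection error is $zB_n$ times a polynomial of degree $<\deg Q$ whose coefficients come from the first $\deg Q$ Taylor coefficients of $B_n$, and these tend to $0$ by \eqref{Blaschke}.
\item In Step~2 the directions are swapped. $\|f_n\|_{H^2}\le1$ gives $\alpha_nD_n(0)\le1$, while Step~1 gives the reverse inequality. With that, $\|f_n-1\|_2^2\le2-2\alpha_nD_n(0)\to0$, exactly as in Lemma~\ref{lem:pw6}.
\end{itemize}
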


It is well-known, see \cite[Theorem~11.5]{Szego}, that orthogonal polynomials on \( [-1,1] \) can be expressed through associated orthogonal polynomials on the unit circle. In particular, one can carry over the Szeg\H{o} asymptotics obtained for polynomials orthogonal on the unit circle to polynomials orthogonal on the interval, see \cite[Theorem~12.1.2]{Szego} and \cite{St00}.
This is exactly what we do next. That is, we translate Theorem~\ref{thm:1} into an analogous theorem about polynomials orthogonal on \( [-1,1] \). To this end,   we look at triples \( (\tilde\mu_n,h_n,\tau_n) \), where \( \tilde\mu_n \) is a finite  measure, \( h_n \) is a continuous function, and \( \tau_n \) is a polynomial of degree at most $2n$ with real coefficients that does not vanish on \( [-1,1] \) and is normalized by $\tau_n(0)=1$.  For each \( n \) we define an inner product 
\[
\langle f,g\rangle_{[-1,1],n} := \int f(x)\overline{g(x)} \, \frac{e^{h_n(x)}d\tilde\mu_n(x)}{\tau_n(x)}.
\]
We consider the corresponding orthonormal polynomials $p_n$ characterized by  \( \deg p_n=n \) and
\begin{equation}
\label{ortho-interval}
\begin{cases}
\langle p_n,x^k\rangle_{[-1,1],n}=0, &  k\in\{0,1,\ldots,n-1\}, \smallskip \\
\langle p_n,p_n\rangle_{[-1,1],n} =1,
\end{cases}
\end{equation}
with the normalization that the leading coefficient  \( \gamma_n \) is positive, i.e., \(\gamma_n>0\) where \( p_n(z) = \gamma_n z^n +\ldots \). The assumptions we made about the polynomials \( \tau_n \) can be equivalently stated in the following way. Let \( \{a_{n,1},a_{n,2},\ldots,a_{n,2n} \} \) be a conjugate-symmetric multi-set (points \( a_{n,i} \) can coincide and be either real or come in complex-conjugate pairs) such that \( a_{n,i}\in \overline{\C}\backslash [-1,1] \)). Then,
\[
\tau_n(x) = \prod_{i=1}^{2n}\left(1-\frac{x}{a_{n,i}}\right),
\]
where we interpret \( x/a_{n,i} \) as  \( 0 \) when \( a_{n,i} = \infty \). Below, we assume the following hypothesis:
\emph{
\begin{itemize}
\item[$(A_{[-1,1]})$] there exists a finite measure $\mu$ on $[-1,1]$ such that 
\[
\limsup_{n\to\infty} \int fd\tilde\mu_n \leq \int fd\mu
\]
for every nonnegative continuous function $f$ on $[-1,1]$;
\item[$(B_{[-1,1]})$] it holds that \( \|\log \tilde v_n- \log v\|_{L^1(\omega_{[-1,1]})}\to 0 \) as \( n\to\infty \), where \( \tilde v_n \) and \( v \) denote the Radon-Nikodym derivatives of \( \tilde\mu_n \) and \( \mu \) with respect to \( \omega_{[-1,1]} \), see \eqref{arcsine} and \eqref{szego}; \smallskip
\item[$(C_{[-1,1]})$] the functions \( h_n \) belong to \( \mathcal K \), a fixed compact subset of \( C[-1,1] \), the space of continuous functions on \( [-1,1] \); \smallskip
\item[$(D_{[-1,1]})$] the zeros \( \{a_{n,i}\} \) of the polynomials \( \tau_n \) satisfy \( \sum_{i=1}^{2n} (1-|\phi(a_{n,i})|)\to \infty \) as \( n\to\infty \), where \( \phi = \phi_{[-1,1]} \) is the conformal map defined in \eqref{phi-w}.
\end{itemize}
}

\begin{theorem} 
\label{thm:2} 
Let \( p_n \) be the orthonormal polynomials defined  in \eqref{ortho-interval}, where the triples $(\tilde\mu_n,h_n,\tau_n)$ satisfy conditions \( (A_{[-1,1]}) \)--\( (D_{[-1,1]}) \). Then,
\begin{equation}
\label{A8-1}
2\widetilde G_n^2(z) \, \frac{p_n^2(z)}{\tau_n(z)} \, \prod_{j=1}^{2n} \frac{\phi(z)-\phi(a_{n,j})}{1-\overline{\phi(a_{n,j})}\phi(z)} = 1 + o_{\mathcal K}(1) \qasq n\to\infty
\end{equation}
locally uniformly in $D_{[-1,1]}$, where \( \widetilde G_n(z)=G(e^{h_n}\tilde\mu_n;z) \). In particular,
\begin{equation}
\label{A8-2}
\gamma^2_n\widetilde G_n^2(\infty)2^{1-2n} \, \prod_{i: a_{n,i}\neq \infty} \big(2a_{n,i}\phi(a_{n,i})\big) = 1 + o_{\mathcal K}(1) \qasq n\to\infty.
\end{equation}
\end{theorem}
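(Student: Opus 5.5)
We will deduce Theorem~\ref{thm:2} from Theorem~\ref{thm:1} through the classical Szeg\H{o} correspondence $x=\frac12(\xi+\xi^{-1})$ between $[-1,1]$ and $\T$ (cf.\ \cite[Theorem~11.5]{Szego}).

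\textbf{Step 1: transfer of the data to the circle.} Given the triple $(\tilde\mu_n,h_n,\tau_n)$, define the symmetric measure $\sigma_n$ on $\T$ as the pull-back of $\tilde\mu_n$ under $\xi\mapsto x=\frac12(\xi+\bar\xi)$, split evenly between $\xi$ and $\bar\xi$. Set $g_n(\xi):=h_n(\frac12(\xi+\bar\xi))$. For $W_n$, note that with $z=\phi^{-1}(u)$ we have $1-x/a = c_a\,(1-\bar b u^{-1})(1-\bar b u)$ up to a constant, where $b=\phi(a)\in\D$. Here we use $\phi(z)\approx 1/(2z)$ near $\infty$, so $\phi^{-1}(u)=\frac12(u+u^{-1})$. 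Hence on $\T$,
\[
\tau_n(x)=C_n|W_n(\xi)|^2,\qquad W_n(\xi)=\prod_{j=1}^{2n}(\xi-\phi(a_{n,j})),
\]
with an explicit positive constant $C_n=\prod_j(2a_{n,j}\phi(a_{n,j}))^{-1}$. Points $a_{n,j}=\infty$ give $b=0$ and contribute trivially. Thus $W_n$ has degree $2n$, and its zeros are conjugate-symmetric since the multiset $\{a_{n,j}\}$ is.

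We then check the hypotheses. $(A_\T)$ follows from $(A_{[-1,1]})$ by pulling back test functions: for continuous $f\ge0$ on $\T$, use its symmetrization $\frac12(f(\xi)+f(\bar\xi))$, which is a continuous function of $x$. $(B_\T)$ holds because $\upsilon_n(\xi)=\tilde v_n(x)$ and $dm$ pulls back to $\omega_{[-1,1]}$. $(C_\T)$ holds because composition with the fixed map $\xi\mapsto\re\xi$ is continuous, so $\mathcal E$ is compact. $(D_\T)$ is exactly $(D_{[-1,1]})$.

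\textbf{Step 2: apply Theorem~\ref{thm:1} with index $2n$.} Let $\phi_{2n}$ be the orthonormal polynomials for $e^{g_n}\sigma_n/|W_n|^2$. Since all data are symmetric, $\phi_{2n}$ has real coefficients. Szeg\H{o}'s formula then expresses $p_n(x)$ via
\[
\xi^{-n}\phi_{2n}(\xi)+\xi^{n}\phi_{2n}(\xi^{-1})=\xi^{-n}\big(\phi_{2n}(\xi)+\phi^*_{2n}(\xi)\big),
\]
normalized by the factor $\big(2(1+\phi_{2n}(0)/\alpha_{2n})\big)^{-1/2}$ and by $C_n^{1/2}$. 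We should verify that the reciprocal weight does not break this: orthogonality of $p_n$ against $x^k$, $k<n$, corresponds to orthogonality of the symmetric Laurent combination against $\xi^{j}$, $|j|<n$. This holds because $\phi_{2n}\perp\xi^0,\dots,\xi^{2n-1}$, and $\phi_{2n}^*\perp\xi^1,\dots,\xi^{2n}$ for the same weight. Moreover $\phi_{2n}(0)/\alpha_{2n}$ is minus the conjugate Verblunsky coefficient, and it tends to $0$ by the second relation in \eqref{A1-1} evaluated at $z=0$.

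\textbf{Step 3: asymptotics.} Now substitute $u=\phi(z)\in\D$ and take the $u$-branch inside the disk. By \eqref{A1-1},
\[
\phi_{2n}(u)+\phi_{2n}^*(u)=\phi_{2n}^*(u)(1+o(1))=\frac{W_n^*(u)}{D_n(u)}(1+o(1)).
\]
Squaring, dividing by $\tau_n(z)=C_nu^{-2n}W_n(u)W_n^*(u)$ up to the normalizations, and using $D_n(\phi(z))^{-2}$ equal to a constant multiple of $\widetilde G_n^2(z)$ (the standard relation $G(\mu;z)=D(\sigma;\phi(z))^{-1}$ up to the factor fixing $2$), we get the product $W_n(u)/W_n^*(u)=\prod_j\frac{u-\phi(a_{n,j})}{1-\overline{\phi(a_{n,j})}u}$ as in \eqref{A8-1}. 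Comparing leading coefficients at $z=\infty$ ($u=0$), or using \eqref{A1-2}, gives \eqref{A8-2}. Uniformity in $\mathcal K$ is inherited from $o_{\mathcal E}(1)$.

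\textbf{Main obstacle.} The main difficulty is the bookkeeping of constants, namely the factor $2$, $C_n$, the powers of $2$, and the branch of $u$. The cleanest way to check them is to compute everything at $z=\infty$ and confirm \eqref{A8-2} directly from \eqref{A1-2}.
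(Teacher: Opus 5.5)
Your route is the paper's: transfer $(\tilde\mu_n,h_n,\tau_n)$ to $\T$ via the Joukowski map with $b_{2n,j}=\phi(a_{n,j})$, apply Theorem~\ref{thm:1} at index $2n$, and express $p_n$ through $\xi^{-n}(\phi_{2n}+\phi_{2n}^*)$ using \cite[Theorem~11.5]{Szego}. Your verification of $(A_\T)$--$(D_\T)$, of the orthogonality of that Laurent combination, and of the normalization $\big(2c_n(1+\phi_{2n}(0)/\alpha_{2n})\big)^{-1/2}$ is fine. Your $C_n$ is the paper's $c_n^{-1}$, and your $W_n$, $D_n$ are the paper's $W_{2n}$, $D_{2n}$.

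There is one wrong step in Step~3, and as written it breaks the conclusion. The relation between the Szeg\H{o} functions is not $G(\mu;z)=D(\sigma;\phi(z))^{-1}$ up to a constant. It is the exact identity $\widetilde G_n(z)=D_{2n}(\phi(z))$. The reason is that $\widetilde G_n\circ J$ is outer in $\D$, positive at $0$, and satisfies $|\widetilde G_n(J(\xi))|^2=e^{h_n(J(\xi))}\tilde v_n(J(\xi))=e^{g_{2n}(\xi)}\upsilon_{2n}(\xi)$ on $\T$, so it coincides with $D_{2n}$. Your own Step~3 gives $p_n^2\approx u^{-2n}(W_{2n}^*)^2/(2c_nD_{2n}^2)$, hence
\[
2\widetilde G_n^2(z)\,\frac{p_n^2(z)}{\tau_n(z)}\,\frac{W_{2n}(u)}{W_{2n}^*(u)}=\big(1+o_{\mathcal K}(1)\big)\frac{\widetilde G_n^2(z)}{D_{2n}^2(u)},\qquad u=\phi(z).
\]
The right-hand side is $1+o(1)$ precisely because $\widetilde G_n=D_{2n}\circ\phi$. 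Under your inverse relation it would behave like a constant times $D_{2n}^{-4}(u)$, and \eqref{A8-1} would not follow.

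There is also no factor $2$ hidden in the Szeg\H{o} function. The $2$ in \eqref{A8-1} comes from $\|\xi^{-n}(\phi_{2n}+\phi_{2n}^*)\|^2=2(1+\phi_{2n}(0)/\alpha_{2n})$. With the correct identity, \eqref{A8-2} follows from $\gamma_n^2=2^{2n-1}\alpha_{2n}^2c_n^{-1}(1+\phi_{2n}(0)/\alpha_{2n})$, \eqref{A1-2}, and $D_{2n}(0)=\widetilde G_n(\infty)$.

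A minor slip: the single factor $(1-\bar b u^{-1})(1-\bar b u)$ vanishes at $u=\bar b$, so it corresponds to $1-x/\bar a$, not $1-x/a$. This is harmless only after taking the product over the conjugate-symmetric multiset. That is how the paper writes $\tau_n(J(\zeta))=W_{2n}(\zeta)W_{2n}^*(\zeta)/(c_n\zeta^{2n})$.
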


\subsection{Proof of Theorem~\ref{thm:1}}

The proof closely follows the argument in \cite{St00}, although several modifications are required to accommodate the greater generality of our assumptions. For clarity, we divide the proof into a sequence of lemmas.\smallskip

 We define the Caratheodory function of a measure \( \sigma \) by
\[
F(\sigma;z):=\int \frac{\xi+z}{\xi-z}d\sigma(\xi).
\]
Clearly, \( \re(F(\sigma;\cdot)) \) is the Poisson integral of \( \sigma \) and therefore is a positive harmonic function in \( \D \). From \cite[Theorem~1.2.4]{Ransford} we know that every function \( \lambda \), which is harmonic  in \( \D \) and extends continuously to \( \overline\D \), admits repesentation \( \lambda = \re(F(\lambda m;\cdot)) \) and \( \lambda(0)=|\lambda m|\). For each orthonormal polynomial \( \phi_n \), we further introduce  its companion polynomial \( \psi_n \). This is the polynomial of degree at most \( n \) such that \( \psi_n^* \) interpolates  $\phi_n^*(z) F(e^{g_n}\sigma_n;z)$ at the zeros of $zW_n(z)$. This polynomial has an explicit integral representation, see \cite[Equation~(3.6)]{St00}, which fixes its normalization.

\begin{lemma}
\label{lem:pw2}
Under the conditions of Theorem~\ref{thm:1}, we have
\[
F(e^{g_n}\sigma_n;z)- \frac{\psi^*_n(z)}{\phi^*_n(z)} = o_\mathcal{E}(1) \qasq n\to\infty
\]
locally uniformly in \( \D \). Moreover, 
\begin{equation}\label{zar1}
F(e^{g_n}\sigma_n;0)=\frac{\psi^*_n(0)}{\phi^*_n(0)}\,.
\end{equation}
\end{lemma}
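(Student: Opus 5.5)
We follow Stahl's argument from \cite{St00}, adapting it to the varying data $(\sigma_n,g_n,W_n)$. Write $\mu_n:=e^{g_n}\sigma_n/|W_n|^2$ for the orthogonality measure. The companion polynomial has the integral representation
\[
\psi_n^*(z)=\int \frac{\xi+z}{\xi-z}\Big(\phi_n^*(\xi)\tfrac{W_n^*(z)}{W_n^*(\xi)}\cdots\Big)\,\ldots
\]
which we will not need in explicit form. The only properties we use are that $\psi_n^*$ interpolates $\phi_n^*F(e^{g_n}\sigma_n;\cdot)$ at the zeros of $zW_n$ and has degree at most $n$.

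\textbf{Step 1: the identity \eqref{zar1}.} It follows at once from interpolation at $z=0$, because $0$ is a zero of $zW_n$.

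\textbf{Step 2: reduce the difference to a Cauchy-type integral.} Set
\[
R_n(z):=\phi_n^*(z)F(e^{g_n}\sigma_n;z)-\psi_n^*(z).
\]
This function is holomorphic in $\D$ and vanishes at the zeros of $zW_n$. Hence $R_n/(zW_n)$ is holomorphic in $\D$. Expanding the Cauchy kernel and using the orthogonality of $\phi_n$ against $\xi^k$, $k<n$, with respect to $\mu_n$ (equivalently, orthogonality of $\phi_n^*$ against $\xi^k$, $1\le k\le n$), we get the standard representation (cf.\ \cite[(3.7)]{St00})
\[
\frac{R_n(z)}{\phi_n^*(z)}=\frac{zW_n(z)}{\phi_n^*(z)}\int\frac{2\,\overline{\phi_n^*(\xi)}^{-1}\ldots}{\xi-z}\ldots
\]
We will use it in the equivalent form
\[
F-\frac{\psi_n^*}{\phi_n^*}=\frac{z\,W_n(z)}{\phi_n^*(z)}\int\frac{2\xi\,\phi_n^*(\xi)\,e^{g_n}d\sigma_n(\xi)}{(\xi-z)\,W_n(\xi)\,\ldots}.
\]
By the Cauchy--Schwarz inequality, this is bounded on compacts $K\subset\D$ by
\[
C_K\,\Big|\frac{W_n(z)}{W_n^*(z)}\Big|\cdot\Big|\frac{W_n^*(z)}{\phi_n^*(z)}\Big|\cdot\Big(\int|\phi_n|^2\,d\mu_n\Big)^{1/2}\Big(\int e^{g_n}d\sigma_n\Big)^{1/2}.
\]
The first integral equals $1$. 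The second is uniformly bounded by $(A_\T)$ and $(C_\T)$, with $\sup_{\mathcal E}\|g\|_\infty<\infty$.

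\textbf{Step 3: control of the factors.} By \eqref{Blaschke}, $(D_\T)$ gives $W_n/W_n^*=o(1)$ locally uniformly in $\D$. It remains to bound $W_n^*/\phi_n^*$ uniformly on compacts. The function $\phi_n^*/W_n^*$ is zero-free and holomorphic in $\D$, and $|\phi_n^*/W_n^*|=|\phi_n/W_n|$ on $\T$. Hence $f_n:=W_n^*/(\phi_n^*D_n^{-1})$ is a ratio of outer-type functions, and $\log|W_n^*/(\phi_n^* D_n)|$ is harmonic with boundary values $-\tfrac12\log\big(|\phi_n|^2\upsilon_ne^{g_n}/|W_n|^2\big)$. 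By Jensen's inequality, $\int\log(|\phi_n|^2e^{g_n}\upsilon_n/|W_n|^2)\,dm\le\log\int|\phi_n|^2d\mu_n=0$. Combined with a lower bound coming from the extremal property $\alpha_n^{-2}=\min\|\cdot\|^2$, this yields the needed uniform bound on compacts via the Poisson representation, with the constant governed by $\|\log\upsilon_n\|_{L^1}$ (uniform by $(B_\T)$) and the compact family $\mathcal E$. Uniformity over $\mathcal E$ follows because every constant depends only on $\sup_{\mathcal E}\|g\|_\infty$ and on the modulus of continuity of $\mathcal E$.

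\textbf{Main obstacle.} We expect the hardest point to be Step 3: showing that $W_n^*/\phi_n^*$ stays locally bounded without knowing a priori that $\phi_n^*D_n/W_n^*\to1$. We would first try to get a lower bound for $\alpha_nD_n(0)$ by testing the extremal problem against the polynomial $W_n^*$-projection of $D_n^{-1}$, which uses $(B_\T)$ and $(A_\T)$. A Harnack/Poisson estimate then transfers this from $0$ to compacts. If it fails, we would instead use a normal-family argument on $\phi_n^*/W_n^*$ together with the mean-value bound coming from $\int|\phi_n^*/W_n^*|^2e^{g_n}d\sigma_n=1$.
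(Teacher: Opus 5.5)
\textbf{There is a genuine gap in Step~3.} Your Steps~1 and~2 are sound and match what the paper takes from \cite{St00}. The identity \eqref{zar1} is the interpolation condition at $z=0$. By uniqueness of the interpolation one has $F(e^{g_n}\sigma_n;z)-\psi_n^*(z)/\phi_n^*(z)=\frac{2zW_n(z)}{\phi_n^*(z)}\int\frac{\phi_n^*(\xi)e^{g_n(\xi)}d\sigma_n(\xi)}{(\xi-z)W_n(\xi)}$, and Cauchy--Schwarz bounds this as you write.

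Step~3 is the only nontrivial point, and you never show that $W_n^*/\phi_n^*$ is bounded on compacts of $\D$ uniformly in $n$.
\begin{itemize}
\item The Jensen inequality you quote gives $\alpha_nD_n(0)\le 1$, which is the wrong direction. To bound the harmonic function $\log|W_n^*/(\phi_n^*D_n)|$ from above via Poisson, you need $\int\log^-\big(|\phi_n|^2e^{g_n}\upsilon_n/|W_n|^2\big)\,dm$ bounded, i.e.\ a lower bound on $\alpha_nD_n(0)$. That is exactly what you leave open.
\item You cannot borrow this lower bound from the paper. The limit $\alpha_nD_n(0)\to1$ (Lemma~\ref{lem:pw5}) is proved through Lemmas~\ref{lem:pw3}--\ref{lem:pw4}, which rest on the present lemma.
\item Testing the extremal problem against a ``projection of $D_n^{-1}$'' would need a Szeg\H{o}--Kolmogorov--Krein type approximation in the rational spaces $\{Q/W_n^*\}$ with varying singular parts. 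That is essentially the hard half of Theorem~\ref{thm:1}.
\item Your normal-family fallback, as stated, only bounds $\phi_n^*/W_n^*$ from above, not $W_n^*/\phi_n^*$. It would also need Hurwitz's theorem and a lower bound on $\alpha_n$.
\end{itemize}

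\textbf{The paper's fix.} It uses the idea from \cite{St00}. By \eqref{lam2}, which is where orthogonality enters, $\lambda_n=\re(\psi_n^*/\phi_n^*)$ is the Poisson integral of $|W_n/\phi_n|^2dm$. The subharmonic function $|W_n^*/\phi_n^*|^2$ is continuous on $\overline\D$ with the same boundary values, so $|W_n^*/\phi_n^*|^2\le\lambda_n$ in $\D$. Harnack's inequality and \eqref{zar1} then give $\lambda_n(z)\le\frac{1+|z|}{1-|z|}\lambda_n(0)=\frac{1+|z|}{1-|z|}|e^{g_n}\sigma_n|$. Inserting this into your Step~2 yields exactly the paper's estimate $2\sqrt2\,|zW_n(z)/W_n^*(z)|\,|e^{g_n}\sigma_n|(1-|z|)^{-3/2}$. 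This needs only the mass bound from $(A_\T)$ and $(C_\T)$ together with \eqref{Blaschke}, not $(B_\T)$.

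\textbf{A repair within your own route.} A crude test suffices. The monic polynomial $W_n$ is admissible in the extremal problem, so $\alpha_n^{-2}\le\int|W_n|^2e^{g_n}d\sigma_n/|W_n|^2=|e^{g_n}\sigma_n|$. Hence $\alpha_nD_n(0)\ge c>0$ by $(A_\T)$--$(C_\T)$. Put $u_n=|\phi_n|^2e^{g_n}\upsilon_n/|W_n|^2$, so that $\int u_n\,dm\le1$. Then $\int\log^-u_n\,dm\le\int u_n\,dm-2\log(\alpha_nD_n(0))\le1-2\log c$. The Poisson bounds for $\log|W_n^*/(\phi_n^*D_n)|$ and $\log|D_n|$ then give local boundedness. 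The cost is using $(B_\T)$ and getting a non-explicit constant.
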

\begin{proof}
Combining formulas (3.23), (3.24), and (3.26) from \cite{St00}, we get
\[
\left|F(e^{g_n}\sigma_n;z)- \frac{\psi^*_n(z)}{\phi^*_n(z)} \right| \leq 2\sqrt 2\left|z\frac{W_n(z)}{W_n^*(z)}\right| \frac{|e^{g_n}\sigma_n|}{(1-|z|)^{3/2}}, \quad z\in \D,
\]
where \( |\mu| \) denotes the total mass of the measure \( \mu \). Taking $f=1$ in $(A_\T)$, we get
\begin{equation}
\label{zar2}
\limsup_{n\to \infty}|\sigma_n|\le|\sigma| \qandq \limsup_{n\to \infty}|e^{g_n}\sigma_n|\lesssim_{\mathcal E} |\sigma|,
\end{equation}
where the last bound follows from $(C_\T)$. Applying \eqref{Blaschke} finishes the proof of the lemma.
\end{proof}

Notice that Jensen's inequality and $(B_\T)$ yield that
\begin{equation}
\label{zar3}
\liminf_{n\to\infty} |\sigma_n|\ge \liminf_{n\to\infty} \exp\left(\int \log\upsilon_n dm\right)=\exp\left(\int \log\upsilon dm\right)>-\infty
\end{equation}
and $(C_\T$) gives a lower bound
\begin{equation}
\label{zar4}
\liminf_{n\to\infty} |e^{g_n}\sigma_n|\gtrsim_{\mathcal{E}}\exp\left(\int \log\upsilon dm\right).
\end{equation}

Let \( \lambda_n \) be a harmonic function in some neighborhood of \( \overline \D \) defined by
\begin{equation}
\label{lam1}
\lambda_n(z) := \re\left( \frac{\psi^*_n(z)}{\phi^*_n(z)} \right)
\end{equation}
(recall that \( \phi^*_n \) has no zeros in \( \overline \D\)). It has been shown in \cite[Equation~(3.9)]{St00} that
\begin{equation}
\label{lam2}
\frac{\psi^*_n(z)}{\phi^*_n(z)} = \int \frac{\xi+z}{\xi-z} \left| \frac{W_n(\xi)}{\phi_n(\xi)} \right|^2 dm(\xi).
\end{equation}
Hence, \( \lambda_n \) is a Poisson integral of an absolutely continuous measure with a strictly positive density and therefore is a strictly positive harmonic function on the closed unit disk.  It readily follows from the second claim of Lemma~\ref{lem:pw2} that
\begin{equation}
\label{ls-masses}
|\lambda_nm| = \lambda_n(0) = F(e^{g_n}\sigma_n;0) = |e^{g_n}\sigma_n|.
\end{equation}

\begin{lemma}
\label{lem:pw3} 
Let $\mathcal Z$ be a compact set in $C(\T)$. Under the conditions of Theorem~\ref{thm:1}, it holds that for any \( \epsilon>0 \) there exists \( N_{\mathcal E,\mathcal Z}(\epsilon) \) such that
\[
\left|\int he^{g_n}d\sigma_n-\int h\lambda_ndm\right| \leq \epsilon
\]
for all \( n \geq N_{\mathcal E,\mathcal Z}(\epsilon) \) and each \( h\in\mathcal Z \).
\end{lemma}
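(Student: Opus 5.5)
The plan is to compare the two measures $e^{g_n}\sigma_n$ and $\lambda_n m$ through their Carathéodory functions, using Lemma~\ref{lem:pw2}, and then pass from locally uniform closeness in $\D$ to closeness of integrals against continuous test functions on $\T$.

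\textbf{Step 1: both Carathéodory functions are close.} By \eqref{lam2}, $\lambda_n$ is the Poisson integral of its own boundary values. The representation $\lambda=\re(F(\lambda m;\cdot))$ then gives
\[
\frac{\psi_n^*(z)}{\phi_n^*(z)} = F(\lambda_n m;z)+\ic c_n .
\]
Evaluating at $z=0$ and using \eqref{zar1}, the constant vanishes: $c_n=0$. Hence Lemma~\ref{lem:pw2} yields
\[
F(e^{g_n}\sigma_n;z)-F(\lambda_nm;z)=o_{\mathcal E}(1)
\]
locally uniformly in $\D$. Expanding both functions in powers of $z$ and applying Cauchy estimates on a fixed circle $|z|=r<1$, all moments of the difference tend to zero:
\[
\int \bar\xi^{k}\, d\big(e^{g_n}\sigma_n-\lambda_n m\big)\to 0, \qquad k\geq 0 .
\]
Taking complex conjugates (both measures are positive) gives the same for $k<0$. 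So the difference tends to zero against every trigonometric polynomial, uniformly in the choice of $g_n\in\mathcal E$.

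\textbf{Step 2: a uniform mass bound.} By \eqref{ls-masses}, the two measures have equal total mass $|e^{g_n}\sigma_n|$. By \eqref{zar2}, this mass is bounded by a constant $M=M_{\mathcal E}$ for all large $n$.

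\textbf{Step 3: density and compactness.} Fix $\epsilon>0$. Since $\mathcal Z$ is compact in $C(\T)$, it is covered by finitely many balls of radius $\epsilon/(6M)$ centred at $h_1,\dots,h_K$. By Fejér's theorem, each $h_j$ can be replaced by a trigonometric polynomial $t_j$ with $\|h_j-t_j\|_\infty<\epsilon/(6M)$. For every $h\in\mathcal Z$, choose $j$ with $\|h-t_j\|_\infty<\epsilon/(3M)$. The error from replacing $h$ by $t_j$ is then at most $2M\cdot\epsilon/(3M)=2\epsilon/3$. By Step~1 applied to the finitely many $t_j$, there is $N_{\mathcal E,\mathcal Z}(\epsilon)$ beyond which
\[
\left|\int t_j\, d\big(e^{g_n}\sigma_n-\lambda_nm\big)\right|<\epsilon/3 \quad \text{for all } j .
\]
Combining the two estimates gives the bound $\epsilon$ uniformly over $h\in\mathcal Z$.

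\textbf{Main obstacle.} The main obstacle is making sure the $o_{\mathcal E}(1)$ in Lemma~\ref{lem:pw2} is genuinely uniform over $\mathcal E$. This follows from the explicit bound in that lemma's proof: it depends on $g_n$ only through $|e^{g_n}\sigma_n|\lesssim_{\mathcal E}|\sigma|$. Everything else is a routine weak-convergence argument.
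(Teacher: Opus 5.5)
Your proposal is correct and follows essentially the same route as the paper: Lemma~\ref{lem:pw2} together with \eqref{lam2} gives convergence of the Taylor coefficients of the Carath\'eodory functions, hence of all trigonometric moments. The equal and uniformly bounded masses from \eqref{ls-masses} and \eqref{zar2}, combined with compactness of $\mathcal Z$ and a finite approximation by trigonometric polynomials, then finish the argument. Your explicit identification $\psi_n^*/\phi_n^*=F(\lambda_n m;\cdot)$, with the imaginary constant removed via \eqref{zar1}, only spells out what the paper takes directly from \eqref{lam2}.
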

\begin{proof}
We use a standard approximation argument. Recall that
\[
F(\sigma;z) = |\sigma| + 2\sum_{i=1}^\infty z^i \int \xi^{-i} d\sigma(\xi) = |\sigma| + 2\sum_{i=1}^\infty z^i \overline{\int \xi^i d\sigma(\xi)}.
\]
Since locally uniform convergence of analytic functions implies convergence of their Taylor coefficients, we can use   Lemma~\ref{lem:pw2} and \eqref{lam2} to conclude that
\[
\int h \big(e^{g_n}d\sigma_n-\lambda_ndm\big)  = o_{\mathcal E,h}(1) \qasq n\to\infty
\]
for every monomial \( h(\xi) = \xi^i \), $i\in \Z$. By linearity, the same conclusion holds for every trigonometric polynomial $h$. Now, given $\epsilon>0$, we can use compactness and denseness of trigonometric polynomials in $C(\T)$ to find a finite collection of trigonometric polynomials \( \{h_1,h_2,\ldots,h_{K(\epsilon)}\}\) such that for each $h\in \mathcal{Z}$ there is $k\in\{1,2,\ldots,K(\epsilon)\}$ for which\
\[
\|h-h_k\|_\infty<\frac{\epsilon}{3M_{\mathcal E}},
\]
where \(M_{\mathcal E}\) is chosen so that \( |e^{g_n}\sigma_n| \leq M_{\mathcal E} \) for all \( n \). Then, 
\[
 \left|\int (h-h_k)e^{g_n}d\sigma_n\right| \leq \frac\epsilon3 \qandq \left|\int (h-h_k)\lambda_ndm\right|\leq \frac\epsilon3
\]
by \eqref{ls-masses}. As just observed, for each \( h_k \) one can find a natural number $N_{\mathcal E,h_k}(\epsilon)$ such that 
\[
\left| \int  h_k\big(e^{g_n}d\sigma_n-\lambda_ndm\big) \right|\le \frac\epsilon3
\]
for $n\ge N_{\mathcal E,h_k}(\epsilon)$. Taking $N_{\mathcal E,\mathcal Z}(\epsilon)=\max_{1\leq k\leq K(\epsilon)} N_{\mathcal E,h_k}(\epsilon)$ yields the desired claim.
\end{proof}

For the next step, we shall need the mutual entropy of two measures. Let $\mu$ and $\nu$ be two measures on $\T$ such that $\mu$ is absolutely continuous with respect to $\nu$. The entropy $S(\mu|\nu)$ is defined as 
\[
S(\mu|\nu)=-\int \log\left(\frac{d\mu}{d\nu}\right)d\mu.
\]
It is known that $S(\mu|\nu)\le \log |\nu|$  and, see \cite[Lemma~2.3.3]{Simon}, it holds that
\begin{equation}
\label{aper1}
S(\mu|\nu) = \inf_f\left(  \int fd\nu-\int (1+\log f)d\mu \right),
\end{equation}
where the infimum is taken over all positive continuous functions on \( \T \). Moreover, see \cite[Example~2.3.2]{Simon}, if \( \mu=m \) and \( d\nu= \nu^\prime dm + d\nu^s \), then
\begin{equation}
\label{aper2}
S(m|\nu) = - \int\log\left(\frac{dm}{d\nu}\right)dm = \int \log\nu^\prime dm.
\end{equation}

\begin{lemma}
\label{lem:pw4}
Under the assumptions of Theorem~\ref{thm:1},  for every \( \epsilon>0 \), there exists \( N_{\mathcal E}(\epsilon) \),
 depending only on the compact set \( \mathcal E \),  such that 
\[
\int \log\lambda_n dm \le \int \big( g_n+\log\upsilon_n\big) dm + \epsilon, \quad n\geq N_{\mathcal E}(\epsilon).
\]
\end{lemma}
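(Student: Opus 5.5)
We will use the variational formula \eqref{aper1}--\eqref{aper2} for the entropy $S(m|\cdot)$ together with the weak comparison provided by Lemma~\ref{lem:pw3}. By \eqref{aper2} applied to $\nu=\lambda_n m$ (absolutely continuous with continuous positive density) we have $\int\log\lambda_n dm = S(m|\lambda_n m)$. By \eqref{aper1}, for every positive continuous $f$,
\[
\int \log\lambda_n dm \le \int f\lambda_n dm - \int(1+\log f)dm .
\]
The idea is to choose $f$ so that the right-hand side is nearly $\int(g_n+\log\upsilon_n)dm=S(m|e^{g_n}\sigma_n)$, again by \eqref{aper2}. We then replace $\int f\lambda_n dm$ by $\int f e^{g_n}d\sigma_n$ using Lemma~\ref{lem:pw3}.

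The first step is a uniformity device. The infimum in \eqref{aper1} for $\nu=e^{g_n}\sigma_n$ must be approximately attained by an $f$ from a fixed compact family, so that Lemma~\ref{lem:pw3} applies uniformly in $n$. We will use $f=e^{-g_n}f_0$, where $f_0$ is a positive continuous function chosen once, depending only on $\sigma$ and $\epsilon$. The family $\mathcal Z=\{e^{-g}f_0: g\in\mathcal E\}$ is compact in $C(\T)$ by $(C_\T)$. For this $f$,
\[
\int f\lambda_n dm-\int(1+\log f)dm \le \int f_0\, d\sigma_n - \int(1+\log f_0)dm + \int g_n dm + \tfrac\epsilon3
\]
for $n\ge N_{\mathcal E,\mathcal Z}(\epsilon/3)$, by Lemma~\ref{lem:pw3}.

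The second step uses $(A_\T)$: $\limsup_n\int f_0 d\sigma_n\le\int f_0 d\sigma$, which gives the bound $\int f_0 d\sigma-\int(1+\log f_0)dm+\int g_ndm+2\epsilon/3$ for large $n$. This threshold depends only on $f_0$, hence only on $\sigma$ and $\epsilon$. Now choose $f_0$ via \eqref{aper1}--\eqref{aper2} for the fixed measure $\sigma$ so that
\[
\int f_0d\sigma-\int(1+\log f_0)dm\le \int\log\upsilon\, dm+\epsilon/6 .
\]
The third step uses $(B_\T)$: $\int\log\upsilon\,dm\le\int\log\upsilon_n dm+\epsilon/6$ for large $n$. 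Combining the three estimates yields the claim with $N_{\mathcal E}(\epsilon)$ the maximum of the thresholds.

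The main obstacle is the order of quantifiers. $f_0$ must be chosen before $n$, using only the limit measure $\sigma$. This is exactly why the comparison passes through $\sigma$ via the one-sided bound $(A_\T)$ and $(B_\T)$, rather than optimizing $f$ for each $e^{g_n}\sigma_n$ separately, which would destroy uniformity. Absorbing $g_n$ into $f$ multiplicatively, via the compactness of $\mathcal E$, is what makes Lemma~\ref{lem:pw3} applicable uniformly.
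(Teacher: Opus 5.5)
Your proposal is correct and follows the paper's proof essentially step for step. The paper likewise bounds $S(m|\lambda_n m)$ via \eqref{aper1} with the test function $e^{-g_n}f_\epsilon$, where $f_\epsilon$ is chosen once for the limit measure $\sigma$, and then invokes Lemma~\ref{lem:pw3} with $\mathcal Z=\{e^{-g}f_\epsilon:g\in\mathcal E\}$, condition $(A_\T)$, and condition $(B_\T)$ exactly as you do; the only difference is how $\epsilon$ is split among the steps.
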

\begin{proof}
By \eqref{aper1}, \eqref{aper2} and  condition \( (B_\T) \),  there exists a positive continuous function \( f_\epsilon \) and a natural number \( N_1(\epsilon) \) such that
\[
\int f_\epsilon d\sigma - \int (1+\log f_\epsilon)dm \leq S(m|\sigma) +  \frac\epsilon4 \leq \int\log\upsilon dm +  \frac\epsilon4 \leq \int\log\upsilon_n dm +  \frac\epsilon2
\]
for all \( n\geq N_1(\epsilon) \). Applying \eqref{aper1} and \eqref{aper2} one more time, we use the inequality above to get
\begin{align}
\int \log\lambda_n dm & = S(m|\lambda_nm) \leq \int (e^{-g_n}f_\epsilon) \lambda_n dm - \int \big(1+\log(e^{-g_n}f_\epsilon) \big) dm \nonumber \\
&\leq \int\big( g_n+\log\upsilon_n\big) dm + \int (e^{-g_n}f_\epsilon)\lambda_n dm - \int f_\epsilon d\sigma +  \frac\epsilon2
\label{A4-1}
\end{align}
for all \( n\geq N_1(\epsilon) \). Furthermore,  condition \( (A_\T) \) implies that there exists a natural number \( N_2(\epsilon) \) such that
\[
\int f_\epsilon d\sigma_n \leq \int f_\epsilon d\sigma +  \frac\epsilon4
\]
for all \( n\geq N_2(\epsilon) \). Applying Lemma~\ref{lem:pw3} with \( \mathcal Z=\{e^{-g}f_\epsilon:g\in\mathcal E\} \), we conclude that there exists \( N_{\mathcal E}(\epsilon) \geq \max\{ N_1(\epsilon),N_2(\epsilon) \} \)  such that
\begin{equation}
\label{A4-2}
\int (e^{-g_n}f_\epsilon)\lambda_n dm \leq \int (e^{-g_n}f_\epsilon) e^{g_n}d\sigma_n +  \frac\epsilon4 \leq \int f_\epsilon d\sigma +  \frac\epsilon2
\end{equation}
for all \( n\geq N_{\mathcal E}(\epsilon) \). Clearly, inequalities \eqref{A4-1} and \eqref{A4-2} yield the desired claim.
\end{proof}

\begin{lemma}
\label{lem:pw5}
Under the assumptions of Theorem~\ref{thm:1}, asymptotic relation \eqref{A1-2} holds.
\end{lemma}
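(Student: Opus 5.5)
We prove \eqref{A1-2} by squeezing $\log(\alpha_n D_n(0))$ between two bounds. An identity ties it to $\int\log\lambda_n\,dm$; Lemma~\ref{lem:pw4} then gives the upper bound, and Jensen's inequality gives the matching lower bound.

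\emph{Step 1: the identity.} By \eqref{lam2}, $\lambda_n = |W_n/\phi_n|^2$ on $\T$. Since $|\phi_n|=|\phi_n^*|$ and $|W_n|=|W_n^*|$ on $\T$, we get
\[
\log\lambda_n = 2\log|W_n^*| - 2\log|\phi_n^*| \quad\text{on }\T.
\]
Both $W_n^*$ and $\phi_n^*$ are zero-free in a neighborhood of $\overline\D$, so $\log|W_n^*|$ and $\log|\phi_n^*|$ are harmonic there. The mean value property, together with $W_n^*(0)=1$ and $\phi_n^*(0)=\alpha_n$, yields
\[
\int\log\lambda_n\,dm = -2\log\alpha_n .
\]
On the other hand, $\log D_n(0) = \frac12\int (g_n+\log\upsilon_n)\,dm$ by \eqref{szego function}, since the Radon--Nikodym derivative of $e^{g_n}\sigma_n$ is $e^{g_n}\upsilon_n$. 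Combining the two formulas,
\[
2\log\big(\alpha_n D_n(0)\big) = \int (g_n+\log\upsilon_n)\,dm - \int\log\lambda_n\,dm .
\]

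\emph{Step 2: lower bound.} Lemma~\ref{lem:pw4} gives $\int\log\lambda_n\,dm\le\int(g_n+\log\upsilon_n)\,dm+\epsilon$ for $n\ge N_{\mathcal E}(\epsilon)$. By Step~1 this is exactly $\log(\alpha_nD_n(0))\ge -\epsilon/2$.

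\emph{Step 3: upper bound.} This is where the orthogonality must be used. I would compare $\phi_n$ with a competitor via the extremal property: $\alpha_n^{-2}$ equals the minimum of $\langle p,p\rangle_{\T,n}$ over monic $p$ of degree $n$. Equivalently, $\phi_n^*/\alpha_n$ minimizes
\[
\int |q|^2 \,\frac{e^{g_n}d\sigma_n}{|W_n|^2}
\]
over polynomials $q$ of degree at most $n$ with $q(0)=1$.

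The natural competitor $q=W_n^*/D_n\cdot D_n(0)$ is not a polynomial, and the singular part $\sigma_n^s$ also causes trouble. A cleaner route is Jensen applied to $\lambda_n m$ itself. By \eqref{ls-masses}, $|\lambda_n m| = |e^{g_n}\sigma_n|$. Lemma~\ref{lem:pw3} (with $\mathcal Z$ built from continuous approximants of $\log$ of a regularized density) then lets us transfer the entropy bound the other way: for any positive continuous $f$,
\[
\int\log f\,dm \le \log\int f\lambda_n\,dm ,
\]
and by Lemma~\ref{lem:pw3} the right-hand side is $\log\int f e^{g_n}d\sigma_n+o(1)$. This alone is not enough; one would need $\int \log\lambda_n\,dm \ge \int(g_n+\log\upsilon_n)\,dm - \epsilon$.

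I would obtain that inequality from weak convergence together with upper semicontinuity of entropy. Suppose, along a subsequence, $g_n\to g$ in $\mathcal E$ and $e^{g_n}\sigma_n \to \nu$ weak-$*$, where $\nu\le e^{g}\sigma$ by $(A_\T)$. Lemma~\ref{lem:pw3} then gives $\lambda_n m\to\nu$ weak-$*$ as well. Since $S(m|\cdot)$ is weak-$*$ upper semicontinuous (it is an infimum of continuous functionals by \eqref{aper1}) and monotone in its second argument,
\[
\limsup_n \int\log\lambda_n\,dm \le S(m|\nu) \le \int (g+\log\upsilon)\,dm .
\]
This reproves Lemma~\ref{lem:pw4} but gives the wrong direction again. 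Hence the genuine upper bound for $\alpha_nD_n(0)$ must come from the extremal property.

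I would therefore use the competitor $q=\phi^*_{n,\mathrm{approx}}$ as follows. Choose a fixed polynomial $Q_k$ with $Q_k(0)=1$ such that $|Q_k|^2$ approximates $1/(e^{g}\upsilon)$ (up to the usual Szeg\H{o} minimum, which kills the singular part), and take $q=W_n^* Q_k\cdot(\text{degree adjustment})$. This needs $\deg W_n^*+k\le n$. I would fix the degree problem by replacing $W_n$ with $W_n/\prod_{i\le k}(z-b_{n,i})$, removing $k$ zeros of largest modulus; this does not affect $(D_\T)$ and changes $\alpha_n$ only by a controlled factor.

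Then $\alpha_n^{-2}\le \int |Q_k|^2 e^{g_n}d\sigma_n$. Passing to the limit via $(A_\T)$, $(C_\T)$ and the classical Szeg\H{o} theorem, then letting $k\to\infty$, gives
\[
\limsup_n \alpha_n^{-2}D_n(0)^{-2}\ge \ldots ,
\]
that is, $\limsup\alpha_nD_n(0)\le1$. Uniformity in $\mathcal E$ follows by compactness. This degree bookkeeping and the approximation are the main obstacle.
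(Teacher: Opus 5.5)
\textbf{There is a gap in Step 3.} Your Steps 1 and 2 are correct and match the paper. The mean-value identity $\int\log\lambda_n\,dm=-2\log\alpha_n$, combined with Lemma~\ref{lem:pw4}, gives $\log(\alpha_nD_n(0))\ge-\epsilon/2$.

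Step 3 fails, and the reason is the direction of the inequality. Any competitor $q$ in the extremal problem gives $\alpha_n^{-2}\le\int|q|^2e^{g_n}d\sigma_n/|W_n|^2$. That is an upper bound on $\alpha_n^{-2}$, so it is a \emph{lower} bound on $\alpha_nD_n(0)$. This is the same direction you already have from Step 2. Your last line turns $\alpha_n^{-2}\le\int|Q_k|^2e^{g_n}d\sigma_n$ into $\limsup_n\alpha_nD_n(0)\le1$, which reverses that inequality. A secondary point: deleting $k$ zeros of $W_n$ changes the inner product itself, so the claim that $\alpha_n$ changes only by a controlled factor is unsupported. As written, the upper bound is missing.

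The missing idea is short. Use the normalization $\langle\phi_n,\phi_n\rangle_{\T,n}=1$ in \eqref{ortho-circle}, not the minimality of $\phi_n$. On $\T$ you have $|\phi_n/W_n|^2=\lambda_n^{-1}$. Drop $\sigma_n^s$ and apply Jensen's inequality with respect to the probability measure $m$:
\[
0=\log\int\left|\frac{\phi_n}{W_n}\right|^2e^{g_n}d\sigma_n\ge\log\int\frac{e^{g_n}\upsilon_n}{\lambda_n}\,dm\ge\int\log\frac{e^{g_n}\upsilon_n}{\lambda_n}\,dm=2\log\big(\alpha_nD_n(0)\big).
\]
The last equality is exactly your Step 1 identity. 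This gives $\alpha_nD_n(0)\le1$ for every $n$, with no limiting argument or approximation. Together with Step 2 it proves \eqref{A1-2}.

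Your attempted ``Jensen applied to $\lambda_n m$'' went the wrong way because it integrates $\lambda_n$ against $m$. What you need is $\lambda_n^{-1}$ integrated against the Szeg\H{o} density $e^{g_n}\upsilon_n$. The orthogonality you were looking for is already encoded in $\lambda_n=|W_n/\phi_n|^2$, which comes from \eqref{lam2}.
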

\begin{proof}
 It follows directly from \eqref{lam1} and \eqref{lam2} combined with \cite[Theorem~1.2.4]{Ransford}, see also \cite[Equation~(3.8)]{St00}, that
\[
\left|\frac{W_n^*(\xi)}{\phi_n^*(\xi)}\right|^2 = \left|\frac{W_n(\xi)}{\phi_n(\xi)}\right|^2 = \lambda_n(\xi), \quad |\xi|=1.
\]
As \( W_n^*/\phi_n^* \) is analytic and non-vanishing on the closed unit disk, the logarithm of its absolute value is harmonic there. Hence, the mean-value property gives
\[
\log\alpha_n = -\log\left|\frac{W_n^*(0)}{\phi_n^*(0)}\right| = - \frac12\int\log\lambda_ndm,
\]
where we also used \( \phi_n^*(0) = \alpha_n>0 \) and \( W_n^*(0)=1 \).  Applying Lemma~\ref{lem:pw4} together with the definition of \( D_n \), we obtain
\[
\log\alpha_n \geq -\frac12\int \big( g_n+\log\upsilon_n\big) dm + o_{\mathcal E}(1) = - \log D_n(0) + o_{\mathcal E}(1). 
\]
On the other hand, \eqref{ortho-circle} provides
\[
0 = \log \int \left|\frac{\phi_n}{W_n}\right|^2 e^{g_n}d\sigma_n \geq \log \int \left|\frac{\phi_n}{W_n}\right|^2 e^{g_n} \upsilon_ndm.
\]
Applying  Jensen's inequality gives
\[
0 \geq \int \log \frac{e^{g_n}\upsilon_n}{\lambda_n} dm = 2\log\big(\alpha_nD_n(0) \big).
\]
Combining the two estimates finishes the proof of the lemma.
\end{proof}

\begin{lemma}
\label{lem:pw6}
Under the assumptions of Theorem~\ref{thm:1}, it holds that
\[
\int\left|\frac{\phi_n^*}{W_n^*}D_n-1\right|^2dm = o_{\mathcal E}(1) \qasq n\to\infty,
\]
where \( D_n(\xi) \) denotes the non-tangential boundary values of \( D_n(z) \) on \( \T \) taken from within \( \D \). Consequently, the first asymptotic formula in \eqref{A1-1} takes place.
\end{lemma}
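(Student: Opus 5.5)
Our plan is to expand the square and use the two facts already available: on the circle $|\phi_n^*/W_n^*|^2=1/\lambda_n$, and $|D_n|^2=e^{g_n}\upsilon_n$ a.e. Then
\[
\int\left|\frac{\phi_n^*}{W_n^*}D_n-1\right|^2dm=\int\frac{e^{g_n}\upsilon_n}{\lambda_n}dm+1-2\,\re\int\frac{\phi_n^*}{W_n^*}D_n\,dm .
\]
We will show that the first integral is at most $1$ and that the last integral tends to $1$.

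For the first term, note that $|\phi_n/W_n|^2=1/\lambda_n$ on $\T$. The normalization in \eqref{ortho-circle} therefore gives
\[
1=\int \frac{e^{g_n}d\sigma_n}{\lambda_n}\ge\int\frac{e^{g_n}\upsilon_n}{\lambda_n}dm .
\]

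For the cross term, $\phi_n^*D_n/W_n^*$ is analytic in $\D$, since $\phi_n^*$ and $W_n^*$ have no zeros in $\overline\D$. It belongs to $H^2$: indeed $D_n\in H^2$ because $\int|D_n|^2dm=\int e^{g_n}\upsilon_n dm<\infty$, and $\phi_n^*/W_n^*$ is bounded on $\overline\D$. By the mean value property, the integral of its boundary values against $dm$ equals its value at $0$. That value is $\alpha_nD_n(0)$, because $W_n^*(0)=1$. By Lemma~\ref{lem:pw5} this equals $1+o_{\mathcal E}(1)$.

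Putting the pieces together, the integral in the lemma is at most $2-2(1+o_{\mathcal E}(1))=o_{\mathcal E}(1)$, uniformly in the sense of $\mathcal E$. This proves the $L^2$ claim.

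To obtain locally uniform convergence in $\D$, we use the fact that $H^2$ functions are recovered from their boundary values through the Cauchy integral. For $|z|\le r<1$ this gives
\[
\left|\frac{\phi_n^*(z)}{W_n^*(z)}D_n(z)-1\right|\le \frac{1}{\sqrt{1-r^2}}\left\|\frac{\phi_n^*}{W_n^*}D_n-1\right\|_{L^2(m)}=o_{\mathcal E}(1).
\]
This is the first relation in \eqref{A1-1}.

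The only subtle point is justifying the membership in $H^2$ and the mean value identity for the boundary values. This follows from $D_n$ being outer with $\log(e^{g_n}\upsilon_n)\in L^1$ and $e^{g_n}\upsilon_n\in L^1$. Everything else is bookkeeping.
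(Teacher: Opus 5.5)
Your proposal is correct and follows the paper's proof essentially step for step. You expand the square, bound the first term by the orthonormality relation after discarding the singular part of $\sigma_n$, evaluate the cross term by the mean-value property as $\alpha_nD_n(0)$ and invoke Lemma~\ref{lem:pw5}, and then upgrade the $L^2$ estimate to locally uniform convergence via the Cauchy integral for $H^2(\D)$ functions.
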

\begin{proof}
Denote the integral in the statement of the lemma by \( I \). Expanding the square gives
\[
I = \int \left|\frac{\phi_n^*}{W_n^*}D_n\right|^2dm + 1 -2\int\re \left(\frac{\phi_n^*}{W_n^*}D_n\right)dm.
\]
Recall that \( |D_n(\xi)|^2=e^{g_n(\xi)}\upsilon_n(\xi) \) for almost every \( |\xi|=1 \). By the mean-value property for harmonic functions, one has
\[
I = \int \left|\frac{\phi_n}{W_n}\right|^2 e^{g_n}\upsilon_ndm + 1 - 2\re \left(\frac{\phi_n^*(0)}{W_n^*(0)}D_n(0)\right) \leq 2 -2\alpha_nD_n(0),
\]
where we used that $W^*_n(0)=1$. The first claim of the lemma now follows from \eqref{A1-2}. In particular, we have that the functions \( 1-\phi_n^*D_n/W_n^* \) belong to the Hardy space \( H^2(\D) \). Thus, the second claim of the lemma now follows from the first one and the Cauchy integral formula for functions in \( H^2(\D) \).
\end{proof}

\begin{lemma}
\label{lem:pw7}
Under the assumptions of Theorem~\ref{thm:1}, the second asymptotic formula in \eqref{A1-1} holds.
\end{lemma}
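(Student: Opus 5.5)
We need \(\phi_n/\phi_n^* = o_{\mathcal E}(1)\) locally uniformly in \(\D\). The idea is to factor
\[
\frac{\phi_n}{\phi_n^*} = \frac{\phi_n}{W_n}\cdot\frac{W_n}{W_n^*}\cdot\frac{W_n^*}{\phi_n^*}.
\]
The middle factor is \(o(1)\) locally uniformly in \(\D\) by \eqref{Blaschke}, which is where \((D_\T)\) enters. The third factor equals \(D_n(z)(1+o_{\mathcal E}(1))^{-1}\) by Lemma~\ref{lem:pw6}. Moreover, \(D_n\) is locally uniformly bounded in \(\D\): \(\log|D_n|\) is the Poisson integral of \(\tfrac12(g_n+\log\upsilon_n)\), and this is bounded in \(L^1(\T)\) by \((B_\T)\) and \((C_\T)\). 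So it remains to control \(\phi_n/W_n\) inside \(\D\). This function is not analytic in \(\D\), since \(W_n\) has zeros there, so a direct bound fails.

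Instead I will work with the Blaschke-type ratio \(B_n:=\phi_n/\phi_n^*\). It is analytic in a neighborhood of \(\overline\D\), because the zeros of \(\phi_n^*\) lie outside \(\overline\D\), and \(|B_n|=1\) on \(\T\). Thus \(B_n\) is a finite Blaschke product of degree \(n\), bounded by \(1\) in \(\D\). By Montel's theorem it suffices to show that every subsequential limit vanishes identically. For this I write
\[
B_n=\frac{W_n}{W_n^*}\cdot\frac{\phi_n W_n^*}{\phi_n^* W_n},
\]
and on \(\T\) the second factor \(Q_n:=\phi_nW_n^*/(\phi_n^*W_n)\) has modulus one. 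The plan is to show that \(Q_n\) stays bounded in \(\D\) in a suitable sense.

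The key observation is that \(B_n W_n^*/W_n = \phi_nW_n^*/(\phi_n^*W_n)\) may have poles at the zeros of \(W_n\), so it cannot be bounded pointwise. The cleaner route is the \(L^2\) estimate. On \(\T\) we have \(|\phi_n/W_n| = |\phi_n^*/W_n^*|\), and by Lemma~\ref{lem:pw6} the function \(\phi_n^*D_n/W_n^*\) is close to \(1\) in \(L^2(m)\). Hence \(\overline{\xi^{n}}\,\phi_n(\xi)D_n^{*}(\xi)/W_n(\xi)\)-type expressions are close in \(L^2\) to unimodular quantities. I will therefore pair instead: for fixed \(z\in\D\), Cauchy's formula gives
\[
\frac{\phi_n(z)}{\phi_n^*(z)}\,\frac{\phi_n^*(z)D_n(z)}{W_n^*(z)} = \int \frac{\phi_n(\xi)D_n(\xi)}{W_n^*(\xi)}\frac{dm(\xi)}{1-z\bar\xi},
\]
because \(\phi_nD_n/W_n^*\) is in \(H^2(\D)\) (note \(W_n^*\) has no zeros in \(\overline\D\)). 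On \(\T\) we also have \(\phi_n/W_n^* = (W_n/W_n^*)\cdot(\phi_n/W_n)\) and \(|\phi_n/W_n|=|\phi_n^*/W_n^*|\). So the integrand equals \((W_n/W_n^*)\) times an \(L^2\)-function \(u_n\) with \(|u_n|=|\phi_n^*D_n/W_n^*|\). By Lemma~\ref{lem:pw6}, \(u_n\) is bounded in \(L^2\).

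The main obstacle is that \(W_n/W_n^*\) is only small inside \(\D\), not on \(\T\). A different factorization fixes this. Write \(\phi_nD_n/W_n^* = \xi^{n}\overline{(\phi_n^*D_n^{-1}\cdots)}\) and rather use orthogonality directly. The relations \eqref{ortho-circle} say that \(\phi_n/W_n\) is orthogonal in \(L^2(e^{g_n}\sigma_n/|W_n|^2\cdots)\); equivalently, \(\phi_n/W_n\) is orthogonal in \(L^2(e^{g_n}\sigma_n)\) to \(\{\xi^k/W_n\}_{k<n}\), which span \(\{1/(\xi-b)\}\)-type rational functions, i.e. the model space \(K_n=H^2\ominus (W_n/W_n^*)H^2\) conjugated. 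Replacing \(e^{g_n}d\sigma_n\) by \(|D_n|^2dm\), which is controlled through Lemma~\ref{lem:pw6}, I would show that \(\phi_n^*D_n/W_n^*-1\to0\) in \(H^2\) forces \(\phi_nD_n/W_n^*\) to be \(L^2\)-close to \(W_n/W_n^*\) times a bounded-norm \(H^2\) function. Concretely, on \(\T\),
\[
\frac{\phi_n D_n}{W_n^*} = \frac{W_n}{W_n^*}\cdot \overline{\left(\frac{\phi_n^*D_n}{W_n^*}\right)}\cdot\frac{D_n}{\overline{D_n}}\,\overline{\xi}^{\,0},
\]
using \(\phi_n=\xi^n\overline{\phi_n^*}\) and \(W_n=\xi^n\overline{W_n^*}\). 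Replacing \(\overline{\phi_n^*D_n/W_n^*}\) by \(1\) costs \(o(1)\) in \(L^2\), and the Cauchy integral of the \(L^2\)-small error is \(o(1)\) locally uniformly. This leaves
\[
\int \frac{W_n}{W_n^*}\frac{D_n}{\overline{D_n}}\frac{dm}{1-z\bar\xi}.
\]
Showing this tends to zero is the step I expect to be hardest. I would first try to handle it as the Cauchy projection of \(W_n/W_n^*\) times the unimodular function \(D_n/\overline{D_n}\). After approximating \(\log(e^{g_n}\upsilon_n)\) in \(L^1\), via compactness of \(\mathcal E\) and \((B_\T)\), this reduces \(D_n/\overline{D_n}\) to something close to \(D_n^2\) times a bounded outer factor. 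Weak convergence \(W_n/W_n^*\to0\) on \(\T\), which follows from \eqref{Blaschke} together with unimodularity, should then kill the integral. Finally I divide by \(\phi_n^*D_n/W_n^*\to1\), which is nonvanishing locally uniformly by Lemma~\ref{lem:pw6}, to conclude.
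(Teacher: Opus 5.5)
Your reduction is correct and coincides with the paper's. On $\T$ you use $\phi_n=\xi^n\overline{\phi_n^*}$ and $W_n=\xi^n\overline{W_n^*}$, then Cauchy--Schwarz with Lemma~\ref{lem:pw6} and $|W_n/W_n^*|=|D_n/\overline{D_n}|=1$. Everything then comes down to showing
\[
\int_\T \frac{W_n(\xi)}{W_n^*(\xi)}\,\frac{D_n(\xi)}{\overline{D_n(\xi)}}\,\frac{dm(\xi)}{1-z\overline\xi}\to0
\]
locally uniformly in $\D$, and uniformly over $\mathcal E$. That step is the real content of the lemma, and your sketch of it does not work.

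Weak nullity of $W_n/W_n^*$ in $L^2(\T)$ only kills a pairing $\langle W_n/W_n^*,u_n\rangle$ if the $u_n$ converge in norm, or lie in a norm-precompact set. Here $u_n=(1-\overline z\xi)^{-1}\overline{D_n}/D_n$ changes with $n$ and is only known to be bounded. So strong precompactness of $\{D_n/\overline{D_n}\}$ in $L^2(\T)$ has to be proved.

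Your proposed mechanism, writing $D_n/\overline{D_n}$ as ``$D_n^2$ times a bounded outer factor'', is not meaningful. On $\T$ one has $D_n/\overline{D_n}=D_n^2\,e^{-g_n}/\upsilon_n$. The factor $e^{-g_n}/\upsilon_n$ need not be bounded, and it is not the boundary value of an analytic function. The outer function with that modulus is $D_n^{-2}$, which differs from $e^{-g_n}/\upsilon_n$ exactly by the unimodular factor $\overline{D_n}/D_n$ you were trying to remove. The argument is therefore circular.

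The missing idea is how to pass from $L^1$ information on $g_n+\log\upsilon_n$ to $L^2$ information on $D_n/\overline{D_n}$. Note that $D_n/\overline{D_n}=e^{2\ic\arg D_n}$, where $\arg D_n$ is the conjugate function of $\tfrac12(g_n+\log\upsilon_n)$. Conjugation is unbounded on $L^1$, so approximating the logarithm in $L^1$ gives nothing by itself.

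The paper treats the two pieces separately.
\begin{itemize}
\item For $g\in\mathcal E$, it uses $L^2$-boundedness of $\mathcal H$ together with compactness of $\mathcal E$ in $L^2$.
\item For $\log\upsilon_n\to\log\upsilon$ in $L^1$, it uses Kolmogorov's theorem, which gives $\mathcal H\log\upsilon_n\to\mathcal H\log\upsilon$ in $L^p$ for $p<1$. Combined with the elementary bound $|e^{\ic f}-e^{\ic h}|^2\le 4|f-h|^p$, this yields $L^2$ convergence of the unimodular factors.
\end{itemize}
Together these give precompactness in $L^2(\T)$ of the family $(1-\overline z\xi)^{-1}\,\overline{D(e^{g};\xi)}/D(e^{g};\xi)\cdot\overline{D(\upsilon_n;\xi)}/D(\upsilon_n;\xi)$, for $z$ in a compact set and $g\in\mathcal E$. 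Only then does weak nullity of $W_n/W_n^*$, which follows from \eqref{Blaschke}, give the required uniform convergence. Without this step, or an equivalent weak-type estimate for the conjugate function, your final claim that weak convergence ``should then kill the integral'' remains unproved.
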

\begin{proof}
It follows from the first asymptotic formula in \eqref{A1-1} that
\[
\frac{\phi_n(z)}{\phi_n^*(z)} = \frac{W_n^*(z)}{\phi_n^*(z)}  \frac{\phi_n(z)}{W_n^*(z)} = (1 +o_{\mathcal E}(1)) \frac{\phi_n(z)D_n(z)}{W_n^*(z)}.
\]
Thus, it is sufficient to study the behavior of \( \phi_nD_n/W_n^* \) in the unit disk. Since these functions belong to $H^2(\D)$ and \( dm(\xi) = d\xi/(2\pi\ic\xi) \),  the Cauchy integral formula gives
\[
\frac{\phi_n(z)D_n(z)}{W_n^*(z)} = \int \frac{\phi_n(\xi)D_n(\xi)}{W_n^*(\xi)} \frac{dm(\xi)}{1-z\overline\xi} = \int B_n(\xi)U_n(\xi) \frac{\phi_n(\xi)\overline{D_n(\xi)}}{W_n(\xi)} \frac{dm(\xi)}{1-z\overline\xi},
\]
where \( B_n = W_n/W_n^* \) and \( U_n(\xi) = D_n(\xi)/\overline{D_n(\xi)} \), \( \xi\in\T \). Since \( |B_nU_n| \equiv 1 \) on \( \T \), it follows from the Cauchy-Schwarz inequality that
\[
\left| \int  \left(\frac{\phi_n(\xi)\overline{D_n(\xi)}}{W_n(\xi)} - 1 \right) B_n(\xi)U_n(\xi)\frac{dm(\xi)}{1-z\overline\xi}\right| \leq \frac{\|\phi_n\overline{D_n}/W_n -1 \|_{L^2(\T)}}{\sqrt{1-|z|^2}}.
\]
Thus, we deduce from the first claim of Lemma~\ref{lem:pw6} that
\[
\frac{\phi_n(z)D_n(z)}{W_n^*(z)} = o_{\mathcal E}(1) + \int B_n(\xi)U_n(\xi) \frac{dm(\xi)}{1-z\overline\xi} = o_{\mathcal E}(1) + \big\langle B_n,u(\cdot;g_n,\upsilon_n,z)\big\rangle,
\]
where \( o_{\mathcal E}(1) \) holds locally uniformly in the unit disk and
\[
u(\xi;g_n,\upsilon_n,z) := \frac1{1-\overline z\xi}\frac{\overline{D(e^{g_n};\xi)}}{D(e^{g_n};\xi)} \frac{\overline{D(\upsilon_n;\xi)}}{D(\upsilon_n;\xi)}.
\]
It follows from \eqref{Blaschke} that \( \langle B_n,u\rangle \to 0 \) as \( n\to\infty \) for any fixed \( u\in L^2(\T) \). Indeed, this property clearly holds for each \( u(\xi)=\xi^i, i\in \{0,1,\ldots\} \), and therefore for the whole space by the density of the monomials. As Blaschke products \( B_n \) have unit norms in \( L^2(\T) \), it also holds that \( \langle B_n,u_n\rangle \to 0 \) as \( n\to\infty \) whenever \( u_n\to u \)  in \( L^2(\T) \). Thus, we would get the desired bound
\[
\big\langle B_n,u_n(\cdot;g_n,\upsilon_n,z)\big\rangle = o_{\mathcal E}(1) 
\]
for  \(z\in K\subset \D \), $K$ is an arbitrary compact in $\D$, if we can only show that the functions \( u(\cdot;g,\upsilon_n,z) \) form a precompact family in \( L^2(\T) \) when \( z\in K \) and \( g\in \mathcal E \). Denote by \( \mathcal H \)  the Hilbert transform on $\T$. The functions $u(\xi;g_n,\upsilon_n,z)$ can be written as
\[
\big(1-\overline z \xi\big)^{-1} e^{-2\ic(\mathcal H g)(\xi)} e^{-2\ic(\mathcal H \log\upsilon_n)(\xi)}.
\]
Consider the following subsets of \( L^2(\T) \):
\[
\begin{cases}
S_1 & = \big\{\big(1-z\overline\xi\big)^{-1}:~ z\in K\big\}, \smallskip \\
S_2 & =\left\{ e^{2\ic(\mathcal H g)(\xi)}: g\in \mathcal{E}\right\}, \smallskip \\
S_3 & =\left\{e^{2\ic(\mathcal H \log\upsilon_n)(\xi)}: n\in \N\right\}.
\end{cases}
\]
The compactness of $S_1$ in $L^2(\T)$ is trivial. Since \( \mathcal E \) is compact in \( C(\T) \), it is also compact in \( L^2(\T) \). The operator \( \mathcal H \) is a bounded on \( L^2(\T) \). Hence, \( \mathcal H\mathcal E \) is a compact subset of real-valued functions in \( L^2(\T) \). Given two real numbers $f$ and $g$, we have
\begin{equation}
\label{2-2}
|e^{\ic f}-e^{\ic h}|^2=4\sin^2((f-h)/2) \leq 4|f-h|^p, \quad p\in(0,2].
\end{equation}
Therefore, $S_2$ is compact in \( L^2(\T) \). Finally, since \( \log\upsilon_n \to \log\upsilon \) in \( L^1 (\T) \) due to our condition \( (B_\T) \), it holds that \( \mathcal H\log\upsilon_n \to \mathcal H\log\upsilon \) in \( L^p (\T) \) for any \( p\in (0,1) \) by Kolmogorov's theorem. Using \eqref{2-2} with any such \( p \) yields that the functions \( \exp(2\ic\mathcal H \log\upsilon_n) \) converge to \( \exp(2\ic\mathcal H \log\upsilon ) \) in \( L^2(\T) \), which shows that $S_3$ is precompact in $L^2(\T)$. Since $S_1,S_2$, and $S_3$ are bounded in $L^\infty(\T)$, the product set $S_1S_2S_3$ is  precompact in $L^2(\T)$ as claimed. That completes the proof of the lemma.
\end{proof}

\subsection{Proof of Theorem~\ref{thm:2}}

To translate conditions \( (A_{[-1,1]})\)--\((D_{[-1,1]}) \) into conditions \( (A_\T)\)--\((D_\T) \), set \( b_{2n,i} = \phi(a_{n,i}) \), \( i\in\{1,2,\ldots,2n \} \). That is, \( J(b_{2n,i})=a_{n,i} \), where \( J(z) = ( z+z^{-1})/2\) is the Joukovski map. Then,
\[
\tau_n(z) = \prod_{a_{n,i}\neq\infty} \frac{(\zeta-b_{2n,i})(1-\zeta b_{2n,i})}{2a_{n,i}b_{2n,i}\zeta} = \frac{W_{2n}(\zeta)W_{2n}^*(\zeta)}{c_n\zeta^{2n}},
\]
\( z=J(\zeta) \), where \( c_n=\prod_{a_{n,i}\neq\infty}2a_{n,i}b_{2n,i} \), \( W_{2n}(\zeta) = \prod_{i=1}^{2n}(\zeta-b_{2n,i}) \), and we used the conjugate symmetry of the multi-set \( \{ b_{2n,1},b_{2n,2},\ldots,b_{2n,2n}\} \). Clearly, conditions \((D_{[-1,1]}) \) and \((D_\T)\) are equivalent to each other. Define \( \mathcal E=\{h\circ J:h\in\mathcal K\} \) and let \( g_{2n}=h_n\circ J \). It trivially holds that condition \( (C_{[-1,1]}) \) implies condition \( (C_\T) \). Every measure $\mu$ defined on $[-1,1]$ can be mapped to a measure $\sigma$ on $\T$ by the formula
\[
2\sigma(A)=\mu(J(A_+)) + \mu(J(A_-)),
\]
where $A$ is  any  subset in \( \T \), \( A_+=A\cap\{e^{\ic t}: t\in[0,\pi)\} \), and \( A_-=A\setminus A_+ \). For example, the mapping of the arcsine law $\omega_{[-1,1]}$ results in the normalized Lebesgue measure $m$ on the circle. More generally, if one has a function  \( v \) that is integrable with respect to \( \omega_{[-1,1]} \), the measure \(vd\omega_{[-1,1]}\) is mapped to  \(\upsilon dm$, where \( \upsilon= v\circ J \). We use this map to define measures \( \sigma_{2n} \) on \( \T \) that correspond to the measures $\tilde\mu_n$ on \( [-1,1] \).  This gives
\[
d\sigma_{2n} = \upsilon_{2n} dm + d\sigma_{2n}^s  \qandq d\sigma = \upsilon dm + d\sigma^s,
\]
where  \( \upsilon_{2n}=\tilde v_n \circ J$ and $\upsilon=v \circ  J \). Notice that condition \( (B_{[-1,1]}) \) implies  \( (B_\T) \). Similarly, condition \( (A_{[-1,1]}) \) implies condition \( (A_\T) \). 

Let now \( \phi_{2n} \) be the polynomials satisfying orthogonality relations \eqref{ortho-circle} with the above defined \( (\sigma_{2n},g_{2n}, W_{2n} ) \). By \cite[Theorem~11.5]{Szego} (see also \cite[Lemma~4.13]{St00}), it holds that
\[
p_n^2(z) = \frac{\phi_{2n}^*(\zeta)^2}{2c_n\zeta^{2n}}\frac{(1+\phi_{2n}(\zeta) / \phi_{2n}^*(\zeta))^2}{1+\phi_{2n}(0)/\phi_{2n}^*(0)} \qandq \gamma_n^2 = 2^{2n-1}\frac{\alpha_{2n}^2}{c_n}\left(1+\frac{\phi_{2n}(0)}{\phi_{2n}^*(0)}\right),
\]
where \( z=J(\zeta) \). Moreover, as observed in \cite[Lemma~4.3]{St00},  \( \widetilde G_n(z) = D_{2n}(\zeta) \). Thus, asymptotic formulae \eqref{A8-1} and \eqref{A8-2} follow from \eqref{A1-1} and \eqref{A1-2}, where one needs to use the identity
\[
\frac{\widetilde G_n^2(z)}{\tau_n(z)} \cdot \prod_{i=1}^{2n}\frac{\phi(z)-\phi(a_{n,i})}{1-\overline{\phi(a_{n,i})}\phi(z)}  = c_n\left(\frac{\zeta^nD_{2n}(\zeta)}{W_{2n}^*(\zeta)}\right)^2. 
\]

\section{OPs with Varying Weights and Szeg\H{o} Measures}
\label{sec:sm}

In this section we present the first generalization of Totik's theorem~\hyperref[thm:totik]{(Totik)}. To define the weight of orthogonality, we consider triples \( (\mu_n,h_n,\omega_n) \), where \( \mu_n,\omega_n \) are measures on a closed interval \( \Delta=[\alpha,\beta] \) and \( h_n \) is a continuous function on it. Assume further that
\emph{
\begin{itemize}
\item[$(A)$] there exists a finite  measure $\mu$ on $\Delta$ such that 
\[
\limsup_{n\to\infty} \int fd\mu_n \leq \int fd\mu
\]
for every nonnegative function \( f\in C(\Delta) \);
\item[$(B)$] if $v_n$ and $v$ denote the Radon-Nikodym derivatives of \( \mu_n \) and \( \mu \) with respect to the arcsine distribution \( \omega_\Delta \), see \eqref{szego}, then  \( \|\log v_n- \log v\|_{L^1(\omega_\Delta)}\to 0 \) as \( n\to\infty \); \smallskip
\item[$(C)$] the functions \( h_n \) belong to \( \mathcal K \), a fixed compact subset of \( C(\Delta) \); \smallskip
\item[$(D)$] \( d\omega_n(x)=\omega_n^\prime(x)dx \) are probability measures whose densities \( \omega_n^\prime \) form a uniformly equicontinuous family on every compact subset of \( (\alpha,\beta) \) and satisfy
\[
\begin{cases}
\omega_n^\prime(x)  & \gtrsim |w_\Delta(x)|^{\varkappa_L}, \quad x\in [\alpha+n^{-\tau},\beta-n^{-\tau}], \\
\omega_n^\prime(x) &  \lesssim |w_\Delta(x)|^{\varkappa_U}, \quad  x\in (\alpha,\beta),
\end{cases}
\]
for some \( \tau>0 \) and \( \varkappa_L,\varkappa_U>-2 \), see \eqref{phi-w}.
\end{itemize}}

We recall our notation:  given a nonnegative  measure \( \mu \) on an interval \( \Delta \) and a continuous nonnegative function \( f \) on \( \Delta \),  \( T_n(f\mu) \) denotes the \( n \)-th monic orthogonal polynomial with respect to the measure \( f\mu \). 

\begin{theorem}
\label{thm:3}
Assume that the triples \( (\mu_n,h_n,\omega_n) \) satisfy conditions \((A)\)--\((D)\) and define  \( \theta_n := 2nV^{\omega_n}+h_n \). Then, 
\begin{equation}\label{sapsad32}
T_n\left(e^{\theta_n}\mu_n\right)(z) = \left(1+o_{\mathcal K}(1)\right) \exp\left( n \int\log(z-x)d\omega_n(x)\right) \frac{G_n(e^{h_n}\mu_n;\infty)}{G_n(e^{h_n}\mu_n;z)}
\end{equation}
as \( n\to\infty \) locally uniformly in \( D_\Delta = \overline\C\setminus \Delta \). Moreover,
\[
\int_\Delta T_n^2\left(e^{\theta_n}\mu_n\right)(x)e^{\theta_n(x)}d\mu_n(x) = 2\big(1+o_{\mathcal K}(1)\big)G_n^2(e^{h_n}\mu_n;\infty).
\]
\end{theorem}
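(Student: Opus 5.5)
Following Totik's strategy, the plan is to replace the varying weight $e^{2nV^{\omega_n}}$ by a reciprocal polynomial weight $1/\tau_n$ of degree $2n$ that approximates it very accurately on $\Delta$, and then to apply Theorem~\ref{thm:2}. After an affine change of variables we may assume $\Delta=[-1,1]$. The first step is to invoke the discretization result \cite[Theorem~10.2]{Totik}, or its analogue under hypothesis $(D)$, which produces for each $n$ a polynomial $\tau_n$ with zeros $a_{n,i}\in\overline\C\setminus[-1,1]$ such that
\[
e^{2nV^{\omega_n}(x)} = \frac{(1+o(1))\,C_n}{\tau_n(x)}
\]
uniformly on compact subsets of $(-1,1)$, with a one-sided bound $e^{2nV^{\omega_n}}\lesssim C_n/\tau_n$ on all of $[-1,1]$. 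The lower and upper density bounds with exponents $\varkappa_L,\varkappa_U>-2$ and the equicontinuity in $(D)$ are exactly what is needed to discretize $\omega_n$ into $2n$ atoms (mass $1/(2n)$ each), pushed off the interval by distances comparable to the local spacing. The endpoint zones $[\pm1\mp n^{-\tau}]$ have vanishing $\omega_n$-mass by the upper bound, so they cost only $o(1)$ after a crude comparison.

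Concretely, I would take the $a_{n,i}$ to be the points $x_{n,i}\pm \ic\,\delta_{n,i}$, where the $x_{n,i}$ are $1/(2n)$-quantiles of $\omega_n$ and $\delta_{n,i}$ is a small multiple of the local spacing. Then $\sum_i(1-|\phi(a_{n,i})|)\asymp\sum_i \delta_{n,i}/\sqrt{1-x_{n,i}^2}$ stays bounded below by a fixed multiple of $n\cdot\tfrac1n$ times a quantity that can be made to tend to infinity. Since the spacing is $\asymp 1/(n\omega_n')$, the normalized distance in the $\phi$-coordinate is of order $1/(n\sqrt{1-x^2}\,\omega_n')$ per point. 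Summing over $2n$ points gives an integral of order $\int dx/\sqrt{1-x^2}$, which is not large. Condition $(D_{[-1,1]})$ therefore requires slightly larger offsets, say $\delta_{n,i}$ multiplied by a slowly growing factor $M_n\to\infty$. With a larger offset the approximation ratio is no longer $1+o(1)$ but $e^{o(1)}$-controlled only in an averaged sense, and this tension is the main obstacle.

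I would resolve it by absorbing the non-uniform error into the measure rather than the weight. I would write $e^{2nV^{\omega_n}}\tau_n/C_n=e^{r_n}$ and set $\tilde\mu_n=e^{r_n}\mu_n$, then verify $(A_{[-1,1]})$ and $(B_{[-1,1]})$ for $\tilde\mu_n$ using $\|r_n\|_{L^1(\omega_\Delta)}\to0$ together with a uniform upper bound $r_n\le c$. Here $(B)$ transfers because $\log\tilde v_n=\log v_n+r_n$, and $(A)$ follows from $\limsup$ plus dominated convergence on compacts together with the small endpoint mass. Condition $(C_{[-1,1]})$ is just $(C)$.

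Once the theorem is applied, $p_n=\gamma_nT_n$, and \eqref{A8-1} gives $T_n^2$ in terms of $\tau_n\widetilde G_n^{-2}$ and the Blaschke product. The remaining step is an explicit computation. The product $\tau_n(z)\prod_j\frac{1-\overline{\phi(a_{n,j})}\phi(z)}{\phi(z)-\phi(a_{n,j})}$ combined with \eqref{A8-2} is identified, up to $1+o(1)$ locally uniformly off $\Delta$, with $\exp(2n\int\log(z-x)\,d\omega_n)$, since the discrete measure on the $a_{n,i}$ and its balayage approximate $\omega_n$ weakly and $\log|\,\cdot\,|$ is smooth away from $\Delta$. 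Finally, $\widetilde G_n/G_n(e^{h_n}\mu_n;\cdot)=\Omega(e^{r_n/2})\to1$ locally uniformly because $r_n\to0$ in $L^1(\omega_\Delta)$. Taking square roots, with the sign fixed at infinity, yields \eqref{sapsad32}. The norm identity follows from $\int T_n^2\,d(\ldots)=\gamma_n^{-2}$ together with \eqref{A8-2} and the same identification at $z=\infty$, where the factor $2$ comes from the $2^{1-2n}$ normalization.
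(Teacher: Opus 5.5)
Your overall architecture is the paper's: reduce to $[-1,1]$, replace $e^{2nV^{\omega_n}}$ by $C_n/\tau_n$, move the discrepancy $e^{r_n}$ into the measure $\tilde\mu_n=e^{r_n}\mu_n$, apply Theorem~\ref{thm:2}, and identify the answer. The gap is in the central input. You flag it as ``the main obstacle'' and then resolve it only by assertion. What you actually need is a real polynomial $\tau_n$, zero-free on $[-1,1]$, satisfying three properties at once: (a) $r_n\le 0$ on $[-1,1]$, or at least $\sup_{[-1,1]}r_n\to 0$; (b) $\|r_n\|_{L^1(\omega_{[-1,1]})}\to0$; and (c) condition $(D_{[-1,1]})$.

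Your own computation shows that offsets comparable to the local spacing violate (c). For the enlarged offsets ($M_n$ times the spacing) you give no argument for (b). The endpoint remark does not supply one: the $\omega_n$-mass $\lesssim n^{-\tau(1+\varkappa_U/2)}$ of $[1-n^{-\tau},1]$ gets multiplied by $2n$ and need not be small. Also, the form in which you quote Totik's theorem (uniform $1+o(1)$ approximation on compacts, with a bounded ratio) is not what you end up using.

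The paper obtains exactly (a)--(c) from \cite[Theorem~10.2]{Totik} with $\gamma=1/2$ and $u\equiv1$. That result gives polynomials $H_n$ of degree at most $n$, zero-free on $[-1,1]$, such that $0<\iota_n=e^{2nV^{\omega_n}}|H_n|^2\le 1$ and $\int\log\iota_n\,d\omega_{[-1,1]}\to0$; hence $\|\log\iota_n\|_{L^1}\to0$. Totik also shows that at least half of the zeros lie in $\{|\mathrm{Re}\,z|<0.9,\ |\mathrm{Im}\,z|>L_n/n\}$ with $L_n\to\infty$, which yields $(D_{[-1,1]})$. The paper then takes $\tau_n=H_n\overline{H_n(\bar z)}/|H_n(0)|^2$ and $\tilde\mu_n=\iota_n\mu_n$. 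Once these properties are in hand, your transfer of $(B)$ and $(C)$ and your reading of $\gamma_n$ and the norm off \eqref{A8-2} are fine.

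Two further steps would fail as written.

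First, a bound $r_n\le c$ with a fixed $c>0$ does not give $(A_{[-1,1]})$. Smallness of $r_n$ in $L^1(\omega_{[-1,1]})$ says nothing about the singular parts of $\mu_n$ and $\mu$. For example, an atom of $\mu$ at a point where $r_n$ stays near $c$ violates the required inequality. Your ``dominated convergence on compacts'' also presupposes the pointwise convergence you had just given up. This is why the paper needs $\iota_n\le1$.

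Second, weak convergence of the balayaged discrete measure $\nu_n$ to $\omega_n$ cannot control $2n\int\log(z-x)\,d(\nu_n-\omega_n)(x)$, because of the factor $2n$. The identification should instead use uniqueness of outer functions with continuous boundary modulus. One has $\tau_n(z)\phi^{\deg\tau_n}(z)\prod_{j}\frac{1-\overline{\phi(a_{n,j})}\phi(z)}{\phi(z)-\phi(a_{n,j})}=\Omega(\tau_n;z)$ and $\phi^{2n}(z)\exp\big(2n\int\log(z-x)\,d\omega_n(x)\big)=\Omega(e^{-2nV^{\omega_n}};z)$. So the quotient you want to be $1+o(1)$ is exactly $C_n\,\Omega(e^{r_n};z)$. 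It cancels identically against $\widetilde G_n^2/G^2(e^{h_n}\mu_n;\cdot)=\Omega(e^{r_n};\cdot)$, which is the computation in Lemma~\ref{lem:vw5}.
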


We prove Theorem~\ref{thm:3} in three steps that we organize as separate lemmas.

\begin{lemma}
\label{lem:vw3}
It is enough to prove Theorem~\ref{thm:3} for \( \Delta=[-1,1] \) only.
\end{lemma}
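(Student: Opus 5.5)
The plan is to reduce to \( [-1,1] \) by the affine change of variables \( l(x) = \frac{\beta-\alpha}{2}x + \frac{\beta+\alpha}{2} \), which maps \( [-1,1] \) onto \( \Delta \). We pull back every object in the triple \( (\mu_n,h_n,\omega_n) \) and check three things: conditions \( (A) \)--\( (D) \) hold for the pulled-back triples, the orthogonal polynomials correspond to each other, and both sides of the asymptotic formulas transform compatibly.

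First, define \( \hat\mu_n := \mu_n\circ l \), i.e. \( \hat\mu_n(E)=\mu_n(l(E)) \), together with \( \hat\mu := \mu\circ l \), \( \hat h_n := h_n\circ l \), \( \hat{\mathcal K} := \{h\circ l : h\in\mathcal K\} \), and \( \hat\omega_n := \omega_n\circ l \). The last measure has density \( \hat\omega_n^\prime(x) = \frac{\beta-\alpha}{2}\,\omega_n^\prime(l(x)) \).

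Then we verify the conditions one at a time.
\begin{itemize}
\item Condition \( (A) \) transfers because \( f\mapsto f\circ l \) is a bijection of nonnegative continuous functions.
\item For \( (B) \), note that \( l \) pushes \( \omega_{[-1,1]} \) forward to \( \omega_\Delta \). Hence the Radon--Nikodym derivatives satisfy \( \hat v_n = v_n\circ l \), and the \( L^1 \) norms coincide.
\item Condition \( (C) \) holds since composition with \( l \) is an isometry of \( C(\Delta) \) onto \( C[-1,1] \), so \( \hat{\mathcal K} \) is compact.
\item For \( (D) \), we use \( |w_\Delta(l(x))| = \frac{\beta-\alpha}{2}|w_{[-1,1]}(x)| \). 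The bounds then transfer up to constants. The shrinking window \( [\alpha+n^{-\tau},\beta-n^{-\tau}] \) becomes \( [-1+cn^{-\tau},1-cn^{-\tau}] \) with \( c = 2/(\beta-\alpha) \), which contains \( [-1+n^{-\tau'},1-n^{-\tau'}] \) for any \( \tau'>\tau \) and large \( n \). Equicontinuity on compacts is preserved.
\end{itemize}

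Next we relate the potentials and the polynomials. The identity \( V^{\hat\omega_n}(x) = V^{\omega_n}(l(x)) + \log\frac{\beta-\alpha}{2} \) gives \( \hat\theta_n = \theta_n\circ l + 2n\log\frac{\beta-\alpha}{2} \). The additive constant only rescales the weight, so it does not affect the monic orthogonal polynomial. Consequently
\[
T_n(e^{\theta_n}\mu_n)(l(z)) = \left(\tfrac{\beta-\alpha}{2}\right)^n T_n(e^{\hat\theta_n}\hat\mu_n)(z),
\]
because the right-hand side is monic in \( l(z) \) and satisfies the transformed orthogonality relations. Similarly,
\[
\exp\left(n\int\log(l(z)-y)\,d\omega_n(y)\right) = \left(\tfrac{\beta-\alpha}{2}\right)^n \exp\left(n\int\log(z-x)\,d\hat\omega_n(x)\right).
\]
The Szeg\H{o} functions are conformally natural: \( w_\Delta(l(z)) = \frac{\beta-\alpha}{2}w_{[-1,1]}(z) \) and \( \omega_\Delta \) pulls back to \( \omega_{[-1,1]} \), so the outer function \eqref{outer} satisfies \( \Omega_\Delta(f;l(z)) = \Omega_{[-1,1]}(f\circ l;z) \). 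With \( v_\Delta\circ l = \hat v \), this gives \( G(e^{h_n}\mu_n;l(z)) = G(e^{\hat h_n}\hat\mu_n;z) \), including the value at infinity. The factors \( ((\beta-\alpha)/2)^n \) cancel in \eqref{sapsad32}, and locally uniform convergence in \( D_{[-1,1]} \) is equivalent to locally uniform convergence in \( D_\Delta \).

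For the norm identity, substitute \( y=l(x) \) in the integral \( \int_\Delta T_n^2 e^{\theta_n}d\mu_n \). The factor \( ((\beta-\alpha)/2)^{2n} \) coming from \( T_n^2 \) is exactly cancelled by \( e^{-2n\log\frac{\beta-\alpha}{2}} \) from the relation between \( \theta_n\circ l \) and \( \hat\theta_n \). So the integral equals the corresponding \( [-1,1] \) integral, and the right-hand side \( 2G_n^2(\infty) \) is invariant.

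The main point requiring care is bookkeeping rather than analysis: the constant shift in \( \theta_n \) must exactly compensate the scaling of the monic polynomials, and the \( n^{-\tau} \) window in \( (D) \) must be adjusted by changing \( \tau \).
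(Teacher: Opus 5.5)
Your proposal is correct and follows the same route as the paper's proof. You pull everything back by the affine map \( l \), use \( V^{\hat\omega_n}=V^{\omega_n}\circ l+\log\tfrac{\beta-\alpha}{2} \) so that \( \hat\theta_n \) and \( \theta_n\circ l \) differ only by a constant, extract the common factor \( (\tfrac{\beta-\alpha}{2})^n \), and use \( w_\Delta\circ l=\tfrac{\beta-\alpha}{2}\,w_{[-1,1]} \) to see that \( \Omega \), hence \( G \), is invariant. You are more explicit than the paper about the norm identity and the \( n^{-\tau} \) window.

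There is one small slip in that window adjustment. The interval \( [-1+cn^{-\tau},1-cn^{-\tau}] \) contains \( [-1+n^{-\tau'},1-n^{-\tau'}] \) for large \( n \) only when \( n^{-\tau'}\geq cn^{-\tau} \), i.e.\ for \( 0<\tau'<\tau \) (or \( \tau'=\tau \) if \( c\le 1 \)), not for \( \tau'>\tau \). This is harmless, since \( (D) \) only requires some \( \tau>0 \).
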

\begin{proof}
Let \( l(z) = az+b \) be any linear transformation with \( a>0 \) and \( b \) real. Set \( \Delta^{(l)}:=l^{-1}(\Delta) \) and, given a measure \( \mu \) on \( \Delta \), let \( \mu^{(l)} \) denote a  measure on \( \Delta^{(l)} \) such that \( \mu^{(l)}(B) = \mu(l(B)) \) for any  set \( B\subseteq \Delta^{(l)} \). Notice that \( \omega_\Delta^{(l)} = \omega_{\Delta^{(l)}} \) and that the Radon-Nikodym derivative of \( \mu^{(l)} \) with respect to the Lebesgue measure (resp. \( \omega_{\Delta^{(l)}} \)) is equal to \( a\mu^\prime(l(x)) \) (resp. \( v(l(x) \)), where \( \mu^\prime(x) \) (resp. \( v(x) \)) is the Radon-Nikodym derivative of \( \mu \) with respect to the Lebesgue measure (resp. \( \omega_\Delta \)). Observe also that
\[
V^\omega(l(x)) = -\int\log|l(x)-l(y)|d\omega^{(l)}(y) = -\log a + V^{\omega^{(l)}}(x)
\]
for any  measure \( \omega \). Set \( \theta_n^{(l)}:=2nV^{\omega_n^{(l)}}+h_n^{(l)}\),  \( h_n^{(l)} = h\circ l \). Hence, it holds that
\[
T_n(e^{\theta_n}\mu_n)(l(z)) = a^n  T_n\left(e^{\theta_n^{(l)}}\mu_n^{(l)}\right)(z).
\]
Since \( w_\Delta(l(z)) = aw_{\Delta^{(l)}}(z) \), the above considerations show that triples \( (\mu_n^{(l)},h_n^{(l)},\omega_n^{(l)}) \) satisfy conditions \( (A) \)--\( (D) \) on \( \Delta^{(l)} \) if the triples \( (\mu_n,h_n,\omega_n) \) satisfy  \( (A) \)--\( (D) \) on \( \Delta \). Finally, we get from \eqref{outer} and \eqref{SzegoFun} that
\[
e^{n \int\log(l(z)-x)d\omega_n(x)} \frac{G(e^{h_n}\mu_n;\infty)}{G(e^{h_n}\mu_n;l(z))} = a^ne^{n \int\log(z-x)d\omega_n^{(l)}(x)} \frac{G(e^{h_n^{(l)}}\mu_n^{(l)};\infty)}{G(e^{h_n^{(l)}}\mu_n^{(l)};z)},
\]
which finishes the proof of the lemma.
\end{proof}

Condition \( (D) \), placed on the measures \( \omega_n \) in Theorem~\ref{thm:3}, comes from \cite[Theorem~10.2]{Totik}. Under this assumption, it was shown there that there exist polynomials \( H_n \), \( \deg H_n \leq n \), that do not vanish on \( [-1,1] \) and such that the functions \( \iota_n = e^{2n V^{\omega_n}}|H_n|^2 \) satisfy
\begin{equation}
\label{iota}
\begin{cases}
 0< \iota_n(x)\leq 1, \quad x\in [-1,1], \smallskip \\
\displaystyle  \lim_{n\to\infty}\int\log\iota_n d\omega_{[-1,1]} = 0.
\end{cases}
\end{equation}
We remark that \cite[Theorem~10.2]{Totik} was formulated on \( [0,1] \), but its results can be easily brought to \( [-1,1] \) by a linear transformation. In that theorem,  we put \( \gamma=1/2 \) and \( u\equiv 1 \),  and the degree satisfies \( \deg H_n = n-i_n \), where \( i_n\to\infty \). The non-vanishing of $H_n$ was claimed only on \( (-1,1) \), but it is clear from the construction, see \cite[pages~58 and~75]{Totik}, that these polynomials also do not vanish at the endpoints. Set
\begin{equation}\label{sapsad29}
\tau_n(z) := \frac{H_n(z)\overline{H_n(\bar z)}}{|H_n(0)|^2}.
\end{equation}
The polynomial \( \tau_n \) has an even degree not exceeding \( 2n \) and has the following properties:
\begin{itemize}
\item \( \tau_n \) has real coefficients,  does not vanish on $[-1,1]$, and \( \tau_n(0)=1 \);
\item  if we denote  the zeros of \( \tau_n \) by \( a_{n,1},\ldots,a_{n,\deg\tau_n} \), then at least a half of them, see \cite[page~94]{Totik}, are located in $\{|\re\, z|<0.9, |\im \,z|>L_n/n\}$ with $\lim_{n\to\infty}L_n=+\infty$ (this guarantees that condition \( (D_{[-1,1]})\) of Theorem~\ref{thm:2} is satisfied by $\tau_n$).
\end{itemize}

\begin{lemma}
\label{lem:vw4}
Under the conditions of Theorem~\ref{thm:3} with \( \Delta=[-1,1] \), it holds that
\begin{multline}
\label{sapsad31}
T_n^2\left(e^{\theta_n}\mu_n\right)(z) = \big(1+o_{\mathcal K}(1)\big) \frac{G^2(e^{h_n}\iota_n\mu_n;\infty)}{G^2(e^{h_n}\iota_n \mu_n;z)} \times \\  \frac1{2^{2n}} \prod_{j=1}^{\deg \tau_n}(2a_{n,j}\phi(a_{n,j})) \frac{\tau_n(z)}{\phi^{2n-\deg \tau_n}(z)}  \prod_{j=1}^{\deg\tau_n}\frac{1-\overline{\phi(a_{n,j})}\phi(z)}{\phi(z)-\phi(a_{n,j})} 
\end{multline}
locally uniformly in \( D=\overline\C\setminus[-1,1] \), where, as before, \( \phi = \phi_{[-1,1]} \), see \eqref{phi-w}. Moreover,
\[
\int_{-1}^1 T_n^2\left(e^{\theta_n}\mu_n\right)e^{\theta_n}d\mu_n = \big(1+o_{\mathcal K}(1)\big) \frac{G^2(e^{h_n}\iota_n\mu_n;\infty)}{2^{2n-1}}\prod_{j=1}^{\deg \tau_n}(2\phi(a_{n,j})).
\]
\end{lemma}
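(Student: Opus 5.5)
The plan is to rewrite the varying weight \(e^{\theta_n}\mu_n\) as a weight with a reciprocal polynomial factor and then apply Theorem~\ref{thm:2}. By definition of \(\iota_n\) and \eqref{sapsad29},
\[
e^{\theta_n} = e^{2nV^{\omega_n}+h_n} = \frac{e^{h_n}\iota_n}{|H_n|^2} = \frac{e^{h_n}\iota_n}{|H_n(0)|^2\,\tau_n} \quad\text{on } [-1,1].
\]
Multiplying the weight by the positive constant \(|H_n(0)|^2\) does not change the monic orthogonal polynomial. Hence \(T_n(e^{\theta_n}\mu_n)=p_n/\gamma_n\), where \(p_n\) are the orthonormal polynomials of \eqref{ortho-interval} with the triple \((\tilde\mu_n,h_n,\tau_n)\) and \(\tilde\mu_n:=\iota_n\mu_n\). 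The squared norm of the monic polynomial with respect to \(e^{\theta_n}\mu_n\) equals \(|H_n(0)|^{-2}\gamma_n^{-2}\). The factor \(|H_n(0)|^{-2}\) should be absorbed when we compare with the claimed formula, since \(\iota_n\) and \(\tau_n\) already carry this normalization. I will recheck this bookkeeping carefully, because the stated norm formula has no \(H_n(0)\) factor.

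Next I verify \((A_{[-1,1]})\)--\((D_{[-1,1]})\) for \((\iota_n\mu_n,h_n,\tau_n)\).
\begin{itemize}
\item \((A_{[-1,1]})\) follows from \((A)\), because \(0<\iota_n\le1\) by \eqref{iota}, so \(\int f\iota_n d\mu_n\le\int fd\mu_n\) for nonnegative \(f\).
\item \((C_{[-1,1]})\) is just \((C)\).
\item \((D_{[-1,1]})\) is the property of the zeros of \(\tau_n\) listed above: at least half of them lie in \(\{|\re z|<0.9,\ |\im z|>L_n/n\}\) with \(L_n\to\infty\). For such points \(1-|\phi(a)|\gtrsim \min(|\im a|,1)\), so the sum is \(\gtrsim L_n\to\infty\). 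If \(|\im a|\) is large the terms are bounded below anyway, so the sum still diverges because there are \(\gtrsim n\) such zeros.
\end{itemize}

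The delicate condition is \((B_{[-1,1]})\). The density of \(\iota_n\mu_n\) is \(\iota_n v_n\), and \(\|\log(\iota_nv_n)-\log v\|_{L^1(\omega)}\le\|\log v_n-\log v\|_{L^1(\omega)}+\int|\log\iota_n|d\omega\). Since \(\log\iota_n\le0\), we have \(\int|\log\iota_n|d\omega=-\int\log\iota_n d\omega\to0\) by \eqref{iota}. So \((B_{[-1,1]})\) holds with the same limit \(v\). I expect this to be the key observation rather than an obstacle; the sign condition \(\iota_n\le1\) is exactly what makes both \((A)\) and \((B)\) transfer.

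Finally I apply \eqref{A8-1} and \eqref{A8-2} with \(\widetilde G_n=G(e^{h_n}\iota_n\mu_n;\cdot)\). Dividing \(p_n^2\) from \eqref{A8-1} by \(\gamma_n^2\) from \eqref{A8-2} gives \(T_n^2\) as \((1+o_{\mathcal K}(1))\,\widetilde G_n^2(\infty)/\widetilde G_n^2(z)\) times \(2^{-2n}\prod(2a_{n,j}\phi(a_{n,j}))\,\tau_n(z)\prod\frac{1-\overline{\phi(a_{n,j})}\phi}{\phi-\phi(a_{n,j})}\). This must be adjusted for \(\deg\tau_n<2n\): the remaining \(2n-\deg\tau_n\) points \(a_{n,i}=\infty\) have \(\phi(\infty)=0\), and each contributes the factor \(1/\phi(z)\) in the Blaschke product. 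This produces \(\phi^{-(2n-\deg\tau_n)}\) and yields \eqref{sapsad31}. The norm formula is \(1/\gamma_n^2\) read off from \eqref{A8-2}, namely \(\widetilde G_n^2(\infty)2^{1-2n}\prod 2a_{n,j}\phi(a_{n,j})\), up to constant normalizations. I still need to reconcile the \(a_{n,j}\) factors and the \(|H_n(0)|^2\) factor with the stated formula. I expect that they combine via \(\tau_n(0)=1\), that is \(\prod a_{n,j}\) relates to the leading coefficient of \(\tau_n\), and the constant renormalization of the weight. This bookkeeping is the only place where I expect an error could creep in.
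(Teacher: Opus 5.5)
Your route is the paper's route. You rewrite $e^{\theta_n}=e^{h_n}\iota_n/(|H_n(0)|^2\tau_n)$, check $(A_{[-1,1]})$--$(D_{[-1,1]})$ for $(\iota_n\mu_n,h_n,\tau_n)$ using $0<\iota_n\le1$ and $\int\log\iota_n\,d\omega_{[-1,1]}\to0$, and obtain $T_n^2=p_n^2/\gamma_n^2$ from \eqref{A8-1} and \eqref{A8-2}. That part is correct and gives \eqref{sapsad31} exactly as in the paper, including the factor $\phi^{-(2n-\deg\tau_n)}$ coming from the points $a_{n,i}=\infty$.

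\textbf{The norm formula.} The step you left open is where the paper asserts that \eqref{sapsad29} gives $|H_n(0)|^2=\prod_i a_{n,i}$, and then applies \eqref{A8-2} to $|H_n(0)|^{-2}\gamma_n^{-2}$. Your hesitation is justified.

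Compare leading coefficients in \eqref{sapsad29}, using $\tau_n(z)=\prod_j(1-z/a_{n,j})$ with $\deg\tau_n$ even. This gives $|H_n(0)|^2=|c_n|^2\prod_j a_{n,j}$, where $c_n$ is the leading coefficient of $H_n$. But $|c_n|=1$ is incompatible with \eqref{iota}. Indeed, $\int V^{\omega_n}d\omega_{[-1,1]}=\log 2$ and $\int\log|x-b|\,d\omega_{[-1,1]}(x)=-\log2-\log|\phi(b)|$, so
\[
\int\log\iota_n\,d\omega_{[-1,1]}=2\log|c_n|+2(n-\deg H_n)\log2-\sum_j\log|\phi(a_{n,j})|.
\]
This quantity must be $\le0$ and tend to $0$, hence $|c_n|^2=(1+o(1))4^{-(n-\deg H_n)}\prod_j|\phi(a_{n,j})|\to0$.

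What your argument actually proves is therefore
\[
\begin{aligned}
\int_{-1}^1T_n^2(e^{\theta_n}\mu_n)e^{\theta_n}d\mu_n&=\big(1+o_{\mathcal K}(1)\big)\frac{G^2(e^{h_n}\iota_n\mu_n;\infty)}{2^{2n-1}}\,\frac{\prod_j 2a_{n,j}\phi(a_{n,j})}{|H_n(0)|^2}\\
&=2\big(1+o_{\mathcal K}(1)\big)G^2(e^{h_n}\iota_n\mu_n;\infty).
\end{aligned}
\]
The last equality holds because, by \eqref{geom-mean}, $2^{2n}|H_n(0)|^2/\prod_j(2a_{n,j}\phi(a_{n,j}))=\Omega_{[-1,1]}(\iota_n;\infty)=\exp\int\log\iota_n\,d\omega_{[-1,1]}\to1$. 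The printed right-hand side equals this times $|c_n|^2$, so do not try to force your bookkeeping into the stated form.

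The slip is harmless for Theorem~\ref{thm:3}. The paper makes the same substitution $|H_n(0)|^2\to\prod_j a_{n,j}$ in its formula for $\Omega(|H_n|^2;\cdot)$ in Lemma~\ref{lem:vw5}, so the two errors cancel. Moreover, the corrected display gives the norm asymptotics of Theorem~\ref{thm:3} directly, since $G^2(e^{h_n}\iota_n\mu_n;\infty)=\Omega_{[-1,1]}(\iota_n;\infty)\,G^2(e^{h_n}\mu_n;\infty)=(1+o(1))\,G^2(e^{h_n}\mu_n;\infty)$.
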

\begin{proof}
Since monic orthogonal polynomials do not depend on the normalization of the measure of orthogonality, we have that
\[
T_n\left(e^{\theta_n}\mu_n\right) = T_n\left(e^{h_n}\tau_n^{-1}\tilde\mu_n\right), \quad \tilde\mu_n := \iota_n \mu_n. 
\]

Let us show that conditions \( (A_{[-1,1]}) \)--\( (D_{[-1,1]}) \) of Theorem~\ref{thm:2} are satisfied by the triples \( (\tilde\mu_n,h_n,\tau_n) \). We have already mentioned that the polynomials \( \tau_n \) fulfill \( (D_{[-1,1]}) \). Moreover, condition \( (C) \) of Theorem~\ref{thm:3} is identical to condition \( (C_{[-1,1]}) \) of Theorem~\ref{thm:2}.  Since the Radon-Nikodym derivative of \( \tilde\mu_n \) with respect to \( \omega_{[-1,1]} \) is \( \iota_n v_n \), the functions \( \iota_n \) obey the first line of \eqref{iota}, and
\[
\big|\log(\iota_nv_n)(x) - \log v(x) \big| \leq \big|\log v_n(x) - \log v(x) | - \log\iota_n(x),
\]
condition \( (B_{[-1,1]}) \) of Theorem~\ref{thm:2} follows from condition \( (B) \) of Theorem~\ref{thm:3} and the second line of \eqref{iota}. Finally, condition \( (A) \) of Theorem~\ref{thm:3} implies condition \( (A_{[-1,1]}) \) of Theorem~\ref{thm:2} for the same measure \( \mu \) due to the upper bound in the first line of \eqref{iota}.  

If $p_n(\sigma)(z)=\gamma_nz^n+\cdots$ denotes the $n$-th orthonormal polynomial with respect to some measure $\sigma$, then we can write
\[
T_n(\sigma)(z) = \gamma_n^{-1}p_n(\sigma)(z), \quad \text{where} \quad \gamma_n^{-2}=\int T_n^2(\sigma)d\sigma.
\]
So, the first claim of the lemma is deduced from Theorem~\ref{thm:2}. To get the second one, we first observe that \eqref{sapsad29} implies \( |H_n(0)|^2 = \prod_i a_{n,i} \). Then, 
\[
\int_{-1}^1 T_n^2\left(e^{\theta_n}\mu_n\right)e^{\theta_n}d\mu_n  = \frac1{|H_n(0)|^2} \int_{-1}^1 T_n^2\left(e^{h_n}\tau_n^{-1}\tilde\mu_n\right)e^{h_n}\frac{d\tilde\mu_n}{\tau_n}
\]
and we only need to apply \eqref{A8-2} to the last integral.
\end{proof}

\begin{lemma}
\label{lem:vw5}
Theorem~\ref{thm:3} holds on \( \Delta=[-1,1] \). 
\end{lemma}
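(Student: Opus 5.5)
Starting from Lemma~\ref{lem:vw4}, I will show that the right-hand side of \eqref{sapsad31} equals $(1+o_{\mathcal K}(1))$ times the square of the right-hand side of \eqref{sapsad32}. The first step is to factor the Szeg\H{o} function of $e^{h_n}\iota_n\mu_n$. Since $\iota_n=e^{2nV^{\omega_n}}|H_n|^2$ and $\Omega_\Delta$ is multiplicative in its argument,
\[
G(e^{h_n}\iota_n\mu_n;z)=G(e^{h_n}\mu_n;z)\,\Omega\big(e^{nV^{\omega_n}};z\big)\,\Omega\big(|H_n|;z\big),
\]
with $\Omega=\Omega_{[-1,1]}$. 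Each factor is computed separately.

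For the polynomial factor, $|H_n|^2=\tau_n|H_n(0)|^2$ on $[-1,1]$. The outer function with modulus $|x-a|$ on $[-1,1]$ is computed explicitly. Since $1-\overline{\phi(a)}\phi(z)$ is zero-free in $D$ and $(z-a)$ has its zero at $a$, the identity
\[
z-a=\frac{(\phi(z)-\phi(a))(1-\overline{\phi(a)}\phi(z))}{2\phi(a)\phi(z)}\cdot\frac{\phi(a)}{\overline{\phi(a)}}\cdots
\]
gives, after a Joukowski computation,
\[
\Omega\big(|x-a|^2;z\big)=\frac{(1-\overline{\phi(a)}\phi(z))(1-\phi(a)\phi(z))}{4\,\phi(z)\,|\phi(a)|\cdots}
\]
up to constants. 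Multiplying over the conjugate-symmetric zeros $a_{n,j}$, I expect $\Omega^2(|H_n|;z)/\Omega^2(|H_n|;\infty)$ to cancel exactly the factor $\tau_n(z)\phi^{-\deg\tau_n}(z)\prod_j(1-\overline{\phi(a_{n,j})}\phi(z))/(\phi(z)-\phi(a_{n,j}))$ in \eqref{sapsad31}, leaving only $\phi^{-2n}(z)$ and constants. Rather than chase the constants by hand, I will verify this by noting that the ratio of the two sides is a zero-free holomorphic function in $D$ whose modulus has boundary value $1$ on $[-1,1]$, and then fix its value at $\infty$.

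For the potential factor, $\log e^{nV^{\omega_n}}=nV^{\omega_n}$ is the real part on $[-1,1]$ of $-n\int\log(z-x)\,d\omega_n(x)+n\log\phi(z)^{-1}\cdot$const. More precisely, the function $\exp\big(n\int\log(z-x)\,d\omega_n\big)\phi^{n}(z)$ is holomorphic and zero-free in $D$, bounded at $\infty$, and has modulus $e^{-nV^{\omega_n}}\cdot(\text{const})$ on $[-1,1]$, because $\log|\phi_\pm|=0$ there. Hence it equals a constant times $\Omega(e^{-nV^{\omega_n}};z)$. Inserting both computations into \eqref{sapsad31}, the factors of $\phi$ cancel and the product of constants reduces to $\Omega(\cdot;\infty)$-values. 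Using \eqref{geom-mean}, these constants must match $(1+o(1))$ by comparing leading coefficients: both sides are monic, so the behaviour at $\infty$ forces the constant to be $1+o_{\mathcal K}(1)$. Taking square roots, with the branch fixed by positivity at $\infty$, gives \eqref{sapsad32}.

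For the norm identity, I will substitute the same factorization at $z=\infty$ into the second formula of Lemma~\ref{lem:vw4}. This should leave $2G^2(e^{h_n}\mu_n;\infty)$ times
\[
\exp\Big(2n\int V^{\omega_n}\,d\omega_{[-1,1]}\Big)\,|H_n(0)|^2\prod_j\frac{\ldots}{\ldots}.
\]
Here the logarithmic energy identity $\int V^{\omega_n}\,d\omega_{[-1,1]}=\int V^{\omega_{[-1,1]}}\,d\omega_n=\log 2$ on $[-1,1]$ produces the factor $2^{2n}$, and $\prod_j|a_{n,j}|=|H_n(0)|^2$ cancels the $a$'s. The main obstacle is this bookkeeping of constants: the $2^{2n}$, the products $2a_{n,j}\phi(a_{n,j})$, and $\deg\tau_n<2n$. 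I plan to avoid it by the normalization argument at $\infty$ in the first claim. For the norm claim, though, I will carry out the explicit computation of $\Omega(|x-a|;\infty)=|a+w(a)|^{1/2}\cdot$const via \eqref{geom-mean}, which amounts to $\int\log|x-a|\,d\omega_{[-1,1]}=\log|a-w(a)|^{-1}\cdots=-\log|2\phi(a)|$.
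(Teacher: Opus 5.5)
Your proposal is correct and follows the paper's proof. You split $\Omega(\iota_n;\cdot)=\Omega(e^{2nV^{\omega_n}};\cdot)\,\Omega(|H_n|^2;\cdot)$ and identify each factor with an explicit zero-free holomorphic function having the right boundary modulus; the paper notes that this step uses the continuity of $V^{\omega_n}$, which comes from condition (D). You then evaluate the constants at $\infty$ through $\int V^{\omega_n}\,d\omega_{[-1,1]}=\log 2$ and $\int\log|x-a|\,d\omega_{[-1,1]}(x)=-\log|2\phi(a)|$.

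Fixing the constant in the first claim by comparing leading coefficients is a valid shortcut, because \eqref{sapsad31} holds uniformly near $\infty\in D$. However, clean up the sketched intermediate formulas: $\Omega(|x-a|;z)=\big(1-\overline{\phi(a)}\phi(z)\big)\big(1-\phi(a)\phi(z)\big)/(2|\phi(a)|)$ has no $\phi(z)$ in the denominator, and $\Omega(|x-a|;\infty)=|a+w_{[-1,1]}(a)|/2$ involves no square root.
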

\begin{proof}
We will show that the right-hand side of \eqref{sapsad31} can be written in a form consistent with \eqref{sapsad32}. We readily get from \eqref{outer} and \eqref{SzegoFun} that
\[
G^2(e^{h_n}\iota_n\mu_n;z)G_n^{-2}(e^{h_n}\mu_n;z) = \Omega(\iota_n;z) = \Omega\big(e^{2nV^{\omega_n}};z\big) \Omega\big(|H_n|^2;z\big),
\]
where \( \Omega(\cdot;\cdot) \) is used as a shorthand for \( \Omega_{[-1,1]}(\cdot;\cdot) \). Let us show that
\[
\begin{cases}
\Omega\big(|H_n|^2;z\big) & \displaystyle = \tau_n(z)\phi^{\deg \tau_n}(z) \left( \prod_{j=1}^{\deg \tau_n} a_{n,j}\right)\prod_{j=1}^{\deg \tau_n}\frac{1-\overline{\phi(a_{n,j})}\phi(z)}{\phi(z)-\phi(a_{n,j})}, \medskip \\
\Omega\big(e^{V^{\omega_n}};z\big) & \displaystyle = \phi^{-1}(z)  \exp\left( \int \log(z-x)d\omega_n(x)\right).
\end{cases}
\]
Both equalities follow from the same general principle: if \( f \) is a continuous function on \( [-1,1] \) and \( \Omega \) is a holomorphic non-vanishing function in \( D \) such that \( |\Omega| \) is continuous in the entire extended complex plane and \( |\Omega| = f \) on \( [-1,1] \), then \( \Omega = \Omega(f;\cdot) \). Continuity of \( |H_n|^2 \) is obvious while continuity of \( V^{\omega_n} \) follows from condition \( (D) \) and properties of the logarithmic potentials. Recall that \( \deg \tau_n \) is an even integer and that \( 2z\phi(z) \to 1 \) as \( z\to\infty \). Now, to complete the proof of the lemma, it  remains to notice that the above explicit representations yield
\[
\Omega\big(|H_n|^2;\infty\big) = \prod_{i=1}^{\deg \tau_n}(2\phi(a_{n,i}))^{-1} \qandq \Omega\big(e^{V^{\omega_n}};\infty\big) = 2. \qedhere
\]

\end{proof}

\section{OPs with Varying Weights and Uniformly Szeg\H{o} Measures}
\label{sec:usm}

In this section, the weights of orthogonality come from a single measure \( \mu \) (rather than a sequence of measures \( \mu_n \) as in Theorem~\ref{thm:3}) and triples \( (h_n,\kappa_n, \omega_n) \), where each \( h_n,\kappa_n \) are continuous functions on \( \Delta(\mu) \), the convex hull of the support of \( \mu \), and \( \omega_n \) is a measure whose support now can be a proper subset of \( \Delta(\mu) \).  The latter possibility necessitates strengthening of the notion of a Szeg\H{o} measure.  We shall say that a measure \( \mu \) is \emph{uniformly Szeg\H{o}} on a closed interval \( \Delta\subseteq\Delta(\mu) \), and denote this by \( \mu\in\mathrm{USz}(\Delta) \), if \( \mu\in\mathrm{Sz}(\Delta) \) and for any sequence of closed intervals \( \{\Delta_n\} \) such that \( \Delta_n\subseteq \Delta(\mu) \) and \( \Delta_n\to\Delta \) as \( n\to\infty \), there is $n_0$ such that \( \mu\in\mathrm{Sz}(\Delta_n) \) for $n\ge n_0$ and 
\begin{equation}
\label{UnSzego}
\lim_{n\to\infty}\int \big|\log \mu^\prime(x) - \log \mu^\prime(l_{\Delta\to\Delta_n}(x)) \big| d\omega_\Delta(x) \to 0,
\end{equation}
where \( l_{\Delta\to\Delta_n} \) is a linear function with a positive leading coefficient that maps \( \Delta \) onto \( \Delta_n \).  We adopt the term  ``uniformly Szeg\H{o}'' to emphasize that small perturbations of the endpoints of \( \Delta \) result in small changes of the value of the Szeg\H{o} function at infinity. In fact, this is true for the whole Szeg\H{o} function locally uniformly on \( \overline{\C}\backslash \Delta \).

\begin{proposition}
\label{prop:sa2}
Let \( \mu \) be a compactly supported measure such that \( \mu\in\mathrm{USz}(\Delta) \) for some \( \Delta\subseteq\Delta(\mu) \) and \( \{h_n\} \) be a sequence of continuous functions that converges uniformly on some closed interval that contains \( \Delta \) in its interior to a continuous function \( h \). Further, let \( \{\Delta_n\} \) be collections of closed intervals such that \( \Delta_n\subseteq \Delta(\mu) \) and \( \Delta_n\to\Delta \) as \( n\to\infty \). Then, the asymptotics
\begin{equation}
\label{SzegoCont}
G\big(e^{h_n}\mu_{|\Delta_n};z\big) =(1+o(1)) G\big(e^h\mu_{|\Delta};z\big).
\end{equation}
holds locally uniformly in $D_\Delta$
\end{proposition}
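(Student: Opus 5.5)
Since both sides of \eqref{SzegoCont} are non-vanishing in $D_\Delta$, we prove the equivalent statement that $\log G(e^{h_n}\mu_{|\Delta_n};z)-\log G(e^h\mu_{|\Delta};z)\to0$ locally uniformly in $D_\Delta$. We pull everything back to the fixed interval $\Delta$ with the linear maps $l_n:=l_{\Delta\to\Delta_n}$.

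\textbf{Step 1: change of variables.} For $z$ in a compact $K\subset D_\Delta$, we have $K\subset D_{\Delta_n}$ for all large $n$, since $\Delta_n\to\Delta$. By \eqref{outer} and \eqref{SzegoFun}, $\log G(f\mu_{|\Delta_n};z)$ equals $\frac12 w_{\Delta_n}(z)\int_{\Delta_n}\log v_{\Delta_n}^f(x)\frac{d\omega_{\Delta_n}(x)}{z-x}$, where $v^f_{\Delta_n}(x)=\pi f(x)\mu'(x)\sqrt{(x-\alpha_n)(\beta_n-x)}$. Substitute $x=l_n(y)$ and $z=l_n(\zeta_n)$ with $\zeta_n:=l_n^{-1}(z)$. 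Then $\omega_{\Delta_n}$ becomes $\omega_\Delta$, $w_{\Delta_n}(z)=a_nw_\Delta(\zeta_n)$, and $z-x=a_n(\zeta_n-y)$, where $a_n\to1$ is the leading coefficient of $l_n$. Also $\sqrt{(l_n(y)-\alpha_n)(\beta_n-l_n(y))}=a_n\sqrt{(y-\alpha)(\beta-y)}$. Hence
\[
\log G(e^{h_n}\mu_{|\Delta_n};z)=\frac12 w_\Delta(\zeta_n)\int_\Delta\Big(h_n(l_n(y))+\log\mu'(l_n(y))+\log a_n+\log\big(\pi\sqrt{(y-\alpha)(\beta-y)}\big)\Big)\frac{d\omega_\Delta(y)}{\zeta_n-y}.
\]
The corresponding formula for $G(e^h\mu_{|\Delta};z)$ has $h$, $\log\mu'$, no $a_n$ term, and $z$ in place of $\zeta_n$.

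\textbf{Step 2: compare term by term.} For $z\in K$, the points $\zeta_n$ converge uniformly to $z$ and stay in a compact subset of $D_\Delta$. On that set, $w_\Delta(\zeta)/(\zeta-y)$ is bounded uniformly in $y\in\Delta$, and in fact equicontinuous in $\zeta$. The same holds for $1/(\zeta-y)$ on finite $\zeta$; near $\infty$ one uses the product $w_\Delta(\zeta)/(\zeta-y)$, which stays bounded. With this kernel bound, each term is handled separately.
\begin{itemize}
\item The $h$-term: $h_n\circ l_n\to h$ uniformly on $\Delta$. This uses that $h_n\to h$ uniformly on a neighbourhood of $\Delta$ containing $\Delta_n$ for large $n$, together with the uniform continuity of $h$.
\item The $\log\mu'$-term: the difference is bounded by $C\|\log\mu'\circ l_n-\log\mu'\|_{L^1(\omega_\Delta)}$, which tends to $0$ by \eqref{UnSzego}.
\item The constant $\log a_n$ tends to $0$.
\item Replacing $\zeta_n$ by $z$ in the kernel changes the integral by at most $\sup_y\big|\frac{w_\Delta(\zeta_n)}{\zeta_n-y}-\frac{w_\Delta(z)}{z-y}\big|\cdot\|\log v^{e^h}_\Delta\|_{L^1(\omega_\Delta)}$. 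This tends to $0$ because $\mu\in\mathrm{Sz}(\Delta)$ and $h$ is bounded.
\end{itemize}

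\textbf{Main obstacle.} The step needing the most care is the uniform control of the kernel $w_\Delta(\zeta)/(\zeta-y)$ for $\zeta$ near $\infty$ and near, but not on, $\Delta$. The estimate must hold on compacts of $D_\Delta$ only. The quantitative input is the uniform bound $|w_\Delta(\zeta)/(\zeta-y)|\le C_K$ for $\zeta$ in a compact neighbourhood of $K$ avoiding $\Delta$. This follows from $\operatorname{dist}(\zeta,\Delta)>0$ and $w_\Delta(\zeta)\sim\zeta$ at infinity. Note that $\infty$ is fixed by $l_n^{-1}$, which is harmless. Once this bound is in place, the convergence is uniform, and exponentiating gives \eqref{SzegoCont}.
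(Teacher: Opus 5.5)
Your proof is correct and follows essentially the paper's route. You pull back to $\Delta$ via $l_{\Delta\to\Delta_n}$, handle the $h$-part by uniform convergence and the $\log\mu'$-part by \eqref{UnSzego}, and finish with a uniform bound on the kernel over compacts of $D_\Delta$. The only difference is cosmetic: you also rescale $z$, so the paper's two kernel-error terms $J_{n,1}$ and $J_{n,3}$ become your single uniform-continuity step for $w_\Delta(\zeta)/(\zeta-y)$.
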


Since the concept of uniformly Szeg\H{o} measures is important to our analysis, we provide a different characterization of this class. To this end, given an integrable function \( \theta \) on an interval \( \Delta \), we let
\[
(I_\gamma\theta)(x) :=  \frac1{\sqrt\pi}\int_\gamma^x \frac{\theta(t)dt}{\sqrt{|x-t|}}, \quad x\in\Delta,
\]
where \( \gamma\in\Delta \) is fixed. That is a version of the so-called Riemann-Liouville fractional integral and it corresponds to the exponent \( 1/2 \). As an integral transform, \( I_\gamma \) is a continuous operator from \( L^1(\Delta) \) into weak-\( L^2 (\Delta) \), see \cite[Lemma~2.13]{MR4191495} (these are Lebesgue spaces with respect to the Lebesgue measure on \( \Delta \)). 
\begin{proposition}
\label{prop:sa3}
Let \( [\alpha,\beta]=\Delta \subseteq \Delta(\mu) \). The following are equivalent:
\begin{itemize}
\item[(i)] \( \mu\in\mathrm{USz}(\Delta) \);
\item[(ii)] \( (I_\gamma\log\mu^\prime)(x) \), \( x\in\Delta(\mu) \), is continuous at \( \beta \) and \( \alpha \), where \( \gamma \in(\alpha,\beta) \) is any;
\item[(iii)] for every \( \epsilon>0 \) there exists \( d_\epsilon>0 \) such that 
\[
\left| \int_{a_\alpha}^{b_\alpha}\frac{\log \mu^\prime(t) dt}{\sqrt{t-a_\alpha}} \right|,\left| \int_{a_\beta}^{b_\beta}\frac{\log \mu^\prime(t) dt}{\sqrt{b_\beta-t}} \right|<\epsilon
\]
when \( \dist(\alpha,\{a_\alpha,b_\alpha\}),\dist(\beta,\{a_\beta,b_\beta\})<d_\epsilon \).
\end{itemize}
\end{proposition}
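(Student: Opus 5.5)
We want to show (i)$\Leftrightarrow$(ii)$\Leftrightarrow$(iii). The plan is to reduce everything to the behaviour of the one-sided Abel-type integrals near the endpoints. Write $f=\log\mu'$. Changing variables $x=l_{\Delta\to\Delta_n}(t)$ in \eqref{UnSzego} and using $d\omega_\Delta(x)=dx/(\pi\sqrt{(x-\alpha)(\beta-x)})$, condition \eqref{UnSzego} concerns $\int |f(x)-f(l_n(x))|\,d\omega_\Delta(x)$ with $l_n\to\mathrm{id}$. Away from the endpoints the weight is bounded, so this part tends to zero by continuity of translations and dilations in $L^1$, since $f\in L^1$ on compacts of $(\alpha,\beta)$ when $\mu\in\mathrm{Sz}(\Delta)$. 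Hence (i) is a statement localized near $\alpha$ and $\beta$, and we treat only $\beta$; $\alpha$ is symmetric. Near $\beta$ the weight is comparable to $(\beta-x)^{-1/2}$, and $\omega_{\Delta_n}$ near $b_n$ is comparable to $(b_n-t)^{-1/2}dt$.

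\textbf{(i)$\Rightarrow$(iii).} We have $\mu\in\mathrm{Sz}(\Delta_n)$ for all intervals near $\Delta$, so $f$ is integrable against $(b-t)^{-1/2}$ near $\beta$ for every $b$ close to $\beta$. Suppose (iii) fails. Then there are $a_\beta^{(n)},b_\beta^{(n)}\to\beta$ with $\big|\int_{a}^{b} f(t)(b-t)^{-1/2}dt\big|\ge\epsilon$. Apply \eqref{UnSzego} to $\Delta_n=[\alpha,b_\beta^{(n)}]$. This gives $\int_\Delta |f(l_n x)|\,d\omega_\Delta\to\int_\Delta|f|\,d\omega_\Delta$. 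Equivalently, after rescaling, $|f|(b_n-t)^{-1/2}\mathbf 1_{[\beta-\delta,b_n]}$ is uniformly integrable. Combined with the smallness of $\int_{\beta-\delta}^{\beta}|f|(\beta-t)^{-1/2}dt$, this yields smallness of the tail integrals over short intervals ending at $b_n$, a contradiction. We keep the absolute value throughout, which only strengthens the conclusion.

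\textbf{(iii)$\Rightarrow$(ii).} Compare this with $(I_\gamma f)(x)-(I_\gamma f)(\beta)$, which equals
\[
\frac{1}{\sqrt\pi}\int_\gamma^{\beta-\eta}f(t)\Big[(x-t)^{-1/2}-(\beta-t)^{-1/2}\Big]dt
\]
plus the pieces $\int_{\beta-\eta}^{x}$ and $\int_{\beta-\eta}^{\beta}$. The first piece is small because the bracket is uniformly small once $x\to\beta$ with $\eta$ fixed. The remaining pieces are exactly of the form controlled by (iii), with $b=x$ or $b=\beta$ and $a=\beta-\eta$. The same decomposition run backwards gives (ii)$\Rightarrow$(iii) for pairs $(a,b)$: take differences $I_\gamma f(b)-I_\gamma f(a)$ and an extra kernel-difference term that is handled by a similar splitting.

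\textbf{(iii)$\Rightarrow$(i).} This is the main obstacle, because (iii) controls signed integrals while \eqref{UnSzego} has an absolute value. The first thing I would try is to note that the paper calls the class ``uniformly Szeg\H{o}'' because of continuity of $\int\log\mu'\,d\omega_{\Delta_n}$. So I would show that (iii) gives $\int f\,d\omega_{\Delta_n}\to\int f\,d\omega_\Delta$, and likewise with $f$ replaced by $f^{\pm}$. To get the $f^\pm$ versions I would use that $\log\mu'$ is bounded above in the $L^1$ sense, since $\mu$ is finite: Jensen applied to $f^+$ on short intervals controls the positive part via $\int\mu'$. Then (iii) transfers to $f^-$. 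With convergence of $\int|f\circ l_n|\,d\omega_\Delta$ to $\int|f|\,d\omega_\Delta$ and a.e.\ convergence along subsequences (Lebesgue points), the Brezis--Lieb/Scheff\'e lemma gives \eqref{UnSzego}. If the positive part cannot be handled this way, the fallback is to prove the weaker convergence of the Szeg\H{o} integrals directly. That weaker convergence is what Proposition~\ref{prop:sa2} uses.
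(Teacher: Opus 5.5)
\textbf{The gap: (ii)$\Rightarrow$(iii).} Your chain needs this step, because no other implication in it starts from (ii). Running your decomposition backwards gives
\[
\int_a^b\frac{f(t)\,dt}{\sqrt{b-t}}=\sqrt\pi\big[(I_\gamma f)(b)-(I_\gamma f)(a)\big]+\int_\gamma^a f(t)\Big[\frac1{\sqrt{a-t}}-\frac1{\sqrt{b-t}}\Big]dt .
\]
By (ii) the first term is small. The kernel-difference term, however, cannot be handled by ``a similar splitting.'' When $a-t\ll b-a$ the bracket is comparable to $(a-t)^{-1/2}$, so this term is genuinely local. After you discard $[\gamma,a-\eta]$, the piece over $[a-\eta,a]$ splits into $\int_{a-\eta}^a f(t)(a-t)^{-1/2}dt$ and $\int_{a-\eta}^a f(t)(b-t)^{-1/2}dt$. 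Both are local Abel integrals of exactly the type (iii) asks you to control, this time at the point $a$, so the argument is circular. Continuity of $I_\gamma f$ at a single point gives no a priori control of the local pieces of the Abel integral. Passing to $|f|$ does not rescue this: positivity only yields the lower bound $\int_a^b|f|(b-t)^{-1/2}dt\ge\sqrt\pi\big[(I_\gamma|f|)(b)-(I_\gamma|f|)(a)\big]$.

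\textbf{The missing idea: invert the Abel transform.} The paper obtains the identity below via $\theta=\frac{d}{dx}I_\gamma^2\theta$ and a justified differentiation under the integral. You can also verify it directly by Fubini, using $\int_t^{x-\delta}\frac{ds}{(x-s)\sqrt{(x-\delta-s)(s-t)}}=\frac{\pi}{\sqrt{\delta(x-t)}}$:
\[
\int_\gamma^{x-\delta}\frac{f(t)\,dt}{\sqrt{x-t}}=\sqrt{\frac\delta\pi}\int_\gamma^{x-\delta}\frac{(I_\gamma f)(s)\,ds}{(x-s)\sqrt{x-\delta-s}} .
\]
Under $x-\delta-s=\delta t^2$ the kernel becomes $\frac{2}{\sqrt\pi}\frac{dt}{1+t^2}$. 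This is $\sqrt\pi$ times a probability measure concentrated at scale $\delta$ just to the left of $x-\delta$. Consequently $\frac1{\sqrt\pi}\int_{x-\delta}^x f(t)(x-t)^{-1/2}dt$ equals $(I_\gamma f)(x)$ minus a Cauchy-type average of $I_\gamma f$ near $x$. The tails of that average are $O(\delta^{1/8})$ by local boundedness and integrability of $I_\gamma f$. The main part is bounded by the oscillation of $I_\gamma f$ on $[x-\delta-\delta^{1/4},x]$, which is small by continuity at $\beta$.

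\textbf{Your other directions.} These are sound. Your (i)$\Rightarrow$(iii) works via the uniform integrability supplied by the $L^1$ convergence in \eqref{UnSzego}; note that this comes from \eqref{UnSzego} itself, not merely from convergence of the norms. Your (iii)$\Rightarrow$(ii), splitting at $\beta-\eta$, is a tidy elementary argument that the paper does not use.

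\textbf{A caution on (iii)$\Rightarrow$(i).} Your fallback of proving only convergence of the Szeg\H{o} integrals would not establish (i), since (i) is defined with the absolute value in \eqref{UnSzego}. The clean route is the paper's. Since $\log^+\mu'\in L^p$ for every $p<\infty$, H\"older makes its contribution to the integrals in (iii) negligible, so (iii) holds for $|\log\mu'|$. Splitting \eqref{UnSzego} into a middle part, handled by continuity of dilations in $L^1$, and two tails, controlled by (iii), then finishes the argument without Scheff\'e.
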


If \( \theta\in L^p(\Delta) \) for some \( p>2 \), then \( I_\gamma\theta \)  is H\"older continuous on \( \Delta \) with exponent at least \( 2-1/p \), see \cite[Theorem~12]{MR1544927}. Since \( \mu^\prime \) is an integrable function, \( \log^+\mu^\prime \) is in \( L^p(\Delta) \) for any \( p>2 \). Hence, \( I_\gamma\log^+\mu^\prime \) is necessarily H\"older continuous and therefore Proposition~\ref{prop:sa3} could be equivalently stated with \( \log\mu^\prime \) replaced by either \( |\log\mu^\prime| \) or \( \log^-\mu^\prime \).

Below, we assume that the triples \( (h_n,\kappa_n,\omega_n) \) are such that
\emph{
\begin{itemize}
\item[$(1)$] \( \omega_n \) is a probability measure supported on some interval \( [\alpha_n,\beta_n] = \Delta_n\subseteq \Delta(\mu) \) and these intervals converge to an interval \( \Delta =[\alpha,\beta]\) as \( n\to\infty \);
\item[$(2)$] We have that \( d\omega_n(x)=\omega_n^\prime(x)dx \) and the densities \( \omega_n^\prime \)  form a uniformly equicontinuous family on any compact subset of $(\alpha,\beta)$. Moreover, they satisfy the bounds
\begin{equation}
\label{w_cond_1}
|w_{\Delta_n}(x)|^{\varkappa_L} \lesssim \omega_n^\prime(x) \lesssim|w_{\Delta_n}(x)|^{\varkappa_U}, \quad x\in(\alpha_n,\beta_n),
\end{equation}
for some \( \varkappa_L,\varkappa_U>-2 \). If \( \alpha_n>\alpha(\mu) \), we additionally assume that the inequalities
\begin{equation}
\label{w_cond_2}
\omega_n^\prime(x)\lesssim |x-\alpha_n|^{\widehat\varkappa_U}, \quad x\in(\alpha_n,\alpha_n+\delta),
\end{equation} hold for  $\widehat\varkappa_U>0$ and $n$-independent positive $\delta$. The similar assumption is made for the case when  \( \beta_n<\beta(\mu) \);
\item[$(3)$] The functions \( h_n \) belong to \( \mathcal K \), a fixed compact subset of \( C(\Delta(\mu)) \);
\item[$(4)$] The functions \( \kappa_n \) are such that
\[
\kappa_n(x)
\begin{cases}
\leq 0, & x\in \Delta(\mu), \\
= 0, & x\in \Delta_n,
\end{cases}
\]
and the bound
\begin{equation}
\label{w_cond_3}
|w_{\Delta_n}(x)|^\varkappa \lesssim |\kappa_n(x)| \lesssim |w_{\Delta_n}(x)|^\varkappa, \quad x\in\Delta(\mu)\setminus\Delta_n,
\end{equation}
holds for some \( \varkappa>0 \).
\end{itemize}
}
The functions $\kappa_n$ are introduced to model the behavior of the external field away from the support of the extremal measure. Recall \eqref{equilibrium condition}. It shows that
\[
w_n^{2n} = e^{2nV^{\omega_n}}e^{2n(\log w_n - V^{\omega_n})},
\]
where \( \omega_n \) is the extremal measure corresponding to the weight function \( w_n \). The difference \( \log w_n - V^{\omega_n}+\int \log w_nd\omega_n \) is non-positive on \( \Delta(\mu) \) and is identically zero on \( \supp(\omega_n) \). The functions \( \kappa_n \) account for those differences.

\begin{theorem} 
\label{thm:4}
Let \( \mu \) be a compactly supported measure. Assume that the triples \( (h_n,\kappa_n,\omega_n) \) possess the above properties (1)--(4) and that \( \mu\in\mathrm{USz}(\Delta) \). Set \( \theta_n := 2n(V^{\omega_n} + \kappa_n)+h_n \). Then, we have that
\[
T_n\left(e^{\theta_n}\mu\right)(z) = \left(1+o_{\mathcal K}(1)\right) \exp\left( n \int\log(z-x)d\omega_n(x)\right) \frac{G(e^{h_n}\mu_{|\Delta};\infty)}{G(e^{h_n}\mu_{|\Delta};z)}
\]
 as \( n\to\infty \) locally uniformly in \( D_\Delta \). Moreover,
\[
\int_{\Delta(\mu)} T_n^2\left(e^{\theta_n}\mu\right)(x)e^{\theta_n(x)}d\mu(x) = 2\big(1+o_{\mathcal K}(1)\big)G^2\big(e^{h_n}\mu_{|\Delta};\infty\big).
\]
In the above two formulae, the functions \( G(e^{h_n}\mu_{|\Delta};z) \) can be replaced by \( G(e^{h_n}\mu_{|\Delta_n};z) \).
\end{theorem}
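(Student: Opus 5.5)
The plan is to reduce Theorem~\ref{thm:4} to Theorem~\ref{thm:3} applied on the moving intervals $\Delta_n$, and to control the part of $\mu$ living on $\Delta(\mu)\setminus\Delta_n$ separately. By Lemma~\ref{lem:vw3}, rescaled so that it acts interval-by-interval, we transport everything from $\Delta_n$ to $[-1,1]$ via $l_n:=l_{[-1,1]\to\Delta_n}$. The pulled-back data are as follows. The measures $\mu_n:=(\mu_{|\Delta_n})^{(l_n)}$ have densities $v\circ l_n$ with respect to $\omega_{[-1,1]}$. The functions $h_n\circ l_n$ lie in a compact subset of $C[-1,1]$, because $\mathcal K$ is compact and hence equicontinuous, and $l_n\to l_{[-1,1]\to\Delta}$. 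The measures $\omega_n^{(l_n)}$ satisfy condition $(D)$ by \eqref{w_cond_1}, since $w_{\Delta_n}\circ l_n$ is a constant multiple of $w_{[-1,1]}$. Condition $(A)$ holds with limit measure $\mu_{|\Delta}^{(l)}$, at least for the absolutely continuous parts. Condition $(B)$ is exactly the uniformly Szeg\H{o} assumption \eqref{UnSzego}, after composing with $l_{\Delta\to[-1,1]}$. For the singular part near the endpoints, $(A)$ only asks for a $\limsup$ inequality, so weak compactness of $\mu_{|\Delta_n}$ suffices. With this setup Theorem~\ref{thm:3} gives the asymptotics, with $G(e^{h_n}\mu_{|\Delta_n};\cdot)$, for the polynomials $Q_n:=T_n(e^{\theta_n}\mu_{|\Delta_n})$, where $\kappa_n=0$ on $\Delta_n$. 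It also gives the norm $\int Q_n^2e^{\theta_n}d\mu_{|\Delta_n}=2(1+o(1))G^2(e^{h_n}\mu_{|\Delta_n};\infty)$. Proposition~\ref{prop:sa2} then lets us replace $G(e^{h_n}\mu_{|\Delta_n})$ by $G(e^{h_n}\mu_{|\Delta})$, provided we use $h_n\to h$ along subsequences together with compactness of $\mathcal K$.

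It remains to show that adding the piece $\mu_{|\Delta(\mu)\setminus\Delta_n}$ with weight $e^{\theta_n}$ changes nothing asymptotically. We use the extremal property of monic orthogonal polynomials: writing $P_n:=T_n(e^{\theta_n}\mu)$, we have
\[
\int P_n^2e^{\theta_n}d\mu\le \int Q_n^2e^{\theta_n}d\mu=\int Q_n^2e^{\theta_n}d\mu_{|\Delta_n}+\int_{\Delta(\mu)\setminus\Delta_n}Q_n^2e^{\theta_n}d\mu.
\]
The key estimate is that the last integral is $o(1)$. To get it, we bound $|Q_n(x)|^2e^{2nV^{\omega_n}(x)}$ for $x$ outside $\Delta_n$. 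By the asymptotic formula, $|Q_n(x)|e^{nV^{\omega_n}(x)}$ behaves like $|G_n(\infty)/G_n(x)|$ away from $\Delta_n$. Close to the endpoints we instead need a uniform bound of at most polynomial (or subexponential) growth in $n$. We try to obtain it from a Bernstein--Walsh/Remez-type argument on $\Delta_n$, together with the decay bound \eqref{w_cond_2}: this bound makes $V^{\omega_n}$ H\"older at $\alpha_n$, so $2nV^{\omega_n}(x)-2nV^{\omega_n}(\alpha_n)$ grows only like $n|x-\alpha_n|^{1/2+\cdots}$, and that growth is dominated by $2n\kappa_n\lesssim -n|x-\alpha_n|^{\varkappa/2}$ from \eqref{w_cond_3}. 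This gives exponential smallness outside an $n^{-c}$-neighbourhood of $\Delta_n$. The thin layer of width $n^{-c}$ is handled by the continuity of $\mu$-mass near the endpoints of $\Delta$, which comes from $\mu\in\mathrm{USz}(\Delta)$ through Proposition~\ref{prop:sa3}(iii).

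For the lower bound and the strong asymptotics of $P_n$ itself, we follow the Hilbert-space argument of Lemma~\ref{lem:pw6}. Since both norms are $2(1+o(1))G_n^2(\infty)$, orthogonality of $P_n$ gives $\|Q_n-P_n\|^2=\|Q_n\|^2-\|P_n\|^2=o(G_n^2(\infty))$ in $L^2(e^{\theta_n}\mu_{|\Delta_n})$. Evaluating at $z\in D_\Delta$ through the Christoffel-type bound that Theorem~\ref{thm:3} furnishes, that is, point evaluations normalized by $e^{n\int\log(z-x)d\omega_n}/G_n(z)$, transfers this to locally uniform asymptotics. The lower bound $\int P_n^2e^{\theta_n}d\mu\ge\int P_n^2e^{\theta_n}d\mu_{|\Delta_n}\ge\int Q_n^2e^{\theta_n}d\mu_{|\Delta_n}$ is immediate from the same extremal property.

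The main obstacle is the estimate of $Q_n^2e^{\theta_n}$ near and outside the endpoints of $\Delta_n$, in particular the uniform polynomial bound for $Q_n^2e^{2nV^{\omega_n}}$ on $\Delta_n$ up to the endpoints. We would first try to derive this from the $L^2$ norm via a Nikolskii-type inequality for weighted polynomials as in \cite{Totik}.
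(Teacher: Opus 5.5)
Your architecture is the right one: apply Theorem~\ref{thm:3} to a truncated measure, compare with $P_n=T_n(e^{\theta_n}\mu)$ through the extremal property, then pass from $L^2$-smallness to locally uniform asymptotics by a Cauchy-integral argument. The gap is the interval you truncate to. The estimate everything hinges on, $\int_{\Delta(\mu)\setminus\Delta_n}Q_n^2e^{\theta_n}d\mu=o(1)$, is false in general, and your ``thin layer'' is exactly where it breaks. By \eqref{w_cond_3}, $2n|\kappa_n(\beta_n+t)|\asymp nt^{\varkappa/2}$, so $e^{2n\kappa_n}$ gives no decay at distance $t\lesssim n^{-2/\varkappa}$ from $\Delta_n$. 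Meanwhile $Q_n$, being orthogonal only on $\Delta_n$, does not see the part of $\mu$ that lives there. Since $\mu\in\mathrm{USz}(\Delta)$ restricts only $\log\mu'$, $\mu$ may for instance have an atom at $\beta$. If $\beta_n\uparrow\beta$ with $n(\beta-\beta_n)^{\varkappa/2}\to0$, that atom alone contributes about $\mu(\{\beta\})\,Q_n^2(\beta)e^{2nV^{\omega_n}(\beta)+h_n(\beta)}$. Weighted orthogonal polynomials are not small at the edge of their support. At a square-root (soft) edge, which \eqref{w_cond_2} permits, $Q_n^2e^{2nV^{\omega_n}}$ is even of order $n^{1/3}$ relative to $\int Q_n^2e^{\theta_n}d\mu_{|\Delta_n}$. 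Neither of your patches is available either. Proposition~\ref{prop:sa3}(iii) says nothing about how much $\mu$-mass sits near $\beta$. A Nikolskii-type inequality turns the $L^2(e^{\theta_n}\mu)$-norm into a polynomial sup-bound only under lower bounds on $\mu'$, which a (uniformly) Szeg\H{o} measure need not satisfy.

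The missing idea is to make the comparison polynomial see that layer. The paper enlarges $\Delta_n$ to $\Delta_n^*=[\alpha_n-\delta_n,\beta_n+\delta_n]\cap\Delta(\mu)$ with $\delta_n=(\xi_n/n)^{2/(\varkappa+1)}$. It then applies Theorem~\ref{thm:3} to $P_n^*=T_n(e^{\theta_n}\mu_{|\Delta_n^*})$, absorbing $e^{2n\kappa_n}$ into the measure.
\begin{itemize}
\item This is allowed because $2n\|\kappa_n\|_{L^1(\omega_{\Delta_n^*})}\lesssim\xi_n\to0$, so condition $(B)$ survives. Singular mass in the layer is harmless for Theorem~\ref{thm:3}.
\item After rescaling, $\omega_n$ lives on a proper subinterval. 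Verifying $(D)$ there is the real role of \eqref{w_cond_2} (Lemma~\ref{lem:vw6}), not a H\"older bound on $V^{\omega_n}$.
\item Beyond $\Delta_n^*$, with $G_n(z):=G(e^{h_n}\mu_{|\Delta_n};z)$ and $F_n(z):=e^{n\int\log(z-x)d\omega_n(x)}G_n(\infty)/G_n(z)$, one has $(P_n^*)^2e^{\theta_n}=|P_n^*/F_n|^2\,G_n^2(\infty)G_n^{-2}\,e^{2n\kappa_n+h_n}$. So $e^{2nV^{\omega_n}}$ cancels exactly.
\item \eqref{HardyEst} gives $|P_n^*/F_n|^2\lesssim t^{-1/2}$.
\item Lemma~\ref{lem:vw7} gives $G_n^{-2}(\beta_n+t)\le e^{\epsilon_\mu(t)/\sqrt t}$. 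This is where USz actually enters, via Proposition~\ref{prop:sa3}(iii).
\item For $t\ge\delta_n$ one uses $e^{2n\kappa_n}\le e^{-Cnt^{\varkappa/2}}$.
\item Choosing $\xi_n\to0$ with $n\xi_n^{\varkappa}\to\infty$ and $2\epsilon_\mu(T_n)\le\xi_n$ makes the tail $o(1)$ (Lemma~\ref{lem:vw8}).
\end{itemize}
With $P_n^*$ in place of $Q_n$, your final extremal-property and Cauchy-integral step goes through. The extra outer factor $\Omega_{\Delta_n^*}(e^{-n\kappa_n};\cdot)=1+o(1)$ accounts for $\kappa_n\neq0$ on $\Delta_n^*\setminus\Delta_n$.
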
 

\subsection{Proof of Proposition~\ref{prop:sa2}}

Given a (real-valued) function \( u \) in \( L^1(\omega_\Delta) \), we set
\begin{equation}
\label{Dirichlet}
(H_\Delta u)(z) := \log|\Omega_\Delta(e^u;z)|, \quad z\in D_\Delta.
\end{equation}
Then, \( H_\Delta u \) is a harmonic function in \( D_\Delta \) whose non-tangential boundary values from above and below \( \Delta \) exist almost everywhere and are equal to \( u \), see \eqref{outer-b}. That is, \( H_\Delta u \) is a solution of the Dirichlet problem in \( D_\Delta \) with boundary data~\( u \). One can readily see from \eqref{outer} and \eqref{SzegoFun} that to prove the proposition it is enough to show that
\[
(H_{\Delta_n}u_n)(z) \to (H_\Delta u)(z) \qasq n\to\infty
\]
locally uniformly in \( D_\Delta \), where \( u_n =\log v_n +h_n\) and \( u=\log v+h \) while \( v_n \) and \( v \) are the Radon-Nikodym derivatives of \( \mu_{|\Delta_n} \) and \( \mu_{|\Delta} \) with respect to \( \omega_{\Delta_n} \) and \( \omega_\Delta \), respectively.  

Let \( l_n = l_{\Delta\to\Delta_n} \) be as in \eqref{UnSzego}. Observe that \( l_n(x) \) converges to \( x \) uniformly on \( \Delta \). Then,
\[
(H_{\Delta_n}u_n)(z) - (H_\Delta u)(z) = \int_\Delta \re\left(w_{\Delta_n}(z)  \frac{u_n(l_n(x))}{z-l_n(x)}- w_\Delta(z)\frac{u(x)}{z-x}\right) d\omega_\Delta(x).
\]
The function in parenthesis above can be rewritten as
\begin{multline*}
(w_{\Delta_n}(z)-w_\Delta(z))\frac{u(x)}{z-x} +w_{\Delta_n}(z)\frac{u_n(l_n(x))-u(x)}{z-l_n(x)} + \\   w_{\Delta_n}(z)\frac{(l_n(x)-x)u(x)}{(z-l_n(x))(z-x)} =: (J_{n,1}+J_{n,2}+J_{n,3})(x,z).
\end{multline*}
Observe that the functions \( w_{\Delta_n} - w_\Delta \) converge to zero uniformly in the whole extended complex plane. Hence,
\begin{equation}
\label{J123}
\int_\Delta J_{n,1}(x,z)d\omega_\Delta(x) \to 0   \qasq n\to\infty
\end{equation}
locally uniformly in \( D_\Delta \). Furthermore, since
\[
v_n(x) = \pi\mu^\prime(x)\sqrt{(x-\alpha_n)(\beta_n-x)}, \quad x\in\Delta_n=[\alpha_n,\beta_n],
\]
it follows that \( u_n(l_n(x))-u(x) \) is equal to
\[
\big[h_n( l_n(x)) - h(x) \big] + \frac12\log \frac{\beta_n-\alpha_n}{\beta-\alpha} +  \big[\log\mu^\prime(l_n(x)) - \log\mu^\prime(x) \big]
\]
on \( \Delta \), where \( \Delta=[\alpha,\beta] \). Due to uniform convergence of \( h_n \) to \( h \), uniform continuity of \( h \) on \( \Delta \), and \eqref{UnSzego}, these functions converge to zero in \( L^1(\omega_\Delta) \). As functions \( |w_{\Delta_n}(z)/(z-l_n(x))| \) are uniformly bounded for \( x\in\Delta \) and \( z \) on closed subsets of \( D_\Delta \), this necessarily yields that \eqref{J123} holds with  \( J_{n,1}(x,z) \) is replaced by \( J_{n,2}(x,z) \). Uniform convergence to zero of \( l_n(x)-x \)  on \( \Delta \) now guarantees that \eqref{J123} remains valid if \( J_{n,1}(x,z) \) is replaced by \( J_{n,3}(x,z) \) as well. This, of course, finishes the proof of the proposition.

\subsection{Proof of Proposition~\ref{prop:sa3}}

Set \( \theta(t) := |\log\mu^\prime(t)| \). As mentioned right after the statement of Proposition~\ref{prop:sa3}, we can consider \( (I_\gamma\theta)(x) \) instead of \( (I_\gamma\log\mu^\prime)(x) \).

\begin{lemma}
\label{lem:sa12}
Proposition~\ref{prop:sa3}(i) implies Proposition~\ref{prop:sa3}(ii).
\end{lemma}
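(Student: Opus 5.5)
We argue by contradiction. Suppose \( \mu\in\mathrm{USz}(\Delta) \) but \( I_\gamma\theta \) fails to be continuous at \( \beta \); the endpoint \( \alpha \) is handled symmetrically. Here \( \theta=|\log\mu^\prime|\geq0 \). Since \( \mu\in\mathrm{Sz}(\Delta) \), the quantity \( (I_\gamma\theta)(\beta)=\pi^{-1/2}\int_\gamma^\beta\theta(t)(\beta-t)^{-1/2}dt \) is finite. Discontinuity then yields \( \epsilon>0 \) and points \( x_n\to\beta \) with \( x_n\in\Delta(\mu) \) and \( |(I_\gamma\theta)(x_n)-(I_\gamma\theta)(\beta)|\geq\epsilon \). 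The plan is to choose \( \Delta_n:=[\alpha,x_n] \), which satisfies \( \Delta_n\subseteq\Delta(\mu) \) and \( \Delta_n\to\Delta \), and to show that \eqref{UnSzego} fails along this sequence.

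First, we may assume \( \mu\in\mathrm{Sz}(\Delta_n) \) for large \( n \); otherwise \( \int_{\Delta_n}\theta\,d\omega_{\Delta_n}=\infty \) for infinitely many \( n \), which already contradicts the definition of \( \mathrm{USz}(\Delta) \). Write \( l_n=l_{\Delta\to\Delta_n} \). The triangle inequality applied to \eqref{UnSzego} gives
\[
\int_\Delta \big|\theta(x)-\theta(l_n(x))\big|\,d\omega_\Delta(x)\to0 .
\]
The change of variables \( t=l_n(x) \) transforms \( \omega_\Delta \) into \( \omega_{\Delta_n} \). Hence
\[
\int_{\Delta_n}\theta\, d\omega_{\Delta_n}-\int_\Delta\theta\, d\omega_\Delta\to0 .
\]
We localize near \( \beta \) by splitting at the fixed point \( \gamma \). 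On \( [\alpha,\gamma] \) the weights \( \pi^{-1}((t-\alpha)(x_n-t))^{-1/2} \) converge to \( \pi^{-1}((t-\alpha)(\beta-t))^{-1/2} \) uniformly, and \( \theta \) is integrable there with the factor \( (t-\alpha)^{-1/2} \), because \( \theta\in L^1(\omega_\Delta) \). Dominated convergence therefore removes this part. On \( [\gamma,x_n] \) the factor \( (t-\alpha)^{-1/2} \) is a smooth function bounded above and below. This leaves
\[
\int_\gamma^{x_n}\frac{\theta(t)\,c(t)\,dt}{\sqrt{x_n-t}}-\int_\gamma^{\beta}\frac{\theta(t)\,c(t)\,dt}{\sqrt{\beta-t}}\to0,\qquad c(t)=\frac1\pi(t-\alpha)^{-1/2}.
\]

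The main obstacle is removing the smooth weight \( c(t) \) so that we reach \( (I_\gamma\theta)(x_n)-(I_\gamma\theta)(\beta)\to0 \), which contradicts the choice of \( x_n \). The plan is to write \( c(t)=c(x_n)+(c(t)-c(x_n)) \) in the first integral and \( c(t)=c(\beta)+(c(t)-c(\beta)) \) in the second. The remainder terms have kernels \( (c(t)-c(x))(x-t)^{-1/2} \), which are bounded by \( C\sqrt{x-t} \). These are bounded, and they converge pointwise on \( [\gamma,\beta) \) as \( x_n\to\beta \). Because \( \theta\in L^1[\gamma,\beta] \), dominated convergence sends the difference of the remainders to \( 0 \). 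What survives is \( \sqrt\pi\,\big(c(x_n)(I_\gamma\theta)(x_n)-c(\beta)(I_\gamma\theta)(\beta)\big)\to0 \).

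From \( c(x_n)\to c(\beta)>0 \) we first get that \( (I_\gamma\theta)(x_n) \) is bounded. It then follows that \( (I_\gamma\theta)(x_n)\to(I_\gamma\theta)(\beta) \), which is the desired contradiction. If \( \beta=\beta(\mu) \), continuity at \( \beta \) only concerns approach from the left. If \( \beta<\beta(\mu) \), the same argument also covers points \( x_n>\beta \), because \( \Delta_n=[\alpha,x_n]\subseteq\Delta(\mu) \) is still admissible and the dominated convergence steps are unchanged. The endpoint \( \alpha \) is treated identically, using \( \Delta_n=[x_n,\beta] \) and the reflected operator.
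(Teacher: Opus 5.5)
Your argument is correct, but it takes a longer route than the paper's. The paper argues directly, with no contradiction, for an arbitrary sequence $\beta_n\to\beta$:
\begin{itemize}
\item it replaces $\gamma$ by $\alpha$;
\item it rescales $[\alpha,\beta_n]$ onto $\Delta$ via $l_n$, which gives $(I_\alpha\theta)(\beta_n)=\sqrt{(\beta_n-\alpha)/(\beta-\alpha)}\,\pi^{-1/2}\int_\alpha^\beta\theta(l_n(t))(\beta-t)^{-1/2}dt$;
\item it uses the pointwise kernel bound $(\beta-t)^{-1/2}dt\le\pi\sqrt{\beta-\alpha}\,d\omega_\Delta(t)$.
\end{itemize}
The difference $(I_\alpha\theta)(\beta_n)-(I_\alpha\theta)(\beta)$ is then controlled by $\int_\Delta|\log\mu^\prime-\log\mu^\prime\circ l_n|\,d\omega_\Delta$, which tends to zero by \eqref{UnSzego}.

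You instead discard the pointwise information in \eqref{UnSzego}. You keep only its scalar consequence $\int\theta\,d\omega_{\Delta_n}\to\int\theta\,d\omega_\Delta$. You then recover continuity of $I_\gamma\theta$ by splitting at $\gamma$ and peeling off the smooth factor $(t-\alpha)^{-1/2}$ with dominated-convergence arguments.

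What your route buys is the observation that continuity at $\beta$ of the single scalar function $x\mapsto\int\theta\,d\omega_{[\alpha,x]}$ already forces (ii). What the paper's route buys is brevity: one kernel inequality replaces your localization and weight-removal steps.

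The contradiction framing is superfluous in your version too, since every step works verbatim for an arbitrary sequence $x_n\to\beta$. One detail should be made explicit. When infinitely many $x_n>\beta$, the dominating function $C\theta\chi_{[\gamma,x_m]}$ needs $\theta\in L^1[\gamma,x_m]$ for one fixed $x_m>\beta$. This follows from $\mu\in\mathrm{Sz}(\Delta_m)$ for a single large $m$, because eventually $x_n<x_m$.
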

\begin{proof}
We shall prove continuity at \( \beta \); the continuity at \( \alpha \) can be proven analogously. We need to show that
\[
\lim_{n\to\infty} (I_\gamma\theta)(\beta_n) = (I_\gamma\theta)(\beta) 
\]
for any sequence \( \{\beta_n\}\subset\Delta(\mu) \) such that \( \beta_n\to\beta \) as \( n\to\infty \). Clearly, in the limit above we can replace \( \gamma \) by \( \alpha \). Let \( l_n(t) = \alpha + \frac{\beta_n-\alpha}{\beta-\alpha}(t-\alpha) \). Observe also that
\[
 (I_\alpha\theta)(\beta_n) = \sqrt{\frac{\beta_n-\alpha}{\beta-\alpha}} \int_\alpha^\beta \frac{\theta(l_n(t))}{\sqrt{\beta-t}}dt.
\]
The claim of the lemma now follows from \eqref{UnSzego} and the estimate
\begin{align*}
\left| \int_\alpha^\beta \frac{\theta(t)}{\sqrt{\beta-t}}dt - \int_\alpha^\beta \frac{\theta(l_n(t))}{\sqrt{\beta-t}}dt \right| & \leq \pi\sqrt{\beta-\alpha}\int_\alpha^\beta |\theta(t) - \theta(l_n(t))|d\omega_\Delta(t) \\
& \leq \pi\sqrt{\beta-\alpha}\int_\alpha^\beta |\log\mu^\prime(t) - \log\mu^\prime(l_n(t))|d\omega_\Delta(t). \qedhere
\end{align*}
\end{proof}

\begin{lemma}
\label{lem:sa13}
Proposition~\ref{prop:sa3}(iii) implies Proposition~\ref{prop:sa3}(i).
\end{lemma}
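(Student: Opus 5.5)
The plan is to show the two requirements in the definition of \( \mathrm{USz}(\Delta) \) directly. Take any sequence \( \Delta_n=[\alpha_n,\beta_n]\subseteq\Delta(\mu) \) with \( \Delta_n\to\Delta \), and let \( l_n=l_{\Delta\to\Delta_n} \). We must show that \( \mu\in\mathrm{Sz}(\Delta_n) \) for large \( n \), and that \eqref{UnSzego} holds.

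\emph{Step 1: upgrade (iii) to absolute values.} First I will replace \( \log\mu^\prime \) in (iii) by \( \theta=|\log\mu^\prime| \). Write \( |\log\mu^\prime| = 2\log^+\mu^\prime-\log\mu^\prime \). Since \( \mu^\prime\in L^1 \), we have \( \log^+\mu^\prime\in L^p(\Delta(\mu)) \) for some (any) \( p>2 \). By Hölder's inequality,
\[
\int_a^b \frac{\log^+\mu^\prime(t)\,dt}{\sqrt{b-t}} \leq C_p\,\|\log^+\mu^\prime\|_{L^p}\,(b-a)^{1/2-1/p},
\]
and the same bound holds with kernel \( 1/\sqrt{t-a} \). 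Hence (iii) implies the following. For every \( \epsilon>0 \) there is \( d_\epsilon \) such that \( \int_{a_\beta}^{b_\beta}\theta(t)(b_\beta-t)^{-1/2}dt<\epsilon \) whenever \( a_\beta,b_\beta \) lie within \( d_\epsilon \) of \( \beta \), and similarly at \( \alpha \). In particular this gives \( \theta\in L^1(\omega_{\Delta_n}) \) for large \( n \): split \( \omega_{\Delta_n} \) into two endpoint windows and a middle part. Near \( \beta_n \) the density is comparable to \( (\beta_n-t)^{-1/2} \), so the window integral is finite by the bound just derived. The middle part is a compact subinterval of \( (\alpha,\beta) \), on which \( \theta \) is integrable because \( \mu\in\mathrm{Sz}(\Delta) \). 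Thus \( \mu\in\mathrm{Sz}(\Delta_n) \).

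\emph{Step 2: split the integral in \eqref{UnSzego}.} Fix \( \epsilon \) and choose \( \delta<d_\epsilon/2 \). Split \( \Delta \) into \( E_\delta=[\alpha,\alpha+\delta]\cup[\beta-\delta,\beta] \) and the middle interval \( M_\delta=[\alpha+\delta,\beta-\delta] \).

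On \( M_\delta \) the density of \( \omega_\Delta \) is bounded. The maps \( l_n \) converge to the identity in \( C^1 \), and \( l_n(M_\delta)\subseteq[\alpha+\delta/2,\beta-\delta/2] \) for large \( n \). On that larger interval \( \log\mu^\prime \) is integrable. By continuity of dilations and translations in \( L^1 \) (approximate \( \log\mu^\prime \) by a continuous function there), we get \( \int_{M_\delta}|\log\mu^\prime-\log\mu^\prime\circ l_n|\,d\omega_\Delta\to0 \).

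On \( E_\delta \), say near \( \beta \), I will bound the integrand crudely by \( \theta(x)+\theta(l_n(x)) \). The term \( \int_{\beta-\delta}^\beta\theta\,d\omega_\Delta \) is at most \( C\int_{\beta-\delta}^\beta\theta(t)(\beta-t)^{-1/2}dt<C\epsilon \) by Step 1, using the window \( a_\beta=\beta-\delta \), \( b_\beta=\beta \). For the other term, substitute \( t=l_n(x) \). Since \( \beta-x=(\beta_n-t)(\beta-\alpha)/(\beta_n-\alpha_n) \), it becomes comparable to \( \int_{l_n(\beta-\delta)}^{\beta_n}\theta(t)(\beta_n-t)^{-1/2}dt \). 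Both endpoints of this window are within \( 2\delta<d_\epsilon \) of \( \beta \) for large \( n \), so it is also \( <C\epsilon \) by Step 1. The constants \( C \) are uniform in \( n \), since the factor \( (x-\alpha)^{-1/2} \) is bounded near \( \beta \) and the ratios of interval lengths tend to \( 1 \). The endpoint \( \alpha \) is handled symmetrically.

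Letting \( n\to\infty \) and then \( \epsilon\to0 \) gives \eqref{UnSzego}. I expect the main obstacle to be Step 1, namely passing from the signed integrals in (iii) to control of \( |\log\mu^\prime| \). This rests entirely on the \( L^p \), \( p>2 \), integrability of \( \log^+\mu^\prime \), so the \( \log^+ \) part contributes a vanishing amount on short windows. A secondary point to check is that the windows produced by the change of variables have both endpoints close to \( \beta \), uniformly in \( n \), which is what forces the choice \( \delta<d_\epsilon/2 \).
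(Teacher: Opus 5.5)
Your proposal is correct and follows the paper's proof essentially step for step. The paper uses the same split of \( \Delta \) into a middle interval, handled by the bounded arcsine density and \( L^1 \)-continuity of dilations, and endpoint windows where the integrand is bounded by \( \theta+\theta\circ l_n \), with the substitution \( t=l_n(x) \) turning the second term into a window at \( \beta_n \) that (iii) controls. Your Step~1 is the reduction to \( |\log\mu'| \) that the paper makes in the remark after Proposition~\ref{prop:sa3} (also via \( \log^+\mu'\in L^p \), \( p>2 \)), done directly with H\"older's inequality. You also check \( \mu\in\mathrm{Sz}(\Delta_n) \) explicitly, which the paper leaves implicit; like the paper, your middle-part argument relies on \( \mu\in\mathrm{Sz}(\Delta) \), which is a standing assumption rather than something (iii) provides.
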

\begin{proof}
Let \( \{ \Delta_n \}\) be a sequence of closed subintervals of \( \Delta(\mu) \) that converges to \( \Delta \). Pick \( \epsilon> 0 \) and let \( \delta\)  be some positive number to be specified later.  Then,
\begin{eqnarray}\nonumber
\int_\alpha^\beta  |\log\mu^\prime(t) - \log\mu^\prime(l_n(t))|d\omega_\Delta(t) \leq  \int_{\alpha+\delta}^{\beta-\delta}  |\log\mu^\prime(t) - \log\mu^\prime(l_n(t))|d\omega_\Delta(t) + \\ \int_{\beta-\delta}^\beta \big( \theta(t) + \theta(l_n(t)) \big) d\omega_\Delta(t) + \int_{\alpha}^{\alpha+\delta} \big( \theta(t) + \theta(l_n(t)) \big) d\omega_\Delta(t),\label{sapsad6}
\end{eqnarray}
where \( l_n(t) \) is the linear transformation with the positive leading coefficient that maps \( \Delta \) onto \( \Delta_n \). 
If $\alpha_n$ and $\beta_n$ are the endpoints of $\Delta_n$, i.e.,  \( \Delta_n = [\alpha_n,\beta_n] \), then
\[
\int_{\beta-\delta}^\beta \theta(l_n(t)) d\omega_\Delta(t) = \int_{\beta_n-\delta_n}^{\beta_n} \theta(t) d\omega_{\Delta_n}(t),
\] 
where \( \delta_n = \frac{\beta_n-\alpha_n}{\beta-\alpha}\delta \), and a similar equality holds for the integral of \( \theta(l_n(t)) \) on \( [\alpha,\alpha+\delta] \). Notice that $\lim_{n\to\infty}\delta_n=\delta$. Now, it becomes clear that the assumption (iii) of Proposition~\ref{prop:sa3} implies that there exists \( \delta>0 \) and $N\in \mathbb{N}$ such that
\begin{eqnarray*}
\int_{\beta-\delta}^\beta \theta(t) d\omega_\Delta(t)< \frac\epsilon5,\,\int_{\alpha}^{\alpha+\delta}\theta(t) d\omega_\Delta(t) < \frac\epsilon5,\,\hspace{3cm}\\
\int_{\beta-\delta}^\beta \theta(l_n(t)) d\omega_\Delta(t)< \frac\epsilon5,\,\int_{\alpha}^{\alpha+\delta}\theta(l_n(t)) d\omega_\Delta(t) < \frac\epsilon5
\end{eqnarray*}
for all $n\ge N$.
To see that the first integral in \eqref{sapsad6} also can be made smaller than \( \epsilon/5 \) for all  large enough \( n \), observe that \( d\omega_\Delta(t) \leq (\pi\delta)^{-1}dt \) on the interval of integration and that
\[
\lim_{n\to\infty}
\int_{\alpha+\delta}^{\beta-\delta}  |\log\mu^\prime(t) - \log\mu^\prime(l_n(t))|dt=0\,.
\]
Indeed, this limit is obviously zero if we replace $\log\mu'$ by a continuous function. The general case follows by  approximating  \( \log\mu^\prime \) in \( L^1(\Delta(\mu)) \)-norm with continuous functions.
\end{proof}

\begin{lemma}
\label{lem:sa14}
Proposition~\ref{prop:sa3}(ii) implies Proposition~\ref{prop:sa3}(iii).
\end{lemma}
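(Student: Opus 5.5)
The idea is to use positivity of the integrand. As in the proof of Lemma~\ref{lem:sa12}, I will work with \( \theta=|\log\mu^\prime|\geq 0 \). This is allowed by the remark after Proposition~\ref{prop:sa3}: \( I_\gamma\log^+\mu^\prime \) is H\"older continuous, so (ii) for \( \log\mu^\prime \) is equivalent to (ii) for \( \theta \). Since \( \big|\int \log\mu^\prime(t)\,k(t)dt\big|\leq\int\theta(t)\,k(t)dt \) for a nonnegative kernel \( k \), it suffices to prove (iii) with \( \theta \) in place of \( \log\mu^\prime \). I treat the endpoint \( \beta \); the endpoint \( \alpha \) is symmetric, with kernel \( (t-a_\alpha)^{-1/2} \) and \( I_\gamma \) evaluated to the left of \( \gamma \). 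By symmetry of the roles of \( a_\beta \) and \( b_\beta \) (only the absolute value matters), I may assume \( a_\beta<b_\beta \) and write \( a=a_\beta \), \( b=b_\beta \). The point \( b \) may lie on either side of \( \beta \), but always inside \( \Delta(\mu) \). If \( \beta=\beta(\mu) \), then \( b\leq\beta \).

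Fix \( \epsilon>0 \). Continuity of \( I_\gamma\theta \) at \( \beta \) gives in particular \( (I_\gamma\theta)(\beta)<\infty \). Hence \( \theta(t)(\beta-t)^{-1/2} \) is integrable on \( [\gamma,\beta] \), and I can choose \( \eta\in(0,\beta-\gamma) \) with
\[
\int_{\beta-\eta}^{\beta}\frac{\theta(t)\,dt}{\sqrt{\beta-t}}<\frac\epsilon3 .
\]
Next, on \( [\gamma,\beta-\eta] \) the kernel \( (b-t)^{-1/2} \) is bounded and converges uniformly to \( (\beta-t)^{-1/2} \) as \( b\to\beta \). Since \( \theta\in L^1 \), this gives
\[
\int_\gamma^{\beta-\eta}\frac{\theta(t)\,dt}{\sqrt{b-t}}\longrightarrow\int_\gamma^{\beta-\eta}\frac{\theta(t)\,dt}{\sqrt{\beta-t}} \qasq b\to\beta .
\]
Now take \( d_\epsilon\leq\eta/2 \), so that \( a,b\geq\beta-\eta \). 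Because \( \theta\geq0 \),
\[
0\leq\int_a^b\frac{\theta(t)\,dt}{\sqrt{b-t}}\leq\int_{\beta-\eta}^b\frac{\theta(t)\,dt}{\sqrt{b-t}}=\sqrt\pi\,(I_\gamma\theta)(b)-\int_\gamma^{\beta-\eta}\frac{\theta(t)\,dt}{\sqrt{b-t}} .
\]
By (ii) and the previous display, the right-hand side tends to
\[
\sqrt\pi\,(I_\gamma\theta)(\beta)-\int_\gamma^{\beta-\eta}\frac{\theta\,dt}{\sqrt{\beta-t}}=\int_{\beta-\eta}^\beta\frac{\theta\,dt}{\sqrt{\beta-t}}<\frac\epsilon3 .
\]
Shrinking \( d_\epsilon \) further, I make the right-hand side smaller than \( \epsilon \) whenever \( |b-\beta|<d_\epsilon \), which gives (iii) at \( \beta \).

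The main subtlety is the reduction to the nonnegative function \( \theta \). The monotonicity step, enlarging \( [a,b] \) to \( [\beta-\eta,b] \), is what turns the two-point condition (iii) into the one-point continuity statement (ii), and it needs positivity. That is why the H\"older continuity of \( I_\gamma\log^+\mu^\prime \) is used first. The remaining steps are routine dominated-convergence arguments.
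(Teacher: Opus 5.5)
Your argument is correct, and it takes a genuinely different and much shorter route than the paper's.

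The paper's proof does not use the sign of \(\theta\) at the key step. It inverts the Riemann--Liouville transform, \(\theta=\frac{d}{dx}I_\gamma^2\theta\), and justifies differentiating under the integral sign (absolute continuity of \(I_\gamma^2\theta\), Vitali's theorem). This yields the identity
\[
\int_\gamma^{x-\delta}\frac{\theta(t)\,dt}{\sqrt{x-t}}=\frac{2}{\sqrt\pi}\int_0^{L_{x,\delta}}(I_\gamma\theta)(x-\delta-\delta t^2)\frac{dt}{1+t^2}.
\]
Thus \(\frac1{\sqrt\pi}\int_{x-\delta}^x\theta(t)(x-t)^{-1/2}dt\) becomes \((I_\gamma\theta)(x)\) minus a Poisson-type average of \(I_\gamma\theta\) over points just left of \(x-\delta\). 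The paper then bounds this by the oscillation of \(I_\gamma\theta\) near \(\beta\) plus tails of size at most \(\delta^{1/8}\), using boundedness of \(I_\gamma\theta\) on a whole neighborhood of \(\beta\).

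You replace all of that with two observations. First, \(b\mapsto\int_\gamma^{\beta-\eta}\theta(t)(b-t)^{-1/2}dt\) is trivially continuous at \(\beta\). So (ii) makes \(b\mapsto\int_{\beta-\eta}^{b}\theta(t)(b-t)^{-1/2}dt\) continuous at \(\beta\), with a small value there. Second, positivity lets you enlarge \([a,b]\) to \([\beta-\eta,b]\). You only need finiteness of \((I_\gamma\theta)(\beta)\) and sequential continuity at \(\beta\).

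The comparison of what each buys:
\begin{itemize}
\item The paper's linear argument would give the two-point bound for a signed function directly.
\item Yours needs \(\theta\ge0\), and hence the H\"older continuity of \(I_\gamma\log^+\mu'\). But the paper already passes to \(\theta=|\log\mu'|\) at the start of the proof of Proposition~\ref{prop:sa3}, and it needs the absolute value in Lemma~\ref{lem:sa13} anyway. In this context your route costs nothing extra.
\end{itemize}

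One small correction: your reduction to \(a_\beta<b_\beta\) is not a symmetry, since the kernel is singular only at \(b_\beta\). That ordering is, however, the intended reading of (iii) and the one the paper's proof uses, so nothing is lost.
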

\begin{proof}
Since \( (I_\gamma\theta)(x) \) is continuous at \( \beta \), there exists an interval, say \( [a,b] \), that contains \( \beta \) in its interior (unless \( \beta \) is the right endpoint of \( \Delta(\mu)\), in which case \( b=\beta \)), on which \( (I_\gamma\theta)(x) \) is bounded.  It is known \cite[Theorem~2.1]{SamkoKilbasMarichev} that
\begin{equation}
\label{I-0}
\theta(x) = \frac{d}{dx} \left(\frac1{\sqrt\pi} \int_\gamma^x \frac{(I_\gamma\theta)(t)}{\sqrt{x-t}}dt\right) = \frac{d}{dx} (I_\gamma(I_\gamma\theta))(x), \quad x\in(\gamma,b).
\end{equation}
Let us write \( (I_\gamma^2\theta)(x) \) for \( (I_\gamma(I_\gamma\theta))(x) \), which is an absolutely continuous function on \( [\gamma,b] \) that vanishes at \( \gamma \), see again \cite[Theorem~2.1]{SamkoKilbasMarichev}. Fix some \( \delta \in (0,(b-a)/2) \). Notice that
\begin{multline*}
\frac{d}{dx}\left( \int_\gamma^{x-\delta}\frac{(I_\gamma^2\theta)(t)}{\sqrt{x-t}}dt \right) = \frac{d}{dx}\left( \int^{x-\gamma}_\delta \frac{(I_\gamma^2\theta)(x-s)}{\sqrt s}d s \right) = \\
 \lim_{h\to 0}\int^{x-\gamma}_\delta  \frac{(I_\gamma^2\theta)(x+h-s)-(I_\gamma^2\theta)(x-s)}{h}\frac{ds}{\sqrt s} +\lim_{h\to 0} \frac1h\int_\gamma^{\gamma+h}\frac{(I_\gamma^2\theta)(t)}{\sqrt{x+h-t}}dt.
 \end{multline*}
The second limit is equal to zero due to continuity of the integrand and vanishing of \( (I_\gamma^2\theta)(t) \) at \( \gamma \). It is known,  see \cite[Theorem~6.9]{RoydenFitzpatrick}, that absolute continuity of a function is equivalent to uniform integrability of its divided differences. As  \( (I_\gamma^2\theta)(x) \) is absolutely continuous and \( 1/\sqrt{s} \) is continuous on \( [\delta,b-\gamma] \), we get from Vitali's convergence theorem and \eqref{I-0} that the first limit is equal to 
\[
\int_\delta^{x-\gamma} \frac{\theta(x-s)}{\sqrt s}ds 
\]
and hence
\begin{equation}
\label{I-1}
\frac{d}{dx}\left( \int_\gamma^{x-\delta}\frac{(I_\gamma^2\theta)(t)}{\sqrt{x-t}}dt \right) =\int_\delta^{x-\gamma} \frac{\theta(x-s)}{\sqrt s}ds = \int_\gamma^{x-\delta} \frac{\theta(t)}{\sqrt{x-t}}dt.
\end{equation}
Writing the outer \( I_\gamma \) transform explicitly and  changing the order of integration gives us
\begin{multline}
\label{I-2}
\int_\gamma^{x-\delta}\frac{(I_\gamma^2\theta)(t)}{\sqrt{x-t}}dt = \frac1{\sqrt\pi}\int_\gamma^{x-\delta} \left( \int_s^{x-\delta} \frac{dt}{\sqrt{(t-s)(x-t)}} \right) (I_\gamma\theta)(s)ds  = \\ \int_\gamma^{x-\delta}  F\left(1-\frac\delta{x-s} \right) (I_\gamma\theta)(s)ds, \quad F(s) := \frac1{\sqrt \pi} \int_0^s \frac{dt}{\sqrt{t(1-t)}}.
\end{multline}
 Again, we need to justify changing the order of differentiation and integration. To this end, assume now that \( x\in(a+2\delta,b) \). By the mean-value theorem and its very definition, the derivative of the last integral in \eqref{I-2} is equal to the limit as \( h\to0 \) of the following sum of three terms
 \begin{multline}
 \label{I-3}
 \delta\int_\gamma^a F^\prime\left(1-\frac\delta{x+\xi_h-s} \right) \frac{(I_\gamma\theta)(s)}{(x-s)^2}ds + \\
 \frac1h\int_a^{x-\delta} \left( F\left(1-\frac\delta{x+h-s}\right) - F\left(1-\frac\delta{x-s}\right) \right)  (I_\gamma\theta)(s)ds + \\ 
\frac1h\int_{x-\delta}^{x+h-\delta}F\left(1-\frac\delta{x+h-s}\right)  (I_\gamma\theta)(s)ds,
 \end{multline}
where \( \xi_h = \xi_h(x,s) \) is such that \( |\xi_h|\leq |h| \). The first term in the sum above converges to 
\[
 \sqrt{\frac\delta\pi} \int_\gamma^a \frac{(I_\gamma\theta)(s)ds}{(x-s)\sqrt{x-\delta-s}},
\]
when \( h\to0 \).  This follows from the dominated convergence theorem because \( (I_\gamma\theta)(s) \) is a fixed integrable function and the other factor in the integrand is a function continuous in  $s$ that converges uniformly when $h\to 0$. The second term in \eqref{I-3} has the following limit 
\[
\delta \int_a^{x-\delta} F^\prime\left(1-\frac\delta{x-s} \right) \frac{(I_\gamma\theta)(s)}{(x-s)^2}ds = \sqrt{\frac\delta\pi} \int_a^{x-\delta}\frac{(I_\gamma\theta)(s)ds}{(x-s)\sqrt{x-\delta-s}}
\]
as \( h\to0 \) according to Vitali's convergence theorem. Indeed,  \( (I_\gamma\theta)(s) \) is bounded on the interval of integration by assumptions of Proposition~\ref{prop:sa3}(ii) and the divided differences of $F(1-\delta(x-s)^{-1})$ are uniformly integrable. The last term in \eqref{I-3} can be rewritten as
\[
\frac\delta h \int_0^{h/(h+\delta)}  (I_\gamma\theta)\left(x+h-\frac\delta{1-t} \right)\frac{F(t)}{(1-t)^2}dt.
\]
Its limit as \( h\to0 \) is equal to \( 0 \) due to the boundedness of \( (I_\gamma\theta)(x) \) on \( (a,b) \) as well as the continuity of the function \( F(t)/(1-t)^2 \) around the origin and its vanishing  at $t=0$.  Altogether, we get from \eqref{I-1}, \eqref{I-2}, and the reasoning above that
\[
\int_\gamma^{x-\delta} \frac{\theta(t)dt}{\sqrt{x-t}} = \sqrt{\frac\delta\pi} \int_\gamma^{x-\delta} \frac{(I_\gamma\theta)(s)ds}{(x-s)\sqrt{x-\delta-s}} = \frac2{\sqrt\pi} \int_0^{ L_{x,\delta}} (I_\gamma\theta)\big(x-\delta-\delta t^2\big)\frac{dt}{1+t^2},
\]
where \( L_{x,\delta} = \sqrt{(x-\delta-\gamma)/\delta} \). We can use the identity $\int_0^\infty (t^2+1)^{-1}dt=\pi/2$ to rewrite 
\begin{multline*}
\frac1{\sqrt\pi}\int_{x-\delta}^x \frac{\theta(t)dt}{\sqrt{x-t}} = (I_\gamma\theta)(x) -\frac1{\sqrt\pi} \int_\gamma^{x-\delta} \frac{\theta(t)dt}{\sqrt{x-t}} = \frac2\pi \left( \int_{ \ell_\delta/\sqrt\delta}^\infty \frac{(I_\gamma\theta)(x)dt}{1+t^2} \right. \\   \left. - \int_{\ell_\delta/\sqrt\delta}^{ L_{x,\delta}} \frac{(I_\gamma\theta)(x-\delta-\delta t^2)dt}{1+t^2} + \int_0^{\ell_\delta/\sqrt\delta} \frac{(I_\gamma\theta)(x) - (I_\gamma\theta)(x-\delta-\delta t^2)}{1+t^2}dt\right)
\end{multline*}
for any positive \( \ell_\delta \). If we choose $\ell_\delta=\delta^{\frac 18}$, we get that
\[
\left| \int_{\ell_\delta/\sqrt\delta}^\infty \frac{(I_\gamma\theta)(x)dt}{1+t^2} \right| \leq |(I_\gamma\theta)(x)|\frac{\sqrt\delta}{\ell_\delta}=|(I_\gamma\theta)(x)|\delta^{\frac 38}\,.
\]
Similarly,
\begin{multline*}
\left| \int_{\ell_\delta/\sqrt\delta}^{ L_{x,\delta}} \frac{(I_\gamma\theta)(x-\delta-\delta t^2)dt}{1+t^2} \right| \leq \frac{\sqrt\delta}{\ell_\delta^2}\int_\gamma^{x-\delta-\ell_\delta^2}\frac{|(I_\gamma\theta)(s)|ds}{\sqrt{x-\delta-s}} \\
 \leq \frac{\sqrt\delta}{\ell_\delta^3} \int_\gamma^b |(I_\gamma\theta)(s)|ds=\delta^{\frac 18} \int_\gamma^b |(I_\gamma\theta)(s)|ds\lesssim\delta^{\frac 18}  \,.
\end{multline*}
Next, we have that
\[
\left|\int_0^{\ell_\delta/\sqrt{\delta}} \frac{(I_\gamma\theta)(x) - (I_\gamma\theta)(x-\delta-\delta t^2)}{1+t^2}dt \right| \leq \frac\pi2 \max_{s\in[x-\delta-\delta^{\frac 14},x-\delta]} \big|(I_\gamma\theta)(x) - (I_\gamma\theta)(s) \big|.
\]
Now, using continuity of \( (I_\gamma\theta)(x) \) at \( \beta \), the above estimates show that given \( \epsilon> 0\), we can always find \( d_\epsilon \) so that \( |x-\beta|<d_\epsilon\) and \(|x-\delta-\beta|<d_\epsilon \) imply
\[
\int_{x-\delta}^x \frac{\theta(t)dt}{\sqrt{x-t}} < \epsilon.
\]
This is precisely the statement of the second part in Proposition~\ref{prop:sa3}(iii). The first one is proved similarly.
\end{proof}

\noindent{\bf An example.} The uniform Szeg\H{o} condition is subtle and depends on the direction in which the point is approached, as shown by the following example. Given \( \epsilon\in[0,1] \), let \( \mu_\epsilon \) be an absolutely continuous measure on \( [-1,1] \) such that \( \log\mu^\prime_\epsilon(x) = - \theta_\epsilon(x) \), where
\[
\theta_\epsilon(x) := \begin{cases}
0, & x\in[-1,0], \\
x^{-1/2}(1 - \log x)^{-\epsilon}, & x\in(0,1].
\end{cases}
\]
When \( \epsilon=0 \), it holds that \( \mu_0\in\mathrm{Sz}([-1,a]) \) for any \( a\in(-1,1] \). Indeed, the claim is obvious when \( a\leq 0 \). When \( a>0 \), it holds that
\[
\frac1\pi\int_{-1}^a \frac{\theta_0(x)dx}{\sqrt{(x+1)(a-x)}} = \frac1\pi \int_0^a \frac{dx}{\sqrt{(x+1)x(a-x)}} \leq \frac1\pi \int_0^a \frac{dx}{\sqrt{x(a-x)}} = 1.
\]
This computation also shows that
\[
\frac1\pi \int_{-1}^\delta \frac{\theta_0(x)dx}{\sqrt{\delta-x}} = \begin{cases} 0, & \delta\leq0, \\ 1, & \delta>0, \end{cases}
\]
and therefore \( \mu_0\not\in\mathrm{USz}([-1,0]) \) as follows from Proposition~\ref{prop:sa3}(ii). On the other hand, when \( \epsilon>0 \), it holds that
\[
0\leq \frac1\pi \int_{-1}^\delta \frac{\theta_\epsilon(x)dx}{\sqrt{\delta-x}} \leq \frac1{(1-\log\delta)^\epsilon} \to 0 \qasq \delta\to0^+,
\]
and therefore \( \mu_\epsilon\in\mathrm{USz}([-1,0]) \) again by Proposition~\ref{prop:sa3}(ii) (in fact, \( \mu_\epsilon\in\mathrm{USz}([-1,a]) \)  for any \( a\in(-1,1] \) in this case). However, when \( \epsilon\leq 1 \), we have that \( \mu_\epsilon\not\in\mathrm{Sz}([0,1]) \) since
\[
\frac1\pi \int_0^1  \frac{\theta_\epsilon(x)dx}{\sqrt{x(1-x)}} \geq \frac1\pi \int_0^1 \frac{dx}{x(1-\log x)^\epsilon} = \frac1\pi\int_1^\infty \frac{du}{u^\epsilon} = \infty.
\]

\subsection{Proof of Theorem~\ref{thm:4}}

We prove Theorem~\ref{thm:4} in four steps organized as separate lemmas.  We write \( \Delta(\mu) = [\alpha(\mu),\beta(\mu)] \), \( \Delta_n = [\alpha_n,\beta_n] \), and \( \Delta = [\alpha,\beta] \).  

Let \( \varkappa \) be a parameter in \eqref{w_cond_3} and \( \{\delta_n\} \) be a sequence of positive numbers given by
\begin{equation}
\label{delta_n}
\delta_n = (\xi_n/n)^{2/(\varkappa+1)},
\end{equation}
where \( \xi_n\to 0 \) as \( n\to\infty \) (this sequence will be specified  later in the formula \eqref{ser20}). Set
\[
\Delta_n^* := [\alpha_n^*,\beta_n^*], \quad \alpha_n^* := \max\big\{\alpha_n-\delta_n,\alpha(\mu) \big\}, \quad \beta_n^* := \min\big\{\beta_n+\delta_n,\beta(\mu) \big\}.
\]
Our strategy consists in first applying Theorem~\ref{thm:3}  to obtain the asymptotics of polynomials $P_n^*$ orthogonal with respect to the same measures as \( T_n \) from Theorem~\ref{thm:4} but restricted to  $\Delta_n^*$. Then, we show that asymptotics of $T_n$ coincides with that of $P_n^*$.

\begin{lemma}
\label{lem:vw6}
Recall that \( \theta_n = 2n(V^{\omega_n} + \kappa_n)+h_n \). It holds that
\[
P_n^*(z) := T_n\left(e^{\theta_n}\mu_{|\Delta_n^*}\right)(z) = (1+o_{\mathcal K}(1)) F_n^*(z)
\]
locally uniformly in \( D_\Delta  \), where
\begin{equation}\label{ser21}
F_n^*(z) := \exp\left( n \int\log(z-x)d\omega_n(x)\right) \frac{G(e^{h_n}\mu_{|\Delta_n^*};\infty)}{G(e^{h_n}\mu_{|\Delta_n^*};z)}.
\end{equation}
\end{lemma}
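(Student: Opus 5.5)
The plan is to derive Lemma~\ref{lem:vw6} from Theorem~\ref{thm:3}, applied after an affine rescaling in the spirit of Lemma~\ref{lem:vw3}. Let \( l_n \) be the increasing linear map of \( [-1,1] \) onto \( \Delta_n^* \). Since \( \delta_n\to0 \) and \( \Delta_n\to\Delta \), we have \( \Delta_n^*\to\Delta \), so \( l_n \) converges uniformly to the map \( l \) of \( [-1,1] \) onto \( \Delta \). I would not put the factor \( e^{2n\kappa_n} \) into the continuous part \( h_n \): one checks that \( 2n|\kappa_n| \) need not stay bounded on \( \Delta_n^*\setminus\Delta_n \). Instead I absorb it into the measures and apply Theorem~\ref{thm:3} on \( [-1,1] \) to the pulled-back triples
\[
\mu_n^\circ:=\big(e^{2n\kappa_n}\mu_{|\Delta_n^*}\big)^{(l_n)},\qquad h_n^\circ:=h_n\circ l_n,\qquad \omega_n^\circ:=\omega_n^{(l_n)}.
\]
The same computation as in Lemma~\ref{lem:vw3} (the potential shifts by \( -\log a_n \), and \( w_{\Delta_n^*}\circ l_n=a_n w_{[-1,1]} \)) then turns the conclusion on \( [-1,1] \) back into the statement for \( P_n^* \). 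The output is \eqref{ser21}, except that the Szeg\H{o} function is that of \( e^{h_n+2n\kappa_n}\mu_{|\Delta_n^*} \).

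The next step is to verify conditions \((A)\)--\((D)\).
\begin{itemize}
\item[\((C)\):] The family \( \{h\circ l_n: h\in\mathcal K,\ n\in\N\} \) is equicontinuous and bounded, since \( \mathcal K \) is compact and \( l_n\to l \) uniformly. Its closure is compact by Arzel\`a--Ascoli.
\item[\((A)\):] Use \( e^{2n\kappa_n}\le1 \) and \( \Delta_n^*\to\Delta \). For nonnegative continuous \( f \), \( \limsup_n\int f\,d\mu_n^\circ\le\int f\,d(\mu_{|\Delta})^{(l)} \), because \( \limsup\mu(K_n)\le\mu(\bigcap_{m}\overline{\bigcup_{n\ge m}K_n}) \) for the shrinking closed sets.
\item[\((D)\):] Extend \( \omega_n' \) by zero on \( \Delta_n^*\setminus\Delta_n \). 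Equicontinuity on compact subsets of \( (-1,1) \) is preserved, since \( \alpha_n,\beta_n \) eventually lie outside the image under \( l_n \) of any such compact set. For the lower bound, take \( \tau<2/(\varkappa+1) \), so that \( \delta_n=o(n^{-\tau}) \). On \( [\alpha_n^*+n^{-\tau},\beta_n^*-n^{-\tau}] \) the quantities \( |w_{\Delta_n}| \) and \( |w_{\Delta_n^*}| \) are then comparable, and \eqref{w_cond_1} transfers. For the upper bound, replace \( \varkappa_U \) by \( \min\{\varkappa_U,0\} \). Near an endpoint \( \alpha_n>\alpha(\mu) \), \eqref{w_cond_2} makes \( \omega_n' \) bounded, and a bounded function is \( \lesssim|w_{\Delta_n^*}|^{\varkappa} \) for any \( \varkappa\le0 \). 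Away from the endpoints, \( |w_{\Delta_n}|\asymp|w_{\Delta_n^*}| \). If \( \alpha_n=\alpha(\mu) \), nothing changes there.
\end{itemize}

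The main obstacle is condition \((B)\), together with the final replacement of the Szeg\H{o} function. Split \( \log v_n^\circ-\log v^\circ \) into two parts:
\[
\big[\log\mu'\circ l_n-\log\mu'\circ l\big]+\tfrac12\log\tfrac{|\Delta_n^*|}{|\Delta|}\qquad\text{and}\qquad 2n\kappa_n\circ l_n .
\]
The first part tends to zero in \( L^1(\omega_{[-1,1]}) \) by \( \mu\in\mathrm{USz}(\Delta) \), i.e.\ \eqref{UnSzego}, applied to \( \Delta_n^*\to\Delta \). The second part lives on the end pieces of length at most \( \delta_n \). There \eqref{w_cond_3} gives \( |\kappa_n(x)|\lesssim(\delta_n)^{\varkappa/2}\asymp \) a multiple of \( |x-\alpha_n|^{\varkappa/2} \) up to bounded factors, while \( d\omega_{\Delta_n^*}\lesssim(x-\alpha_n^*)^{-1/2}dx \). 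Hence
\[
\int 2n|\kappa_n|\,d\omega_{\Delta_n^*}\lesssim n\int_0^{\delta_n}s^{\varkappa/2}s^{-1/2}\,ds\asymp n\,\delta_n^{(\varkappa+1)/2}=\xi_n\to0,
\]
and this is exactly why \( \delta_n \) is chosen as in \eqref{delta_n}.

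The same estimate finishes the argument. It shows \( \|2n\kappa_n\|_{L^1(\omega_{\Delta_n^*})}\to0 \), so by \eqref{outer}
\[
\frac{G(e^{h_n+2n\kappa_n}\mu_{|\Delta_n^*};z)}{G(e^{h_n}\mu_{|\Delta_n^*};z)}=\exp\Big(\tfrac12 w_{\Delta_n^*}(z)\int 2n\kappa_n\frac{d\omega_{\Delta_n^*}(x)}{z-x}\Big)=1+o(1).
\]
This holds locally uniformly in \( D_\Delta \), and also at \( \infty \), because \( |w_{\Delta_n^*}(z)/(z-x)| \) is uniformly bounded there. Substituting this into the output of Theorem~\ref{thm:3} yields \( P_n^*=(1+o_{\mathcal K}(1))F_n^* \).
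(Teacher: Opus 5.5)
Your proposal is correct and follows essentially the same route as the paper. You rescale $\Delta_n^*$ to a fixed interval and put $e^{2n\kappa_n}$ into the measures rather than into $h_n$. You then verify $(A)$--$(D)$ of Theorem~\ref{thm:3}, with $(B)$ coming from \eqref{UnSzego} and the bound $n\delta_n^{(\varkappa+1)/2}=\xi_n$.

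Your explicit last step fills in a point the paper leaves implicit in ``computations in Lemma~\ref{lem:vw3}''. That step replaces $G(e^{h_n+2n\kappa_n}\mu_{|\Delta_n^*};\cdot)$ by $G(e^{h_n}\mu_{|\Delta_n^*};\cdot)$ using the $L^1$ bound on $2n\kappa_n$. One harmless slip: the constant in your $(B)$ splitting should be $\log(|\Delta_n^*|/|\Delta|)$ rather than half of it, and it tends to zero either way.
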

\begin{proof}
To apply  Theorem~\ref{thm:3}, we need to rescale intervals \( \Delta_n^* \) to the fixed limit interval \( \Delta \). To this end, let \( l_n(x) = a_nx+b_n \), \( a_n>0 \), be the linear function that maps \( \Delta \) onto \( \Delta_n^* \). Since $\Delta_n^*\to\Delta$, we have  \( a_n\to1 \) and \( b_n\to0 \) as \( n\to\infty \). In the notation of Lemma~\ref{lem:vw3}, we denote the rescaled equilibrium measure by \( \tilde \omega_n := \omega_n^{(l_n)} \). Then, one has
\[
\supp(\tilde\omega_n) = l_n^{-1}(\Delta_n)\subseteq \Delta \qandq d\tilde\omega_n(x)  = \tilde\omega_n^\prime(x)dx = a_n\omega_n^\prime(l_n(x))dx.
\]
Write \( l_n^{-1}(\Delta_n)=[\tilde\alpha_n,\tilde\beta_n]\). Since \( a_n\to 1 \) as \( n\to\infty \), the upper bound in \eqref{w_cond_1} gives 
\[
\tilde\omega_n^\prime(x) \lesssim (x-\tilde\alpha_n)^{\varkappa_U/2}(\tilde\beta_n-x)^{\varkappa_U/2},  \quad x\in \big( \tilde\alpha_n,\tilde\beta_n \big).
\]
If \( \tilde\alpha_n>\alpha \) for some index \( n \), then \( \alpha_n>\alpha_n^*\geq \alpha(\mu) \) and \( (x-\tilde\alpha_n)^{\varkappa_U/2} \) can be replaced by \( (x-\tilde\alpha_n)^{\widehat\varkappa_U} \) in the above estimate as required by \eqref{w_cond_2}, where \( \widehat\varkappa_U>0 \). 
\begin{figure}[!ht]
\includegraphics[scale=.65]{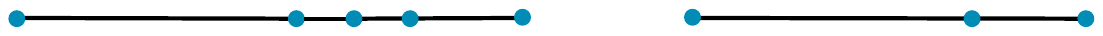}
\begin{picture}(0,0)
\put(-358,2){\(\alpha(\mu)=\alpha_n=\alpha_n^*=\alpha\)}
\put(-265,19){\(\beta\)}
\put(-248,19){\(\beta_n\)}
\put(-231,19){\(\beta_n^*\)}
\put(-200,19){\(\beta(\mu)\)}
\put(-45,0){\(\beta=l_n^{-1}(\beta_n^*)\)}
\put(-80,19){\(\tilde\beta_n=l_n^{-1}(\beta_n)\)}
\put(-160,2){\(\alpha(\mu)=\tilde\alpha_n\)}
\end{picture}
\caption{\small Segments \( \Delta\subset\Delta_n\subset\Delta_n^*\subset\Delta(\mu) \) and \( l_n^{-1}(\Delta_n)\subset\Delta=l_n^{-1}(\Delta_n^*) \).}
\label{fig:1}
\end{figure}
Similarly, if \( \tilde\beta_n<\beta \) for some \( n \), then \( (\tilde\beta_n-x)^{\varkappa_U/2} \) can again be replaced by \( (\tilde\beta_n - x )^{\widehat\varkappa_U} \). Either way, the upper bound in assumption \( (D) \) of Theorem~\ref{thm:3} is fulfilled. Similarly to the upper bound, we have that
\[
\tilde\omega_n^\prime(x) \gtrsim |w_{[\tilde\alpha_n,\tilde\beta_n]}(x)|^{\varkappa_L},  \quad x\in ( \tilde\alpha_n,\tilde\beta_n),
\]
by the lower bound in \eqref{w_cond_1}. If \( l_n^{-1}(\Delta_n) =\Delta \) for all \( n \), the above inequality gives the desired lower bound in assumption \( (D) \) of Theorem~\ref{thm:3}. If at least one of the intervals \( l_n^{-1}(\Delta_n) \) is a proper subinterval of \( \Delta \), then the corresponding upper bound requires that \( \varkappa_L>0 \). It can be readily checked that
\[
|w_{[\tilde\alpha_n,\tilde\beta_n]}(x)| \gtrsim  |w_\Delta(x)|, \quad x\in \big(\alpha+2(\tilde\alpha_n-\alpha), \beta-2(\beta-\tilde\beta_n)\big).
\]
Notice that \( a_n( \tilde\alpha_n-\alpha) = \alpha_n-\alpha_n^* \leq \delta_n \) and similarly that \( a_n(\beta-\tilde\beta_n) \leq \delta_n \). Then,   \eqref{delta_n} guarantees existence of \( \tau \geq \frac2{1+\varkappa} \)  such that
\[
\tilde\omega_n^\prime(x) \gtrsim  |w_\Delta(x)|^{\varkappa_L}, \quad x\in(\alpha+n^{-\tau},\beta-n^{-\tau}).
\] 
As equicontinuity of the functions \( \tilde\omega_n^\prime(x) \) on compact subsets of \( (\alpha,\beta) \) follows from the equicontinuity of the densities \( \omega_n^\prime(x) \), all the requirements of assumption \( (D) \) of Theorem~\ref{thm:3} are satisfied.

Next, let \( \tilde h_n = h_n\circ l_n \). Functions \( l_n \) converge to the identity map locally uniformly, hence assumption $(C)$ of Theorem~\ref{thm:3} follows from  assumption (3) of Theorem~\ref{thm:4} with the compact subset of \( C(\Delta) \) being the closure of \( \cup_n\{h\circ l_n : h\in\mathcal K\} \). 

Now, we verify assumption $(A)$ for the same measure \( \mu \) as in the statement of Theorem~\ref{thm:4} and a sequence of measures
\[
d\tilde \mu_n(x) = e^{2n\tilde\kappa_n(x)}d(\mu_{|\Delta_n^*})^{(l_n)}(x), \quad \tilde \kappa_n = \kappa_n\circ l_n.
\]
Given a continuous nonnegative function \( f \) on \( \Delta \), it holds that
\[
\int_\Delta fd(\mu_{|\Delta_n^*}^{(l_n)}-\mu) = \int_{\Delta\cap \Delta_n^*}( f\circ l_n^{-1}-f)d\mu + \int_{\Delta_n^*\setminus \Delta}  f\circ l_n^{-1} d\mu - \int_{\Delta\setminus\Delta_n^*} fd\mu.
\]
The first integral on the right-hand side above converges to zero due to uniform continuity of \( f \). The second one converges to zero because \( f \) is bounded and \( \cap_{n}\Delta_n^*\setminus \Delta=\emptyset\) so that \(\mu(\Delta_n^*\setminus \Delta)\to 0\) as \( n\to\infty \). The third integral is always nonnegative. Hence, as \( \tilde\kappa_n(x) \leq 0 \), assumption $(A)$ of Theorem~\ref{thm:3} is clearly fulfilled. 

To verify assumption $(B)$, we write
\[
\log\tilde v_n(x) = 2n\tilde \kappa_n(x) + \log v_{\Delta_n^*}(l_n(x)),
\]
where \( \tilde v_n \) is the Radon-Nikodym derivative of \( \tilde\mu_n \) with respect to \( \omega_\Delta \), see \eqref{szego}. Observe that
\[
|\log v_\Delta(x) - \log v_{\Delta_n^*}(l_n(x))| = |\log\mu^\prime(x) - \log\mu^\prime(l_n(x)) - \log a_n|.
\]
Since \( a_n\to 1\) and \( \mu\in\mathrm{Usz}(\Delta) \), \( \| \log v_\Delta - \log v_{\Delta_n^*}(l_n)\|_{L^1(\omega_\Delta)} \to 0 \) as \( n\to\infty \) due to assumption \eqref{UnSzego}. Furthermore, we have 
\[
\int_\Delta |\tilde\kappa_n(x)|d\omega_\Delta(x) = \int_{\Delta_n^*} |\kappa_n(x)| d\omega_{\Delta_n^*}(x)  \lesssim \int_{\Delta_n^*\setminus\Delta_n} |w_{\Delta_n}(x)|^\varkappa d\omega_{\Delta_n^*}(x),
\]
where we used \eqref{w_cond_3} and identity \( \kappa_n\equiv0 \) on \( \Delta_n \). Hence, it readily follows from the definition of \( \Delta_n^* \) that
\begin{equation}
\label{L1kappa}
2n\int_\Delta |\tilde\kappa_n(x)|d\omega_\Delta(x) \lesssim_\Delta n \int_0^{\delta_n} x^{(\varkappa+1)/2} d\omega_{[0,\delta_n]}(x) \lesssim_\Delta n \delta_n^{(\varkappa+1)/2} = \xi_n.
\end{equation}
Since $\lim_{n\to\infty}\xi_n=0$ by our assumptions, we get \( \| 2n\tilde \kappa_n \|_{L^1(\omega_\Delta)} \to 0 \) as \( n\to\infty \), which shows that condition $(B)$ of Theorem~\ref{thm:3} is also satisfied. 

Altogether, we have that the triples \( (\tilde\mu_n,\tilde h_n,\tilde\omega_n) \) satisfy all the conditions of Theorem~\ref{thm:3} and therefore
\[
T_n\left(e^{2nV^{\tilde\omega_n}+\tilde h_n}\tilde\mu_n\right)(z) = \left(1+o_{\mathcal K}(1)\right) \exp\left( n \int\log(z-x)d\tilde \omega_n(x)\right) \frac{G(e^{\tilde h_n}\tilde \mu_n;\infty)}{G(e^{\tilde h_n}\tilde\mu_n;z)}
\]
locally uniformly in \( D_\Delta  \). Computations in Lemma~\ref{lem:vw3} now show that the above formula is equivalent to the statement of the lemma.
\end{proof}

The next lemma provides a simple uniform estimate on the boundary behavior for the sequence of outer functions. Recall the definition of the function $G$ given in \eqref{SzegoFun} and \eqref{outer}.
\begin{lemma}
\label{lem:vw7}
There exists a non-decreasing function \( \epsilon_\mu(t) \) such that \( \lim_{t\downarrow 0}\epsilon_\mu(t)=0 \) and
\[
\log G^{-2}\big(e^{h_n}\mu_{|\Delta_n};\beta_n+t\big) \leq \frac{\epsilon_\mu(t)}{\sqrt t}
\] 
for every \( t\in[0,\beta(\mu)-\beta_n] \) and all \( n \). Moreover, an analogous estimate holds with \( \beta_n+t \) replaced by \( \alpha_n-t \) for \( t\in[0,\alpha_n-\alpha(\mu)] \).
\end{lemma}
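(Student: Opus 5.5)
Write the quantity explicitly through \eqref{outer} and \eqref{SzegoFun}. With \( u_n = h_n + \log v_n \), where \( v_n(x)=\pi\mu'(x)\sqrt{(x-\alpha_n)(\beta_n-x)} \) on \( \Delta_n \), we have
\[
\log G^{-2}\big(e^{h_n}\mu_{|\Delta_n};z\big) = -\, w_{\Delta_n}(z)\int_{\Delta_n} \frac{u_n(x)\,d\omega_{\Delta_n}(x)}{z-x}.
\]
For \( z=\beta_n+t \) with \( t\ge0 \), both \( w_{\Delta_n}(z)=\sqrt{t(\beta_n-\alpha_n+t)} \) and \( z-x \) are positive. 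Since only an upper bound is needed, we may replace \( -u_n \) by \( u_n^- = \max\{-u_n,0\} \). The \( h_n \)-part is bounded by \( \sup_{\mathcal K}\|h\|_\infty \), because \( w_{\Delta_n}(z)\int d\omega_{\Delta_n}/(z-x)=1 \) for real \( z>\beta_n \). The term \( \log\big(\pi\sqrt{(x-\alpha_n)(\beta_n-x)}\big) \) contributes a bounded amount in the same way: it is \( \log^- \) of a function whose \( L^1(\omega_{\Delta_n}) \)-norm is uniformly bounded, and a direct computation handles it since the singularity is only logarithmic. So the main term to control is
\[
t^{1/2}\,(\beta_n-\alpha_n+t)^{1/2}\int_{\alpha_n}^{\beta_n}\frac{\theta(x)\,dx}{\pi(\beta_n+t-x)\sqrt{(x-\alpha_n)(\beta_n-x)}},\qquad \theta=\log^-\mu' \ (\text{or } |\log\mu'|).
\]
Constants are absorbed by allowing \( \epsilon_\mu(t)\ge C\sqrt t \), which still tends to zero.

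Next split \( \Delta_n \) into the part near \( \beta_n \), say \( x\in[\beta_n-\eta,\beta_n] \), and the rest. Away from \( \beta_n \) the kernel is bounded by \( C/\eta \), and \( \theta\in L^1(\omega_{\Delta_n}) \) uniformly in \( n \). This uniformity is the substantive input: because \( \Delta_n\to\Delta \) and \( \mu\in\mathrm{USz}(\Delta) \), \eqref{UnSzego} gives uniform Szeg\H{o} integrals for large \( n \), and finitely many small \( n \) are handled individually. That part therefore contributes at most \( C\sqrt t/\eta \). Near \( \beta_n \), use \( \beta_n+t-x\ge \max\{t,\beta_n-x\} \) to obtain the bound
\[
C\int_{\beta_n-\eta}^{\beta_n}\frac{\sqrt t\,\theta(x)\,dx}{(\beta_n+t-x)\sqrt{\beta_n-x}} \le \frac{C}{\sqrt t}\,\sup_{s\le \beta_n-x\le s+ \dots}\ldots
\]
It is cleaner to decompose dyadically in \( \beta_n-x\in[2^{k}t,2^{k+1}t] \) and \( [0,t] \). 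On \( [0,t] \) the kernel is at most \( t^{-1/2}(\beta_n-x)^{-1/2} \), giving \( t^{-1/2}\int_{\beta_n-t}^{\beta_n}\theta/\sqrt{\beta_n-x} \). On the \( k \)-th dyadic block it is at most \( 2^{-k/2}\,t^{-1/2}\cdot(\beta_n-x)^{-1/2} \), so summing gives
\[
\frac{C}{\sqrt t}\sup_{0<s\le\eta}\int_{\beta_n-s}^{\beta_n}\frac{\theta(x)\,dx}{\sqrt{\beta_n-x}}.
\]

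Finally invoke Proposition~\ref{prop:sa3}(iii) with \( a_\beta=\beta_n-s \) and \( b_\beta=\beta_n \), applied to \( |\log\mu'| \) as justified after Proposition~\ref{prop:sa3}. For large \( n \) and small \( \eta \), both endpoints lie within \( d_\epsilon \) of \( \beta \), so the supremum is below \( \epsilon \). Choosing \( \eta=\eta(t)\to0 \) slowly, e.g. \( \eta=t^{1/4} \), the total is at most \( t^{-1/2}\big(\epsilon(\eta(t))+Ct^{1/4}+C\sqrt t\big) \); take the non-decreasing majorant of the bracket as \( \epsilon_\mu(t) \). The main obstacle is obtaining uniformity in \( n \) for the near-endpoint integral when \( \beta_n \) does not converge to \( \beta \) fast, and handling the finitely many early \( n \) and the case \( t \) not small. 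For large \( t \), \( \epsilon_\mu \) can simply be taken large, since only \( t\le|\Delta(\mu)| \) matters; for each fixed early \( n \), integrability gives a per-\( n \) modulus, and we take the maximum of finitely many such moduli. The estimate at \( \alpha_n-t \) is symmetric.
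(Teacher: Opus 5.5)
Your proposal is correct and is essentially the paper's proof. The paper uses the same three pieces: the $h_n$-term via $w_{\Delta_n}(z)\int d\omega_{\Delta_n}/(z-x)=1$; the part of $\Delta_n$ away from $\beta_n$ via the uniform bound on $\|\log\mu'\|_{L^1(\omega_{\Delta_n})}$ from \eqref{UnSzego}; and the part near $\beta_n$ via Proposition~\ref{prop:sa3}(iii) for $|\log\mu'|$. The paper cuts at $\sqrt t$ rather than your $t^{1/4}$, and argues by contradiction where you give a direct $\epsilon$--$d_\epsilon$ argument plus finitely many early $n$; your dyadic splitting is unnecessary, since $\beta_n+t-x\ge t$ already gives the same bound.

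One slip: the $\log\big(\pi\sqrt{(x-\alpha_n)(\beta_n-x)}\big)$ term is not bounded but of size $\tfrac12|\log t|+\mathcal O(1)$. For instance, for Lebesgue measure on $\Delta$ one has $G^2=\pi w_\Delta\phi_\Delta$, which vanishes like $\sqrt t$ at $\beta+t$. So, as in the paper, you need $\epsilon_\mu(t)\ge C\sqrt t\,(1+|\log t|)$ (then a non-decreasing majorant), which still tends to zero.
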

\begin{proof}
From \eqref{outer} and \eqref{SzegoFun}, we get
\[
\log G^{-2}\big(e^{h_n}\mu_{|\Delta_n};\beta_n+t\big) \leq w_{\Delta_n}(\beta_n+t) \int_{\Delta_n} \frac{|h_n(x)|+|\log v_{\Delta_n}(x)|}{\beta_n+t-x}d\omega_{\Delta_n}(x).
\]
The application of the Cauchy integral formula gives
\[
\frac1{w_{\Delta_n}(z)} = \frac1{\pi\ic}\int_{\Delta_n} \frac1{x-z}\frac{dx}{w_{\Delta_n+}(x)} = \int_{\Delta_n} \frac{d\omega_{\Delta_n}(x)}{z-x},
\]
where we used the fact that \( w_{\Delta_n+} = \ic |w_{\Delta_n}| \) on \( \Delta_n \). Since \( \mathcal K \) is a compact family,  the functions \( |h_n| \) are uniformly bounded. Therefore, we get that
\[
w_{\Delta_n}(\beta_n+t) \int_{\Delta_n} \frac{|h_n(x)|}{\beta_n+t-x}d\omega_{\Delta_n}(x) \lesssim_{\mathcal K} 1.
\]
Recall further that \(  v_{\Delta_n}(x) = \pi\mu^\prime(x)|w_{\Delta_n}(x)|\). We have 
\begin{align}
w_{\Delta_n}(\beta_n+t) \int_{\Delta_n} \frac{|\log|w_{\Delta_n}(x)||}{\beta_n+t-x}d\omega_{\Delta_n}(x) & \lesssim_\Delta  1 + \sqrt t \int_{\beta_n-1}^{\beta_n} \frac{-\log(\beta_n-x)}{\beta_n+t-x} \frac{dx}{\sqrt{\beta_n-x}} \\
& \lesssim_\Delta 1 + |\log t|
\end{align}
when \( t\in[0,\beta(\mu)-\beta_n] \), where the first estimate is deduced exactly as the estimate for the integrals of \( |h_n| \) above and the second one follows by making a substitution \( \beta_n-x=ty\). Moreover,
\begin{align}
\int_{\Delta_n} \frac{|\log\mu^\prime(x)|}{\beta_n+t-x}d\omega_{\Delta_n}(x)  &\lesssim_{\Delta} \frac{1}{\sqrt t} \int_{\alpha_n}^{\beta_n-\sqrt t} \frac{|\log\mu^\prime(x)|}{\sqrt{\beta_n-x}}dx+\frac1t \int_{\beta_n-\sqrt t}^{\beta_n} \frac{|\log\mu^\prime(x)|}{\sqrt{\beta_n-x}}dx\\
& \stackrel{ \eqref{UnSzego}}{\lesssim_\Delta}  \frac{\|\log\mu^\prime\|_{L^1(\omega_\Delta)}}{\sqrt t} + \frac1t \int_{\beta_n-\sqrt t}^{\beta_n} \frac{|\log\mu^\prime(x)|}{\sqrt{\beta_n-x}}dx\,.
 \end{align}
We define $\epsilon(t)$ by the formula:
\[
\epsilon(t) := \sup_{n>N_0} \int_{\beta_n-\sqrt t}^{\beta_n} \frac{|\log\mu^\prime(x)|}{\sqrt{\beta_n-x}}dx,
\]
where $N_0$ is sufficiently large to make sure that integrals converge. Clearly, this is a non-decreasing function of \( t \). We claim that \( \lim_{t\downarrow 0} \epsilon(t) =0 \). Indeed, assume to the contrary that there exist \( \epsilon_0>0 \), a sequence \( \{t_m\} \) decreasing to \( 0 \), and a set \( \{m_n\} \) such that
\begin{equation}
\label{contrapos}
\int_{\beta_{m_n}-\sqrt t_m}^{\beta_{m_n}} \frac{|\log\mu^\prime(x)|}{\sqrt{\beta_{m_n}-x}}dx \geq \epsilon_0.
\end{equation}
 If $\{m_n\}$ is bounded, it contains a constant sequence. However, \eqref{contrapos} cannot hold along this sequence as the integrals of a fixed integrable function over sets of decreasing measure must vanish. On the other hand, if \( \{m_n\} \) contains a strictly increasing sequence, \eqref{contrapos} contradicts Proposition~\ref{prop:sa3}(iii) since \( \mu\in \textrm{USz}(\Delta) \) (recall that we can replace \( \log\mu^\prime \) by \( |\log\mu^\prime|\) in that proposition). Hence, altogether, 
\[
w_{\Delta_n}(\beta_n+t) \int_{\Delta_n}\frac{|\log\mu^\prime(x)|}{\beta_n+t-x}d\omega_{\Delta_n}(x) \lesssim_{\mu,\Delta} 1 + \frac{\epsilon(t)}{\sqrt t}.
\]
Combining the previous estimates, we obtain
\[
\log G^{-2}\big(e^{h_n}\mu_{|\Delta_n},\beta_n+t\big) \lesssim_{\Delta,\mu,\mathcal K} \frac{\sqrt t(1 + |\log t|) + \epsilon(t)}{\sqrt t}.
\]
Since the numerator on the right-hand side is estimated by a non-decreasing function that has zero limit at zero, the claim of the lemma follows.
\end{proof}

In the proof of the next lemma, we will be using the following estimate. Suppose \( f \in H^2(\D) \), then  the Cauchy integral formula and Cauchy-Schwarz inequality  give
\[
|f(z)| \leq \int_\T \frac{|f(\eta)|}{|\eta-z|}\frac{|d\eta|}{2\pi} \leq \frac{ \|f\|_{L^2(\T)} }{\sqrt{1-|z|}}.
\]
Thus, if \( F \in H^2(\overline\C\setminus\Delta) \), then \( F\circ \phi_\Delta \) is a function in \( H^2(\D) \), see \eqref{phi-w}, and therefore
\begin{equation}
\label{HardyEst}
|F(\beta+\delta)| \leq \frac{\|F\|_{L^2(\omega_\Delta)}}{\sqrt{1-\phi_\Delta(\beta+\delta)}} \lesssim_\Delta \frac{\|F\|_{L^2(\omega_\Delta)}}{\sqrt[4]\delta}.
\end{equation}

\begin{lemma}
\label{lem:vw8}
The sequence \( \{\xi_n\} \) from \eqref{delta_n} can be chosen so that
\[
\int_{\Delta(\mu)\setminus\Delta_n^*} P_n^*(x)^2e^{\theta_n(x)}d\mu(x) \to 0 \qasq n\to\infty.
\]
\end{lemma}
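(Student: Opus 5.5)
We will bound \(|P_n^*(x)|^2 e^{\theta_n(x)}\) pointwise on \(\Delta(\mu)\setminus\Delta_n^*\), working on the right piece \((\beta_n^*,\beta(\mu)]\); the left piece is symmetric, and the set is empty when \(\beta_n^*=\beta(\mu)\). The plan is to use the Hardy-space estimate \eqref{HardyEst} to control \(P_n^*\) just outside \(\Delta_n^*\), and then to let the strongly negative factor \(e^{2n\kappa_n}\) absorb everything further out. We normalize by
\[
F_n^*(z)=\exp\Big(n\int\log(z-x)\,d\omega_n(x)\Big)\frac{G(e^{h_n}\mu_{|\Delta_n^*};\infty)}{G(e^{h_n}\mu_{|\Delta_n^*};z)}
\]
and set \(R_n:=P_n^*/F_n^*\). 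This function is holomorphic in \(D_{\Delta_n^*}\), and by \eqref{outer-b} its boundary values satisfy \(|R_{n\pm}|^2 = |P_n^*|^2e^{2nV^{\omega_n}}e^{h_n}v_{\Delta_n^*}/G^2(\infty)\) on \(\Delta_n^*\).

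\textbf{Step 1: \(L^2\) bound.} On \(\Delta_n^*\) we have \(\kappa_n\) of size \(\lesssim n^{-1}\xi_n\) in \(L^1(\omega_{\Delta_n^*})\), by \eqref{L1kappa}. The orthogonality norm on \(\Delta_n^*\) is \(\int_{\Delta_n^*}(P_n^*)^2e^{\theta_n}d\mu\), and we compare it with Theorem~\ref{thm:3} as used in Lemma~\ref{lem:vw6}, which includes the norm asymptotics. This should give \(\|R_n\|_{L^2(\omega_{\Delta_n^*})}\lesssim 1\) up to the factor \(e^{-2n\kappa_n}\). To avoid that factor, we would rather define \(R_n\) with the full weight \(e^{h_n+2n\kappa_n}\mu\) inside the Szeg\H{o} function. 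This changes \(G\) by \(\exp(O(\|2n\kappa_n\|_{L^1}))\), and \(\|2n\kappa_n\|_{L^1}=O(\xi_n)\to0\), so the normalization is harmless. The boundary values are then exactly normalized, and the \(L^2\) norm is \(\lesssim 1\) by the second formula of Theorem~\ref{thm:3}.

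\textbf{Step 2: pointwise bound outside \(\Delta_n^*\).} For \(x=\beta_n^*+s\), \eqref{HardyEst} gives \(|R_n(x)|^2\lesssim s^{-1/2}\). Next we need to control \(|F_n^*(x)|^2e^{\theta_n(x)}\). The first factor of \(F_n^*\) satisfies \(|\exp(n\int\log(x-y)d\omega_n)|^2=e^{-2nV^{\omega_n}(x)}\), and this cancels against \(e^{2nV^{\omega_n}}\) inside \(e^{\theta_n}\). The term \(e^{h_n}\) is bounded. That leaves
\[
|P_n^*(x)|^2e^{\theta_n(x)}\lesssim s^{-1/2}\,G^{-2}(e^{h_n}\mu_{|\Delta_n^*};x)\,e^{2n\kappa_n(x)}.
\]
By Lemma~\ref{lem:vw7}, applied with \(\Delta_n^*\) in place of \(\Delta_n\) (its proof only uses \(\Delta_n\to\Delta\) and USz), we get \(G^{-2}\le \exp(\epsilon_\mu(s)/\sqrt s)\). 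By \eqref{w_cond_3}, since \(x-\beta_n\ge\delta_n+s\), we have \(2n\kappa_n(x)\le -c\,n(\delta_n+s)^{(\varkappa+1)/2}\), using \(|w_{\Delta_n}(x)|\gtrsim (x-\beta_n)^{1/2}\).

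\textbf{Step 3: choosing \(\xi_n\).} This is the main obstacle. It comes down to showing that
\[
\int_0^{\beta(\mu)} s^{-1/2}\exp\Big(\frac{\epsilon_\mu(s)}{\sqrt s}-cn(\delta_n+s)^{(\varkappa+1)/2}\Big)\,\mu'(\beta_n^*+s)\,ds
\]
plus the singular part of \(\mu\) tends to \(0\); the singular part is handled with the same pointwise bound since it has finite mass. The difficulty is that the cushion \(\delta_n\) must be small enough that \(n\delta_n^{(\varkappa+1)/2}=\xi_n\to0\), while \(e^{\epsilon_\mu(s)/\sqrt s}\) may blow up at small \(s\). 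We split at \(s=\delta_n\).

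For \(s\ge\delta_n\), the exponent is at most \(\epsilon_\mu(s)s^{-1/2}-cns^{(\varkappa+1)/2}\). Since \(ns^{(\varkappa+1)/2}\ge \xi_n (s/\delta_n)^{(\varkappa+1)/2}\), we would like \(\epsilon_\mu(s)/\sqrt s\ll ns^{(\varkappa+1)/2}\) for \(s\ge\delta_n\). That holds whenever \(n\delta_n^{(\varkappa+2)/2}\ge \epsilon_\mu(\delta_n)^{1/2}\), that is, \(\xi_n\delta_n^{1/2}\gtrsim\sqrt{\epsilon_\mu(\delta_n)}\). This is not compatible as stated, so for \(s\le\) some fixed \(\eta\) we instead use \(e^{\epsilon_\mu(s)/\sqrt s}\le e^{\epsilon_\mu(\eta)/\sqrt s}\) together with the observation that on \(s\ge\delta_n\) the factor \(s^{-1/2}\) is at most \(\delta_n^{-1/2}\). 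If this turns out too crude, we would revisit \eqref{HardyEst}. Evaluating \(R_n\) at distance \(s+\delta_n\) from \(\beta_n\) rather than from \(\beta_n^*\) replaces \(s^{-1/2}\) by \((s+\delta_n)^{-1/2}\), and \(G\) can likewise be evaluated relative to \(\Delta_n\) via Proposition~\ref{prop:sa2}. With that change the integrand is at most \((s+\delta_n)^{-1/2}\exp\big(\epsilon_\mu(s+\delta_n)/\sqrt{s+\delta_n}-cn(s+\delta_n)^{(\varkappa+1)/2}\big)\). The exponential term \(-cn t^{(\varkappa+1)/2}\) dominates \(\epsilon_\mu(t)/\sqrt t\) as soon as \(t\ge t_n\), where \(t_n\) solves \(n t_n^{(\varkappa+2)/2}=\epsilon_\mu(t_n)\), and \(t_n=o(n^{-2/(\varkappa+2)})\). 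We then choose \(\xi_n\to0\) slowly enough that \(\delta_n\ge t_n\) while still \(n\delta_n^{(\varkappa+1)/2}\to0\). This is possible because \(n^{-2/(\varkappa+1)}\gg n^{-2/(\varkappa+2)}\gg t_n\), so the admissible range for \(\delta_n\) is nonempty. On that range the integrand is at most \(\delta_n^{-1/2}e^{-c' n(s+\delta_n)^{(\varkappa+1)/2}}\), and its integral is \(O(\delta_n^{-1/2}\,n^{-2/(\varkappa+1)})\). Since \(n^{-2/(\varkappa+1)}=\delta_n/\xi_n^{2/(\varkappa+1)}\), this equals \(O(\delta_n^{1/2}\xi_n^{-2/(\varkappa+1)})\), which tends to \(0\) provided \(\xi_n\) decays slowly, e.g. \(\xi_n\ge n^{-\epsilon}\) for small \(\epsilon\). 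Reconciling these competing constraints on \(\xi_n\) is the crux of the argument, and we expect it to fix the formula \eqref{ser20}.
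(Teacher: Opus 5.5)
Your overall architecture is the right one: normalize $P_n^*$, get an $L^2$ bound from the norm asymptotics of Theorem~\ref{thm:3}, apply \eqref{HardyEst} and Lemma~\ref{lem:vw7} pointwise, let $e^{2n\kappa_n}$ win, and tune $\xi_n$. As written, however, the proof does not close, for two reasons.

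\textbf{First, the normalization on $\Delta_n^*$ does not work.} Steps~1 and~2 use two incompatible normalizations on $\Delta_n^*$.
\begin{itemize}
\item If $R_n$ uses the unmodified Szeg\H{o} function of $e^{h_n}\mu_{|\Delta_n^*}$, its boundary values miss the factor $e^{2n\kappa_n}\le1$ on $\Delta_n^*\setminus\Delta_n$. Then Theorem~\ref{thm:3} gives no $L^2$ bound.
\item If you put $e^{2n\kappa_n}$ inside the Szeg\H{o} function, the $L^2$ bound holds. But the pointwise estimate at $x=\beta_n^*+s$ then carries the factor $|\Omega_{\Delta_n^*}(e^{-n\kappa_n};x)|^2\ge1$, which you dropped. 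The smallness of $\|2n\kappa_n\|_{L^1}$ controls this factor only at $\infty$ or on compacts of $D_\Delta$, because the kernel $w_{\Delta_n^*}(x)/(x-y)$ is of size $s^{-1/2}$. In fact, since $\kappa_n$ is continuous, $|\Omega_{\Delta_n^*}(e^{-n\kappa_n};x)|^2e^{2n\kappa_n(x)}\to1$ as $x\downarrow\beta_n^*$. So the decay you want to exploit is exactly cancelled next to $\beta_n^*$.
\end{itemize}
The missing idea is to normalize on $\Delta_n$, where $\kappa_n\equiv0$. With $F_n$ built from $\mu_{|\Delta_n}$, one has $P_n^*/F_n\in H^2(D_{\Delta_n})$. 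Restricting the norm integral from $\Delta_n^*$ to $\Delta_n$ gives $\int_{\Delta_n}|P_n^*/F_{n\pm}|^2d\omega_{\Delta_n}\le 2+o(1)$. Then \eqref{HardyEst} yields $|P_n^*/F_n|^2(x)\lesssim(x-\beta_n)^{-1/2}$, and Lemma~\ref{lem:vw7} applies exactly as stated at $t=x-\beta_n\ge\delta_n$. Your Step~3 gestures at ``evaluating relative to $\Delta_n$''. However, your $R_n$ is not holomorphic across $\Delta_n^*\setminus\Delta_n$, and Proposition~\ref{prop:sa2} says nothing at points tending to $\beta$.

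\textbf{Second, the decay exponent of $\kappa_n$ is wrong.} By \eqref{w_cond_3}, $|\kappa_n(x)|\gtrsim(x-\beta_n)^{\varkappa/2}$, not $(x-\beta_n)^{(\varkappa+1)/2}$, and the lost factor $t^{1/2}$ is fatal.
\begin{itemize}
\item With your exponent, beating $\epsilon_\mu(t)/\sqrt t$ at $t=\delta_n$ requires $n\delta_n^{(\varkappa+2)/2}=\xi_n\delta_n^{1/2}\gtrsim\epsilon_\mu(\delta_n)$. This fails in general: the left side decays like a power of $n$, while $\epsilon_\mu$ has no rate.
\item Your justification $n^{-2/(\varkappa+1)}\gg n^{-2/(\varkappa+2)}$ is backwards. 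So your admissible range $t_n\le\delta_n$ is typically empty.
\item With the correct exponent, the requirement is $nt^{(\varkappa+1)/2}\ge2\epsilon_\mu(t)$ for $t\ge\delta_n$. On $[\delta_n,T_n]$, with $T_n=(2\epsilon_\mu(T)/n)^{2/(\varkappa+1)}$, this is guaranteed by $\xi_n\ge2\epsilon_\mu(T_n)$. That is precisely why $\delta_n$ is defined by \eqref{delta_n}, and the choice \eqref{ser20} finishes the argument.
\end{itemize}
Also, $\mu$ may be singular or have unbounded density near $\beta_n^*$, so you cannot integrate against $ds$. Instead, absorb $t^{-1/2}$ into $\epsilon_\mu(t)/\sqrt t$, letting $\epsilon_\mu$ decay slowly enough that $-\frac12\sqrt t\log t\le\epsilon_\mu(t)$. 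Then bound by $|\mu|\max_{t\ge\delta_n}\exp\big(C(-nt^{\varkappa/2}+\epsilon_\mu(t)/\sqrt t)\big)$, which tends to $0$ since $n\delta_n^{\varkappa/2}=(n\xi_n^\varkappa)^{1/(\varkappa+1)}\to\infty$. Even granting your bound, at $s=0$ it is of size $\delta_n^{-1/2}e^{-c\xi_n}\to\infty$, so ``finite mass'' does not handle the singular part.
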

\begin{proof}
As in the proof of Lemma~\ref{lem:vw6}, let  \( l_n(x)=a_nx+b_n \), \( a_n>0 \), be the linear function that maps of $\Delta$ onto $\Delta_n^*$. Due to the compactness of \( \mathcal K \), the functions \( h_n-h_n\circ l_n \) converge to zero uniformly on \( \Delta \). Thus, according to Proposition~\ref{prop:sa2}, 
\begin{equation}
\label{limitGs}
\begin{cases}
G\big(e^{h_n}\mu_{|\Delta_n^*};z\big) & = \big(1+o_{\mathcal K}(1)\big)G\big(e^{h_n}\mu_{|\Delta};z\big), \\ 
G\big(e^{h_n}\mu_{|\Delta_n};z\big) & = \big(1+o_{\mathcal K}(1)\big)G\big(e^{h_n}\mu_{|\Delta};z\big), 
\end{cases}
\end{equation}
locally uniformly in \( D_\Delta \).  Hence, the last asymptotic formula of Theorem~\ref{thm:3} pulled back to the intervals \( \Delta_n^* \) as in Lemma~\ref{lem:vw6} shows 
\begin{equation}
\label{kvhlk}
G^{-2}(e^{h_n}\mu_{|\Delta_n};\infty)\int_{\Delta_n^*} P_n^*(x)^2e^{\theta_n(x)}d\mu(x) = 2+o_{\mathcal K}(1).
\end{equation}
Define
\[
F_n(z) = \exp\left( n \int\log(z-x)d\omega_n(x)\right) \frac{G(e^{h_n}\mu_{|\Delta_n};\infty)}{G(e^{h_n}\mu_{|\Delta_n};z)}
\]
(where we restricted \( \mu \) to \( \Delta_n \) instead of \( \Delta_n^* \) as in Lemma~\ref{lem:vw6}).
Using identity \( \kappa_n(x)\equiv 0 \) on \( \Delta_n \), we get
\[
|F_{n\pm}(x)|^{-2} = G^{-2}(e^{h_n}\mu_{|\Delta_n};\infty)e^{\theta_n(x)}v_{\Delta_n}(x)
\]
almost everywhere on \( \Delta_n \) by \eqref{outer-b}, \eqref{szego} and \eqref{SzegoFun}.
 Therefore, 
\[
2+o_{\mathcal K}(1) \geq \int_{\Delta_n} \big|P_n^*(x)/F_{n\pm}(x)\big|^2 d\omega_{\Delta_n}(x),
\]
where we first reduce the interval of integration from \( \Delta_n^* \) to \( \Delta_n \) in \eqref{kvhlk} and then drop the singular part of \( \mu \). Since the potentials \( V^{\omega_n} \) are continuous in \( \C \),  each function \( P_n^*/F_n \) is a product of a Szeg\H{o} function and a bounded analytic function. As such, it belongs to \( H^2(D_{\Delta_n}) \). Hence,  \eqref{HardyEst} gives us
\[
\big|(P_n^*/F_n)(x)\big|^2 \lesssim_{\Delta,\mathcal K} 1/\sqrt{x-\beta_n}, \quad x\in[\beta_n,\beta(\mu)].
\]
Observe also that \( G(e^{h_n}\mu_{|\Delta_n};\infty) \lesssim_{\mu,\mathcal K} 1 \) by the compactness of \( \mathcal K \) and \eqref{limitGs}.

Now, we are ready to estimate the integrals in the statement of the lemma. We only carry out the estimates on \( [\beta_n^*,\beta(\mu)] \) as the estimates on \( [\alpha(\mu),\alpha_n^*] \) can be done analogously. Assume that \( \beta_n^*<\beta(\mu) \) (otherwise we have nothing to prove). In this case \( \beta_n^*=\beta_n+\delta_n \). Using the  bounds we just obtained, Lemma~\ref{lem:vw7}, and \eqref{w_cond_3}, we get
\begin{multline*}
\int_{\beta_n^*}^{\beta(\mu)} P_n^*(x)^2e^{\theta_n(x)}d\mu(x) = \int_{\beta_n^*}^{\beta(\mu)} \left|\frac{P_n^*(x)}{F_n(x)}\right|^2 \frac{G^2(e^{h_n}\mu_{|\Delta_n};\infty)}{G^2(e^{h_n}\mu_{|\Delta_n};x)} e^{2n\kappa_n(x)+h_n(x)} d\mu(x) \\ \lesssim_{\Delta,\mu,\mathcal K} \int_{\beta_n^*}^{\beta(\mu)} \exp\left( -C_\Delta n(x-\beta_n)^{\varkappa/2} + \frac{\epsilon_\mu(x-\beta_n)}{\sqrt{x-\beta_n}} \right) \frac{d\mu(x)}{\sqrt{x-\beta_n}}.
\end{multline*}
Observe that we can always make \( \epsilon_\mu(t) \) decay as slowly as we need (this will not affect the inequality in Lemma~\ref{lem:vw7}). In particular, we can redefine it if necessary to ensure
\[
\frac{\epsilon_\mu(t)}{\sqrt{t}} - \frac12\log t \leq 2\frac{\epsilon_\mu(t)}{\sqrt{t}} \quad \Leftrightarrow \quad -\frac12\sqrt t\log t \leq \epsilon_\mu(t).
\]
Thus, by taking \( t=x-\beta_n \), we get
\begin{equation}
\label{ser11}
\int_{\beta_n^*}^{\beta(\mu)} P_n^*(x)^2e^{\theta_n(x)}d\mu(x)  \lesssim_{\Delta,\mu,\mathcal K} \max_{\delta_n\leq t \leq T}\exp\left( C \left( -n t^{\varkappa/2} + \frac{\epsilon_\mu(t)}{\sqrt{t}} \right) \right),
\end{equation}
where \( T := \sup_n (\beta(\mu)-\beta_n)>0 \). Let 
\begin{equation} 
T_n := (2\epsilon_\mu(T)/n )^{2/(\varkappa+1)}.\label{ser2a} 
\end{equation}
First, consider the range
\begin{equation}
\label{ser2}
 T_n\leq t \leq T.
 \end{equation}
Since  \( \epsilon_\mu(t) \) is non-decreasing,  we get
\begin{equation}
\label{ser1}
\epsilon_\mu(t) \le \epsilon_\mu(T) \stackrel{\eqref{ser2a}}{=} (n/2)T_n^{(\varkappa+1)/2} \stackrel{\eqref{ser2}}{\leq} (n/2)t^{(\varkappa+1)/2}.
\end{equation}
For  \( t \) in that range, we can also write 
\[
- n t^{\varkappa/2} + \epsilon_\mu(t)/\sqrt{t} \stackrel{\eqref{ser1}}{\leq} - (n/2) t^{\varkappa/2} \stackrel{\eqref{ser2}}{\leq} - (n/2) T_n^{\varkappa/2} \stackrel{\eqref{ser2a}}{=} - \big(n\epsilon_\mu^\varkappa(T)/2\big)^{1/(\varkappa+1)}.
\]
Clearly, the right-hand side above converges to \( - \infty \) as $n\to\infty$. Next, consider $t$ that satisfy
\begin{equation} 
\label{ser7}
\delta_n\leq t \leq T_n.
\end{equation}
Since \( \epsilon_\mu(T_n)\to 0 \), we can choose \( \xi_n \) in \eqref{delta_n} so that 
\begin{equation}\label{ser3}
2\epsilon_\mu(T_n)\leq \xi_n.
\end{equation}
It again follows from the monotonicity of $\epsilon_\mu(t)$ that
\begin{equation}
\label{ser6}
\epsilon_\mu(t) \leq \epsilon_\mu(T_n) \stackrel{\eqref{ser3}}{\leq} (1/2)\xi_n \stackrel{\eqref{delta_n}}{=} (n/2)\delta_n^{(\varkappa+1)/2} \stackrel{\eqref{ser7}}{\leq}  (n/2)t^{(\varkappa+1)/2}.
\end{equation}
Hence, for \( t \) satisfying \eqref{ser7}, we obtain 
\[
-n t^{\varkappa/2} + \epsilon_\mu(t)/\sqrt{t} \stackrel{\eqref{ser6}}{\leq} - (n/2) t^{\varkappa/2} \stackrel{\eqref{ser7}}{\leq}  - (n/2) \delta_n^{\varkappa/2} \stackrel{\eqref{delta_n}}{=} -(1/2)(n\xi_n^\varkappa)^{1/(\varkappa+1)}. 
\]
To show that the right-hand side of \eqref{ser11} converges to zero when $n\to\infty$ and finish the proof of the lemma, it only remains to choose  $\xi_n$ so that
\begin{equation}
\lim_{n\to\infty}\xi_n=0, \quad \lim_{n\to\infty}n\xi_n^\varkappa=+ \infty, \qandq 2\epsilon_\mu(T_n) \leq \xi_n. \qedhere
\label{ser20}
\end{equation}
\end{proof}

\begin{lemma}
\label{lem:vw9}
Theorem~\ref{thm:4} holds.
\end{lemma}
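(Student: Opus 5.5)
The plan is to compare \(T_n := T_n(e^{\theta_n}\mu)\) with \(P_n^*\) from Lemma~\ref{lem:vw6} through the extremal property of monic orthogonal polynomials. Write \(\|\cdot\|_n\) for the norm in \(L^2(e^{\theta_n}\mu)\) on \(\Delta(\mu)\). Since \(T_n\) minimizes this norm among monic polynomials of degree \(n\), we have \(\|T_n\|_n^2\le\|P_n^*\|_n^2\). Splitting \(\Delta(\mu)=\Delta_n^*\cup(\Delta(\mu)\setminus\Delta_n^*)\) and using \eqref{kvhlk} together with Lemma~\ref{lem:vw8}, this gives
\[
\|P_n^*\|_n^2 = (2+o_{\mathcal K}(1))G^2(e^{h_n}\mu_{|\Delta_n};\infty)+o(1).
\]
Because \(G(e^{h_n}\mu_{|\Delta_n};\infty)\) is bounded above and below uniformly in \(n\) (by \eqref{limitGs} and the compactness of \(\mathcal K\)), the \(o(1)\) term can be absorbed into the relative error. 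On the other hand, \(P_n^*\) minimizes the norm restricted to \(\Delta_n^*\). Hence
\[
\|T_n\|_n^2\ge\int_{\Delta_n^*}T_n^2e^{\theta_n}d\mu\ge\int_{\Delta_n^*}(P_n^*)^2e^{\theta_n}d\mu.
\]
Together these give the norm asymptotics in Theorem~\ref{thm:4}, with \(G(e^{h_n}\mu_{|\Delta_n};\infty)\) replaceable by \(G(e^{h_n}\mu_{|\Delta};\infty)\) via \eqref{limitGs}.

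For the pointwise asymptotics I would use the orthogonality of \(P_n^*\) on \(\Delta_n^*\). Since \(T_n-P_n^*\) has degree less than \(n\), it is orthogonal to \(P_n^*\) in \(L^2(e^{\theta_n}\mu_{|\Delta_n^*})\). This yields the Pythagorean identity
\[
\int_{\Delta_n^*}(T_n-P_n^*)^2e^{\theta_n}d\mu=\int_{\Delta_n^*}T_n^2e^{\theta_n}d\mu-\int_{\Delta_n^*}(P_n^*)^2e^{\theta_n}d\mu\le \|P_n^*\|_n^2-\int_{\Delta_n^*}(P_n^*)^2e^{\theta_n}d\mu=o(1),
\]
where the last equality is again Lemma~\ref{lem:vw8}. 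Now set \(F_n\) as in the proof of Lemma~\ref{lem:vw8}. On \(\Delta_n\) we have \(\kappa_n=0\), so the same boundary computation used there gives
\[
\int_{\Delta_n}\big|(T_n-P_n^*)/F_{n\pm}\big|^2d\omega_{\Delta_n}\lesssim G^{-2}(e^{h_n}\mu_{|\Delta_n};\infty)\,o(1)=o(1).
\]
The function \((T_n-P_n^*)/F_n\) lies in \(H^2(D_{\Delta_n})\) and vanishes at infinity, since \(T_n-P_n^*\) has degree \(<n\) while \(F_n\) grows like \(z^n\). The Cauchy-type bound \eqref{HardyEst}, or equivalently the interior estimate for \(H^2\) transplanted by \(\phi_{\Delta_n}\), then gives \((T_n-P_n^*)/F_n=o(1)\) locally uniformly in \(D_\Delta\). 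This is uniform in \(n\) because \(\Delta_n\to\Delta\), so compact subsets of \(D_\Delta\) stay a fixed distance from \(\Delta_n\).

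It remains to compare \(F_n\) with \(F_n^*\) and with the target expression. By \eqref{limitGs} and \eqref{ser21}, \(F_n/F_n^*=1+o_{\mathcal K}(1)\) locally uniformly, and both can be rewritten with \(G(e^{h_n}\mu_{|\Delta};\cdot)\). Combining this with Lemma~\ref{lem:vw6}, we get \(T_n/F_n=P_n^*/F_n+o(1)=1+o_{\mathcal K}(1)\), which proves the first formula and its \(\Delta_n\)-variant.

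The main obstacle is the \(H^2\) step. One needs to make sure that \(F_n\) is a legitimate outer-type denominator, and that the \(L^2(\omega_{\Delta_n})\) control coming only from the absolutely continuous part on \(\Delta_n\) suffices. Discarding the singular part and the set \(\Delta_n^*\setminus\Delta_n\) only decreases the integral, so the inequality goes the right way. The uniformity over the moving interval \(\Delta_n\) then follows because \(\phi_{\Delta_n}\to\phi_\Delta\) uniformly on compact subsets of \(D_\Delta\).
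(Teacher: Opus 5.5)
Your argument is correct. Its skeleton is the paper's: the extremal property of \(T_n\), the Pythagorean identity on \(\Delta_n^*\), and Lemma~\ref{lem:vw8} to control the tail. The norm asymptotics are derived in exactly the same way.

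The difference is in how you turn the \(L^2\) smallness of \(T_n-P_n^*\) into pointwise control.

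\textbf{The paper's route.} It keeps the whole interval \(\Delta_n^*\). It introduces the auxiliary outer function \(\Omega_{\kappa_n}=\Omega_{\Delta_n^*}(e^{-n\kappa_n};\cdot)\), so that \(|(\Omega_{\kappa_n}F_n^*)_\pm|^{-2}\) matches the weight \(e^{\theta_n}v_{\Delta_n^*}\) on all of \(\Delta_n^*\). It then writes \((T_n-P_n^*)/(\Omega_{\kappa_n}F_n^*w_{\Delta_n^*})\) as a Cauchy integral over \(\Delta_n^*\) and applies Cauchy--Schwarz. Finally it checks separately, via \eqref{L1kappa}, that \(\Omega_{\kappa_n}=1+o(1)\).

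\textbf{Your route.} You discard \(\Delta_n^*\setminus\Delta_n\) and the singular part, which only decreases the integral. Since \(\kappa_n\equiv0\) on \(\Delta_n\), \(|F_{n\pm}|^{-2}\) already matches the weight there. You then reuse the \(H^2(D_{\Delta_n})\) point-evaluation bound from Lemma~\ref{lem:vw8}. The only extra input is \(F_n/F_n^*=1+o_{\mathcal K}(1)\), which is immediate from \eqref{limitGs}.

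\textbf{Comparison.} Your route is slightly more economical: it needs no auxiliary outer function and relies on a device the paper has already established. The paper's route works directly with the normalization \(F_n^*\) delivered by Lemma~\ref{lem:vw6} and uses the full \(L^2\) information on \(\Delta_n^*\).
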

\begin{proof}
Let \( F_n^*(z) \) be as in \eqref{ser21} and
\[
\Omega_{\kappa_n}(z) := \Omega_{\Delta_n^*}\big(e^{-n\kappa_n};z),
\]
which is an outer function in \( H^2(D_{\Delta_n^*}) \). Its traces satisfy \( |\Omega_{\kappa_n\pm}|^2 = e^{-2n\kappa_n} \) almost everywhere on \( \Delta_n^* \), see \eqref{outer-b}. Thus, 
\[
|(\Omega_{\kappa_n}F_n^*)_\pm(x)|^{-2} = G_n^{-2}e^{\theta_n(x)}v_{\Delta_n^*}(x)
\]
almost everywhere on \( \Delta_n^* \) due to \eqref{outer-b}, \eqref{szego} and \eqref{SzegoFun}, where \( G_n := G(e^{h_n}\mu_{|\Delta_n^*};\infty) \). For brevity, put \( P_n = T_n(e^{\theta_n}\mu) \). Recall that \( F_n^*(z)/z^n\to 1 \) and \( w(z)/z\to 1 \) as \( z\to\infty \),
\[
F_{n-}^*(x) = \overline{F_{n+}^*(x)} \qandq w_{\Delta_n^*+}(x) = - w_{\Delta_n^*-}(x) = \ic|w_{\Delta_n^*}(x)|, \quad x\in\Delta_n^*.
\]
Then, using \eqref{arcsine} and the Cauchy integral formula, we write
\begin{align*}
\frac{P_n(z)-P_n^*(z)}{ (\Omega_{\kappa_n}F_n^*)(z)w_{\Delta_n^*}(z)} &= \frac1{2\pi\ic}\oint_{\Gamma_n} \frac{P_n(s)-P_n^*(s)}{z-s} \frac{1}{(\Omega_{\kappa_n}F_n^*)(s)} \frac{ds}{w_{\Delta_n^*}(s)} \\
& = \int_{\Delta_n^*} \frac{P_n(x)-P_n^*(x)}{z-x} \re\left( \frac1{(\Omega_{\kappa_n}F_n^*)_+(x)} \right) d\omega_{\Delta_n^*}(x),
\end{align*}
where \( \Gamma_n \) is any counter-clockwise oriented Jordan curve that separates \( \Delta_n^* \) and \( z \). Since the absolute value of the real part of a complex number does not exceed its absolute value, the application of the Cauchy-Schwarz inequality yields
\begin{align}\nonumber
\left|\frac{P_n(z)-P_n^*(z)}{ (\Omega_{\kappa_n}F_n^*)(z)}\right|^2 & \leq G_n^{-2}\frac{|w_{\Delta_n^*}(z)|^2}{\dist(z,\Delta_n^*)^2} \int_{\Delta_n^*} (P_n-P_n^*)^2e^{\theta_n} v_{\Delta_n^*}d\omega_{\Delta_n^*} \\
\label{ser22}
& \leq G_n^{-2}\frac{|w_{\Delta_n^*}(z)|^2}{\dist(z,\Delta_n^*)^2}  \int_{\Delta_n^*} (P_n-P_n^*)^2 e^{\theta_n}d\mu.
\end{align}
As was observed in the previous lemma, it follows from the compactness of \( \mathcal K \) and \eqref{limitGs} that the sequence \( G_n^{-2} \) is uniformly bounded. Because \( P_n^* \) is the \( n \)-th monic orthogonal polynomials with respect to \( e^{\theta_n}\mu_{|\Delta_n^*} \) and \( P_n \) is a monic polynomials of degree \( n \), we have that
\[
\int_{\Delta_n^*} P_nP_n^*e^{\theta_n}d\mu = \int_{\Delta_n^*} (P_n^*)^2 e^{\theta_n}d\mu,
\]
from which we  deduce 
\begin{equation}
\label{ser23a}
\int_{\Delta_n^*} (P_n-P_n^*)^2 e^{\theta_n}d\mu = \int_{\Delta_n^*} P_n^2e^{\theta_n}d\mu - \int_{\Delta_n^*} (P_n^*)^2e^{\theta_n}d\mu.
\end{equation}
Recall that the \( n\)-th monic orthogonal polynomial $P_n$ satisfies a bound
\begin{equation}
\label{ser23}
\int_{\Delta(\mu)} P_n^2 e^{\theta_n}d\mu \leq \int_{\Delta(\mu)} P^2e^{\theta_n}d\mu
\end{equation}
for any monic polynomial \( P \) of degree \( n \). Take $P=P_n^*$. Then,  \eqref{ser23a}, simple majorization, and \eqref{ser23} give us
\begin{align*}
\int_{\Delta_n^*} (P_n-P_n^*)^2 e^{\theta_n}d\mu & \leq \int_{\Delta(\mu)} P_n^2e^{\theta_n}d\mu - \int_{\Delta_n^*} (P_n^*)^2e^{\theta_n}d\mu \\
& \stackrel{\eqref{ser23}}{\leq}  \int_{\Delta(\mu)} (P_n^*)^2e^{\theta_n}d\mu - \int_{\Delta_n^*} (P_n^*)^2e^{\theta_n}d\mu \\
& = \int_{\Delta(\mu)\setminus\Delta_n^*} (P_n^*)^2e^{\theta_n}d\mu \to  0 \qasq n\to\infty,
\end{align*}
where the last conclusion is  Lemma~\ref{lem:vw8}.  From \eqref{outer}, \eqref{delta_n}, and \eqref{L1kappa}, we get
 \( \Omega_{\kappa_n}(z) = 1+o(1) \) locally uniformly in the complement of \( \Delta \). Since the intervals \( \Delta_n^* \) converge to \( \Delta \),  \eqref{ser22}, the above estimates, and Lemma~\ref{lem:vw6} ensure that
\[
P_n(z) = P_n^*(z) + o_{\mathcal K}(1) (\Omega_{\kappa_n} F_n^*)(z) = \big( 1+ o_{\mathcal K}(1) \big) F_n^*(z)
\]
locally uniformly in \( D_\Delta \). The first claim of Theorem~\ref{thm:4} now follows from \eqref{limitGs}. It also holds that
\begin{align}
\int_{\Delta_n^*} (P_n^*)^2e^{\theta_n}d\mu \leq \int_{\Delta_n^*} P_n^2e^{\theta_n}d\mu \leq \int_{\Delta(\mu)} P_n^2e^{\theta_n}d\mu
\end{align}
since \( P_n^* \) is the \( n \)-th monic orthogonal polynomial with respect \( e^{\theta_n}d\mu_{|\Delta_n^*} \) and similarly
\begin{align}
\int_{\Delta(\mu)} P_n^2e^{\theta_n}d\mu  \leq \int_{\Delta(\mu)}(P_n^*)^2e^{\theta_n}d\mu = \int_{\Delta_n^*} (P_n^*)^2e^{\theta_n}d\mu + o(1),
\end{align}
where we used  Lemma~\ref{lem:vw8}  for the last equality. The second claim of the theorem is now a consequence of \eqref{kvhlk}, the second line of \eqref{limitGs}, and the uniform boundedness of \( G^{-2}(e^{h_n}\mu_{|\Delta};\infty) \).
\end{proof}

\section{OPs with Varying Weights and Strongly Szeg\H{o} Measures}
\label{sec:ssm}

Just like in the previous two sections,  we are interested in the strong asymptotics of the monic orthogonal polynomials \( T_n(w_N^{2N}\mu_n) \). We again take \( \{\mu_n\} \) to be a collection of Szeg\H{o} measures on an interval \( \Delta \), with densities satisfying some mild assumptions and \( \{ w_N \} \) to be a sequence of smooth weights. The difference is that  we now work in the following asymptotic regime:
\begin{equation}
\label{nN}
N=N(n), \quad n/N \to 0 \qasq n\to\infty.
\end{equation}
We think of the measures \( \mu_n \) as slowly changing with \( n \) while the varying behavior comes primarily from the weights \( w_N^{2N} \). 

Recall property \eqref{equilibrium condition} of the weighted extremal measures. In this setting, we let
\[
 \omega_{n,N} := \omega_{w_N^{N/n}} \qandq \omega_n:=\omega_{n,N(n)}.
\]
In this section we place the following assumptions on the weights \( w_N \):
\emph{
\begin{itemize}
\item[(i)] Each \( w_N \) is a three times continuously differentiable  function on \( \Delta \)\footnote{Including the one-sided derivatives at \( \alpha,\beta\).} satisfying
\begin{equation}
 w_N^\prime(x)>0, \,\, x<\eta_N, \qandq	w_N^\prime(x)<0,\,\, x>\eta_N, 
 \end{equation}
for some \(\eta_N\in\Delta\) (if $\eta_N$ is an endpoint of $\Delta$, one of the above conditions is vacuous). We assume \( w_N(\eta_N) = 1 \) and \( w_N\geq\delta \) on \( \Delta \) for some \( \delta>0 \) independent of \( N \);
\item[(ii)] There exists a three times continuously differentiable function \( w \) on \( \Delta \) satisfying
\begin{equation}
 w^\prime(x)>0, \,\, x<\eta, \qandq w^\prime(x)<0,\,\, x>\eta,
 \end{equation}
for some point $\eta\in \Delta$ (again, if $\eta$ is an endpoint of $\Delta$, one of the above conditions is vacuous). We assume  \((w^\prime/w)^\prime(\eta)<0 \) and, in addition, it holds that \( w_N^{(k)} \to w^{(k)} \) uniformly on \( \Delta \) as \( N\to\infty \) for each \( k\in\{0,1,2,3\} \).
\end{itemize}
}

As in Section~\ref{sec:usm}, only assuming Szeg\H{o} condition on a single interval is not sufficient for our purposes. Recall \eqref{arcsine} and \eqref{szego}. Let $\gamma>0$ be fixed. We say that \( \mu \) is  \emph{strongly Szeg\H{o} with exponent \( \gamma \)} on a closed interval \( I \) if 
\begin{equation}
\label{cond_szego}
\int_{I^\prime} |\log v_{I^\prime}(x)|d\omega_{I^\prime}(x) \lesssim |I^\prime|^{-\gamma}
\end{equation}
for any closed subinterval \( I^\prime \subseteq I \) (in particular, \( \mu\in\mathrm{Sz}(I^\prime)\) for each such \( I^\prime \)), where the constant in the above estimate is independent of \( I^\prime \). 

\begin{theorem}
\label{thm:5}
Let \( \{w_N \} \) be a sequence of weights on \( \Delta \) satisfying conditions $(i)$ and $(ii)$. Let \( \{\mu_n\} \) be a sequence of measures on \( \Delta \) that are strongly Szeg\H{o} with the same exponent \( \gamma\in(\frac12,1) \) on a closed interval \( I_\eta\subset\Delta \) that contains \(\eta \) in its interior if \( \eta \) is an interior point of \( \Delta \) (or as an endpoint if $\eta\in \partial\Delta$). Assume further that masses \( |\mu_n| \) as well as constants of proportionality in \eqref{cond_szego} are bounded uniformly in \( n \). Then, given \eqref{nN}, the asymptotics
\begin{equation}
\label{totik}
T_n(w_N^{2N}\mu_n)(z) = (1+o(1)) \exp\left( n\int \log(z-x)d\omega_n(x)\right) \qasq n\to\infty
\end{equation}
holds locally uniformly in \( \overline\C\setminus\{\eta\} \).
\end{theorem}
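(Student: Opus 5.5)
The plan is to reduce Theorem~\ref{thm:5} to the machinery behind Theorems~\ref{thm:3} and~\ref{thm:4}. We work on the shrinking support of \( \omega_n \) after a linear rescaling to a fixed interval. The Szeg\H{o} factor \( G(\infty)/G(z) \) should then disappear from the final formula because, seen from a fixed \( z\neq\eta \), the support collapses to a point.

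\textbf{Step 1: structure of \( \omega_n \).} Write \( Q_n := -(N/n)\log w_N \). By (i)--(ii) this field is three times differentiable, has its minimum at \( \eta_N\to\eta \), and satisfies \( Q_n''\asymp N/n \) near \( \eta_N \) because \( (w'/w)'(\eta)<0 \). Standard equilibrium estimates then give \( \supp(\omega_n)=\Delta_n=[\alpha_n,\beta_n]\ni\eta_N \) with \( |\Delta_n|\asymp\sqrt{n/N}\to0 \). Let \( l_n \) map \( [-1,1] \) onto \( \Delta_n \). The rescaled measures \( \omega_n^{(l_n)} \) should be \( C^1 \)-close to the semicircle law when \( \eta \) is interior, and to a square-root/one-sided law when \( \eta\in\partial\Delta \). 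Their densities are then equicontinuous and satisfy \eqref{w_cond_1} with \( \varkappa_L=\varkappa_U=1 \), and \eqref{w_cond_2} where needed. Outside the support, \eqref{equilibrium condition} produces a function \( \kappa_n \) such that \( 2n\kappa_n\circ l_n \) behaves like \( -n|w_{[-1,1]}|^{3} \). This verifies (4) in rescaled variables with \( \varkappa=3 \) on \( l_n^{-1}(\Delta) \), which grows to \( \R \) or a half-line. The computations in Lemma~\ref{lem:vw3} transfer the whole problem to this rescaled picture.

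\textbf{Step 2: the Szeg\H{o} part, which I expect to be the main obstacle.} After rescaling, \( \log v \) for \( \mu_n^{(l_n)} \) on \( [-1,1] \) equals \( \log v_{\Delta_n} \) up to a constant. By \eqref{cond_szego} its \( L^1(\omega_{[-1,1]}) \) norm is only \( \lesssim|\Delta_n|^{-\gamma} \), so condition \( (B) \) of Theorem~\ref{thm:3} fails: there is no limit measure. I would therefore not quote Theorem~\ref{thm:3} directly but rerun its proof along the lines of Lemmas~\ref{lem:vw8}--\ref{lem:vw9}. The target comparison function is
\[
F_n(z)=e^{n\int\log(z-x)d\omega_n}\,\frac{G(\mu_{n|\Delta_n};\infty)}{G(\mu_{n|\Delta_n};z)}.
\]
The lower bound \( \int P^2e^{\theta_n}d\mu_n\geq(2+o(1))G^2(\infty)\cdot(\text{potential factor}) \) for every monic \( P \) comes from Jensen's inequality on \( \Delta_n \), exactly as in Lemma~\ref{lem:pw5}. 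For the matching upper bound I would use a trial polynomial \( H_n\cdot S_n \). Here \( H_n \) is Totik's polynomial from \eqref{iota}, and \( S_n \) is a polynomial of degree \( o(n) \) approximating the outer function \( 1/G(\mu_{n|\Delta_n};\cdot) \) in \( L^2(\mu_n) \). For this approximation I would first truncate \( \log v_{\Delta_n} \) at level \( M_n \), with \( M_n \) growing slowly. The truncation error is controlled by the uniform \( |I'|^{-\gamma} \) bounds applied to subintervals near the endpoints, and I expect the hypothesis \( \gamma>1/2 \) to be what makes the endpoint tails (an analogue of Lemma~\ref{lem:vw7}) summable. The exterior mass outside a slightly enlarged interval \( \Delta_n^* \) is killed as in Lemma~\ref{lem:vw8}, using the \( \varkappa=3 \) decay of \( \kappa_n \). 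Once the norms match up to \( 1+o(1) \), the Cauchy--Schwarz/\( H^2 \) argument in \eqref{ser22} gives \( T_n=(1+o(1))F_n \) locally uniformly off \( \Delta_n \).

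\textbf{Step 3: removing \( G \).} For fixed \( z \) at positive distance from \( \eta \), \eqref{outer} gives
\[
\log\frac{G(\mu_{n|\Delta_n};z)}{G(\mu_{n|\Delta_n};\infty)}=\frac12\int\log v_{\Delta_n}(x)\Big(\frac{w_{\Delta_n}(z)}{z-x}-1\Big)d\omega_{\Delta_n}(x).
\]
The bracket is \( O(|\Delta_n|) \) uniformly in \( x\in\Delta_n \), so by \eqref{cond_szego} the right-hand side is \( O(|\Delta_n|^{1-\gamma})\to0 \), using \( \gamma<1 \). Hence the ratio tends to \( 1 \) locally uniformly in \( \overline\C\setminus\{\eta\} \), and \eqref{totik} follows.
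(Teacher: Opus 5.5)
\textbf{There is a genuine gap, and it is in Step~2.} You aim for the sharp norm asymptotics $\int T_n^2w_N^{2N}d\mu_n=(2+o(1))\,G^2(\mu_{n|\Delta_n};\infty)\times(\text{potential factor})$ and then feed it into the $H^2$ argument of \eqref{ser22}. Under the hypotheses of Theorem~\ref{thm:5} this is not merely unproved; it is false in general. The $\mu_n$ vary with $n$ and have no limit. Moreover, \eqref{cond_szego} bounds only $\int|\log v_{I'}|\,d\omega_{I'}$, not oscillation on the scale of $\Delta_n$.

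For instance, $\mu_n'(x)=\exp(a\cos(M_nx))$ satisfies every hypothesis, since its log-density is bounded. But once $M_n|\Delta_n|$ grows fast enough relative to $n$, weighted polynomials of degree $n$ on $\Delta_n$ cannot resolve the oscillation. The minimal norm is then governed by the arithmetic mean $I_0(a)>1$ of $e^{a\cos}$, while $G^2(\mu_{n|\Delta_n};\infty)$ sees the geometric mean, which tends to $1$. On $\T$ this is exact: for $e^{a\cos(M\theta)}dm$ with $M>n$ the Toeplitz matrix is diagonal, so the squared norm of the $n$-th monic orthogonal polynomial is $I_0(a)$, whereas $\exp\int a\cos(M\theta)\,dm=1$.

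The same example breaks the rest of Step~2. No $S_n$ of degree $o(n)$ approximates $1/G(\mu_{n|\Delta_n};\cdot)$ in $L^2(\mu_n)$, and truncating $\log v_{\Delta_n}$ does nothing against oscillation. Your lower bound is also off by the factor $2$: Jensen on $\Delta_n$ yields only $G^2(\infty)$. On an interval the $2$ comes from $\phi_{2n}(0)/\phi_{2n}^*(0)\to0$ in Theorem~\ref{thm:1}, and its proof (Lemma~\ref{lem:pw7}) rests on $(B_\T)$.

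\textbf{The missing idea is that sharp norm asymptotics are unnecessary.} Rescale $\Delta_n$ to $[0,1]$ and map $D_{[0,1]}$ onto $\D$. A point $z$ at fixed distance from $\eta$ then lands within $\O(|\Delta_n|)$ of the center, so crude exponential bounds suffice.
\begin{itemize}
\item The paper places $g_n=T_n\,e^{-\hat\ell_n-n\int\log(z-s)d\hat\omega_n(s)}G_n/\|T_n\|$ in the unit ball of $H^2(D_{[0,1]})$, using only the equality in \eqref{Robin_nN} on $[0,1]$.
\item It proves $g_n(\infty)\geq e^{-C|\Delta_n|^{-\gamma}}$ from \eqref{sada2} and the crude upper bound of Lemma~\ref{lem:4.9}. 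That bound loses $e^{C|\Delta_n|^{-1/2}}$ through the mass of $\hat\mu_n$, which is absorbed precisely because $\gamma>1/2$.
\item The outer factor of $g_n$ then varies by $\O(|\Delta_n|^{1-\gamma})$ on $|z|\lesssim|\Delta_n|$. This uses $\gamma<1$ exactly as your Step~3 does; that step is fine and matches \eqref{4.1.2}.
\item The Blaschke factor needs a separate localization of the zeros of $T_n$ off the support. This is Lemma~\ref{lem:4.8}, built on the exterior decay of Lemma~\ref{lem:4.6} and the lower bound of Lemma~\ref{lem:4.7}.
\end{itemize}
Your plan contains no zero control. A sharp-norm argument would supply it for free, but here it must be proved.

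Less importantly, several claims in Step~1 are inaccurate.
\begin{itemize}
\item $|\Delta_n|\asymp\sqrt{n/N}$ fails when $\eta\in\partial\Delta$ and $w'(\eta)\neq0$; then $|\Delta_n|\asymp n/N$, see \eqref{sd9}.
\item The upper bound with $\varkappa_U=1$ fails at a hard edge (cf.\ Lemma~\ref{lem:4.4}).
\item The lower bound $|\kappa_n|\gtrsim|w|^3$ fails far from the support in rescaled variables, where the gap grows only quadratically or linearly.
\end{itemize}
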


The rest of this section is devoted to the proof of Theorem~\ref{thm:5}. In the first subsection below, we discuss implications of the conditions $(i)$ and $(ii)$ for the weighted extremal measures \( \omega_{n,N} \).  The second subsection contains  auxiliary lemmas. The proof of Theorem~\ref{thm:5} is given in the final subsection.

\subsection{Weighted Extremal Measures}

Throughout the subsection, we assume that conditions $(i)$ and $(ii)$ of Theorem~\ref{thm:5} are satisfied. These conditions immediately imply that \( w\geq \delta \) on \( \Delta \) and \( w(\eta)=1 \). Furthermore, it is also clear that each \( w_N \) reaches its maximum on \( \Delta \) at \( \eta_N \), \(\eta_N\to \eta\), and that we can shrink \( I_\eta \) if necessary so that
\begin{equation}
\label{sd1}
\begin{cases}
L_N^\prime(x)<0, & x\in I_\eta, \\
L^\prime(x)<0, & x\in I_\eta,
\end{cases}
\quad \text{where} \quad
\begin{cases}
L_N(x) &:= (w_N^\prime/w_N)(x), \\
L(x)  &:= (w^\prime/w)(x).
\end{cases}
\end{equation}
We remark that if \( w=\exp(-V^\sigma) \) for some  measure $\sigma$ whose support is disjoint from \( \Delta \), then
 \[
L^\prime(x) = - \int\frac{d\sigma(s)}{(x-s)^2}
 \]
and the inequalities in \eqref{sd1} hold on the entire interval \( \Delta \). 

The assumptions $(i)$ and $(ii)$ have potential-theoretic implications for the extremal measures \( \omega_{n,N} \) and we discuss them below.

\begin{lemma}
\label{lem:4.2}
For all \( n/N \) small enough, the measure \( \omega_{n,N} \) is absolutely continuous and supported on an interval which we denote by \( \Delta_{n,N} \). Moreover,  \( \eta_N\in\Delta_{n,N} \) and \( |\Delta_{n,N}| =o(1) \) as \( n/N \to 0 \).
\end{lemma}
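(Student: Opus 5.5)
The weight in question is $W:=w_N^{N/n}$, so that $\omega_{n,N}=\omega_W$ with external field $Q:=-\log W=-(N/n)\log w_N$. I would use two standard facts from weighted potential theory (Saff--Totik, see also \cite{Totik}). First, if $Q$ is convex on an interval containing the support of the extremal measure, then that support is an interval. Second, on such an interval $[a,b]$ the extremal density is given explicitly by
\[
\omega_W^\prime(x)=\frac{\sqrt{(x-a)(b-x)}}{\pi^2}\,\mathrm{p.v.}\!\int_a^b\frac{Q^\prime(x)-Q^\prime(t)}{x-t}\,\frac{dt}{\sqrt{(t-a)(b-t)}}, \qquad x\in(a,b),
\]
with the endpoints $a,b$ determined by the two normalization equations $\int_a^b Q^\prime(t)\sqrt{(t-a)/(b-t)}\,dt=\pi$ and its mirror image (the one-sided version applies when an endpoint sits at an endpoint of $\Delta$). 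If $Q^\prime$ is increasing, then every difference quotient $(Q^\prime(x)-Q^\prime(t))/(x-t)$ is positive, so the density is positive, absolutely continuous, and has no point masses.

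\textbf{Step 1: localization.} I will show that $\supp(\omega_{n,N})$ lies in a shrinking neighborhood of $\eta_N$, and in particular inside $I_\eta$. By \eqref{equilibrium condition}, every point of the support satisfies $V^{\omega_W}(x)\le \log W(x)+F_W$. I will combine this with the standard comparison: compare the energy of $\omega_W$ with that of a test measure, namely the arcsine measure on a small interval $J_\epsilon=[\eta_N-\epsilon,\eta_N+\epsilon]$ (one-sided if $\eta_N$ is an endpoint). Because $w_N$ is unimodal and its maximum $w_N(\eta_N)=1$ is uniformly strict away from $\eta_N$ (this follows from (i), (ii) and $w_N^{(k)}\to w^{(k)}$), we get $\log W\le -c(\epsilon)N/n$ outside $J_\epsilon$. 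The gain from being outside $J_\epsilon$ is therefore of order $N/n$. The potential terms, by contrast, change by at most $O(\log(1/\epsilon))$. So for $n/N$ small, the support lies inside $J_\epsilon$. Letting $\epsilon\to0$ slowly gives $|\Delta_{n,N}|=o(1)$.

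\textbf{Step 2: interval structure and absolute continuity.} On $I_\eta$ we have $Q^\prime=-(N/n)L_N$, and by \eqref{sd1} $L_N^\prime<0$, so $Q$ is strictly convex there. Since Step 1 puts the support inside $I_\eta$ for $n/N$ small, the first fact shows the support is an interval $\Delta_{n,N}$. The second fact then shows that $\omega_{n,N}$ is absolutely continuous with a positive density, because $Q^\prime$ is increasing and $C^2$.

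\textbf{Step 3: $\eta_N\in\Delta_{n,N}$.} Suppose instead that $\Delta_{n,N}=[a,b]$ with $b<\eta_N$. On $[a,b]$ the function $w_N$ is increasing, so $Q^\prime<0$ there. If $b$ is an interior point of $\Delta$, the normalization integral $\int_a^b Q^\prime(t)\sqrt{(t-a)/(b-t)}\,dt=\pi$ has a negative left-hand side, which is a contradiction. If $b$ is an endpoint of $\Delta$, then $b<\eta_N$ is impossible anyway. A cleaner alternative is to check the inequality in \eqref{equilibrium condition} at $x$ slightly to the right of $b$. There $V^{\omega}$ decreases, while $\log W$ strictly increases, so the inequality $V^\omega\ge\log W+F_W$ fails. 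This alternative avoids the endpoint normalization formulas, and I would use it. The case $a>\eta_N$ is symmetric.

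I expect the main obstacle to be the quantitative localization in Step 1, since it must hold uniformly in $N$. Here I would rely on the uniform $C^2$ convergence $w_N\to w$ together with $(w^\prime/w)^\prime(\eta)<0$. These give $\log w_N(x)\le -c\,(x-\eta_N)^2$ near $\eta$, with $c$ independent of $N$, and a uniform negative bound away from $\eta$.
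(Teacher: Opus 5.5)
Your proposal is correct and is essentially the paper's argument. You localize the support by bounding the equilibrium constant through an arcsine test measure on a short interval near $\eta$; take that interval strictly shorter than $J_\epsilon$, so that the field's contribution $(N/n)\max(-\log w_N)$ on it stays below the gain $c(\epsilon)N/n$. You then use convexity of $-\log w_N$ on $I_\eta$, together with the fact that $\omega_{n,N}$ stays extremal for the problem restricted to $I_\eta$, to get an interval, and Saff--Totik's density theorem for absolute continuity. The one real deviation is that you obtain $\eta_N\in\Delta_{n,N}$ by comparing the decrease of $V^{\omega}$ off the support with the strict increase of $\log w_N$ (or from the endpoint normalization). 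The paper instead applies \cite[Theorem~IV.1.3]{SaffTotik} to the weighted polynomial $w_N^{\ell N/n}\cdot 1$, whose maximum on $\Delta$ is attained only at $\eta_N$; your version is more elementary and equally valid.
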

\begin{proof}
Let \( \nu_{n,N} := (n/N)\omega_{n,N} \). It is known, \cite[Theorem~I.1.3]{SaffTotik} that
\begin{equation}
\label{4.2.1}
V^{\nu_{n,N}}(x) - \log w_N(x) 
\begin{cases}
\geq \ell_{n,N}, & \text{q.e. on }~\Delta, \\
\leq \ell_{n,N}, & x\in\supp(\nu_{n,N}),
\end{cases}
\end{equation}
for some constant \( \ell_{n,N} \), see \eqref{equilibrium condition}. Let \( K\subset \Delta\setminus\{\eta\}\) be a compact set. Then, it follows from the  definition of a logarithmic potential and assumptions $(i)$ and $(ii)$ that
\begin{align}
V^{\nu_{n,N}}(x) - \log w_N(x) & \geq \frac nN \log\frac1{|\Delta|} - \log w_N(x) \nonumber \\
& \geq \frac nN \log\frac1{|\Delta|} + 2\varepsilon_K \geq \varepsilon_K, \quad x\in K,
\label{4.2.2}
\end{align}
for some \( \varepsilon_K>0 \) and all \( n/N \) small enough. On the other hand, given an interval \( I\subset \Delta \), it follows from the top inequality in \eqref{4.2.1} that
\begin{align}
\ell_{n,N} \leq \int \big(V^{\nu_{n,N}} - \log w_N\big)d\omega_I  \leq \frac nN \log \frac 4{|I|} - \log\min_{I}w_N,
\end{align}
where we used Fubini-Tonelli's theorem to interchange the order of integration and an estimate \( V^{\omega_I}(z) \leq \log(4/|I|) \) for all \( z\in \C \) well-known in potential theory. Since $w(\eta)=1$, we conclude  that for any \( \varepsilon>0 \) one can choose a small enough interval containing \( \eta \) so that
\begin{equation}
\label{4.2.3}
\ell_{n,N} \leq \varepsilon
\end{equation}
for all \( n/N \) small enough. Inequalities \eqref{4.2.2} and \eqref{4.2.3} as well as the bottom inequality in \eqref{4.2.1} imply that \[ \supp(\omega_{n,N}) \cap K = \varnothing \] for all \( n/N \) small enough. Our claim that \( \eta_N\in \supp(\omega_{n,N}) \) follows, for instance, from \cite[Theorem~IV.1.3]{SaffTotik} if we take  $w_N^{\ell N/n}\cdot 1, \ell\ge 0$ as a required weighted polynomial to test there.

It follows from the first part of the proof and \eqref{sd1} that
\begin{equation}
\label{4.2.4}
\supp(\omega_{n,N}) \subset I_\eta
\end{equation}
for all \( n/N \) small enough. Inequalities \eqref{4.2.1} characterize the weighted extremal measure \( \nu_{n,N} \), see \cite[Theorem~I.3.3]{SaffTotik}. Hence, \( \nu_{n,N} \) remains the weighted extremal measure for the weight \( w_N \) even when the minimization problem is posed only on \( I_\eta \). As \( - \log w_N \) is convex on \( I_\eta \), \( \supp(\omega_{n,N}) \) must be an interval, see \cite[Theorem~IV.1.10]{SaffTotik}.

In our case, the support of the weighted extremal measure is an interval while the weight function is absolutely continuous with $L^p$-integrable derivative for some $p>1$. Hence, the extremal distribution is itself absolutely continuous with respect to the Lebesgue measure, see \cite[Theorem~IV.2.4]{SaffTotik}. The proof of the lemma is completed.
\end{proof}

It immediately follows from Lemma~\ref{lem:4.2} that, given \eqref{nN}, we have \( \cap_n \Delta_{n,N}=\{\eta\} \). In fact, more can be said. Recall \eqref{sd1}. Write \( \Delta = [\alpha,\beta] \). To clarify the statement of the following lemma, observe that
\begin{equation}
\label{Ls}
\begin{cases}
\eta_N\in(\alpha,\beta) & \Rightarrow L_N(\eta_N)=0, \; L_N(x)>0, \; x\in[\alpha,\eta_N), \; L_N(x)<0, \; x\in(\eta_N,\beta], \\
\eta_N = \alpha & \Rightarrow L_N(\alpha)\leq 0, \; L_N(x)<0, \; x\in(\alpha,\beta], \\
\eta_N = \beta & \Rightarrow L_N(\beta)\geq 0, \; L_N(x)>0, \; x\in[\alpha,\beta).
\end{cases}
\end{equation}
Moreover, since \( L_N^\prime(x) \to L^\prime(x) \) uniformly on \( \Delta \), \eqref{sd1} gives that $|L_N^\prime(\eta_N)|>\delta$ for some positive $\delta$ and all $N$ large enough.

\begin{lemma} 
\label{lem:4.3}
For $n/N$ sufficiently small, we have that
\begin{equation}
\label{est_Delta_nN}
|\Delta_{n,N}| \left( \frac12|L_N(\eta_N)|+ \frac18 |L_N^\prime(\eta_N)| |\Delta_{n,N}| + \O\big(|\Delta_{n,N}|^2\big) \right) \leq \frac nN\,.
\end{equation}
Write \( \Delta_{n,N}=[\alpha_{n,N},\beta_{n,N}] \). If \( \alpha<\alpha_{n,N} \) and \( \beta_{n,N}<\beta \), then $L_N(\eta_N)=0$ and
\begin{equation}
\label{est_Delta_nN2}
|\Delta_{n,N}| \left(  \frac18 |L_N^\prime(\eta_N)| |\Delta_{n,N}| + \O\big(|\Delta_{n,N}|^2\big) \right) = \frac nN\,.\end{equation}
On the other hand, if \( L_N(\eta_N)\neq 0 \), then   $\eta_N$ is an endpoint of $\Delta$ and
\begin{equation}
\label{est_Delta_nN3}
|\Delta_{n,N}| \left( \frac12|L_N(\eta_N)| + \frac38 |L_N^\prime(\eta_N)| |\Delta_{n,N}| + \O\big(|\Delta_{n,N}|^2\big) \right) = \frac nN\,.
\end{equation}
Furthermore, we have that
\begin{equation}
\label{sd2}
\eta_N 
\begin{Bmatrix}
= \\
\leq \\
\geq 
\end{Bmatrix}
\frac{\alpha_{n,N}+\beta_{n,N}}2 + \mathcal O(|\Delta_{n,N}|^2) ,
\end{equation}
where the first case holds when \( \alpha<\alpha_{n,N} \) and \( \beta_{n,N}<\beta \), the second one when \( \beta_{n,N}<\beta \), and the third one when \( \alpha<\alpha_{n,N} \).
\end{lemma}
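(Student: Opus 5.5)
The plan is to work with the normalized measure \( \nu_{n,N}=(n/N)\omega_{n,N} \) and the external field \( Q_N:=-\log w_N \), whose derivative is \( Q_N^\prime=-L_N \). By Lemma~\ref{lem:4.2} and \eqref{4.2.4}, \( \nu_{n,N} \) is the weighted equilibrium measure of mass \( n/N \) on \( I_\eta \). Its support is the interval \( [a,b]=\Delta_{n,N}\ni\eta_N \), and \( Q_N \) is convex there. For a convex \( C^2 \) field supported on an interval, I would invoke the standard representation of the equilibrium density from \cite[Section~IV.1]{SaffTotik}. It has the form
\[
\nu_{n,N}^\prime(x)=\frac{1}{\pi\sqrt{(x-a)(b-x)}}\Big(c_0+\frac1\pi\int_a^b\frac{Q_N^\prime(x)-Q_N^\prime(t)}{x-t}\sqrt{(t-a)(b-t)}\,dt\Big),
\]
where \( c_0=n/N+\dots \) is the total mass adjusted by the appropriate moment of \( Q_N^\prime \). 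The two quantities
\[
M_b:=\frac1\pi\int_a^b Q_N^\prime(t)\sqrt{\frac{t-a}{b-t}}\,dt \qandq M_a:=-\frac1\pi\int_a^b Q_N^\prime(t)\sqrt{\frac{b-t}{t-a}}\,dt
\]
govern the \( (b-x)^{-1/2} \) and \( (x-a)^{-1/2} \) singular parts of the density. Concretely, the coefficient of \( (b-x)^{-1/2} \) is proportional to \( n/N-M_b \), and similarly at \( a \).

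\textbf{Step 1: endpoint relations.} At a free endpoint, i.e.\ \( b<\beta \), the density must vanish there. This is the usual soft-edge condition: otherwise one could enlarge the support and violate \eqref{4.2.1}, equivalently \( [a,b] \) maximizes the Mhaskar--Saff \( F \)-functional. Hence \( M_b=n/N \) in that case. At a hard edge \( b=\beta \), the only requirement is nonnegativity of the density, which gives \( M_b\le n/N \). The same reasoning at \( a \) gives \( M_a=n/N \) if \( a>\alpha \) and \( M_a\le n/N \) if \( a=\alpha \). Making this dichotomy precise, with the correct direction of the inequalities, is the step I expect to be the main obstacle. I would derive it directly from \eqref{4.2.1} and the derivative of \( V^{\nu}-\log w_N \) just outside the support, rather than relying on memory of the exact form of the statement in \cite{SaffTotik}.

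\textbf{Step 2: Taylor expansion.} Write \( L_N(t)=L_N(\eta_N)+L_N^\prime(\eta_N)(t-\eta_N)+\O(|\Delta_{n,N}|^2) \). The remainder is uniform in \( N \) because \( w_N\to w \) in \( C^3 \) and \( w_N\ge\delta \). With \( c:=(a+b)/2 \), the elementary arcsine moments give
\[
\frac1\pi\int_a^b\sqrt{\frac{t-a}{b-t}}\,dt=\frac{|\Delta_{n,N}|}2 \qandq \frac1\pi\int_a^b (t-c)\sqrt{\frac{t-a}{b-t}}\,dt=\frac{|\Delta_{n,N}|^2}8,
\]
together with their mirror images for \( M_a \). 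Substituting these yields
\[
M_{b/a}=\mp\frac{|\Delta_{n,N}|}{2}L_N(\eta_N)-L_N^\prime(\eta_N)\Big(\pm\frac{|\Delta_{n,N}|}2(c-\eta_N)+\frac{|\Delta_{n,N}|^2}8\Big)+\O(|\Delta_{n,N}|^3).
\]

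\textbf{Step 3: case analysis.} Recall that \( L_N^\prime<0 \) near \( \eta \), and use the sign information in \eqref{Ls}.

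If both endpoints are free, subtracting the two equalities gives \( L_N^\prime(\eta_N)(c-\eta_N)|\Delta_{n,N}|+L_N(\eta_N)|\Delta_{n,N}|=\O(|\Delta_{n,N}|^3) \). In this case \( \eta_N \) is interior, so \( L_N(\eta_N)=0 \) by \eqref{Ls}. Therefore \( c-\eta_N=\O(|\Delta_{n,N}|^2) \), which is the equality case of \eqref{sd2}, and then either equality gives \eqref{est_Delta_nN2}.

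If \( L_N(\eta_N)\neq0 \), then \( \eta_N \) is an endpoint of \( \Delta \) by \eqref{Ls}, say \( \eta_N=\alpha \), so that \( a=\alpha \). Then \( c-\eta_N=|\Delta_{n,N}|/2 \) and the right endpoint is free. The equality \( M_b=n/N \) becomes \( |\Delta_{n,N}|\big(\tfrac12|L_N(\eta_N)|+(\tfrac14+\tfrac18)|L_N^\prime(\eta_N)||\Delta_{n,N}|\big)+\O(|\Delta_{n,N}|^3)=n/N \), which is \eqref{est_Delta_nN3}. The case \( \eta_N=\beta \) is symmetric.

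The one-sided statements in \eqref{sd2} come from combining one equality with the opposite inequality. For example, when \( b<\beta \) we have \( M_b=n/N\ge M_a \), and with the signs of \( L_N(\eta_N) \) and \( L_N^\prime \) this gives \( \eta_N\le c+\O(|\Delta_{n,N}|^2) \).

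Finally, the general bound \eqref{est_Delta_nN} follows by adding \( M_a\le n/N \) and \( M_b\le n/N \), or by taking the appropriate one of the two. The \( (c-\eta_N) \) terms are then either cancelled or have a favorable sign by \eqref{sd2}, and the \( L_N(\eta_N) \) term enters with absolute value because of \eqref{Ls}. Smallness of \( |\Delta_{n,N}| \), supplied by Lemma~\ref{lem:4.2}, makes all the \( \O \)-terms legitimate.
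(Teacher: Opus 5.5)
Your proposal is correct and is essentially the paper's proof: your conditions \( M_b\le n/N \) and \( M_a\le n/N \), with equality at a free endpoint, are exactly the relations \eqref{4.3.1} that the paper quotes from \cite[Theorem~IV.1.11]{SaffTotik}, and your expansions of \( M_b,M_a \) are \eqref{4.3.3}--\eqref{4.3.4} with \( c-\eta_N=|\Delta_{n,N}|(\tfrac12-\xi_{n,N}) \), after which the case analysis is the same. Two minor remarks: the bracket in your displayed density formula should be \( n/N+Q_N^\prime(x)(c-x)+\frac1\pi\int_a^b\frac{Q_N^\prime(x)-Q_N^\prime(t)}{x-t}\sqrt{(t-a)(b-t)}\,dt \), so it is not a constant plus the difference-quotient integral (its endpoint values \( n/N-M_b \) and \( n/N-M_a \), which are all you use, are still correct); and your ``add the two, or use the relevant one'' derivation of \eqref{est_Delta_nN} is more accurate than the paper's ``averaging'', which cancels the \( L_N(\eta_N) \) term and has to be supplemented by \eqref{est_Delta_nN3} when \( L_N(\eta_N)\neq0 \).
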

\begin{proof}
Recall that \( \Delta_{n,N}\subset I_\eta \) for all \( n/N \) small enough, see \eqref{4.2.4}. Since \( \eta \) is an interior point of \( I_\eta \) when it is an interior point of \( \Delta \), we can take  \( n/N \) sufficiently small so that if \( \alpha_{n,N} \) and/or \( \beta_{n,N} \) is an interior point of \( \Delta \), then it is also an interior point of \( I_\eta \). Thus, it follows from \cite[Theorem~IV.1.11]{SaffTotik} applied on \( I_\eta \) that
\begin{equation}
\label{4.3.1}
\int_{\Delta_{n,N}} L_N(x) \sqrt{\frac{x-\alpha_{n,N}}{\beta_{n,N}-x}} \frac{dx}{\pi} \geq -\frac nN \qandq   \int_{\Delta_{n,N}} L_N(x) \sqrt{\frac{\beta_{n,N}-x}{x-\alpha_{n,N}}} \frac{dx}{\pi} \leq \frac nN,
\end{equation}
where the first inequality is replaced by equality when \( \beta_{n,N}<\beta \) and the second one becomes equality when \( \alpha<\alpha_{n,N}\). Since \( |\Delta_{n,N}| =o(1) \) as \( n/N \to 0 \), at least one of the conditions \( \alpha<\alpha_{n,N}\), or \( \beta_{n,N}<\beta \), or possibly both takes place. According to condition~$(i)$, it holds that
\begin{equation}
\label{4.3.2}
L_N(x) = L_N(\eta_N) + L_N^\prime(\eta_N)(x-\eta_N) + \O\big(|\Delta_{n,N}|^2\big), \quad x\in\Delta_{n,N},
\end{equation}
where the error term is uniform in \( N \) because \( L_N^{\prime\prime}(x) \to L^{\prime\prime}(x) \) uniformly on \( \Delta \) due to condition $(ii)$. Recall \eqref{Ls}. Substituting  \eqref{4.3.2} into the  first inequality in \eqref{4.3.1} and changing signs give
\begin{equation}
\label{4.3.3}
|\Delta_{n,N}| \left( -\frac12L_N(\eta_N) + \frac{3-4\xi_{n,N}}8 |L_N^\prime(\eta_N)| |\Delta_{n,N}| + \mathcal O\big(|\Delta_{n,N}|^2\big) \right) \leq \frac nN,
\end{equation}
where \( \eta_N =: |\Delta_{n,N}|\xi_{n,N}+\alpha_{n,N}\), \( \xi_{n,N}\in[0,1] \), and the inequality is equality when \( \beta_{n,N}<\beta \). In particular, this proves \eqref{est_Delta_nN3} when \( L_N(\eta_N)<0 \), because in this case \( \eta_N=\alpha \) and \( \beta_{n,N}<\beta \), so that we have equality in \eqref{4.3.3} with \( \xi_{n,N} = 0 \).  On the other hand, the second inequality in \eqref{4.3.1} and \eqref{4.3.2} give
\begin{equation}
\label{4.3.4}
|\Delta_{n,N}| \left( \frac12L_N(\eta_N) + \frac{4\xi_{n,N}-1}8 |L_N^\prime(\eta_N)| |\Delta_{n,N}| + \mathcal O\big(|\Delta_{n,N}|^2\big) \right) \leq \frac nN,
\end{equation}
where the inequality is actually an equality when \( \alpha<\alpha_{n,N} \). This, respectively, proves  \eqref{est_Delta_nN3} when \( L_N(\eta_N)>0 \), because in this case \( \eta_N=\beta \) and \( \alpha<\alpha_{n,N} \), so that we have equality in \eqref{4.3.4} with \( \xi_{n,N} = 1 \). Estimate \eqref{est_Delta_nN}  follows by averaging  \eqref{4.3.3} and \eqref{4.3.4}. It becomes \eqref{est_Delta_nN2} when \( \alpha<\alpha_{n,N} \) and \( \beta_{n,N}<\beta \) because \eqref{4.3.3} and \eqref{4.3.4} are both equalities in this case while \( L_N(\eta_N) =0 \). Finally, by taking the difference of  \eqref{4.3.3} and \eqref{4.3.4}, one of which is equality and the other one is a bound, we get that \( \xi_{n,N} \leq 1/2 + \mathcal O(|\Delta_{n,N}|)\) when \( \beta_{n,N}<\beta \) and that \( \xi_{n,N} \geq 1/2 + \mathcal O(|\Delta_{n,N}|)\) when \( \alpha<\alpha_{n,N} \). Clearly, this is equivalent to the inequalities in \eqref{sd2}. The equality is an immediate consequence of both inequalities.
\end{proof}

For small enough $n/N$, this lemma gives
\begin{equation}\label{sd9}
\frac nN \lesssim |\Delta_{n,N}| \lesssim \sqrt{\frac nN} \qandq |\Delta_{n,N}| = \frac nN \frac{2+o(1)}{|L(\eta)|} \quad \text{if} \quad |L(\eta)|>0.
\end{equation}

In the next result, we study the densities of the weighted equilibrium measures \( \omega_{n,N} \). It is more convenient to do this on a fixed interval. To this end, we denote by \( \hat\omega_{n,N} \) the pull back of \( \omega_{n,N} \) from \( \Delta_{n,N} \) to \( [0,1] \). By  Lemma~\ref{lem:4.2}, \( \hat\omega_{n,N} \) is absolutely continuous with respect to the Lebesgue measure and
\[
\hat\omega_{n,N}^\prime(s) = |\Delta_{n,N}| \omega_{n,N}^\prime(s|\Delta_{n,N}|+\alpha_{n,N}), \quad s\in[0,1].
\]

\begin{lemma} 
\label{lem:4.4}
The family of functions \( \{ \hat\omega_{n,N}^\prime \} \) is uniformly equicontinuous on closed subintervals of \( (0,1) \). Moreover,
\begin{equation}
\label{est_omega_nN}
\frac1{3\pi}\sqrt{s(1-s)} < \hat\omega_{n,N}^\prime(s) <\frac9\pi s^{\gamma_0} (1-s)^{\gamma_1}, \quad s\in [0,1],
\end{equation}
where \( \gamma_0 = -\frac12 \) if \( \alpha=\alpha_{n,N} \) and \( \gamma_0 = \frac12 \) otherwise. Similarly, \( \gamma_1 = -\frac12 \) if \( \beta_{n,N}=\beta \) and \( \gamma_1 = \frac12 \) otherwise.
\end{lemma}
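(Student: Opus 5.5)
We use the explicit representation of the density of the weighted equilibrium measure on an interval with a smooth, log-concave external field. Since $\supp(\nu_{n,N})=\Delta_{n,N}\subset I_\eta$ and $\nu_{n,N}=(n/N)\omega_{n,N}$ solves the equilibrium problem on $I_\eta$ with $Q_N=-\log w_N$ (Lemma~\ref{lem:4.2}), the density satisfies a singular integral equation with the $C^3$ data $Q_N'=-L_N$. We invert it by the standard finite Hilbert transform formula (cf. \cite[Theorem~IV.3.1]{SaffTotik}). At an endpoint inside $\Delta$ the density vanishes like a square root; at an endpoint equal to $\alpha$ or $\beta$ it may blow up like an inverse square root. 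Concretely, when both endpoints are interior we write
\[
\omega^\prime_{n,N}(x)=\frac{N}{n}\,\frac{\sqrt{(x-\alpha_{n,N})(\beta_{n,N}-x)}}{\pi^2}\int_{\Delta_{n,N}}\frac{L_N(x)-L_N(t)}{x-t}\frac{dt}{\sqrt{(t-\alpha_{n,N})(\beta_{n,N}-t)}} .
\]
When an endpoint is hard, we use the corresponding one-sided formula with weights $\sqrt{(x-\alpha_{n,N})/(\beta_{n,N}-x)}$ etc. and add the constant term fixed by \eqref{4.3.1}.

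Next we rescale to $[0,1]$ via $x=\alpha_{n,N}+s|\Delta_{n,N}|$. The divided difference $(L_N(x)-L_N(t))/(x-t)$ equals $L_N^\prime(\eta_N)+\mathcal O(|\Delta_{n,N}|)$ uniformly, by the Taylor expansion \eqref{4.3.2} and the uniform convergence in $(ii)$. Hence in the two-soft-edge case
\[
\hat\omega^\prime_{n,N}(s)=\frac{N}{n}|\Delta_{n,N}|^2\,\frac{|L_N^\prime(\eta_N)|+\mathcal O(|\Delta_{n,N}|)}{\pi}\sqrt{s(1-s)} .
\]
By \eqref{est_Delta_nN2}, $\frac nN=\frac18|L_N^\prime(\eta_N)||\Delta_{n,N}|^2(1+\mathcal O(|\Delta_{n,N}|))$. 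This gives $\hat\omega^\prime_{n,N}(s)=\frac8\pi\sqrt{s(1-s)}(1+o(1))$, which lies strictly between the constants $\frac1{3\pi}$ and $\frac9\pi$.

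The main obstacle is the hard-edge case, say $\alpha_{n,N}=\alpha=\eta_N$ with $L_N(\eta_N)\le0$. There the density is
\[
\hat\omega^\prime_{n,N}(s)=\frac{c_{n,N}}{\sqrt{s}}\sqrt{1-s}+\frac{N}{n}|\Delta_{n,N}|^2(\cdots)\sqrt{s(1-s)},
\]
with $c_{n,N}\ge0$ determined by the unit mass together with the competition between $|L_N(\eta_N)|$ and $|L_N^\prime|$ in \eqref{est_Delta_nN3}. We handle it as follows.
\begin{itemize}
\item The lower bound comes from dropping the nonnegative $c_{n,N}$ term. The remaining coefficient is at least $\frac{3}{8}\cdot\frac{8}{\pi}$-comparable to its normalized value, after dividing \eqref{est_Delta_nN3} by the $|L_N^\prime|$ part. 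The worst case $|L_N(\eta_N)|\gg|\Delta_{n,N}|$ still leaves a factor at least $1/(3\pi)$ once one uses $\frac12|L_N|+\frac38|L'||\Delta|\approx n/(N|\Delta|)$ and bounds the ratios.
\item The upper bound $\frac9\pi s^{-1/2}(1-s)^{1/2}$ follows by bounding each of the two pieces by the total-mass normalization, using $\int_0^1 s^{-1/2}(1-s)^{1/2}ds=\pi/2$.
\end{itemize}
These numeric constants need careful bookkeeping of the $\mathcal O(|\Delta_{n,N}|)$ errors, which is valid for $n/N$ small. The mixed case $\eta_N$ interior with one hard edge is treated in the same way.

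Equicontinuity on compact subsets of $(0,1)$ follows from the representation: in the rescaled variable, the integrand is a uniformly $C^1$ function of $s$ (because $L_N\in C^2$ uniformly), multiplied by fixed explicit factors $\sqrt{s(1-s)}$ or $\sqrt{(1-s)/s}$ with bounded coefficients. The resulting family is uniformly Lipschitz away from $0$ and $1$.
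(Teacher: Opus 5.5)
Your soft-edge computation and your equicontinuity argument are fine. There is one sign slip: with $Q_N=-\log w_N$, the divided difference in your first display should be that of $-L_N$, which is positive since $L_N'<0$ on $I_\eta$; your next display silently switches to $|L_N'(\eta_N)|$.

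\textbf{The gap: the lower bound in the hard-edge case.} Write the density there as
\[
\hat\omega_{n,N}'(s)=\sqrt{\frac{1-s}{s}}\Big(c_{n,N}+b_{n,N}\,s\,\big(1+\mathcal O(|\Delta_{n,N}|)\big)\Big),\qquad b_{n,N}:=\frac1\pi\frac Nn|\Delta_{n,N}|^2|L_N'(\eta_N)|,
\]
with $c_{n,N}\ge0$. You propose to drop the $c_{n,N}$ term, but in exactly the ``worst case'' you single out, this leaves almost nothing.

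Take $\eta=\alpha$ with $L(\alpha)<0$. This is allowed by (ii), and it is the case $|L(\eta)|>0$ in \eqref{sd9}. Then $\eta_N=\alpha_{n,N}=\alpha$, and \eqref{est_Delta_nN3} gives
\[
\frac Nn|\Delta_{n,N}|^2|L_N'(\eta_N)|\approx \frac{2|L_N'(\eta_N)|\,|\Delta_{n,N}|}{|L_N(\eta_N)|}\to0 .
\]
So $b_{n,N}\to0$ while $c_{n,N}\to 2/\pi$. The measure is asymptotically the pure hard-edge density $\frac2\pi\sqrt{(1-s)/s}$. No bookkeeping of ratios can recover a factor $\frac1{3\pi}$ from the $b_{n,N}$-term alone.

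\textbf{The fix: keep the singular term and use the unit mass.} Since $c_{n,N}\ge0$ and $\sqrt{(1-s)/s}\ge\sqrt{s(1-s)}$ on $(0,1]$, you get
\[
\hat\omega'_{n,N}(s)\ge\big(c_{n,N}+b_{n,N}-\mathcal O(|\Delta_{n,N}|)\big)\sqrt{s(1-s)} .
\]
The condition $\int_0^1\hat\omega'_{n,N}=1$ reads $\frac\pi2c_{n,N}+\frac\pi8b_{n,N}=1+\mathcal O(|\Delta_{n,N}|)$. Equivalently, $c_{n,N}+b_{n,N}=\frac8\pi-3c_{n,N}+\mathcal O(|\Delta_{n,N}|)$, with $0\le c_{n,N}\le\frac2\pi+\mathcal O(|\Delta_{n,N}|)$. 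Hence
\[
\frac2\pi-o(1)\le c_{n,N}+b_{n,N}\le\frac8\pi+o(1).
\]
The left inequality gives the lower bound. The right one, via $c_{n,N}+b_{n,N}s\le c_{n,N}+b_{n,N}$, gives the upper bound with constant $\frac9\pi$.

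Your upper-bound recipe of bounding the two pieces separately also falls short. Using $c_{n,N}\le\frac2\pi$ and $b_{n,N}\le\frac8\pi$ separately only yields $\frac{2+8s}{\pi}\sqrt{(1-s)/s}$, which exceeds $\frac9\pi\sqrt{(1-s)/s}$ for $s>7/8$. The joint mass constraint is needed there too.

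This is what the paper's argument amounts to. It uses a single density formula covering all cases and isolates the endpoint coefficient $\delta_{n,N}\in[0,2)$, arriving at \eqref{4.4.6}. On $[1/4,1]$ it discards only the nonnegative term $l_{n,N}(s-\frac14)$. It keeps the $\delta_{n,N}/\sqrt{s(1-s)}$ and $2(1-\delta_{n,N})\sqrt{(1-s)/s}$ pieces, which carry the mass in the hard-edge regime.
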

\begin{proof}
As we have mentioned at the end of the proof of Lemma~\ref{lem:4.2}, \cite[Theorem~IV.2.4]{SaffTotik} implies that \( \omega_{n,N} \) is absolutely continuous with respect to the Lebesgue measure. In fact, this theorem also provides an explicit integral formula for its density. We use the symmetry and a trigonometric change of variables to write that formula as
\begin{equation}
\label{4.4.1}
\hat\omega_{n,N}^\prime(s) =\frac1{\pi\sqrt{s(1-s)}}\left( 1 + |\Delta_{n,N}|\frac Nn\fint_0^1 \hat L_N(x) \frac{\sqrt{x(1-x)}}{s-x} \frac{dx}\pi\right),
\end{equation}
where \( \hat L_N(x) := L_N(x|\Delta_{n,N}|+\alpha_{n,N}) \) and \( \fint \) is the integral understood in the sense of the principal value.  Set
\[
\delta_{n,N} := 1+ |\Delta_{n,N}|\frac Nn \int_0^1 \hat L_N(x) \sqrt{\frac x{1-x}}\frac{dx}{\pi}.
\]
By \eqref{4.3.1}, this is a nonnegative quantity and it is equal to \( 0 \) when \( \beta_{n,N}<\beta \). Similarly to \eqref{4.3.3}, plugging \eqref{4.3.2} into the definition of \( \delta_{n,N} \) gives
\begin{equation}
\label{4.4.2}
|\Delta_{n,N}| \left( -\frac12L_N(\eta_N) + \frac{3-4\xi_{n,N}}8 |L_N^\prime(\eta_N)| |\Delta_{n,N}| + \mathcal O\big(|\Delta_{n,N}|^2\big) \right) = \frac nN (1-\delta_{n,N}).
\end{equation}
Hence, by using \eqref{4.3.4} in this identity, we can conclude that
\begin{equation}
\label{4.4.3}
2-\delta_{n,N} \geq |\Delta_{n,N}|^2\frac Nn \frac{|L_N^\prime(\eta_N)| + \mathcal O(|\Delta_{n,N}|)}{4} \quad \Rightarrow \quad \delta_{n,N} \in [0,2).
\end{equation}
Now, we deduce from \eqref{4.4.1} and the definition of \( \delta_{n,N} \) that
\begin{equation}
\label{4.4.4}
\hat\omega_{n,N}^\prime(s) = \frac{\delta_{n,N}}{\pi\sqrt{s(1-s)}} + \frac1\pi\sqrt{\frac{1-s}s}|\Delta_{n,N}|\frac Nn\fint_0^1 \frac{\hat L_N(x)}{s-x} \sqrt{\frac x{1-x}} \frac{dx}\pi.
\end{equation}
If \( \hat L_N(x) \) is replaced by \( 1 \) in the last formula, the singular integral above is equal to \( -1 \), which follows from applying the Plemelj-Sokhotski formula. Hence, we can write
\begin{equation}
\label{4.4.5}
\fint_0^1 \frac{\hat L_N(x)}{s-x} \sqrt{\frac x{1-x}} \frac{dx}\pi  = - \hat L_N(s) - \int_0^1 \frac{\hat L_N(s)-\hat L_N(x)}{s-x} \sqrt{\frac x{1-x}} \frac{dx}\pi.
\end{equation}
The mean-value theorem gives
\[
\hat L_N(s)-\hat L_N(x) = |\Delta_{n,N}|L_N^\prime(\xi_{s,x})(s-x) = |\Delta_{n,N}|(L_N^\prime(\eta_N)+\mathcal O(|\Delta_{n,N}|) )(s-x),
\]
where \( \xi_{s,x}\in\Delta_{n,N} \). Using the above estimate for \( \hat L_N(\xi_{n,N})-\hat L_N(s) \) as well, we get that
\[
\fint_0^1 \frac{\hat L_N(x)}{s-x} \sqrt{\frac x{1-x}} \frac{dx}\pi  = -L_N(\eta_N) - |\Delta_{n,N}| L_N^\prime(\eta_N)\left(s-\xi_{n,N}+\frac12\right) + \mathcal O\big(|\Delta_{n,N}|^2\big).
\]
Furthermore, by solving \eqref{4.4.2} for \( -L_N(\eta_N) - |\Delta_{n,N}| L_N^\prime(\eta_N)\xi_{n,N} \) and recalling that \( L_N^\prime(\eta_N)=-|L_N^\prime(\eta_N)| \), we further get 
\[
|\Delta_{n,N}|\frac Nn \fint_0^1 \frac{\hat L_N(x)}{s-x} \sqrt{\frac x{1-x}} \frac{dx}\pi  = 2(1-\delta_{n,N}) + l_{n,N}\left(s-\frac14\right) + \mathcal O(|\Delta_{n,N}|),
\]
where we use \(l_{n,N} := |\Delta_{n,N}|^2(N/n)|L_N^\prime(\eta_N)| \) for shorthand. Plugging the above expression into \eqref{4.4.4} now gives\footnote{The appearance of \( s-\frac14 \) is not entirely surprising since \( \int_0^1(s-\frac 14)\sqrt{\frac {1-s}s}ds = 0 \).}
\begin{equation}
\label{4.4.6}
\hat\omega_{n,N}^\prime(s) =\frac{\delta_{n,N}}{\pi\sqrt{s(1-s)}} + \frac1\pi \left[2(1-\delta_{n,N}) + \mathcal O(|\Delta_{n,N}|) + l_{n,N} \left(s-\frac14\right) \right] \sqrt{\frac{1-s}s}.
\end{equation}

Since \( l_{n,N}\geq0 \) and $\delta_{n,N}\in [0,2)$, see \eqref{4.4.3}, we get for \( s\in[1/4,1] \) that
\[
\hat\omega_{n,N}^\prime(s) \geq \frac1\pi\left(2-\frac23\delta_{n,N}+\mathcal O(|\Delta_{n,N}|)\right)\sqrt{s(1-s)} > \frac1{3\pi}\sqrt{s(1-s)}\,.
\]
In the other direction,  we can rewrite \eqref{4.4.6} as
\[
\hat\omega_{n,N}^\prime(s) = \frac1\pi\frac{\delta_{n,N}s+(2-\delta_{n,N})(1-s)}{\sqrt{s(1-s)}} + \frac1\pi \left[ \mathcal O(|\Delta_{n,N}|) +   l_{n,N} \left(s-\frac14\right) \right] \sqrt{\frac{1-s}s}.
\]
By \eqref{est_Delta_nN}, we have \( l_{n,N}\leq 8+ \mathcal O(|\Delta_{n,N}| )\) and therefore
\[
\hat\omega_{n,N}^\prime(s) \leq \frac2\pi\frac1{\sqrt{s(1-s)}} + \frac{6+\mathcal O(|\Delta_{n,N}|)}\pi \sqrt{\frac{1-s}s} < \frac9\pi\frac1{\sqrt{s(1-s)}}
\]
for all \( s\in[0,1] \). Similarly, when \( \beta_{n,N}<\beta \), \( \delta_{n,N}=0 \) and  \eqref{4.4.6} gives 
\[
\hat\omega_{n,N}^\prime(s)  \leq \frac{8s+\mathcal O(|\Delta_{n,N}|)}\pi\sqrt{\frac{(1-s)}s} < \frac9\pi \sqrt{s(1-s)}
\]
for \( s\in[1/4,1] \). The last three estimates yield \eqref{est_omega_nN} on \( [1/4,1]\). The estimates for  \(s\in  [0,3/4] \) can be obtained analogously.

It only remains to show that the functions \( \hat\omega_{n,N}^\prime \) form a uniformly equicontinuous family on each closed subinterval of \( (0,1) \). To this end, it is enough to show that the functions \( \hat\omega_{n,N}^{\prime\prime} \) are uniformly bounded on each such subinterval. Using \eqref{4.4.5}, we get that
\[
\frac d{ds}\left( \fint_0^1 \frac{\hat L_N(x)}{s-x} \sqrt{\frac x{1-x}} \frac{dx}\pi \right) = - \hat L_N^\prime(s) + \frac12 \int_0^1 \hat L_N^{\prime\prime}(\tilde \xi_{s,x})\sqrt{\frac x{1-x}} \frac{dx}\pi
\]
for some \( \tilde \xi_{s,x}\in \Delta_{n,N} \) by Taylor approximation theorem. Since \( L_N^{(k)} \to L^{(k)} \) uniformly on \( \Delta \) for \( k=1,2 \), \( \hat L_N^{(k)}(s) = |\Delta_{n,N}|^k L_N^{(k)}(s|\Delta_{n,N}|+ \alpha_{n,N}) \), and \( |\Delta_{n,N}|^2(N/n) \lesssim 1 \) while \( \delta_{n,N} \in [0,2) \), the desired claim readily follows from  \eqref{4.4.4}.
\end{proof}

\subsection{Auxiliary Lemmas}

In what follows, we are always assuming that \eqref{nN} takes place. Hence, to simplify the notation, we rename \( \Delta_{n,N}=[\alpha_{n,N},\beta_{n,N}] \) as \( \Delta_n=[\alpha_n,\beta_n] \). Also, all constants \( C_i,C_i^\prime, C_i^{\prime\prime} \), \( i\geq 1\), appearing below as well as implicit constants in inequalities \( \lesssim,\gtrsim\) are understood to be independent of \( n \).

To prove Theorem~\ref{thm:5} it will be convenient to rescale the problem so that the equilibrium measures are supported on the interval \( [0,1] \). To this end, given measures \( \mu_n \) as in Theorem~\ref{thm:5}, we define measures \( \hat\mu_n \) by
\begin{equation}
\label{munhat}
\hat\mu_n(B) = \mu_n^{-1}(|\Delta_n|)\mu_n\big(\big\{x|\Delta_n|+\alpha_n:x\in B\big\} \big)
\end{equation}
for any  set \( B \subseteq \hat \Delta_n =\big [\frac{\alpha-\alpha_n}{|\Delta_n|},\frac{\beta-\alpha_n}{|\Delta_n|} \big ] =: [\hat\alpha_n,\hat\beta_n]\).

\begin{lemma}
\label{lem:4.5}
There exists a constant \( C_1>0 \) such that
\begin{equation}
\label{sada1}
|\hat\mu_n| \lesssim \exp\left(C_1|\Delta_n|^{-1/2}\right).
\end{equation}
Moreover, if we write \( d\hat\mu_{n|[0,1]} = \hat v_{n,[0,1]}d\omega_{[0,1]} + d\hat\mu^s_{n|[0,1]}\), where \( \hat\mu^s_{n|[0,1]} \) is singular with respect to the Lebesgue measure, then
\begin{equation}
\label{sada2}
\int_0^1 |\log\hat v_{n,[0,1]}(x)|d\omega_{[0,1]}(x) \lesssim |\Delta_n|^{-\gamma}.
\end{equation}
\end{lemma}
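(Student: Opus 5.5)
Throughout I read the normalizing factor in \eqref{munhat} as a positive constant \( c_n \) attached to the scale \( |\Delta_n| \), so that \( \hat\mu_n = c_n^{-1}\,(\mu_n\circ \text{rescaling}) \). The first step is then purely a change of variables. The affine map \( x\mapsto x|\Delta_n|+\alpha_n \) sends \( [0,1] \) onto \( \Delta_n \) and carries \( \omega_{[0,1]} \) to \( \omega_{\Delta_n} \), as in Lemma~\ref{lem:vw3}. Hence
\[
\hat v_{n,[0,1]}(x) = c_n^{-1}\, v_{n,\Delta_n}(x|\Delta_n|+\alpha_n),
\]
where \( v_{n,\Delta_n} \) is the density of \( \mu_{n|\Delta_n} \) with respect to \( \omega_{\Delta_n} \) in \eqref{szego}. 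Therefore
\[
\int_0^1|\log\hat v_{n,[0,1]}|\,d\omega_{[0,1]} \le \int_{\Delta_n}|\log v_{n,\Delta_n}|\,d\omega_{\Delta_n} + |\log c_n| .
\]

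By Lemma~\ref{lem:4.2} and \eqref{4.2.4}, \( \Delta_n\subset I_\eta \) for all large \( n \). So the strong Szeg\H{o} condition \eqref{cond_szego} applies with \( I'=\Delta_n \), with a constant uniform in \( n \) by hypothesis, and the first term is \( \lesssim|\Delta_n|^{-\gamma} \). For \eqref{sada2} it then suffices to have \( |\log c_n|\lesssim |\Delta_n|^{-\gamma} \).

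For both inequalities the real work is a two-sided control of \( c_n \):
\begin{itemize}
\item[(a)] an upper bound on \( \log c_n \), needed for \eqref{sada2};
\item[(b)] a lower bound on \( c_n \), needed for \eqref{sada1}.
\end{itemize}
For (b) we use \( |\hat\mu_n|=c_n^{-1}|\mu_n| \) together with the assumed uniform bound on \( |\mu_n| \), so it suffices to show \( c_n\gtrsim \exp(-C_1|\Delta_n|^{-1/2}) \). The natural tool is Jensen's inequality, exactly as in \eqref{zar3}:
\[
\mu_n(I')\ \ge\ \int_{I'} v_{n,I'}\,d\omega_{I'}\ \ge\ \exp\Big(\int_{I'}\log v_{n,I'}\,d\omega_{I'}\Big)\ \ge\ \exp\big(-C|I'|^{-\gamma}\big)
\]
for subintervals \( I'\subseteq I_\eta \). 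The upper bound (a) follows from \( \mu_n(I')\le|\mu_n| \), combined with the same Jensen inequality run in reverse on the geometric mean.

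The step I expect to be the main obstacle is matching the exponent \( 1/2 \) in \eqref{sada1}. Applying Jensen on \( I'=\Delta_n \) itself only gives \( \exp(C|\Delta_n|^{-\gamma}) \), which is weaker because \( \gamma>\frac12 \). To improve it, I would not work at the scale \( |\Delta_n| \). Instead I would apply Jensen and \eqref{cond_szego} on a comparison interval \( I'\supseteq\Delta_n \) of length \( \ell_n=|\Delta_n|^{1/(2\gamma)} \). This gives \( |I'|^{-\gamma}=|\Delta_n|^{-1/2} \), and \( \ell_n\gg|\Delta_n| \) since \( 2\gamma<2 \).

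I would then relate the normalization at scale \( |\Delta_n| \) to that at scale \( \ell_n \) through the Szeg\H{o} functions. Concretely, I would bound \( \log G(\mu_{n|\Delta_n};\infty)-\log G(\mu_{n|I'};\infty) \) using the harmonic-measure comparison from the proof of Proposition~\ref{prop:sa2}. This is the place where the restriction \( \gamma\in(\frac12,1) \) should be used.

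If that comparison fails, the fallback is to use the upper bound \( \omega_{\Delta_n}\lesssim \ell_n|\Delta_n|^{-1}\omega_{I'} \) near the centre of \( \Delta_n \) to transfer the Jensen bound from \( I' \) down to \( \Delta_n \). The price is a polynomial factor in \( |\Delta_n|^{-1} \), which is harmless inside \( \exp(C_1|\Delta_n|^{-1/2}) \).
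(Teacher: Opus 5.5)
Your reduction for \eqref{sada2} works. With \(c_n=\mu_n(\Delta_n)\), so that \(\hat\mu_n([0,1])=1\), the change of variables gives your inequality and \(\log c_n\le\log|\mu_n|\lesssim 1\). Jensen on \(\Delta_n\) itself, with \eqref{cond_szego} for \(I'=\Delta_n\), gives \(\log c_n\ge -C|\Delta_n|^{-\gamma}\), which is all \eqref{sada2} needs.

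The gap is in \eqref{sada1}. A lower bound on \(\mu_n(I')\) for \(I'\supseteq\Delta_n\) says nothing about \(\mu_n(\Delta_n)\), so everything rests on the transfer step, and your fallback misjudges its cost. The ratio \(d\omega_{\Delta_n}/d\omega_{I'}\approx \ell_n/|\Delta_n|\) multiplies \(\int_{I'}|\log\mu_n'|\,d\omega_{I'}\lesssim \ell_n^{-\gamma}\). That quantity sits inside the exponential produced by Jensen, not in front of it. What you actually get is \(\mu_n(\Delta_n)\gtrsim \exp(-C\ell_n^{1-\gamma}|\Delta_n|^{-1}) = \exp(-C|\Delta_n|^{-(3\gamma-1)/(2\gamma)})\). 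Since \((3\gamma-1)/(2\gamma)>\gamma\) on \((\frac12,1)\), this is worse than the naive bound. With \(\Delta_n\) in the middle of \(I'\), no choice of \(|I'|=\ell\) helps, because \(\ell^{1-\gamma}/|\Delta_n|\) increases with \(\ell\). Your primary route via Proposition~\ref{prop:sa2} does not supply the argument either. That proposition is a qualitative continuity statement for intervals converging to a fixed nondegenerate interval. It gives no quantitative comparison of Szeg\H{o} functions across the shrinking scales \(|\Delta_n|\) and \(\ell_n\).

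The missing idea is to let \(\Delta_n\) share an endpoint with a comparison interval of unit size. If \(\eta<\beta\), take \(I'=[\alpha_n,\beta_\eta]\subseteq I_\eta\), whose length is \(\gtrsim 1\) because \(\alpha_n\to\eta<\beta_\eta\). Then \eqref{cond_szego} gives \(\int_{I'}|\log\mu_n'|\,d\omega_{I'}\lesssim 1\) for any \(\gamma>0\); the term \(\log|w_{I'}|\) is harmless at unit scale. On \(\Delta_n\) one has \(\sqrt{(x-\alpha_n)(\beta_\eta-x)}\le\sqrt{|I_\eta||\Delta_n|}\), that is, \(dx\le \pi\sqrt{|I_\eta||\Delta_n|}\,d\omega_{I'}\). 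In words, the arcsine density of \(I'\) has size \(|\Delta_n|^{-1/2}\) on \(\Delta_n\) because \(\Delta_n\) abuts its endpoint. Jensen for normalized Lebesgue measure on (half of) \(\Delta_n\) then gives \(\mu_n(\Delta_n)\ge\frac{|\Delta_n|}{2}\exp(-C|\Delta_n|^{-1/2})\), hence \eqref{sada1}. If \(\eta=\beta\), use \([\alpha_\eta,\beta_n]\) instead. So the exponent \(\frac12\) comes from the endpoint singularity of the arcsine law, not from tuning \(|I'|^{-\gamma}\), and no restriction on \(\gamma\) is used in this step.
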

\begin{proof}
We recall that $\lim_{n\to\infty}\alpha_n=\lim_{n\to\infty}\beta_n=\lim_{n\to\infty}\eta_n=\eta$. Write \( I_\eta=[\alpha_\eta,\beta_\eta] \).  Assume that \( \eta<\beta \). In this case \( \eta<\beta_\eta \). By dropping the singular part and using Jensen's inequality we get that
\begin{align*}
\mu_n(|\Delta_n|) &\geq  \int_{\frac12(\alpha_n+\beta_n)}^{\beta_n}\mu^\prime_n(x) dx \geq \frac{|\Delta_n|}2 \exp\left( \frac2{|\Delta_n|} \int_{\frac12(\alpha_n+\beta_n)}^{\beta_n}\log \mu^\prime_n(x) dx \right) \\
 & \geq \frac{|\Delta_n|}2\exp\left( -\frac2{|\Delta_n|} \int_{\frac12(\alpha_n+\beta_n)}^{\beta_n}|\log^- \mu^\prime_n(x)|dx \right).
\end{align*}
Since it trivially holds that \( \sqrt{(x-\alpha_n)(\beta_\eta-x)} \leq \sqrt{|I_\eta||\Delta_n|} \) for \( x\in \big[\frac12(\alpha_n+\beta_n),\beta_n\big] \), we have 
\begin{align*}
\mu_n(|\Delta_n|) & \geq \frac{|\Delta_n|}2\exp\left( -2\sqrt{\frac{|I_\eta|}{|\Delta_n|}} \int_{\frac12(\alpha_n+\beta_n)}^{\beta_n}|\log^- \mu^\prime_n|d\omega_{[\alpha_n,\beta_\eta]} \right) \\
& \geq \frac{|\Delta_n|}2\exp\left( -2\sqrt{\frac{|I_\eta|}{|\Delta_n|}} \int_{\alpha_n}^{\beta_\eta}|\log^- \mu^\prime_n|d\omega_{[\alpha_n,\beta_\eta]} \right).
\end{align*}
Since \( \alpha_n\to\eta < \beta_\eta \) as \( n\to\infty \), we have that \( \beta_\eta - \alpha_n \gtrsim 1 \). Thus,  \eqref{cond_szego} provides
\[
\int_{\alpha_n}^{\beta_\eta}|\log^- \mu^\prime_n|d\omega_{[\alpha_n,\beta_\eta]} \leq \int_{\alpha_n}^{\beta_\eta}|\log \mu^\prime_n|d\omega_{[\alpha_n,\beta_\eta]} \lesssim 1.
\]
Combining these bounds, we get
\begin{equation}
\label{4.5.1}
\mu_n(|\Delta_n|) \geq \frac{|\Delta_n|}2\exp\left( -C_1^\prime |\Delta_n|^{-1/2} \right) \geq \exp\left( -C_1 |\Delta_n|^{-1/2} \right)
\end{equation}
for some constants \( C_1^\prime,C_1>0 \). Since the measures \( \mu_n \) have uniformly bounded masses, the above estimate and \eqref{munhat} yield \eqref{sada1} in the considered case. If \( \eta=\beta \), the above estimate can be easily modified by considering intervals \( \big[\alpha_n,\frac12(\alpha_n+\beta_n)\big] \) and \( [\alpha_\eta,\beta_n]\) to deduce the same conclusion.

Recall that the arcsine distribution is invariant under the linear transformations. Therefore,  \eqref{munhat} and \eqref{4.5.1} yield 
\[
\int_0^1 |\log\hat v_{n,[0,1]}|d\omega_{[0,1]} \leq \int_{\Delta_n} |\log v_{n,\Delta_n}|d\omega_{\Delta_n} + C_1|\Delta_n|^{-1/2}.
\]
The above estimate and \eqref{cond_szego} readily yield \eqref{sada2} finishing the proof.
\end{proof}

\begin{lemma}
\label{lem:4.6}
Let \( \hat w_n(x) := w_N(x|\Delta_n|+\alpha_n) \),  \( x \in \hat\Delta_n = [\hat\alpha_n,\hat\beta_n] \). Then, there exist a constant \( C_2>0 \) such that
\begin{equation}
\label{sada3}
\hat w_n(x) \begin{cases}
\ge e^{-C_2n/N}, & x\in [0,1], \\
\le  e^{-C_2|x-\frac 12|n/N}, & x\in\hat\Delta_n\setminus[0,1],
\end{cases}
\end{equation}
for all sufficiently large \( n \).

\end{lemma}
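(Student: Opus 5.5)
We work throughout with \( \log\hat w_n(x)=\log w_N(y) \), where \( y=x|\Delta_n|+\alpha_n \). The main tool will be the Taylor expansion of \( \log w_N \) around its maximum point \( \eta_N\in\Delta_n \), where \( \log w_N(\eta_N)=0 \). We also use the facts that \( L_N=(\log w_N)^\prime \) is decreasing on \( I_\eta \) by \eqref{sd1}, together with the size estimates \eqref{est_Delta_nN}--\eqref{sd9}.

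\emph{Lower bound on \([0,1]\).} For \( y\in\Delta_n \) we have
\[
-\log w_N(y)=-\int_{\eta_N}^y L_N(t)\,dt .
\]
By \eqref{Ls}, \( L_N \) has the appropriate sign on each side of \( \eta_N \), so this quantity is nonnegative and bounded by \( |\Delta_n|\max_{\Delta_n}|L_N| \). We then bound
\[
\max_{\Delta_n}|L_N|\le |L_N(\eta_N)|+\|L_N^\prime\|_\infty|\Delta_n| .
\]
Multiplying by \( |\Delta_n| \) gives
\[
|\Delta_n|\,|L_N(\eta_N)|+\O(|\Delta_n|^2),
\]
and \eqref{est_Delta_nN} bounds this by \( \lesssim n/N \), since \( |L_N^\prime(\eta_N)|>\delta \) bounds the second term as well. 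Hence \( \hat w_n\ge e^{-C_2 n/N} \) on \( [0,1] \).

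\emph{Upper bound off \([0,1]\).} Consider \( x>1 \), i.e.\ \( y>\beta_n \); the case \( x<0 \) is symmetric. Then \( \beta_n<\beta \), so \( \eta_N \) lies to the left of \( y \), and by \eqref{sd2} \( \eta_N\le(\alpha_n+\beta_n)/2+\O(|\Delta_n|^2) \). Since \( y>\beta_n\ge\eta_N \), we have \( L_N<0 \) on \( (\eta_N,y] \). Write
\[
-\log w_N(y)=\int_{\eta_N}^y|L_N(t)|\,dt .
\]
We split this into two cases.

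\emph{Case 1: \( y\in I_\eta \).} By monotonicity \( |L_N(t)|\ge |L_N(\eta_N)|+\delta(t-\eta_N) \). Hence
\[
-\log w_N(y)\ge |L_N(\eta_N)|(y-\eta_N)+\tfrac{\delta}{2}(y-\eta_N)^2 .
\]
In the variable \( x \) we have
\[
y-\eta_N\ge |\Delta_n|\bigl(x-\tfrac12-\O(|\Delta_n|)\bigr),
\]
with \( x-\tfrac12\ge\tfrac12 \). We need this to be \( \gtrsim (x-\tfrac12)\,n/N \). We use the equality case of \eqref{est_Delta_nN}, namely \eqref{est_Delta_nN2} or \eqref{est_Delta_nN3}, which gives
\[
n/N\lesssim |\Delta_n|\,|L_N(\eta_N)|+|\Delta_n|^2 .
\]
Matching the two pieces gives the bound. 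In the quadratic piece we use \( (x-\tfrac12)^2|\Delta_n|^2\ge \tfrac12(x-\tfrac12)|\Delta_n|^2 \). In the linear piece the contribution \( |L_N(\eta_N)|\,|\Delta_n|(x-\tfrac12) \) appears directly. The equality in \eqref{4.3.3} requires \( \beta_n<\beta \), which holds here.

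\emph{Case 2: \( y\notin I_\eta \).} Here \( w_N\le 1-c \) uniformly, because of the strict monotonicity in (i) and the uniform convergence in (ii). Since \( |x-\tfrac12|\le |\Delta|/|\Delta_n| \), we get
\[
|x-\tfrac12|\,\frac nN\lesssim \frac{n}{N|\Delta_n|}\lesssim 1
\]
by \eqref{sd9}. So choosing \( C_2 \) small handles this case.

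\emph{Main obstacle.} The delicate point is the linear-versus-quadratic bookkeeping in Case 1. When \( L(\eta)=0 \), the relation \( n/N\sim|\Delta_n|^2 \) must be captured through the equality case of Lemma~\ref{lem:4.3}. The \( \O(|\Delta_n|^2) \) offset of \( \eta_N \) from the midpoint in \eqref{sd2} also has to be absorbed. This is possible because \( x-\tfrac12\ge\tfrac12 \) dominates it once \( n \) is large.
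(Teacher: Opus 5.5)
Your proof is correct and follows the paper's route. It expands $\log w_N$ by Taylor about its maximum point $\eta_N$, uses \eqref{est_Delta_nN} for the estimate on $[0,1]$, and, off $[0,1]$, combines the equality case of \eqref{4.3.3} (valid since $\beta_n<\beta$) with the midpoint estimate \eqref{sd2}.

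The differences are only in bookkeeping:
\begin{itemize}
\item The paper splits into cases according to whether $L_N(\eta_N)\le-\frac34|L_N'(\eta_N)||\Delta_n|$. You instead keep both the linear and quadratic terms and use $n/N\lesssim|\Delta_n||L_N(\eta_N)|+|\Delta_n|^2$ at once.
\item You handle points outside $I_\eta$ through $w_N\le 1-c$ and $n/(N|\Delta_n|)\lesssim 1$ from \eqref{sd9}, rather than extending the Taylor bound via monotonicity. Your slope bound $L_N'\le-\delta$ must hold uniformly on all of $I_\eta$, not only at $\eta_N$; this follows from \eqref{sd1} and the uniform convergence $L_N'\to L'$.
\end{itemize}
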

\begin{proof}
Using Taylor's theorem at the point $\eta_N$ and  our assumption that $w_N(\eta_N)=1$, we get that
\begin{align*}
\log w_N(x) &= L_N(\eta_N)(x-\eta_N) + \O\left((x-\eta_N)^2\right) \\
& \geq -|L_N(\eta_N)||\Delta_n| + \O\big(|\Delta_n|^2\big) \stackrel{\eqref{est_Delta_nN}}{\gtrsim} -(n/N+\O(|\Delta_n|^2)) \stackrel{\eqref{sd9}}{\gtrsim} -n/N
\end{align*}
for \( x\in \Delta_n \), which is equivalent to the top line in \eqref{sada3}.

To prove the upper bound in \eqref{sada3}, assume first that \( 1<\hat\beta_n \), i.e., \( \beta_n<\beta \). In this case, \( L_N(\eta_N)\leq 0 \). Recall \eqref{sd1}. Let \( c>0 \) be such that \( L_N^\prime(x)\leq-2c \) for \( x\in I_\eta \). Recall also that \( w_N(\eta_N) = 1 \). Therefore, Taylor's theorem applied at the point $\eta_N$ gives
\begin{align}
\log w_N(x) & \leq L_N(\eta_N)(x-\eta_N) - c(x-\eta_N)^2, & \eta_N\leq x\in I_\eta, \nonumber \\
& \lesssim L_N(\eta_N)(x-\eta_N) - c(x-\eta_N)^2, & \eta_N\leq x\in \Delta,
\label{4.6.1}
\end{align}
where the second inequality takes place because \( w_N \) decreases on \( (\eta_N,\beta] \) and \( I_\eta,\Delta \) are fixed intervals independent of $n$.  Consider two cases. 

{\bf 1.} First, assume that 
\begin{equation}
\label{sd11}
L_N(\eta_N) \leq - \frac34|L_N^\prime(\eta_N)||\Delta_n|,
\end{equation} 
in which case \( \eta_N=\alpha \) and \eqref{est_Delta_nN3} takes place. Then, 
\begin{multline}
\label{sad10}
\frac34|L_N^\prime(\eta_N)| |\Delta_n|^2 + \mathcal O\big(|\Delta_n|^3\big) \stackrel{\eqref{sd11}}{\leq} \\ 
\frac12|L_N(\eta_N)||\Delta_n| + \frac38|L_N^\prime(\eta_N)| |\Delta_n|^2 + \mathcal O\big(|\Delta_n|^3\big) \stackrel{\eqref{est_Delta_nN3}}{=} \frac nN+\mathcal O\big(|\Delta_n|^3\big).
\end{multline}
Using \eqref{est_Delta_nN3} one more time together with the above estimates gives
\begin{align*}
L_N(\eta_N)|\Delta_n| &= -2\frac nN + \frac34|L_N^\prime(\eta_N)| |\Delta_n|^2 + \mathcal O\big(|\Delta_n|^3\big) \\
& \stackrel{\eqref{sad10}}{\leq} -\frac nN + \mathcal O\big(|\Delta_n|^3\big) \le -\frac n{2N}
\end{align*}
for all large $n$. Thus, \eqref{4.6.1} and the above inequality give us
 \[
w_n(x) \leq \exp\big(2C_2^\prime L_N(\eta_N)(x-\eta_N)\big) \leq \exp\left(-C_2^\prime \frac nN\frac{x-\eta_N}{|\Delta_n|} \right), \quad x\in[\beta_n,\beta],
 \]
 for some constant \( C_2^\prime>0 \). Now, notice that our assumption $\beta_n<\beta$ implies  $\eta_N\le (\alpha_n+\beta_n)/2+\mathcal O(|\Delta_n|^2)$ due to \eqref{sd2}.  Hence, after rescaling, we obtain that
 \begin{equation}
\label{4.6.2}
\hat w_n(x) \le \exp\big(-C_2(n/N)|x-\tfrac 12| \big), \quad x\in[1,\hat\beta_n],
\end{equation}
for some constant \( C_2>0 \).

{\bf 2.}  Suppose now that 
\begin{equation}
\label{sd12}
 -\frac34 |L_N^\prime(\eta_N)||\Delta_n|<L_N(\eta_N).
\end{equation} 
Then, since in that case \( \beta_n<\beta \), we have equality in \eqref{4.3.3}, so
\[
\frac32|L_N^\prime(\eta_N)| |\Delta_n|^2 \stackrel{\eqref{sd12}}{\geq} |L_N(\eta_N)||\Delta_n| + \frac34|L_N^\prime(\eta_N)| |\Delta_n|^2   \stackrel{\eqref{4.3.3}}{\geq} 2\frac nN+\mathcal O(|\Delta_n|^3)
\]
for all sufficiently large \(n\). Because \( |L_N^\prime(\eta_N)|\to |L^\prime(\eta)|>0\), the previous bound gives $|\Delta_n|^2\gtrsim n/N$. Hence, we keep the second term in \eqref{4.6.1} to get 
\[
w_n(x) \leq \exp\left( -cC_2^\prime(x-\eta_N)^2 \right) \leq \exp\left(-C_2^{\prime\prime} \frac nN\left(\frac{x-\eta_N}{|\Delta_n|}\right)^2\right), \quad x\in[\beta_n,\beta],
\]
 for some constant \( C_2^{\prime\prime}>0 \). Again, since $\eta_N\le (\alpha_n+\beta_n)/2+\mathcal O(|\Delta_n|^2)$, we  rescale and possibly adjust constant \( C_2 \)  to deduce
\begin{equation}
\label{4.6.3}
\hat w_n(x) \leq \exp\left( -2C_2(n/N)(x-\tfrac 12)^2 \right), \quad x\in[1,\hat\beta_n].
\end{equation}
Clearly, the bottom line of \eqref{sada3} on \( [1,\hat\beta_n] \) follows from \eqref{4.6.2} and \eqref{4.6.3}. The desired estimate for \( x<0 \) when \( \alpha< \alpha_n \) can be proven analogously.
\end{proof}

\begin{lemma}
\label{lem:4.7}
There exists a constant \( C_3>0 \) such that
\begin{equation}
\label{sada4}
\int_0^1 |P(x)| \hat w_n^{2N}(x)d\hat\mu_n(x) \geq \exp\left(-C_3(n+|\Delta_n|^{-\gamma})\right)
\end{equation}
for any monic polynomial \( P \) of degree at most \( 2n \).
\end{lemma}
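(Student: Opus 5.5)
We combine the lower bound on $\hat w_n$ from Lemma~\ref{lem:4.6} with a Jensen-type argument on $[0,1]$ driven by the Szeg\H{o} bound \eqref{sada2} of Lemma~\ref{lem:4.5}.

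First, the top line of \eqref{sada3} gives $\hat w_n^{2N}(x)\ge e^{-2C_2 n}$ on $[0,1]$. Hence it suffices to prove
\[
\int_0^1|P|\,d\hat\mu_n \ge \exp\big(-C(n+|\Delta_n|^{-\gamma})\big)
\]
for every monic $P$ of degree at most $2n$. Dropping the singular part of $\hat\mu_n$ and writing $d\hat\mu_n\ge \hat v_{n,[0,1]}d\omega_{[0,1]}$ on $[0,1]$, Jensen's inequality with respect to the probability measure $\omega_{[0,1]}$ yields
\[
\log\int_0^1|P|\hat v_{n,[0,1]}\,d\omega_{[0,1]} \ \ge\ \int_0^1\log|P|\,d\omega_{[0,1]} + \int_0^1\log\hat v_{n,[0,1]}\,d\omega_{[0,1]}.
\]
By \eqref{sada2}, the second term is bounded below by $-C|\Delta_n|^{-\gamma}$.

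For the first term, factor $P(x)=\prod_{j}(x-z_j)$ with $\deg P=k\le 2n$. For each zero,
\[
\int\log|x-z_j|\,d\omega_{[0,1]}(x) = -V^{\omega_{[0,1]}}(z_j)\ \ge\ \log\operatorname{cap}([0,1])=\log\tfrac14,
\]
since the equilibrium potential satisfies $V^{\omega_{[0,1]}}\le \log 4$ everywhere in $\C$. This is the same estimate $V^{\omega_I}\le\log(4/|I|)$ already used in the proof of Lemma~\ref{lem:4.2}. Summing over the zeros gives $\int\log|P|\,d\omega_{[0,1]}\ge -2n\log 4$.

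Combining the three bounds gives \eqref{sada4} with $C_3=2C_2+2\log4+C$. I expect no serious obstacle. The only points to verify are that the constant in \eqref{sada2} is uniform in $n$, which follows from the uniform constants assumed in \eqref{cond_szego}, and that the monic normalization is what makes the capacity bound independent of the location of the zeros, complex ones included.
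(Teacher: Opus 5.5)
Your proof is correct and follows the paper's argument step by step. It uses the lower bound $\hat w_n^{2N}\ge e^{-2C_2n}$ on $[0,1]$ from \eqref{sada3}, drops the singular part, and applies Jensen's inequality together with \eqref{sada2}; it then bounds $\int_0^1\log|P|\,d\omega_{[0,1]}\ge -d\log 4$ factor by factor via $V^{\omega_{[0,1]}}\le\log 4$, where the paper instead uses the mean-value inequality for the subharmonic function $\log|P\phi_{[0,1]}^d|$ at infinity, which is an equivalent fact, and your constant $2C_2+2\log4+C$ agrees with the paper's $C_3^\prime+4\log 2$.
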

\begin{proof}
Apply the top line in \eqref{sada3}. We drop the singular part of \( \hat\mu_n \), use Jensen's inequality and then \eqref{sada2} to write
\begin{align*}
\int_0^1 |P| \hat w_n^{2N}d\hat\mu_n  & \stackrel{\eqref{sada3}}{\geq} e^{-2C_2n}\int_0^1 |P|\hat v_{n,[0,1]} d\omega_{[0,1]} \\
& \stackrel{\text{\rm (Jensen)}}{\geq} \exp\left(-2C_2n + \int_0^1 \log\big(|P \hat v_{n,[0,1]}\big) d\omega_{[0,1]}\right) \\
 & \stackrel{\eqref{sada2}}{\geq} \exp\left( -C_3^\prime \big(n + |\Delta_n|^{-\gamma} \big) + \int_0^1 \log|P| d\omega_{[0,1]}\right)
\end{align*}
for some constant \( C_3^\prime>0 \). Recall \eqref{phi-w}. Observe that \( z\phi_{[0,1]}(z) \to 1/4\)  as \( z\to\infty \). Then, we get from the mean-value inequality for subharmonic functions that
\begin{align}
\int_0^1 \log|P| d\omega_{[0,1]}  = \int_0^1 \log|P\phi_{[0,1]}^d| d\omega_{[0,1]}  \geq \log|P\phi_{[0,1]}^d|(\infty) = -d\log 4,
\end{align}
where \( d=\deg P \). As \( d\leq 2n \), this proves \eqref{sada4} with \( C_3 = C_3^\prime + 4\log2\).
\end{proof}

\begin{lemma}
\label{lem:4.8}
Let \( Z(T_n) \) be the set of zeros of the polynomial \(  T_n := T_n(\hat w_n^{2N}\hat\mu_n) \). Then, there exists a constant \( C_4>0 \) such that 
\begin{equation}
\label{sada5}
Z(T_n) \subset \left\{x:|x-\tfrac 12| \leq C_4 \max\left\{1,n^{-1}|\Delta_n|^{-\gamma}\right\} \right\}.
\end{equation}
for all large enough  \( n \).
\end{lemma}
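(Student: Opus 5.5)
The approach is a comparison (zero-removal) argument combined with the lower bound of Lemma~\ref{lem:4.7} and the decay of the rescaled weight from Lemma~\ref{lem:4.6}. Zeros of $T_n$ are real, simple, and lie in $\hat\Delta_n$. Suppose $x_0\in Z(T_n)$ satisfies $|x_0-\tfrac12| = R$ with $R$ large. Write $T_n(x)=(x-x_0)Q(x)$ with $Q$ monic of degree $n-1$. Then the extremal property of monic orthogonal polynomials, tested against $(x-y)Q(x)$, gives
\[
\int T_n^2\,\hat w_n^{2N}d\hat\mu_n\le \int (x-y)^2Q^2(x)\,\hat w_n^{2N}(x)d\hat\mu_n(x)
\]
for every real $y$. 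The cleaner route is the classical identity: orthogonality of $T_n$ to $Q$ gives
\[
\int (x-x_0)Q^2\,\hat w_n^{2N}d\hat\mu_n=0 .
\]
Hence the mass of $Q^2\hat w_n^{2N}\hat\mu_n$ on the far side of $x_0$ must balance the mass on the near side, which contains $[0,1]$.

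The key steps, in order, are as follows. Assume for definiteness that $x_0>1$ is far to the right, with $R=x_0-\tfrac12$.
\begin{enumerate}
\item Split the balancing identity as
\[
\int_{x<x_0}(x_0-x)Q^2 \hat w_n^{2N}d\hat\mu_n=\int_{x>x_0}(x-x_0)Q^2\hat w_n^{2N}d\hat\mu_n .
\]
\item Bound the left side from below by its restriction to $[0,1]$, where $x_0-x\ge R-\tfrac12\gtrsim R$. Then apply Lemma~\ref{lem:4.7} with $P=Q^2$, which is monic of degree $\le 2n$, to get a lower bound of $R\exp(-C_3(n+|\Delta_n|^{-\gamma}))$.
\item Bound the right side from above. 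On $x>x_0\ge 1$, Lemma~\ref{lem:4.6} gives
\[
\hat w_n^{2N}(x)\le e^{-2C_2 n|x-\frac12|}\le e^{-2C_2 nR}.
\]
The factor $(x-x_0)Q^2(x)$ has to be compared with $Q^2$ on $[0,1]$. Because all zeros of $Q$ are real and lie to the left of $x$, the factor $|Q(x)|$ grows at most like $(x-\min Z)^{n-1}$.
\end{enumerate}

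Step 3 is the main obstacle, since it requires controlling $Q^2$ at far points relative to its size on $[0,1]$. The plan is to avoid this by a monotonicity trick: iterate from the outermost zero. Let $x_0=\max Z(T_n)$. For $x>x_0$ and $t\in[0,1]$ we have $|Q(x)|/|Q(t)|$, but this ratio is not controlled directly.

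An alternative is to use $\int (x-x_0)\,T_n Q\ldots$; instead, replace $x-x_0$ by the extremal comparison. Take the polynomial
\[
\tilde T(x)=T_n(x)\,\frac{x-\tfrac12}{x-x_0}.
\]
It is monic of degree $n$, so
\[
\int T_n^2\hat w_n^{2N}d\hat\mu_n\le\int \tilde T^2\hat w_n^{2N}d\hat\mu_n .
\]
The pointwise ratio $|x-\tfrac12|/|x-x_0|$ is $<1$ exactly when $x<(x_0+\tfrac12)/2$, and there it is at most $1-c$ on $[0,1]$ when $R$ is large. Therefore
\[
\int_{[0,1]}T_n^2(1-\rho)\,\hat w_n^{2N}d\hat\mu_n\le \int_{x>(x_0+1/2)/2}\tilde T^2\,\hat w_n^{2N}d\hat\mu_n ,
\]
where the ratio factor $\rho\lesssim 1/R$. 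The left side is $\gtrsim \exp(-C_3(n+|\Delta_n|^{-\gamma}))$ by Lemma~\ref{lem:4.7}, after normalising by $\sup|T_n|^2$ over the region. On the right side, the weight is at most $e^{-C_2 nR}$ by Lemma~\ref{lem:4.6}. The ratio of $T_n^2$ at points $x\ge (x_0+\tfrac12)/2$ versus points of $[0,1]$ is bounded by $(CR)^{2n}$, because all zeros of $T_n$ lie in $\hat\Delta_n$, which has length $\lesssim |\Delta_n|^{-1}$. This gives a bound of the form
\[
e^{-C_3(n+|\Delta_n|^{-\gamma})}\lesssim |\hat\mu_n|\,(CR)^{2n}e^{-C_2 nR}.
\]
Using \eqref{sada1} to absorb the factor $|\hat\mu_n|\le e^{C_1|\Delta_n|^{-1/2}}$, and noting $|\Delta_n|^{-1/2}\le|\Delta_n|^{-\gamma}$ since $\gamma>\tfrac12$, this forces
\[
C_2 nR-2n\log(CR)\lesssim n+|\Delta_n|^{-\gamma}.
\]
Hence $R\lesssim\max\{1,n^{-1}|\Delta_n|^{-\gamma}\}$, up to a logarithmic factor that the linear term $C_2nR$ dominates for $R$ large. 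This is \eqref{sada5}. The delicate point is to make the comparison of $T_n^2$ on $[0,1]$ with the far region rigorous (the $(CR)^{2n}$ growth), and I would do this via Bernstein--Walsh type bounds on the real line or a direct zero-by-zero factor comparison.
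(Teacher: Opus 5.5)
There is a genuine gap, and it sits exactly at the step you call ``delicate'': the upper bound on the far region is never established, and the reason you give for it is wrong. A pointwise ratio of $T_n^2$ at $x\ge(x_0+\tfrac12)/2$ to its values on $[0,1]$ is unbounded, because zeros of $T_n$ may lie in $[0,1]$. The information $Z(T_n)\subset\hat\Delta_n$, an interval of length comparable to $|\Delta_n|^{-1}$, only gives $|T_n(x)|\le(|x-\tfrac12|+C|\Delta_n|^{-1})^n$. Fed into your comparison, this yields at best $R\lesssim\max\{1+\log(1/|\Delta_n|),\,n^{-1}|\Delta_n|^{-\gamma}\}$, which is strictly weaker than \eqref{sada5} whenever $n|\Delta_n|^{\gamma}\to\infty$. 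Likewise, ``by Lemma~\ref{lem:4.7}, after normalising by $\sup|T_n|^2$'' is not what Lemma~\ref{lem:4.7} says: it applies to monic polynomials, not to polynomials normalised in sup-norm. Finally, you never choose $x_0$ extremal. For an arbitrary far zero, or for $x_0=\max Z(T_n)$, a zero farther from $\tfrac12$ on the left makes $|Q(x)|$, $x>x_0$, impossible to bound in terms of $R$.

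The missing idea is that no relative comparison is needed, because Lemma~\ref{lem:4.7} is already an absolute lower bound. What you need is an absolute upper bound on the far region, and it comes for free once $x_0$ is the zero farthest from $\tfrac12$. If $R=|x_0-\tfrac12|$ is maximal, say $x_0=\tfrac12+R$, then all zeros lie in $[\tfrac12-R,\tfrac12+R]$. Hence $|Q(x)|\le(x-\tfrac12+R)^{n-1}\le(2(x-\tfrac12))^{n-1}$ for $x>x_0$. The right side of your Step~1 identity is then at most $|\hat\mu_n|\sup_{t\ge R}(2t)^{2n-1}e^{-2C_2nt}\lesssim e^{C_1|\Delta_n|^{-1/2}-C_2nR}$ once $R$ exceeds a constant depending only on $C_2$. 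Comparing with your Step~2 bound $(R-\tfrac12)e^{-C_3(n+|\Delta_n|^{-\gamma})}$ and using $|\Delta_n|^{-1/2}\le|\Delta_n|^{-\gamma}$ gives \eqref{sada5}.

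This is essentially the paper's proof. It takes the smallest interval $\{|x-\tfrac12|\le x_n\}$ containing $Z(T_n)$ and tests orthogonality against the product $Q$ of the zeros lying in $\{|x-\tfrac12|\le\tfrac12x_n\}$. Then $T_nQ$ has constant sign on a set containing $[0,1]$, so Lemma~\ref{lem:4.7} applies to $P=T_nQ$. Outside, it bounds $|T_nQ(x)|\le|x+x_n-\tfrac12|^{2n}$, combined with Lemma~\ref{lem:4.6} and \eqref{sada1}. Your Bernstein--Walsh alternative could also be made to work, but only together with a sup-norm analogue of Lemma~\ref{lem:4.7}. That would be a lower bound $\|P\|_{[0,1]}e^{-C(n+|\Delta_n|^{-\gamma})}$ for arbitrary $P$ of degree at most $2n$, which you would have to prove separately.
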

\begin{proof}
Let \( \{|x-\tfrac 12|\leq x_n \} \) be the smallest interval symmetric around the point \( \tfrac 12 \) that contains \( Z(T_n) \).  Denote by \( Q \) the monic polynomial whose zeros are precisely those zeros of \( T_n \) that belong to \( \{|x-\tfrac 12|\leq \tfrac12x_n \} \), counting multiplicities. If there are no such zeros, we let $Q=1$. Necessarily, \( \deg(Q)\leq n-1 \). Then, the orthogonality condition gives us
\begin{equation}
\label{4.8.1}
\int_{\hat\Delta_n} T_nQ \hat w_n^{2N}d\hat\mu_n =0. 
\end{equation}
We can assume without loss of generality that \( x_n\geq 1 \) for infinitely many indices \( n \) as otherwise we have nothing to prove. In what follows, we are only interested in such indices. Notice that \( (T_nQ)(x)  \) has a constant sign on the interval \( \{|x-\tfrac 12|\leq \tfrac12x_n \}\), which contains \( [0,1] \). Therefore,  \eqref{sada4} yields
\begin{equation}
\label{4.8.2}
\left|\int_{\big\{|x-\frac 12|\leq \frac12x_n \big\}} T_nQ \hat w_n^{2N}d\hat\mu_n \right| \geq \exp\left(-C_3(n+|\Delta_n|^{-\gamma})\right).
\end{equation}
On the other hand, in view of \eqref{sada3}, there exists \( x_*>1 \) and \( C_4^\prime>0 \) such that 
\begin{equation}\label{sd14}
2n\log (4x) + 2N \log \hat w_n(x) \leq -C_4^\prime n (x-\tfrac 12), \quad x\geq x_*.
\end{equation}
Again, we only need to consider those indices \( n \) for which \( x_n+1\geq 2x_* \). Since \( T_n \) and $Q$ are monic and have all  zeros on \( [-x_n+\frac12,x_n+\frac12] \), we can write a bound
\[
|T_n(x)Q(x)|\le |x+x_n-\tfrac 12|^{2n}, \, x>\tfrac 12.
\]
Each function \( \hat w_n(x) \) is decreasing in \( x\) on \( (1,\infty) \), so  \eqref{sada1} and \eqref{sd14} give us
\[
\int_{\frac12(x_n+1)}^{x_n+\frac12} |(T_nQ)| \hat w_n^{2N}d\hat\mu_n \lesssim e^{-C_4^\prime nx_n+C_1|\Delta_n|^{-1/2}}.
\]
Similarly, for every \( j\in\{0,1,2,\ldots\} \) satisfying \( x_n+j+\frac12<\hat\beta_n \), we get 
\[
\int_{x_n+j+\frac12}^{x_n+j+\frac32} |T_nQ| \hat w_n^{2N}d\hat\mu_n \lesssim e^{-C_4^\prime n(x_n+j)+C_1|\Delta_n|^{-1/2}}.
\]
Analogous estimates can also be derived on the intervals \( [\frac12-x_n,\frac{1-x_n}2] \) and \( [-\frac12-j-x_n,\frac12-j-x_n] \). Combining these bounds, we get
\begin{equation}
\label{4.8.3}
\int_{\hat\Delta_n\setminus \big\{|x-\frac 12|\leq \frac12x_n \big\}} |(T_nQ)| \hat w_n^{2N}d\hat\mu_n \leq e^{-C_4^{\prime\prime} n x_n+C_1|\Delta_n|^{-1/2}}
\end{equation}
for some constant \( C_4^{\prime\prime}>0  \). However, if
\[
x_n > \frac{C_1+C_3}{C_4^{\prime\prime}} \frac{1}{n|\Delta_n|^\gamma},
\]
estimates \eqref{4.8.2} and \eqref{4.8.3}  contradict \eqref{4.8.1}. Thus, \( x_n \) is bounded above by the maximum between \( 2x_*-1 \) and the right-hand side of the above inequality, which is exactly the statement of the lemma.
\end{proof}

Recall \eqref{4.2.1} and that \( \supp(\omega_n) \) is an interval \( \Delta_n \) for all \( n \) large enough. Therefore,  \cite[Theorem~I.4.8]{SaffTotik} implies that \( V^{\omega_n} \) is continuous everywhere in \( \C \). In particular, the first inequality in \eqref{4.2.1} holds everywhere on \( \Delta \) and not just quasi-everywhere. Thus, in the rescaled variable, we have that
\begin{equation}
\label{Robin_nN}
N\log \hat w_n(x) \leq nV^{\hat\omega_n(x)}  -\hat \ell_n, \quad  x\in\hat\Delta_n,
\end{equation}
where \( \hat \ell_n := n\log|\Delta_n| + N\ell_{n,N} \) and the that inequality is actually equality on \( [0,1] \).

\begin{lemma}
\label{lem:4.9}
For all large enough \( n \), the bound
\begin{equation}
\label{sada6}
\int_{\hat\Delta_n} |T_n(x)|^2 \hat w_n^{2N}(x)d\hat\mu_n(x) \lesssim \exp\left(-2\hat\ell_n+C_1|\Delta_n|^{-1/2}\right)
\end{equation}
holds.
\end{lemma}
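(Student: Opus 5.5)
The plan is to use the extremal property of $T_n$: its weighted $L^2$ norm is at most that of any monic polynomial of degree $n$. We test it against the weighted Fekete/Chebyshev-type comparison polynomial built from $\hat\omega_n$. The simplest choice that needs no zero-distribution information is a monic polynomial $P$ of degree $n$ satisfying
\[
\log|P(x)| \le -nV^{\hat\omega_n}(x) + o\text{-type error}
\]
uniformly on $[0,1]$, and also controlled off $[0,1]$. To avoid constructing discretizations of $\hat\omega_n$, I would instead take $P$ to be a Fekete polynomial for the weighted problem, or, more cheaply, the polynomial $P(x)=\prod_{j=1}^n (x-x_j)$ with $x_j$ the $j/(n+1)$-quantiles of $\hat\omega_n$.

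For such a $P$, the density bounds \eqref{est_omega_nN} of Lemma~\ref{lem:4.4} give the standard estimate $\log|P(x)|\le -nV^{\hat\omega_n}(x)+\O(\log n)$ on $[0,1]$. Since these densities are bounded by $s^{-1/2}(1-s)^{-1/2}$, the usual comparison of a sum to an integral over quantile cells loses at most $\O(\log n)$. This is in fact harmless even if it is $\O(\sqrt n)$, because we will absorb all such errors into the factor $e^{C_1|\Delta_n|^{-1/2}}$; note $|\Delta_n|^{-1/2}\gtrsim (N/n)^{1/4}$ is large, though possibly not larger than $\log n$, so I would aim for an $\O(1)$ or $\O(\log n)$ error and check it carefully.

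Next, split the integral $\int_{\hat\Delta_n}P^2\hat w_n^{2N}d\hat\mu_n$ into two ranges.

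On $[0,1]$, \eqref{Robin_nN} gives $\hat w_n^{2N}\le e^{2nV^{\hat\omega_n}-2\hat\ell_n}$. Hence $P^2\hat w_n^{2N}\le e^{-2\hat\ell_n+\O(\log n)}$, and \eqref{sada1} bounds the mass of $\hat\mu_n$ by $e^{C_1|\Delta_n|^{-1/2}}$.

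Off $[0,1]$, \eqref{Robin_nN} still holds, with $2nV^{\hat\omega_n}(x)+2\log|P(x)|\le 0$ up to error. This is because $\log|P|+nV^{\hat\omega_n}$ is subharmonic off $[0,1]$ and bounded there, and it vanishes at infinity since both are monic-normalized. By the maximum principle its bound on $[0,1]$ propagates to all of $\C$. So the same pointwise bound holds on all of $\hat\Delta_n$, and integrating against $\hat\mu_n$ gives \eqref{sada6}.

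The main obstacle is making the discretization error genuinely $\O(1)$ relative to $e^{C_1|\Delta_n|^{-1/2}}$ (or absorbable by enlarging $C_1$). Here I would instead use the weighted Fekete points for $\hat w_n^{N/n}$ on $[0,1]$. The standard inequality $\|P_F\hat w_n^{N}\|_{[0,1]}\le e^{-\hat\ell_n}\cdot(\text{polynomial in }n)$ holds for weighted Fekete polynomials, since their weighted sup norm is bounded by $e^{-n F}$ times a factor coming from the Fekete energy discrepancy, which is at most $\O(n)$ in exponent only in the worst case. If that factor is too large, I would fall back on the quantile construction, using the equicontinuity and the square-root edge behaviour from Lemma~\ref{lem:4.4}, which are exactly what Totik's construction \cite[Theorem~10.2]{Totik} needs to get a bounded error.
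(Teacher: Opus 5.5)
Your architecture matches the paper's. You compare $T_n$ with a monic competitor of degree $n$, use \eqref{Robin_nN} on all of $\hat\Delta_n$, push a bound on $|P|e^{nV^{\hat\omega_n}}$ from $[0,1]$ to $\C$ by the maximum principle, and finish with \eqref{sada1}. The gap is in the one ingredient that carries the content. You never produce a monic $P$ of degree $n$ with $\sup_{[0,1]}|P|e^{nV^{\hat\omega_n}}\lesssim 1$ uniformly in $n$, and the routes you list do not give it.

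A loss of $\O(\log n)$ in the exponent (quantile zeros) or a factor polynomial in $n$ (weighted Fekete points) cannot be absorbed, not even by enlarging $C_1$. By \eqref{sd9}, $|\Delta_n|^{-1/2}\lesssim (N/n)^{1/2}$, and \eqref{nN} imposes no rate, so $|\Delta_n|^{-1/2}$ may be $o(\log n)$. In particular, your remark that an $\O(\sqrt n)$ loss is harmless is false. Such a loss would also break the end of the proof of Theorem~\ref{thm:5}. That argument needs $|\Delta_n|\log(1/\boldsymbol g_n(0))\to0$, while $|\Delta_n|\log n$ need not tend to $0$.

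Your fallback, Totik's discretization, is where the paper goes, via \cite[Lemma~9.1]{Totik}. Note that the polynomials of \cite[Theorem~10.2]{Totik} have degree $n-i_n$ and are not monic, so that theorem is not the right tool. Even Lemma~9.1 does not give a bounded error up to the endpoints. Under the hypotheses supplied by Lemma~\ref{lem:4.4}, it yields monic $Q(\sigma_m;\cdot)$ of degree $m$ with
\[
1\le |Q(\sigma_m;x)|e^{mV^{\sigma_m}(x)}\le \min\big\{m^B,(x(1-x))^{-B}\big\},\quad x\in(0,1).
\]
This is bounded in the interior but may be of size $m^B$ near $0$ and $1$, for instance where $\hat\omega_n^\prime$ vanishes like $\sqrt s$. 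Used naively, it again loses $\O(\log n)$.

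The missing idea is the endpoint correction. Take $\sigma_m=\hat\omega_{m+2B}$ and set $\Psi_n(z)=Q(\sigma_{n-2B};z)\,(z(z-1))^B$, which is monic of degree $n$.
\begin{itemize}
\item The factor $|x(x-1)|^B$ cancels $(x(1-x))^{-B}$, so $|\Psi_n|e^{(n-2B)V^{\hat\omega_n}}\le 1$ on $[0,1]$.
\item The leftover factor $e^{2BV^{\hat\omega_n}}$ is uniformly bounded on $[0,1]$. The upper bound in \eqref{est_omega_nN} gives $\hat\omega_n^\prime\le 9\,\omega_{[0,1]}^\prime$. Since $\log|x-t|^{-1}\ge0$ for $x,t\in[0,1]$, this yields $V^{\hat\omega_n}\le 9V^{\omega_{[0,1]}}=9\log 4$ there.
\end{itemize}
With this $\Psi_n$ in place of your $P$, the rest of your argument (maximum principle, \eqref{Robin_nN}, \eqref{sada1}) goes through verbatim and gives \eqref{sada6}.
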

\begin{proof}
Suppose we are given a sequence of probability measures  \( d\sigma_n(x) = u_n(x)dx \) on \( [0,1] \) such that the set \( \{ u_n \} \) is uniformly equicontinuous on closed subintervals of \( (0,1)\) and 
\[
\frac 1A \big(x(1-x)\big)^{b^\prime} \leq u_n(x) \leq A\big(x(1-x)\big)^{b^{\prime\prime}}, \quad x\in(0,1),
\]
for every \( n \) and some constants \( A>1\) and \(b^\prime,b^{\prime\prime}>-1 \). In \cite[Lemma~9.1]{Totik}, it was shown that for sufficiently large \( n \) there exist monic polynomials \( Q(\sigma_n;z) \) of degree \( n \) (see the display right after  \cite[Equation (9.7)]{Totik}), such that
\begin{equation}
\label{4.9.1}
1 \leq |Q(\sigma_n;x)|e^{nV^{\sigma_n}(x)} \leq \min\left\{ n^B, \big(x(1-x)\big)^{-B} \right\}, \quad x\in(0,1),
\end{equation}
for some constant \( B \) depending only on \( A,b^\prime,b^{\prime\prime} \). We can always take $B$ to be an integer.

According to Lemma~\ref{lem:4.4}, the measures \( \sigma_n = \hat\omega_{n+2B} \) satisfy all the above requirements. Set
\[
\Psi_n(z) := Q(\sigma_{n-2B};z)\big(z(z-1)\big)^{B}.
\]
One can readily see from the upper bound in \eqref{est_omega_nN} that
\begin{equation}
\label{4.9.2}
V^{\hat\omega_n}(x) < 9 V^{\omega_{[0,1]}}(x) = 9\log 4, \quad x\in [0,1].
\end{equation}
Thus, it follows from \eqref{4.9.1} and \eqref{4.9.2} that
\[
|\Psi_n(x)|e^{nV^{\hat\omega_n}(x)} \leq \frac14e^{2B V^{\hat\omega_n}(x)} < \frac{(9\log 4)^{2B}}4, \quad x\in [0,1].
\]
The maximum principle for subharmonic functions now yields that
\begin{equation}
\label{4.9.3}
|\Psi_n(z)|e^{nV^{\hat\omega_n}(z)} \lesssim 1, \quad z\in\C.
\end{equation}

Because the \( n \)-th monic  orthogonal polynomial minimizes the \( L^2 \)-norm among all monic polynomials of degree \( n \) with respect to the measure of orthogonality, we now get from \eqref{Robin_nN}, \eqref{4.9.3}, and \eqref{sada1} that
\begin{align*}
\int_{\hat\Delta_n} |T_n|^2 \hat w_n^{2N}d\hat\mu_n & \stackrel{\eqref{Robin_nN}}{\leq} e^{-2\hat\ell_n} \int_{\hat\Delta_n} |\Psi_n|^2 e^{2nV^{\hat\omega_n}}d\hat\mu_n \\
& \stackrel{\eqref{4.9.3}}{\lesssim} e^{-2\hat\ell_n } \hat\mu_n(\hat\Delta_n) \stackrel{\eqref{sada1}}{\lesssim} \exp\left( -2\hat\ell_n + C_1|\Delta_n|^{-1/2} \right). \qedhere
\end{align*}
\end{proof}

\subsection{Proof of Theorem~\ref{thm:5}}

Clearly, to prove the theorem, it is sufficient to show that \eqref{totik} holds uniformly on closed sets of the form \( K_{n,\rho} := \{z:\min_{x\in\Delta_n}|z-x|\geq \rho \} \) for each fixed \( \rho>0 \). Let \( T_n \) be as in Lemma~\ref{lem:4.8}. Since the monic orthogonal polynomial does not depend on the normalization of the measure of orthogonality, it holds that
\[
T_n\big(w_N^{2N}\mu_n\big)(z) = |\Delta_n|^n T_n\left(\frac{z-\alpha_n}{|\Delta_n|}\right).
\]
Hence,  \eqref{totik} is equivalent to saying that the asymptotics
\begin{equation}
\label{4.1.1}
T_n(z) = (1+o(1))\exp\left( n\int\log(z-s)d\hat\omega_n(s)\right)
\end{equation}
holds uniformly on closed set \(  \hat K_{n,\rho} := \{z:\min_{x\in[0,1]}|z-x|\geq \rho/|\Delta_n| \} \) for every $\rho>0$.

Let \( G_n(z) := G(\hat\mu_{n|[0,1]};z) \) be the Szeg\H{o} function of \( \hat\mu_{n|[0,1]} \), see \eqref{SzegoFun}. That is, \( G_n \) is an outer function in \( H^2(D_{[0,1]}) \) such that
\[
|G_{n\pm}(x)|^2= \hat v_{n,[0,1]}(x) \quad \text{for almost every} \quad x\in(0,1),
\]
see Lemma~\ref{lem:4.5}. One can readily verify that
\[
\left|\frac{\sqrt{z(z-1)}}{z-x}-1\right| \lesssim \frac{|\Delta_n|}\rho, \quad z\in \hat K_{n,\rho},
 \]
for all sufficiently large \(n\). Hence, it follows from \eqref{outer}, \eqref{geom-mean}, \eqref{SzegoFun}, and \eqref{sada2} that
\begin{equation}
\label{4.1.2}
\frac{G_n(z)}{G_n(\infty)} = \exp\left( \frac12 \int_0^1 \left(\frac{\sqrt{z(z-1)}}{z-x}-1\right)\log\hat v_{n,[0,1]}(x)d\omega_{[0,1]}(x)   \right)  = 1 + o(1)
\end{equation}
for \( z\in \hat K_{n,\rho} \) as \( n\to\infty \). To prove \eqref{4.1.1}, define
\[
g_n(z) := \frac{T_n(z)}{\|T_n\|_{L^2(\hat w_n^{2N}\hat\mu_n)}}\exp\left( -\hat\ell_n-n\int\log(z-s)d\hat\omega_n(s) \right) G_n(z).
\]
In view of \eqref{4.1.2}, \eqref{4.1.1} will follow if we show that
\begin{equation}
\label{4.1.3}
g_n(z)/g_n(\infty) = 1 + o(1), \quad z\in \hat K_{n,\rho},
\end{equation}
as \( n\to\infty \). Clearly, each \( g_n \) is a function in \( H^2(D_{[0,1]})\). In \eqref{Robin_nN}, we have  equality on \( [0,1] \),  so
\[
\int|g_{n\pm}|^2d\omega_{[0,1]} = \|T_n\|_{L^2(\hat w_n^{2N}\hat\mu_n)}^{-2} \int |T_n|^2\hat w_n^{2N} \hat v_{n,[0,1]}d\omega_{[0,1]} \leq 1.
\]
Moreover, as we already observed above, \eqref{geom-mean} and \eqref{sada2} imply that
\[
G_n(\infty) = \exp\left(\frac12\int\log \hat v_{n,[0,1]}d\omega_{[0,1]}\right) \geq \exp\left( -C_5|\Delta_n|^{-\gamma} \right)
\]
for some constant \( C_5>0 \). Since \( \gamma>1/2 \),  the bound \eqref{sada6} gives
\[
g_n(\infty) = \frac{e^{-\hat\ell_n}G_n(\infty)}{\|T_n\|_{L^2(\hat w_n^{2N}\hat\mu_n)}} \geq \exp\left( -C_6|\Delta_n|^{-\gamma} \right)
\]
for some constant \( C_6>0 \).

Next, we will use a conformal map to the unit disk $\mathbb{D}$ to apply the standard results of the Hardy spaces theory on $\mathbb{D}$. Define
\[
\boldsymbol g_n(z) := g_n\big(\phi_{[0,1]}^{(-1)}(z)\big), \quad z\in\D,
\]
where \( \phi_{[0,1]}^{(-1)}(z) \) denotes the inverse of \( \phi_{[0,1]}(z) \), the conformal map introduced in \eqref{phi-w}.  To show \eqref{4.1.3}, it is enough to check that
\begin{equation}
\label{4.1.4}
\boldsymbol g_n(z)/\boldsymbol g_n(0) = 1 + o(1), \quad |z| \leq |\Delta_n|/(2\rho),
\end{equation}
as \( n\to\infty \). Naturally, \( \boldsymbol g_n(z) \) is a function in \( H^2(\D) \) such that
\begin{equation}
\label{4.1.5}
\frac1{2\pi}\int_{\T}|\boldsymbol g_n(\xi)|^2|d\xi| \leq 1 \qandq \boldsymbol g_n(0) \geq \exp\left( -C_6|\Delta_n|^{-\gamma} \right).
\end{equation}
We can factor $\boldsymbol g_n$ as
\begin{equation}
\label{4.1.6}
\boldsymbol g_n(z) = \boldsymbol w_n(z) \boldsymbol b_n(z), \quad \boldsymbol b_n(z) = \iota_n\prod_i\frac{z-a_{n,i}}{1-a_{n,i}z},
\end{equation}
where \(\boldsymbol w_n \) is outer and \( \iota_n\in\{\pm1\} \) is chosen so that \( \boldsymbol b_n(0)>0 \) (because polynomials \( T_n \) are obviously continuous in \( \C \) and \( G_n \) is outer in \( H^2(D_{[0,1]}) \), there is no singular inner function in the above decomposition). Observe that the zeros \( a_{n,i} \) are all real and are precisely the images of the zeros \( T_n \) that lie outside of \( [0,1] \) under the map \( \phi_{[0,1]} \).

Since Blaschke products are unimodular on the unit circle, it follows from the first estimate in \eqref{4.1.5} that
\begin{equation}
\label{4.1.7}
0 \leq \frac1{2\pi}\int_\T \log^+|\boldsymbol w_n(\xi)||d\xi| \leq \frac{1}{4\pi}\int_\T \boldsymbol w_n^2(\xi)d|\xi| \le \frac 12.
\end{equation}
On the other hand, since \( |\boldsymbol b_n(z)| \leq 1 \) in \( \D \), it holds that \( \boldsymbol w_n(0) \geq e^{-C_6|\Delta_n|^{-\gamma}} \) by the second estimate in \eqref{4.1.5}. In particular, we have that
\[
0 \geq \frac1{2\pi}\int_\T \log^-|\boldsymbol w_n(\xi)||d\xi| = \log \boldsymbol w_n(0) - \frac1{2\pi}\int_\T \log^+|\boldsymbol w_n(\xi)||d\xi| \gtrsim -|\Delta_n|^{-\gamma}.
\]
Altogether, we see that
\[
0 \leq \frac1{2\pi}\int_\T \big|\log|\boldsymbol w_n(\xi)|\big| |d\xi| \lesssim |\Delta_n|^{-\gamma}.
\]
Therefore, similarly to \eqref{4.1.2}, it holds that
\[
\frac{\boldsymbol w_n(z)}{\boldsymbol w_n(0)} = \exp\left( \frac1{2\pi}\int_\T \left( \frac{\xi+z}{\xi-z} -1\right) \log|\boldsymbol w_n(\xi)||d\xi| \right) = 1+ \mathcal O\big(|\Delta_n|^{1-\gamma} \big)
\]
for \( |z|< |\Delta_n|/(2\rho) \) and all  large enough \(n\). Thus, in view of \eqref{4.1.6} and the above asymptotic formula, to prove \eqref{4.1.4}, it is sufficient to show that
\begin{equation}
\label{4.1.8}
\boldsymbol b_n(z)/\boldsymbol b_n(0) = 1 + o(1), \quad |z| \leq |\Delta_n|/(2\rho), \qasq n\to\infty.
\end{equation}
Let \( x_{n,i} \) be the zero of \( T_n \) corresponding to \( a_{n,i} \). Then, from Lemma~\ref{lem:4.8}, if follows that
\[
\big|a_{n,i}^{-1}-a_{n,i}\big| = 4|x_{n,i}(x_{n,i}-1)|^{1/2} \lesssim \max\left\{1,n^{-1}|\Delta_n|^{-\gamma}\right\}.
\]
We consider two cases. 

{\bf 1.} Assume first that \( n^{-1}|\Delta_n|^{-\gamma}\geq1 \). Then,  for \( |z|\leq |\Delta_n|/(2\rho) \), we get 
\begin{equation}
\label{4.1.9}
|z|\sum_{i} \big|a_{n,i}^{-1}-a_{n,i}\big| \lesssim |\Delta_n|^{1-\gamma}
\end{equation}
because the sum above has at most \( n \) terms. 

{\bf 2.} Now, suppose that \( n^{-1}|\Delta_n|^{-\gamma}<1 \). In particular, this implies that the zeros \( a_{n,i} \) are all separated away from zero by a constant independent of \( n \).  Since $|\boldsymbol w_n(\xi)|=|\boldsymbol g_n(\xi)|$ for $\xi\in \mathbb{T}$, the mean-value inequality and bound \eqref{4.1.5} imply
\[
 |\boldsymbol w_n(0)|^2\leq \frac1{2\pi}\int_{\T}|\boldsymbol w_n(\xi)|^2|d\xi|= \frac1{2\pi}\int_{\T}|\boldsymbol g_n(\xi)|^2|d\xi|\stackrel{\eqref{4.1.5}}{\leq} 1.
\]
Therefore, \( \boldsymbol b_n(0) \geq e^{-C_6|\Delta_n|^{-\gamma}} \) by the second estimate in \eqref{4.1.5}. Respectively, we have for \( |z|\leq |\Delta_n|/(2\rho) \) that
\begin{align}
|z|\sum_{i} \big|a_{n,i}^{-1}-a_{n,i}\big| & \lesssim |z| \sum_{i} (1-|a_{n,i}|) \leq -|z|\sum_{i} \log|a_{n,i}| \nonumber \\
& = -|z| \log\boldsymbol b_n(0) \lesssim |\Delta_n|^{1-\gamma}.
\label{4.1.10}
\end{align}
Estimates \eqref{4.1.9} and \eqref{4.1.10} justify the first inequality below and give
\[
\left| \sum_i\log\left(1-z\frac{a_{n,i}^{-1}-a_{n,i}}{1-a_{n,i}z}\right)\right| \lesssim |z|\sum_i  \big|a_{n,i}^{-1}-a_{n,i}\big| \lesssim |\Delta_n|^{1-\gamma}.  
\]
It is straightforward to see that the above estimates yield \eqref{4.1.8}, which finishes the proof of the theorem.
\small

\bibliographystyle{plain}

\bibliography{vw}

\end{document}